\documentclass[a4paper,11pt]{amsart}
\usepackage[english]{babel}
\usepackage[foot]{amsaddr}
\usepackage{amsmath}
\usepackage{amssymb}
\usepackage{graphicx}
\usepackage{amsthm}
\usepackage{hyperref}
\usepackage{thmtools}
\usepackage{thm-restate}
\usepackage{enumerate}
\usepackage{enumitem}
\usepackage{bbm}
\usepackage{mathrsfs}
\usepackage{a4wide}
\usepackage{comment}
\usepackage{tikz-cd}
\usepackage{appendix}
\usepackage{stix}
\usepackage{longtable}

\renewcommand{\tfrac}{\genfrac{}{}{}2}

\mathchardef\mm="2D

\newcommand{\eps}{\varepsilon}
\newcommand{\calX}{\mathcal{X}}
\newcommand{\calA}{\mathcal{A}}
\newcommand{\calB}{\mathcal{B}}
\newcommand{\calC}{\mathcal{C}}
\newcommand{\calD}{\mathcal{D}}

\newcommand{\calF}{\mathcal{F}}
\newcommand{\calG}{\mathcal{G}}

\newcommand{\calI}{\mathcal{I}}
\newcommand{\calL}{\mathcal{L}}
\newcommand{\calM}{\mathcal{M}}
\newcommand{\calP}{\mathcal{P}}

\newcommand{\R}{\mathbb{R}}
\newcommand{\N}{\mathbb{N}}
\newcommand{\calR}{\mathcal{R}}

\newcommand{\calT}{\mathcal{T}}

\newcommand{\fA}{\mbox{ for all }}

\newcommand{\dder}{\overline{\nabla}}

\newcommand{\dnabla}{\overline{\nabla}}
\newcommand{\ddiv}{\overline{\text{div}}}
\newcommand{\dd}{\mathrm{d}}

\newcommand{\Cyl}{\mathrm{Cyl}}
\newcommand{\sfPi}{\mathsf\Pi}
\newcommand{\sfP}{\mathsf{P}}

\newcommand{\grad}{\mathrm{grad}_{\Gamma}}

\newcommand{\sfJ}{\mathsf{J}}
\newcommand{\sfJn}{\mathsf{J}^{\mathrm{net}}}
\newcommand{\sfp}{\mathsf{p}}
\newcommand{\sfT}{\mathsf{T}}

\newcommand{\Ent}{\mathcal{E}\mathrm{nt}}
\newcommand{\teta}{\vartheta}

\newcommand{\bc}{\boldsymbol{c}}

\newcommand{\lnet}{\lambda^{\mathrm{net}}}
\newcommand{\jnet}{\sfJ^{\mathrm{net}}}
\newcommand{\Gnet}{G^\mathrm{net}}

\newtheorem{thm}{Theorem}[section]
\newtheorem{lm}[thm]{Lemma}

\newtheorem{cory}[thm]{Corollary}

\newtheorem{assu}[thm]{Assumption}
\theoremstyle{definition}
\newtheorem{defi}[thm]{Definition}
\newtheorem{fdefi}[thm]{Formal Definition}
\newtheorem{remark}[thm]{Remark}

\title[Mean-field population dynamics]{Generalized gradient structures for measure-valued population dynamics and their large-population limit}
\author{Jasper Hoeksema and Oliver Tse}
\address{Department of Mathematics and Computer Science, Eindhoven University of Technology, 5600 MB Eindhoven,The Netherlands; email addresses: j.hoeksema@tue.nl, o.t.c.tse@tue.nl}

\numberwithin{equation}{section}
\begin{document}

	\begin{abstract}
		We consider the forward Kolmogorov equation corresponding to measure-valued processes stemming from a class of interacting particle systems in population dynamics, including variations of the Bolker-Pacala-Dieckmann-Law model. Under the assumption of detailed balance, we provide a rigorous generalized gradient structure, incorporating the fluxes arising from the birth and death of the particles.
		
		Moreover, in the large population limit, we show convergence of the forward Kolmogorov equation to a Liouville equation, which is a transport equation associated with the mean-field limit of the underlying process. In addition, we show convergence of the corresponding gradient structures in the sense of Energy-Dissipation Principles, from which we establish a propagation of chaos result for the particle system and derive a generalized gradient-flow formulation for the mean-field limit. 
	\end{abstract}		
	
\maketitle
\vspace{-2.5em}

\tableofcontents


\section{Introduction}

An important goal in theoretical biology and population dynamics is to derive macroscopic equations from microscopic models \cite{Champagnat2006, Finkelshtein2009}. For many stochastic interacting particle systems involving birth, mutation, and death, these connections have been made rigorous. One such class of particle systems consists of spatially-structured models such as the Bolker-Pacala and Dieckmann-Law (BPDL) model \cite{Bolker1997,Law2000}. The dynamics of these particle systems can be described by jump processes on the space of finite positive measures and can be used to derive macroscopic models. 

The convergence of such measure-valued jump processes under a mean-field scaling to a large-population limit is shown for example in \cite{Fournier2004} via martingale techniques, and in \cite{Finkelshtein2009}, where an analytic approach to the convergence of rescaled moment equations is used. In both approaches, the limiting evolution is governed by a non-local evolution equation given by
\begin{equation}\label{eq:mfi1}
   \partial_t u_t(x) =\int_{\R^d} m(y,x)\,u_t(y) \,\dd y-u_t(x) \int_{\R^d}  c(x,y)\, u_t(y) \, \dd y.
\end{equation}
We will refer to \eqref{eq:mfi1} as the \emph{mean-field equation}. Here, $u_t$ represents the limiting density of particles at time $t$, and the parameter functions $m$ and $c$ are continuous and bounded functions stemming from birth, dispersal, and competition in the BPDL model. 

In recent years, there has been considerable activity in studying the mean-field equation \eqref{eq:mfi1} and the BPDL model in more general spaces, allowing for dynamics involving multiple species and combinations of discrete and continuous traits. See for example \cite{ Finkelshtein2021} for an overview of existing models, where instead of $\R^d$ the underlying space is an arbitrary locally compact Polish space. However, convergence in the large-population limit is not considered.

Meanwhile, powerful variational tools have been developed in the last decade for studying mean-field interacting jump processes and their limits under the assumption of detailed balance. To highlight only a few: \cite{Erbar2016} studied mean-field limits for measure-dependent jump processes; \cite{Erbar2016a} proved the convergence of the spatially-homogeneous Kac-process to the Boltzmann equation; \cite{Schlichting2016} investigated the macroscopic limit of Becker-D\"oring models; \cite{Kaiser2019} showed hydrodynamic limits for zero-range and exclusion processes; \cite{Maas2020} discussed convergence and higher-order approximations for chemical reaction networks, an approach that was subsequently used in the setting of discretized reaction-diffusion equations in \cite{MSW2022}.

In this work, we extend and apply these variational techniques to prove the mean-field limit for population dynamics over arbitrary compact Polish spaces, with bounded measurable parameters $m,c$ satisfying a detailed balance condition. In addition, we establish entropic propagation of chaos, which controls the discrepancy between the microscopic and macroscopic models in a precise sense. To the authors' knowledge, this is the first convergence result under such general assumptions. 

\medskip
To do so, we first introduce a new generalized gradient structure and rigorous variational formulation for the forward Kolmogorov equation (FKE) corresponding to the BPDL model, where the FKE describes the evolution of the law of the measure-valued process. Our formulation incorporates not only the equation itself but tracks the birth and death fluxes as well. This extends the generalized gradient-flow framework of \cite{PRST2020} due to the unboundedness of the underlying jump kernel, and the positivity of the fluxes.

We then show convergence of these generalized gradient structures under a mean-field scaling and the large-population limit in the sense of Energy Dissipation Principles (EDPs) (see \cite{Peletier2017}). The limiting gradient flow is the Liouville equation corresponding to the mean-field equation, namely a transport equation that describes the evolution of the law of a process that follows deterministic dynamics described by \eqref{eq:mfi1} but for possibly random initial conditions. This connection between the Liouville equation and the mean-field equation is made rigorous with the help of a modification of the superposition principle of \cite{ambrosio2008}.

In particular, we deduce that the laws determined by the FKE equation concentrate around the solution of the mean-field equation \eqref{eq:mfi1}, which due to the convergence of the associated free energies translates into an entropic propagation of chaos result, see Theorem \ref{thm:edp_propi}.

\medskip

\paragraph{\textbf{Outline}} The rest of this section is devoted to giving a brief overview of our setting and presenting the main results. In Section \ref{s:mf} the mean-field equation and corresponding gradient structure are introduced. We repeat this process in Sections \ref{s:fke} and \ref{s:liouv} for the forward Kolmogorov equation and the Liouville equation respectively, with the proof of a modified superposition principle delegated to Appendix \ref{s:super}. Finally, in Section \ref{s:edp}, we establish the EDP-convergence of the gradient structures, and prove both the convergence to the mean-field limit and the propagation of chaos. 

\subsection{Measure-valued population dynamics and mean-field limits}

We consider the forward Kolmogorov equation that corresponds to a generalized version of the BPDL model. In its classical form, the Bolker-Pacala model is a purely spatially-structured microscopic model for a population of plants involving the birth, dispersal, and either natural death or death by competition for resources and can be modeled as a jump process in the space of positive measures over $\R^d$. However, in certain models of adaptive evolution it is the mutation of traits that play a role, instead of spatial evolution (see \cite{Law2000,Champagnat2006,Champagnat2006a}). Moreover, if one wants to model multiple interacting species or marked configuration spaces, more general spaces than $\R^d$ are needed \cite{Kondratiev2006,Finkelshtein2021}).

Therefore, let the {\em trait} space be an arbitrary Polish space, denoted henceforth as $\calT$. We model the BPDL-dynamics at any time $t$ as an interacting particle system with particles $A_t^1,\dots,A_t^{N_t} \in \calT$ at positions $X_t^1,\dots,X_t^{N_t} \in \calT$, where the number of particles $N_t$ at time $t$ is not fixed since particles can be removed from and added to the system. 

Moreover, let $b\in \calB^+(\calT)$, $d,c\in \calB^+(\calT\times\calT)$ be non-negative measurable functions, $n>0$ a positive parameter, and $\gamma\in \calM^+_{loc}(\calT)$ a non-negative reference measure such that 
\[ \int_{\calT} d(x,y)\, \gamma(\dd y) = 1, \qquad \fA x\in \calT. \]
Then the BPDL-dynamics can be described as follows:
\begin{itemize}
    \item Each particle located at a position $x\in \calT$ has two exponential clocks: a \emph{seed} clock with rate $b(x)$ and a \emph{death} clock with rate $\tfrac{1}{n}\sum_{i=1}^{N_t} c(x,X_t^i)$. 
    \item If the death clocks rings, the particle is deleted.
    \item If the seed clock rings, a new particle is added at position $y\in\calT$ with probability $d(x,y)\gamma(\dd y)$.
\end{itemize}
Alternatively, we can describe these dynamics in the form of reacting particles. Namely, setting $m(x,y):=b(x)d(x,y)$, then with a little of abuse of notation we have
\begin{equation}\label{eq:ireaction}
\begin{aligned}
    A^i_t &\to A^i_t+A^{N_t+1}_t \quad &\mbox{with rate} \quad& m\left(X_t^i,X_t^{N_1+1}\right)\gamma\left(X_t^{N_1+1}\right),\\
  A^i_t+A^j_t &\to A^j_t \quad &\mbox{with rate} \quad &n^{-1}c\left(X^i_t,X^j_t\right).
\end{aligned}   
\end{equation}
We will refer to $m$ as the \emph{mutation kernel}, and $c$ as the \emph{competition} kernel. The parameter $n>0$ is called the \emph{system size}, in the sense that that the scaling $n^{-1}c$ guarantees that if the amount of particles in the system is of the order of $n$, the total rate of created or deleted particles is of the same order.

\medskip

Instead of looking at the individual positions of the particles, it is common to only consider the measure-valued process $\nu_t$ determined by the rescaled empirical measure
\begin{equation*}
\nu_t^n:=\frac{1}{n}\sum_{i=1}^{N(t)} \delta_{X_t^i}.
\end{equation*}
Here, $\nu_t\in \Gamma:=\calM^+(\calT)$ with $\calM^+(\calT)$ the space of finite non-negative measures. The infinitesimal generator $Q_n$ of this process is given for all $F\in C_c(\Gamma)$ by 
\[
    \qquad  (Q_n F)(\nu) = n \int_{\calT} \left(F\left(\nu+\tfrac{1}{n}\delta_x\right)-F(\nu)\right)\, \kappa^+[\nu](\dd x)+n \int_{\calT} \left(F\left(\nu-\tfrac{1}{n}\delta_x\right)-F(\nu)\right)\, \kappa^-[\nu](\dd x), 
\]
where $\kappa^{\pm}[\nu]\in \Gamma$ are the measure-dependent birth/death-kernels 
\[
    \kappa^+[\nu](\dd x) :=  \left(\int_{y\in \calT} m(y,x)\nu(\dd y)\right)\gamma(\dd x) ,\qquad \kappa^-[\nu](\dd x) := \left(\int_{y\in \calT}  c(x,y) \nu(\dd y)\right) \nu(\dd x).
\]
The law of the process is now given by the corresponding forward Kolmogorov equation
\begin{equation}\label{eq:Forward Kolmogorov}
     \partial_t \sfP_t^n = Q_n^* \sfP_t^n, \qquad \sfP^n_t\in \calP(\Gamma).\tag{$\sf FKE_n$}
\end{equation}

\medskip

Depending on the setting, this formulation can be made rigorous in various ways: for example via an analytical approach on configuration spaces as done in \cite{Finkelshtein2009}, which in fact models infinite configurations of particles over $\R^d$, or via martingale techniques with $\calT$ a closed subset of $\R^d$ and $\gamma=\mathscr{L}^d|_{\calT}$ (see \cite{Fournier2004}). Moreover, in the latter, under the assumption of continuous, bounded, and integrable mutation/competition kernels, it is also shown that the process converges in the large-population limit $n\to \infty$ to the mean-field equation \eqref{eq:mfi1}, which can be rewritten as
\begin{equation}\label{eq:mf}\tag{$\sf MF$}
    \partial_t \nu_t =\kappa^+[\nu_t]-\kappa^-[\nu_t], \qquad \nu_t\in \Gamma.
\end{equation}
While different choices of scalings are possible, the mean-field equation describes the macroscopic properties of the measure-valued process when the population is large. An alternative way is to study the evolution of the moments, which form a hierarchy similar to the BBGKY-hierarchy of correlation functions, and under the so-called Vlasov scaling the first moment or correlation function converges to \eqref{eq:mf}. For the case of infinite configurations over $\R^d$ this has been established, see \cite{Finkelshtein2010}, and both propagation of chaos in the Vlasov limit and the sub-Poissonian property have been established as well \cite{Finkelshtein2015}. 

\medskip

In this work, we do not consider the measure-valued process itself, but take the forward Kolmogorov equation \eqref{eq:Forward Kolmogorov} as a starting point, and show convergence to the mean-field equation in the sense that $\sfP^n_t\to \delta_{\nu_t}$ narrowly on $\calP(\Gamma)$ under suitable initial conditions. Throughout we assume the following: 
\begin{assu}\label{assu:massu}
The trait space $\calT$ is a compact Polish space, and moreover
\begin{equation*}
    \begin{aligned}
    \gamma&\in \Gamma \qquad&& && \mbox{(reference measure with finite mass)}\\
    m,c&\in \calB_b^+(\calT\times\calT) \qquad&& &&\mbox{(bounded rates)}\\
    c(x,x)&=0 \quad &&\fA x\in\calT\qquad &&\mbox{(no natural death)}\\
    m(y,x)&=c(x,y) \quad &&\fA x,y\in\calT\qquad &&\mbox{(detailed balance)}\\
    \end{aligned}
\end{equation*}
\end{assu}
Henceforth we equip the space $\Gamma$ with the narrow topology. Moreover, the assumption of no natural death means that particles can only be deleted due to competition with other particles. Together with the detailed balance condition this guarantees that the jump kernel is reversible with respect to an invariant measure $\Pi_n \in \calP(\Gamma)$, which is obtained as a push-forward of the Poisson measure $\pi_n$ with
\begin{equation*}
    \calP\left(\coprod_{N\geq 1}\calT^N\right) \ni \pi_n:=\frac{1}{e^{n \gamma(\calT)}-1}\sum_{N=1}^{\infty} \frac{n^N}{N!}\gamma^{\otimes N}.
\end{equation*}
This allows us to write the forward Kolmogorov equation as a gradient flow of the relative entropy with respect to $\Pi_n$, and equip it with a corresponding variational structure, see Theorem \ref{thm:ikf}. 

In light of similar results in \cite{Erbar2016,Maas2020} for mean-field jump processes on finite spaces and finite chemical reaction networks, one expects \eqref{eq:Forward Kolmogorov} to converge to the following \emph{Liouville equation}
\begin{equation}\label{eq:liouv}\tag{$\sf Li$}
    \partial_t \sfP_t +\mathrm{div}_{\Gamma} \left(\sfP_t\,\bigl(\kappa^+[\nu]-\kappa^-[\nu]\bigr)\right)=0, \qquad \sfP_t\in \calP(\Gamma).
\end{equation}
It is a transport equation that can be interpreted as the lifting of mean-field dynamics in $\Gamma$ to evolutions in $\calP(\Gamma)$, and describes the evolution of the law of random measures $\nu_t$ that all satisfy \eqref{eq:mf}. In particular, if $\nu_t$ a solution of \eqref{eq:mf} then $\sfP_t:=\delta_{\nu_t}$ is itself a solution of \eqref{eq:liouv}. 

It turns out that in our general setting this convergence holds as well, as will be stated in Theorem \ref{thm:i_sol}. Letting $V[\nu]=\kappa^+[\nu]-\kappa^-[\nu]$, we can therefore represent part of our results in Figure \ref{intro:diagram1}.
\begin{figure}[ht]\label{intro:diagram1}\centering
\begin{tikzcd}[row sep=large,column sep=large]
\eqref{eq:Forward Kolmogorov} \quad   \partial_t \sfP^n_t=Q_n^*\sfP^n_t \;\arrow[r, "n\to \infty", ""']  &  \;\eqref{eq:liouv}  &[-38pt] \partial_t \sfP_t+\mathrm{div}_{\Gamma}\,(\sfP_t V[\nu])=0, \arrow[shift right=.3em,swap]{d}{} &[0pt] \sfP_t^n, \sfP_t\in \calP(\Gamma),\\
      &  \;\eqref{eq:mf} &  \partial_t\nu_t=V[\nu_t],  \arrow[shift right=.3em,swap]{u}{} & \nu_t\in \Gamma:=\calM^+(\calT).
  \end{tikzcd}
\caption{Convergence in the large-population limit}
\end{figure}

This convergence is a direct consequence of the convergence of the associated gradient structures, which we will describe below. 

\subsection{Gradient-flow formulation}

Our first main result concerns the variational formulation of the equations \eqref{eq:Forward Kolmogorov}, \eqref{eq:mf}, \eqref{eq:liouv} and their specific gradient structure. Various gradient-flow formulations exist for jump processes, mean-field jump processes, and chemical reaction networks \cite{Erbar2016, Erbar2016a,Kaiser2019,Maas2020,PRST2020}.

In these works a common starting point is to describe the relation between $\rho_t$, representing either laws of some process or mean-field limits and generalized {\em fluxes} $j_t$ in the form of an abstract {\em continuity equation}. For example, in the case of independent particles following a common jump process over a graph, $\rho_t$ corresponds to the number of particles on a node at time $t$, and a choice of flux $j_t$ can be the so-called \emph{net} flux $j_t$, which is related to the number of particles going through an edge. 

However, we propose a slightly different structure, namely one that tracks the effective \emph{mass fluxes} for both creation (arising from mutation) and annihilation (arising from competition) separately. The use of mass fluxes instead of usual particle fluxes ensures that in our convergence results as $n\to \infty$ we have both convergences of laws and fluxes (see Theorem \ref{thm:maini_conv1}). 

Moreover, separating the effects of creation and annihilation (henceforth simply referred to as birth and death) instead of their combined contribution allows us to incorporate more information in our variational formulation. The downside is that we are forced to work with \emph{positive fluxes}, while the framework in the aforementioned examples involves either quadratic or generalized structures for signed net fluxes. In this sense we are closer to the variational representations stemming from large deviations, involving so-called one-way or unidirectional fluxes, see for example \cite{Mielke2014, Renger2019, Basile2021, Peletier2022}. Indeed, our structure is motivated by large deviation theory, as we will discuss briefly in Appendix \ref{s:ldpmot}.

\medskip
In all three cases, i.e.\ for \eqref{eq:Forward Kolmogorov}, \eqref{eq:mf} and \eqref{eq:liouv}, our proposed structure is similar to the classical notion of a gradient flow in the sense that they all satisfy an abstract Energy-Dissipation Balance. Since we will repeat the same concept three times on different levels and for different spaces, let us make the general and abstract concepts clear:

\begin{fdefi}
Given a {\em free energy} functional $\calF(\rho)$, a {\em dissipation potential} $\calR(\rho,j)$, a {\em Fisher information} functional $\calD(\rho)$, and a linear operator $B$ with dual $B^*$, we consider pairs of curves $(\rho,j)$ satisfying the continuity equation
\begin{equation}\label{eq:ce-formal}\tag{$\mathsf{CE}$}
   \partial_t\rho_t + B^* j_t = 0, \quad \mbox{for a.e.\ $t\in [0,T]$,}
\end{equation} 
and define the {\em EDP-functional}
\begin{equation*}
    \calI(\rho,j):=\int_0^T \calR(\rho_t,j_t) \, \dd t + \calF(\rho_T)-\calF(\rho_0) +\int_0^T \calD(\rho_t)\, \dd t.
\end{equation*}
Moreover, a \emph{gradient-flow solution} is a pair $(\hat{\rho},\hat{\jmath})$ satisfying \eqref{eq:ce-formal} with $I(\hat{\rho},\hat{\jmath})=0$.
\end{fdefi}

Throughout we require the non-negativity of $\calI$. For a  deeper look at the mathematical basis of this sort of setting, especially for generalized gradient systems incorporating net fluxes, see \cite{PRST2020}. 

In all three examples the generalized fluxes $j$ consist of two parts: $j^+$ and $j^-$, corresponding to birth and death. The continuity equations depend on the setting and are summarized in Table \ref{tab:CE}, with $\calM^+_{loc}$ as the space of non-negative Radon measures.

\begin{remark}
Note that the gradient-flow solution $(\hat{\rho},\hat{\jmath})$ is the null-minimizer of $\calI$, and satisfies the \emph{energy-dissipation balance} 
\begin{equation*}
    \calF(\hat \rho_T)+ \int_0^T \left(\calR(\hat \rho_t,\hat{\jmath}_t)+\calD(\hat \rho_t)\right) \, \dd t =\calF(\hat \rho_0).
\end{equation*}
Moreover, for small $T\ll 1$ one would expect
\[ I \approx \calR(\hat \rho,\hat{\jmath})+\langle \hat{\jmath}, B\,  \partial_{\rho} 
\calF \rangle +\calD(\hat \rho).  \]
In light of the generalized gradient-flow framework of \cite{PRST2020} and the relation to minimizing movement schemes, a formal minimization procedure provides the gradient-flow solution
\begin{equation*}
\begin{aligned}
 \partial_t \hat \rho+B^* \hat{\jmath}&=0\\
 \hat{\jmath}&=(\,\partial_2 \calR^*)(\hat \rho,-B\,  \partial_{\rho} \calF),\\
\end{aligned}
\end{equation*}
and that along the solution
\begin{equation}\label{eq:intro_fish}
    \calD(\hat \rho)=\calR^*(\hat \rho,-B\,  \partial_{\rho} \calF).
\end{equation}
where $\calR^*(\rho,w)$ is the dual of the dissipation potential $\calR$. Finally, note that along the gradient-flow solution the free energy $\calF$ is non-increasing, i.e.\ $\calF$ is a Lyapunov functional. 

These (in)equalities indeed hold in our setting. See also Appendix \ref{s:ldpmot}, where we compare the relation to generalized gradient flows for net fluxes, which follow from the above after a contraction argument, and the connection to the reversibility of the underlying process. 
\end{remark}

\begin{table}[ht]
\def\arraystretch{1.8}
    \centering
    \begin{tabular}{c|c|c|c|c}
    \hline
    &$\mathsf{CE}$&$\rho$ &$j=(j^+,j^-)$ & $B F$ \\ 
    \hline
        \eqref{eq:mf}&  \eqref{eq:imf_ce}& $\nu \in \Gamma$& $(\lambda^+,\lambda^-)\in \Gamma^2$&$(F,-F)$\\
        \eqref{eq:Forward Kolmogorov}&\eqref{eq:imv_ce} & $\sfP\in \calP(\Gamma)$  &$(\sfJ^+,\sfJ^-)\in \calM^+_{loc}(\Gamma\times \calT)^2$ &\;\;\;\;$(\dder^{n,+}F,\dder^{n,-}F)$\hfill \eqref{eq:intro_dder}\\
        \eqref{eq:liouv}&\eqref{eq:li_ce} &$\sfP\in\calP(\Gamma)$ &$(\sfJ^+,\sfJ^-)\in \calM^+_{loc}(\Gamma\times \calT)^2$ &$(\mathrm{grad}_{\Gamma}
        F,-\mathrm{grad}_{\Gamma}F)$\;\; \eqref{eq:intro_grad} \\
        \hline
    \end{tabular}
    \vspace*{3mm}
    \caption{Continuity equations}
    \label{tab:CE}
\end{table}

\medskip
Let $H(\mu_1,\mu_2)$ be the Hellinger distance, see \eqref{eq:mf_hell}, and $\Ent(\mu_1|\mu_2)$ the relative entropy of $\mu_1$ with respect to $\mu_2$ for two (possible infinite) locally finite Borel measures $\mu_1,\mu_2$:
\begin{equation*}
    \Ent(\mu_1|\mu_2) :=\left\{
\begin{aligned}
   & \int \phi\left(\frac{\dd \mu_1}{\dd \mu_2}\right) \dd \mu_2, &&\mbox{if $\mu_1\ll \mu_2$,}\\
   & +\infty, \qquad &&\mbox{otherwise,}
\end{aligned}\right.
\end{equation*} 
where 
\[ \phi(s)=s \log s-s+1.\]

With the full technical details contained in Theorems \ref{thm:equivlag}, \ref{thm:fke_main} and  \ref{thm:li_equiv}, we then have the following triple of results below,
\begin{thm}[Mean-field, cf.\ Theorem \ref{thm:equivlag}]\label{thm:imf}
Consider triples $(\nu,\lambda^+,\lambda^-)$, with $\nu_t,\lambda_t^{\pm}\in \Gamma$, satisfying the mean-field continuity equation
\begin{equation}\label{eq:imf_ce}\tag{$\mathscr{CE}$}
\partial_t \nu_t = \lambda_t^+-\lambda^-_t.  
\end{equation}
Define the dissipation potential $\calR_{MF}$, free energy $\calF_{MF}$ and Fisher information $\calD_{MF}$ as
\begin{equation*}
    \begin{aligned}
    \calR_{MF}(\nu,\lambda^+,\lambda^-)&:=\Ent(\lambda^+|\theta_{\nu})+\Ent(\lambda^-|\theta_{\nu}),\\
    \calF_{MF}(\nu)&:=\tfrac{1}{2}\Ent(\nu|\gamma),\\
    \calD_{MF}(\nu)&:=\left\{\begin{aligned}
    &2H^2(\kappa^+[\nu],\kappa^-[\nu]) \qquad && \mbox{if $\nu\ll\gamma$,}\\
    &+\infty,\qquad &&\mbox{otherwise,}
    \end{aligned}\right.
    \end{aligned}
\end{equation*}
where $\theta_{\nu}$ is the geometric mean of the expected birth and death fluxes, i.e.\ 
\begin{equation*}
    \theta_{\nu}:=\sqrt{\kappa^+[\nu] \kappa^-[\nu]}.
\end{equation*}

Then the corresponding EDP-functional $I_{MF}$ given by 
\begin{equation*}
    \calI_{MF}(\nu,\lambda^+,\lambda^-):=\int_0^T \calR_{MF}(\nu_t,\lambda^+_t,\lambda^-_t) \, \dd t + \calF_{MF}(\nu_T)-\calF_{MF}(\nu_0) +\int_0^T \calD_{MF}(\nu_t)\, \dd t,
\end{equation*}
is non-negative, and for any $\nu_0$ with $\calF(\nu_0)<\infty$ a unique gradient-flow solution $(\hat \nu,\hat \lambda^{+},\hat \lambda^{-})$ exists, with $\hat \nu_t$ equal to the unique strong solution to \eqref{eq:mf} and $\hat \lambda_t^{\pm}=\kappa^{\pm}[\hat \nu_t]$ for almost every $t\in [0,T]$. 
\end{thm}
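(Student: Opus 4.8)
The plan is to establish the three assertions in sequence: (i) non-negativity of $\calI_{MF}$ via a chain-rule/duality identity; (ii) existence and uniqueness of a curve $\hat\nu$ solving \eqref{eq:mf}; and (iii) the identification $\calI_{MF}(\hat\nu,\hat\lambda^+,\hat\lambda^-)=0$ with $\hat\lambda^\pm=\kappa^\pm[\hat\nu]$, together with the fact that this is the only null-minimizer. For (i), I would compute the Legendre dual $\calR_{MF}^*(\nu,\cdot)$ of the entropy-type dissipation potential: since $\calR_{MF}(\nu,\lambda^+,\lambda^-)=\Ent(\lambda^+|\theta_\nu)+\Ent(\lambda^-|\theta_\nu)$ splits, its dual in the pair $(w^+,w^-)$ is $\int (e^{w^+}-1)\,\dd\theta_\nu + \int(e^{w^-}-1)\,\dd\theta_\nu$. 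Taking $w^\pm = \mp BF = \mp(F,-F)$ componentwise, i.e. $w^+=-F$, $w^-=+F$ against the two fluxes, and evaluating at $F=\tfrac12\log(\dd\nu/\dd\gamma) = \partial_\nu\calF_{MF}$, one gets $\calR_{MF}^*(\nu,-B\partial_\nu\calF_{MF}) = \int(\sqrt{\kappa^-[\nu]/\kappa^+[\nu]}-1)\,\dd\theta_\nu + \int(\sqrt{\kappa^+[\nu]/\kappa^-[\nu]}-1)\,\dd\theta_\nu = \|\sqrt{\kappa^+[\nu]}-\sqrt{\kappa^-[\nu]}\|^2 = 2H^2(\kappa^+[\nu],\kappa^-[\nu]) = \calD_{MF}(\nu)$, which matches \eqref{eq:intro_fish}. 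Combined with the Young–Fenchel inequality $\calR_{MF}(\nu_t,\lambda_t^+,\lambda_t^-) + \calR_{MF}^*(\nu_t,-B\partial_\nu\calF_{MF}) \ge -\langle B F_t,(\lambda_t^+,\lambda_t^-)\rangle = -\langle F_t,\lambda_t^+-\lambda_t^-\rangle = -\langle \partial_\nu\calF_{MF}(\nu_t),\partial_t\nu_t\rangle$, and the chain rule $\tfrac{\dd}{\dd t}\calF_{MF}(\nu_t) = \langle\partial_\nu\calF_{MF}(\nu_t),\partial_t\nu_t\rangle$ integrated over $[0,T]$, non-negativity of $\calI_{MF}$ follows, with equality forcing the pointwise extremality conditions $\hat\lambda^\pm_t = (\partial_2\calR_{MF}^*)(\hat\nu_t,\mp F_t) = \sqrt{\kappa^\mp[\hat\nu_t]/\kappa^\pm[\hat\nu_t]}\,\theta_{\hat\nu_t} = \kappa^\pm[\hat\nu_t]$.

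For (ii), well-posedness of \eqref{eq:mf} $\partial_t\nu_t = \kappa^+[\nu_t]-\kappa^-[\nu_t]$ in $\Gamma$: since $m,c$ are bounded, the map $\nu\mapsto\kappa^+[\nu]-\kappa^-[\nu]$ is locally Lipschitz on bounded subsets of $\Gamma$ in total variation (the birth term is linear in $\nu$, the death term quadratic), so a Picard iteration in $C([0,T];(\Gamma,\TV{\cdot}))$ gives a unique local solution; an a priori bound on the total mass $t\mapsto\nu_t(\calT)$ — differentiating, $\tfrac{\dd}{\dd t}\nu_t(\calT)=\int m\,\dd\nu_t\otimes\gamma - \int c\,\dd\nu_t\otimes\nu_t \le \|m\|_\infty\gamma(\calT)\,\nu_t(\calT)$ — prevents blow-up and yields a global solution. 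Preservation of non-negativity follows because the death rate is linear in the local density. One must also check that if $\nu_0\ll\gamma$ with finite entropy then $\nu_t\ll\gamma$ for all $t$, so that $\calF_{MF}(\hat\nu_t)$ and $\calD_{MF}(\hat\nu_t)$ are finite; this uses that $\kappa^+[\nu]\ll\gamma$ always and a Gronwall argument on the density $\dd\nu_t/\dd\gamma$.

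For (iii), take this $\hat\nu$, set $\hat\lambda^\pm_t:=\kappa^\pm[\hat\nu_t]$; then $\partial_t\hat\nu_t=\hat\lambda^+_t-\hat\lambda^-_t$ by construction, so $(\hat\nu,\hat\lambda^+,\hat\lambda^-)$ satisfies \eqref{eq:imf_ce}, and by the computation in (i) the chain of inequalities collapses to equalities (the extremality conditions hold by the detailed-balance–driven identity $F_t=\tfrac12\log(\dd\hat\nu_t/\dd\gamma)$ combined with $\theta_{\hat\nu_t}=\sqrt{\kappa^+[\hat\nu_t]\kappa^-[\hat\nu_t]}$), giving $\calI_{MF}(\hat\nu,\hat\lambda^+,\hat\lambda^-)=0$. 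Conversely, any null-minimizer must satisfy the pointwise extremality relations a.e., which force $\lambda^\pm_t=\kappa^\pm[\nu_t]$ and hence $\partial_t\nu_t=\kappa^+[\nu_t]-\kappa^-[\nu_t]$ with $\nu_0$ the given datum, so uniqueness reduces to uniqueness for \eqref{eq:mf} established in (ii). The main obstacle I anticipate is the rigorous justification of the chain rule $\tfrac{\dd}{\dd t}\calF_{MF}(\nu_t)=\langle\partial_\nu\calF_{MF}(\nu_t),\partial_t\nu_t\rangle$ along arbitrary curves of finite action — one needs $t\mapsto\log(\dd\nu_t/\dd\gamma)$ to be sufficiently integrable against $\lambda^+_t-\lambda^-_t$, which is exactly where finiteness of $\int_0^T\calR_{MF}\,\dd t$ must be leveraged (via $\Ent(\lambda^\pm|\theta_\nu)<\infty$ and a de la Vallée–Poussin / logarithm-versus-$\phi$ comparison) to control the potentially singular entropy derivative; handling the set where $\kappa^-[\nu_t]$ vanishes (so $\theta_{\nu_t}$ degenerates) requires a truncation argument and is the delicate technical point, deferred to Theorem \ref{thm:equivlag}.
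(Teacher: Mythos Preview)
Your proposal follows essentially the same route as the paper: chain rule for $\calF_{MF}$ along finite-action curves, Young--Fenchel duality to get non-negativity, Picard iteration for well-posedness of \eqref{eq:mf}, and extremality conditions to identify null-minimizers. The paper carries this out via Lemma~\ref{lm:mf_chain} (chain rule with truncations $\phi_m$ of $\phi$) and Lemma~\ref{lm:mf_strongsol} (Picard).

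One point needs correction. Your computation $\calR_{MF}^*(\nu,-B\,\partial_\nu\calF_{MF})=\calD_{MF}(\nu)$ is not quite right: the integrals against $\theta_\nu$ only see the set $\{u>0,\,c_\nu>0\}$ (where $u=\dd\nu/\dd\gamma$), whereas $2H^2(\kappa^+_\nu,\kappa^-_\nu)$ picks up an additional contribution $\int_{\{u=0\}}c_\nu\,\dd\gamma$ from the region where $\kappa^-_\nu=0$ but $\kappa^+_\nu>0$. The paper makes this explicit by introducing
\[
\calD_{MF}^-(\nu):=\int_{\{u>0\}} c_\nu(\sqrt{u}-1)^2\,\dd\gamma \;\le\; \calD_{MF}(\nu),
\]
which is what actually equals $\calR_{MF}^*(\nu,-B\,\partial_\nu\calF_{MF})$; the chain-rule argument then yields $\calI_{MF}\ge\calI_{MF}^-\ge 0$ with the weaker Fisher term. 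The missing piece---that $\calD_{MF}^-=\calD_{MF}$ \emph{along the strong solution}---is shown separately in the proof of Theorem~\ref{thm:equivlag} by arguing that $\gamma(\{u_t=0,\,c_{\nu_t}>0\})=0$ for the solution curve. You flag the degeneracy of $\theta_\nu$ as the delicate point, which is exactly right, but be aware that resolving it requires this two-step argument rather than a direct identity.
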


As mentioned, although treating birth and death separately provides us with additional information, this prohibits the use of some of the previous works for gradient structures because of the positivity of the fluxes. However, there is still a strong connection to the variational formulations for jump processes arising from the large deviations of fluxes as seen in \cite{Renger2019} and \cite{Basile2021}, see for example Appendix \ref{s:ldpmot} on the equivalence of the EDP-functional to the expected rate functional. 

\begin{remark}
It is straightforward to verify that if $\dd \nu=u \dd \gamma$
\begin{align*}
   \calR^*_{MF}(\nu,\partial_{\nu} \calF_{MF},-\partial_{\nu} \calF_{MF})&=\int_{\calT^2} 1_{u(x)>0} c(x,y)  \left(\sqrt{u(x)}-1\right)^2\gamma(\dd x)\nu(\dd y), \\
   \calD_{MF}(\nu)&=\int_{\calT^2} c(x,y)  \left(\sqrt{u(x)}-1\right)^2\gamma(\dd x)\nu(\dd y),
\end{align*}
and hence it is not directly clear that the relation \eqref{eq:intro_fish} holds. However, as will be shown for Theorem \ref{thm:equivlag}, at least along the solution $\hat \nu_t$ the equivalence holds for a.e. $t\in [0,T]$.
\end{remark}

\begin{thm}[Forward Kolmogorov, cf.\ Theorem \ref{thm:fke_main}]\label{thm:ikf}
Consider triples $(\sfP,\sfJ^+,\sfJ^-)$, with $\sfP_t\in \calP(\Gamma)$ and $J_t^{\pm}\in \calM_{loc}(\Gamma\times \calT)$, satisfying the continuity equation
\begin{align}\label{eq:imv_ce}\tag{$\mathsf{CE}_{n}$}
\langle F, \partial_t \sfP_t\rangle = \langle \dder^{n,+}F,\sfJ_t^+\rangle+\langle \dder^{n,-}F,\sfJ_t^-\rangle, \qquad \forall F\in C_c(\Gamma),
\end{align}
where
\begin{equation}\label{eq:intro_dder}
    (\dder^{n,\pm} F)(\nu,x):=n\left(F(\nu\pm \tfrac{1}{n}\delta_x)-F(\nu)\right).
\end{equation}
Define the $n$-dependent Fisher information $\calD_{n}$ as stated in Definition \ref{defi:fke}, free energy
\[
     \calF_n(\sfP):=\frac{1}{2n} \Ent(\sfP|\Pi_n),
\]
and dissipation potential 
\begin{align*}
    \calR_n(\sfP,\sfJ^+,\sfJ^-)&:=\Ent(\sfJ^+|\Theta_{\sfP}^{n,+})+\Ent(\sfJ^-|\Theta_{\sfP}^{n,-}),
\end{align*}
where, with a little abuse of notation (see \eqref{eq:Theta}), 
\begin{equation*}
\Theta_{\sfP}^{n,\pm}(\nu,x):=\sqrt{ \Big(\sfP(\nu)\kappa^{\pm}[\nu]\Big) \left(\sfP(\nu\pm\tfrac{1}{n}\delta_x)\kappa^\mp[\nu\mp\tfrac{1}{n}\delta_x]\right)}.
\end{equation*}
Then the corresponding EDP-functional $I_{n}$ given by 
\begin{equation*}
    \calI_{n}(\sfP,\sfJ^+,\sfJ^-):=\int_0^T \calR_{n}(\sfP_t,\sfJ^+_t,\sfJ^-_t) \, \dd t + \calF_{n}(\sfP_T)-\calF_{n}(\sfP_0) +\int_0^T \calD_{n}(\sfP_t)\, \dd t,
\end{equation*}
is non-negative, and for any $\sfP_0$ with $\calF_n(\sfP_0)<\infty$ a unique gradient-flow solution $(\hat{\sfP},\hat{\sfJ}^{\pm})$ exists, with $\hat{\sfP}_t$ equal to a weak solution to \eqref{eq:Forward Kolmogorov} and $\hat{\sfJ}_t^{\pm}=\hat{\sfP}_t \kappa_{\nu}^{\pm}$ for almost every $t\in [0,T]$.
\end{thm}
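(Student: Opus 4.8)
The plan is to follow the by-now standard two-part scheme for results of EDP type: first prove non-negativity of $\calI_n$ via a chain-rule identity combined with a Young-type inequality between $\calR_n$ and its dual $\calR_n^*$, and then exhibit the weak solution of \eqref{eq:Forward Kolmogorov} together with its natural birth and death fluxes as the unique null-minimizer. For the non-negativity I would start by computing the dual dissipation potential: since $\calR_n$ is a sum of relative entropies in the flux variable and the convex conjugate of $\mu\mapsto\Ent(\mu\,|\,\omega)$ is $w\mapsto\int(e^{w}-1)\,\dd\omega$, one gets $\calR_n^*(\sfP,w^+,w^-)=\sum_{\pm}\int_{\Gamma\times\calT}(e^{w^\pm}-1)\,\dd\Theta^{n,\pm}_{\sfP}$. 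Next, the variational derivative of $\calF_n$ is $\partial_\sfP\calF_n=\tfrac{1}{2n}\log\tfrac{\dd\sfP}{\dd\Pi_n}$ up to an additive constant, which $\dder^{n,\pm}$ annihilates, so that $\dder^{n,\pm}\partial_\sfP\calF_n(\nu,x)=\tfrac{1}{2}\log\frac{(\dd\sfP/\dd\Pi_n)(\nu\pm\tfrac{1}{n}\delta_x)}{(\dd\sfP/\dd\Pi_n)(\nu)}$. Substituting this into $\calR_n^*$ and using that the jump kernel is reversible with respect to $\Pi_n$ — which is exactly what the no-natural-death and detailed-balance conditions in Assumption \ref{assu:massu} provide — I would verify the pointwise bound $\calD_n(\sfP)\ge\calR_n^*(\sfP,-\dder^{n,+}\partial_\sfP\calF_n,-\dder^{n,-}\partial_\sfP\calF_n)$, with equality whenever $\dd\sfP/\dd\Pi_n$ is strictly positive where relevant; this comparison is the content of Definition \ref{defi:fke}. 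Finally, along any curve of finite action, the chain rule $\tfrac{\dd}{\dd t}\calF_n(\sfP_t)=\langle\dder^{n,+}\partial_\sfP\calF_n,\sfJ^+_t\rangle+\langle\dder^{n,-}\partial_\sfP\calF_n,\sfJ^-_t\rangle$ combined with Young's inequality gives $\calR_n(\sfP_t,\sfJ^+_t,\sfJ^-_t)+\tfrac{\dd}{\dd t}\calF_n(\sfP_t)+\calD_n(\sfP_t)\ge0$ for a.e.\ $t$, and integrating yields $\calI_n\ge0$. The delicate point here is the chain rule itself: since $\log(\dd\sfP/\dd\Pi_n)$ is neither bounded nor compactly supported, one must approximate it by functions in the test class of \eqref{eq:imv_ce}, plug these into the continuity equation and pass to the limit, which requires the finiteness of $\int_0^T\calR_n\,\dd t$ together with moment bounds on $\sfP_t$.

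For the existence of the null-minimizer I would first record well-posedness of \eqref{eq:Forward Kolmogorov}. The generator $Q_n$ governs a Markov jump process on $\Gamma$ whose birth rate is at most linear in $\nu(\calT)$, so it does not explode, and which is reversible with respect to $\Pi_n$; since $\Pi_n$ is the push-forward of a Poisson measure it has all exponential moments, and $\calF_n$ is a Lyapunov functional taming the quadratically growing death rate. This yields a unique weak solution $\hat\sfP$ with $\hat\sfP_0=\sfP_0$ and a priori moment bounds $\sup_{t}\int_{\Gamma}\nu(\calT)^k\,\dd\hat\sfP_t<\infty$ for every $k$. Setting $\hat\sfJ^\pm_t:=\hat\sfP_t\kappa^\pm_\nu$, a direct rewriting of $Q_n$ shows $(\hat\sfP,\hat\sfJ^+,\hat\sfJ^-)$ solves \eqref{eq:imv_ce}. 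To see $\calI_n(\hat\sfP,\hat\sfJ^+,\hat\sfJ^-)=0$ I would check that every inequality of the previous step is saturated along this curve: the equality case in Young's inequality reads $\hat\sfJ^\pm=\Theta^{n,\pm}_{\hat\sfP}\exp(-\dder^{n,\pm}\partial_\sfP\calF_n)$, which collapses to $\hat\sfP_t\kappa^\pm_\nu$ precisely by the detailed-balance identity for $\Pi_n$, while the Fisher-information bound becomes an equality because $\dd\hat\sfP_t/\dd\Pi_n>0$ by irreducibility; finiteness of all three terms along $\hat\sfP$ follows from the moment bounds.

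For uniqueness, if $(\sfP,\sfJ^+,\sfJ^-)$ is any pair with $\calI_n=0$, then the pointwise inequality above is an equality for a.e.\ $t$; in particular Young's inequality is saturated, which forces $\sfJ^\pm_t=\sfP_t\kappa^\pm_\nu$ for a.e.\ $t$, so that \eqref{eq:imv_ce} becomes the weak formulation of \eqref{eq:Forward Kolmogorov}, and uniqueness of weak solutions of the forward Kolmogorov equation — again using the entropy bound, e.g.\ via a duality/Gronwall argument against bounded cylinder functions — gives $\sfP=\hat\sfP$. The main obstacle throughout, and the reason this genuinely extends rather than merely instantiates the framework of \cite{PRST2020}, is the unboundedness of the death kernel $\kappa^-[\nu]$, which grows quadratically in $\nu(\calT)$: it forces one to restrict the test-function class in \eqref{eq:imv_ce}, to establish and propagate (exponential) moment bounds on $\sfP_t$ through the Poissonian reference measure $\Pi_n$ and the entropy Lyapunov structure, to justify the chain rule for $\calF_n$ along finite-action curves by truncation, and to prove uniqueness of weak solutions of \eqref{eq:Forward Kolmogorov} in this unbounded regime; the positivity constraint on the fluxes — hence the relative-entropy rather than quadratic form of $\calR_n$ and $\calD_n$, and the need to make the geometric-mean measures $\Theta^{n,\pm}_{\sfP}$ rigorous — is a secondary but pervasive complication in the bookkeeping.
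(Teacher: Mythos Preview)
Your chain-rule/non-negativity argument is essentially the paper's: regularize $\log(\dd\sfP/\dd\Pi_n)$, plug into the continuity equation, pass to the limit using finiteness of $\int_0^T\calR_n\,\dd t$, and close with the Young inequality between $\calR_n$ and $\calR_n^*$. The paper is explicit that a two-fold regularization is needed---both truncations $\phi_m$ of the entropy density and compactly supported mass cut-offs $\chi_k$---because of the unbounded kernel, and it works with the modified Fisher information $\calD_n^-\le\calD_n$ until the end; but your sketch is compatible with this.

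Where you diverge is in \emph{existence} and \emph{uniqueness}. For existence you construct the weak solution directly from the jump process and then verify that all inequalities saturate along it. The paper instead regularizes the unbounded kernel $\bar\kappa_n$ to bounded $\bar\kappa_n^\eps$, invokes the gradient-flow theory of \cite{PRST2020} for the bounded problem to obtain $\calI_{n,\eps}=0$, and then passes to the limit $\eps\to0$ using lower semicontinuity of all the pieces, obtaining $\calI_n\le0$ for the limit curve; combined with the chain rule this gives $\calI_n=0$. Their route avoids constructing the process on a general compact Polish $\calT$ (which they explicitly decline to do), and it sidesteps the positivity issue you raise: you need $\dd\hat\sfP_t/\dd\Pi_n>0$ to make $\calD_n^-=\calD_n$, and ``irreducibility'' of a measure-valued jump process on $\Gamma_n$ is not immediate. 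Your moment-bound claim is also too strong: $\calF_n(\sfP_0)<\infty$ gives only a first-moment bound (see the equicoercivity argument in Section~\ref{ss:c_gammas}), not all polynomial moments, so you cannot freely justify the chain rule or the finiteness of the dissipation terms that way.

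For uniqueness you appeal to uniqueness of weak solutions of \eqref{eq:Forward Kolmogorov} via a duality/Gronwall argument. The paper instead uses the \emph{strict convexity of $\calF_n$}: if $\sfP^1,\sfP^2$ are two gradient-flow solutions with the same initial datum, their average $\tilde\sfP$ satisfies $\calI_n^t(\tilde\sfP,\vartheta^\pm_{\tilde\sfP})<0$ at any time where $\sfP_t^1\neq\sfP_t^2$, contradicting non-negativity. This is cleaner and does not require a separate well-posedness theory for the unbounded-rate forward equation, which your Gronwall sketch would need and which is itself non-trivial here.
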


Similar to the mean-field case, the dissipation potential consists of relative entropies with respect to geometric averages, now of forward and backward rates along a transition $\nu\to \nu \pm \tfrac{1}{n}\delta_{x}$. 
Moreover, note that in contrast to the framework of \cite{PRST2020}, we employ fluxes $\sfJ^{\pm}$ that are not finite measures. This is due to the unboundedness of $\kappa_{\nu}$ as the mass of $\nu$ grows, which implies that the underlying jump kernel over $\Gamma$ is itself unbounded as well, see Section \ref{s:fke}. 

\medskip

For the Liouville equation, let us define $\Cyl_c(\Gamma)$ as the space of compactly supported smooth cylinder functions of the form 
\begin{equation*}
    F(\nu)=g\left(\langle 1,\nu\rangle,\langle f_1,\nu\rangle,\dots,\langle f_m,\nu\rangle \right),\qquad g\in C^{\infty}_c(\mathbb{R}^{m}),\;m \in \N,
\end{equation*}
where $f_1,\dots,f_m\in C_b(\calT)$, and $\grad$ is the distributional gradient defined by
\begin{equation}\label{eq:intro_grad}
    \mathrm{grad}_{\Gamma}\, F(\nu,x)= (\nabla g)\left(\langle 1,\nu\rangle,\langle f_1,\nu\rangle,\dots,\langle f_m,\nu\rangle \right) \cdot (1,f_1(x),\dots,f_m(x))^\top.
\end{equation}

\begin{thm}[Liouville, cf.\ Theorem \ref{thm:li_equiv}]\label{thm:liouvi}
Consider triples $(\sfP,\sfJ^+,\sfJ^-)$, with $\sfP_t\in \calP(\Gamma)$,  $\sfJ^{\pm}\in \calM_{loc}(\Gamma\times \calT)$, satisfying the continuity equation
\begin{equation}\label{eq:li_ce}\tag{$\mathsf{CE}_{\infty}$}
\langle F ,\partial_t \sfP_t\rangle = \langle \mathsf{grad}_{\Gamma} F,\sfJ^+_t\rangle-\langle \mathsf{grad}_{\Gamma} F,\sfJ^-_t\rangle, \quad \forall F\in \mathrm{Cyl}_c(\Gamma).
\end{equation}
Define the Fisher information $\calD_{\infty}$ as stated in Definition \ref{defi:liouv}, free energy 
\begin{align*}
    \calF_{\infty}(\sfP)&:=\frac{1}{2}\int_{\Gamma} \Ent(\nu|\gamma) \, \dd \sfP,
\end{align*}
and dissipation potential
\[
    \calR_{\infty}(\sfP,\sfJ^+,\sfJ^-) :=\Ent(\sfJ^+|\Theta_{\sfP}^{\infty})+\Ent(\sfJ^-|\Theta_{\sfP}^{\infty}),\qquad \Theta_{\sfP}^{\infty}(\dd \nu,\dd x):=\theta_{\nu}(\dd x)\sfP(\dd \nu).
\]
Then the corresponding EDP-functional $I_{\infty}$ given by 
\begin{equation*}
    \calI_{\infty}(\sfP,\sfJ^+,\sfJ^-):=\int_0^T \calR_{\infty}(\sfP_t,\sfJ^+_t,\sfJ^-_t) \, \dd t + \calF_{\infty}(\sfP_T)-\calF_{\infty}(\sfP_0) +\int_0^T \calD_{\infty}(\sfP_t)\, \dd t,
\end{equation*}
is non-negative, and for any $\sfP_0$ with $\calF_{\infty}(\nu_0)<\infty$ a unique gradient-flow solution $(\hat{\sfP},\hat{\sfJ}^{\pm})$ exists, with $\hat{\sfP}_t$ a weak solution to \eqref{eq:liouv} and $\hat{\sfJ}_t^{\pm}=\hat{\sfP}_t \kappa_{\nu}^{\pm}$ for almost every $t\in [0,T]$.

Finally, for any $(\sfP,\sfJ^+,\sfJ^-)$ such that $I_{\infty}(\sfP,\sfJ^+,\sfJ^-)<\infty$, there exists (with a little abuse of notation) a Borel probability measure $\Omega$ over curves satisfying the mean-field continuity equation \eqref{eq:imf_ce} such that for all $t$ the time marginals $(e_t)_{\#} \Omega$ are equal to $\sfP_t$, and
\begin{equation}\label{eq:superp}
    \calI_{\infty}(\sfP,\sfJ^+,\sfJ^-):=\int \calI_{MF}(\nu,\lambda^+,\lambda^-) \,\dd \Omega. 
\end{equation}
\end{thm}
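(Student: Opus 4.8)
The plan is to reduce the entire statement to a \emph{fiberwise} identity for the EDP-functional $\calI_\infty$ together with a superposition principle that lifts the Liouville continuity equation to a measure on mean-field trajectories; granting these, non-negativity and the existence and uniqueness of the gradient-flow solution follow from the mean-field result (Theorem~\ref{thm:imf}) by soft arguments. For the fiberwise identity, observe that when $\calR_\infty(\sfP,\sfJ^+,\sfJ^-)<\infty$ one has $\sfJ^\pm\ll\Theta_\sfP^\infty$, and since the $\Gamma$-marginal of $\Theta_\sfP^\infty=\theta_\nu\otimes\sfP$ is absolutely continuous with respect to $\sfP$, the fluxes disintegrate as $\sfJ^\pm(\dd\nu,\dd x)=\ell^{\pm,\nu}(\dd x)\,\sfP(\dd\nu)$. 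Because $\Theta_\sfP^\infty(\dd\nu,\dd x)=\theta_\nu(\dd x)\,\sfP(\dd\nu)$ is itself a disintegration over $\sfP$, the Radon--Nikodym densities agree fiber by fiber, and additivity of the relative entropy yields
\begin{equation*}
\calR_\infty(\sfP,\sfJ^+,\sfJ^-)=\int_\Gamma\calR_{MF}\bigl(\nu,\ell^{+,\nu},\ell^{-,\nu}\bigr)\,\sfP(\dd\nu),
\end{equation*}
which, together with $\calF_\infty(\sfP)=\int_\Gamma\calF_{MF}(\nu)\,\sfP(\dd\nu)$ and the fact that the Fisher information of Definition~\ref{defi:liouv} is the lift $\calD_\infty(\sfP)=\int_\Gamma\calD_{MF}(\nu)\,\sfP(\dd\nu)$, will be the source of \eqref{eq:superp}. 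In disintegrated form, \eqref{eq:li_ce} is the continuity equation $\partial_t\sfP_t+\mathrm{div}_\Gamma(\sfP_t\,\xi_t)=0$ on $\Gamma$ with the $\calM(\calT)$-valued velocity $\xi_t(\nu):=\ell_t^{+,\nu}-\ell_t^{-,\nu}$.

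I would then establish a priori estimates and the superposition. From $\calI_\infty<\infty$ --- which we may assume entails $\calF_\infty(\sfP_0)<\infty$ --- and $\calR_\infty,\calD_\infty\ge0$ one gets $\int_0^T\calR_\infty(\sfP_t,\sfJ_t^+,\sfJ_t^-)\,\dd t<\infty$ and control of $\calF_\infty(\sfP_t)$; using $H(\ell^{+,\nu},\ell^{-,\nu})\le\sqrt{\calR_{MF}(\nu,\ell^{+,\nu},\ell^{-,\nu})}$ (triangle inequality for the Hellinger distance together with $\Ent\ge 2H^2$), the entropy bound $\ell^{\pm,\nu}(\calT)\lesssim\calR_{MF}+\theta_\nu(\calT)$ with $\theta_\nu(\calT)\lesssim\langle1,\nu\rangle^{3/2}$ from Assumption~\ref{assu:massu}, and $\langle1,\nu\rangle^{3/4}\le\tfrac12(1+\langle1,\nu\rangle^{3/2})$, one arrives at the weighted integrability
\begin{equation*}
\int_0^T\!\!\int_\Gamma\frac{|\xi_t(\nu)|(\calT)}{1+\langle1,\nu\rangle^{3/2}}\,\sfP_t(\dd\nu)\,\dd t<\infty,
\end{equation*}
at narrow time-continuity of $t\mapsto\sfP_t$, and (via a Gr\"onwall estimate on $\log(1+\langle1,\nu_t\rangle)$ along curves) at a pathwise mass bound. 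These are exactly the inputs of the modified superposition principle of Appendix~\ref{s:super}: embedding $\Gamma=\calM^+(\calT)$ into a sequence space through $\nu\mapsto(\langle f_k,\nu\rangle)_k$ for a countable convergence-determining family $\{f_k\}\subset C(\calT)$, pushing the continuity equation forward, invoking an Ambrosio--Trevisan-type (weighted) superposition there, and pulling back, produces a Borel probability measure $\Omega$ on $C([0,T];\Gamma)$ concentrated on curves $\nu$ solving $\partial_t\nu_t=\xi_t(\nu_t)$ weakly, with $(e_t)_\#\Omega=\sfP_t$ for every $t$.

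Given $\Omega$, for $\Omega$-a.e.\ curve I set $\lambda_t^\pm:=\ell_t^{\pm,\nu_t}$; this is well defined $\Omega\otimes\dd t$-a.e.\ since $(e_t)_\#\Omega=\sfP_t$, lies in $\Gamma$ (finite entropy against the finite-mass measure $\theta_{\nu_t}$ forces finite total mass), and makes $(\nu,\lambda^+,\lambda^-)$ a solution of \eqref{eq:imf_ce}. Combining the fiberwise identities with Fubini --- legitimate because the non-negative contributions have finite integrals by $\calI_\infty<\infty$ and $\int\calF_{MF}(\nu_0)\,\dd\Omega=\calF_\infty(\sfP_0)<\infty$ --- gives $\int_0^T\calR_\infty(\sfP_t,\sfJ_t^+,\sfJ_t^-)\,\dd t=\int\!\int_0^T\calR_{MF}(\nu_t,\lambda_t^+,\lambda_t^-)\,\dd t\,\dd\Omega$, the analogue for $\calD$, and $\calF_\infty(\sfP_T)-\calF_\infty(\sfP_0)=\int(\calF_{MF}(\nu_T)-\calF_{MF}(\nu_0))\,\dd\Omega$ using $(e_0)_\#\Omega=\sfP_0$ and $(e_T)_\#\Omega=\sfP_T$; summing is exactly \eqref{eq:superp}. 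Non-negativity of $\calI_\infty$ is then immediate: if $\calI_\infty<\infty$ then $\calI_\infty=\int\calI_{MF}\,\dd\Omega\ge0$ by Theorem~\ref{thm:imf}, and it is trivial otherwise.

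For the gradient-flow solution, let $(S_t)_{t\ge0}$ be the solution semigroup of \eqref{eq:mf} supplied by Theorem~\ref{thm:imf}, let $\hat\Omega$ be the law on $C([0,T];\Gamma)$ of the curve $t\mapsto S_t\nu_0$ with $\nu_0\sim\sfP_0$, and put $\hat{\sfP}_t:=(e_t)_\#\hat\Omega=(S_t)_\#\sfP_0$ and $\hat{\sfJ}_t^\pm:=\int\delta_{\nu_t}\otimes\kappa^\pm[\nu_t]\,\dd\hat\Omega$. Integrating the weak form of \eqref{eq:imf_ce} against $\grad F$ (the ``easy'' direction of the superposition correspondence) shows $(\hat{\sfP},\hat{\sfJ}^\pm)$ solves \eqref{eq:li_ce}, hence, as $\hat{\sfJ}_t^\pm=\hat{\sfP}_t\otimes\kappa^\pm[\nu]$, that $\hat{\sfP}$ is a weak solution of \eqref{eq:liouv}; and $\calI_\infty(\hat{\sfP},\hat{\sfJ}^\pm)=\int\calI_{MF}(\hat\nu,\kappa^+[\hat\nu],\kappa^-[\hat\nu])\,\dd\hat\Omega=0$ because each fiber is the mean-field gradient-flow solution ($\calF_\infty(\sfP_0)<\infty$ giving $\calF_{MF}(\nu_0)<\infty$ for $\sfP_0$-a.e.\ $\nu_0$). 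Conversely, any null-minimizer $(\sfP,\sfJ^\pm)$ with initial datum $\sfP_0$ satisfies $0=\calI_\infty=\int\calI_{MF}\,\dd\Omega$, forcing $\Omega$-a.e.\ curve to be the unique mean-field gradient-flow solution from its own initial value; thus $\Omega=\hat\Omega$, so $\sfP_t=\hat{\sfP}_t$ and $\ell_t^{\pm,\nu}=\kappa^\pm[\nu]$ for $\sfP_t\otimes\dd t$-a.e.\ $(\nu,t)$, i.e.\ $\sfJ^\pm=\hat{\sfJ}^\pm$. The main obstacle throughout is the superposition principle of the second paragraph: $\Gamma$ is Polish but not locally compact and $\xi_t$ is measure-valued with a priori unbounded mass, so the classical superposition theorems do not apply directly --- one must use the narrow $\sigma$-compactness of the sets $\{\langle1,\nu\rangle\le R\}$ (which holds since $\calT$ is compact) together with the weighted a priori estimate above, and it is precisely this that is delegated to Appendix~\ref{s:super}.
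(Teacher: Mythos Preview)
Your strategy is essentially the paper's: disintegrate $\calR_\infty$, $\calF_\infty$, $\calD_\infty$ fiberwise over $\sfP$, invoke a superposition principle to lift $(\sfP,\sfJ^\pm)\in\mathsf{CE}_\infty$ to a path measure on mean-field curves, obtain the representation \eqref{eq:superp}, and read off non-negativity, existence (via $(G_t)_\#\sfP_0$) and uniqueness from Theorem~\ref{thm:imf}. The paper carries this out exactly so (see Remark~\ref{rem:li_irep}, Theorem~\ref{thm:lsuper}, and the proof of Theorem~\ref{thm:li_equiv}).

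The one place where your sketch diverges is the a~priori estimate feeding the superposition. You derive an $L^1$ bound
\[
\int_0^T\!\!\int_\Gamma\frac{\|\lnet_{t,\nu}\|_{TV}}{1+\nu(\calT)^{3/2}}\,\sfP_t(\dd\nu)\,\dd t<\infty
\]
via $\ell^{\pm,\nu}(\calT)\lesssim\calR_{MF}+\theta_\nu(\calT)$ and $\theta_\nu(\calT)\lesssim\nu(\calT)^{3/2}$. This is correct but does \emph{not} match the hypothesis of the appendix theorem you cite: Theorem~\ref{thm_super} requires the \emph{superlinear} $\Psi$-integrability
\[
\int_0^T\!\!\int_{\R^\N}\Psi\Bigl(\tfrac{|\mathbf{W}_t|_\infty}{M(1+|p_1|)}\Bigr)\,\dd\sigma_t\,\dd t<\infty,
\]
with weight $(1+|p_1|)^{-1}$ rather than $(1+|p_1|^{3/2})^{-1}$; the superlinearity of $\Psi$ is what gives tightness of the approximating path measures $\lambda^n$ in the proof. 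The paper obtains precisely this bound by a different route: the dual inequality of Lemma~\ref{lm:mf_lift} (exploiting $\Psi^*(az)\le a^2\Psi^*(z)$) yields
\[
\int_\Gamma M\,\Psi\Bigl(\tfrac{\|\lnet_{t,\nu}\|_{TV}}{M(1+\nu(\calT))}\Bigr)\,\sfP_t(\dd\nu)\le\calR_\infty(\sfP_t,\sfJ_t^+,\sfJ_t^-),
\]
recorded as Corollary~\ref{cory:li_est1}, which plugs directly into Theorem~\ref{thm_super}. Your Hellinger/entropy route can be repaired to give the same conclusion, but as written the weight and the lack of superlinearity leave a gap between your estimate and the superposition hypothesis. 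A second, smaller point: after the superposition the paper does not use a Gr\"onwall mass bound along curves; it upgrades narrow continuity plus $\int_0^T\calR_{MF}\,\dd t<\infty$ to $(\nu,\lambda^+_\nu,\lambda^-_\nu)\in\mathscr{CE}$ via Lemma~\ref{lm:contweakf1}, which is cleaner and avoids any circularity.
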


The statement of \eqref{eq:superp} is the aforementioned superposition principle, which is a modified version of the superposition principle \cite{Ambrosio2014} in metric measure spaces, and the ones used in \cite{Erbar2016}, \cite{Erbar2016a}. It allows one to essentially jump back and forth between the Liouville equation and the mean-field dynamics, and in particular, provides us with the non-negativity of $\calI_{\infty}$ and uniqueness of gradient-flow solutions.

\subsection{Convergence results} 
Our final and most important result is that the above gradient structures converge in the sense of \emph{EDP-convergence} (e.g.\ see \cite{Peletier2017,Peletier2022}), a generalization of the evolutionary $\Gamma$-convergence approach stated by \cite{Sandier2004,Serfaty2011} and expanded on in \cite{Mielke2016}, which implies convergence of the gradient-flow solutions and their free energies. 

We say that a sequence $(\sfP^n,\sfJ^{n,+},\sfJ^{n,-})\in \mathsf{CE}_n$ converges to some $(\sfP,\sfJ^+,\sfJ^-)\in \mathsf{CE}$ if for all $t\in [0,T]$ the probability measures $\sfP_t^n$ converge narrowly to $\sfP_t$ in $\calP(\Gamma)$, and $\sfJ^{n,\pm}_t(\dd \nu,\dd x) \,\dd t$ converge vaguely to $\sfJ^{\pm}_t (\dd \nu,\dd x)\, \dd t$ in $\calM_{loc}([0,T]\times \Gamma\times \calT)$. Again postponing technicalities, see Theorem \ref{thm:main_conv1}, we have the following lower semi-continuity and compactness result:
\begin{thm}[cf.\ Theorem \ref{thm:main_conv1}]\label{thm:maini_conv1} 
The sequence of free energies $\calF_n$ $\varGamma$-converges to $\calF_{\infty}$. 

Moreover, the sequence of Fisher-information functionals and dissipation potentials are all sequentially lower semicontinuous. In particular, for any sequence $(\sfP^n,\sfJ^{n,+},\sfJ^{n,-})\in \mathsf{CE}_n$ converging to a $(\sfP,\sfJ^+,\sfJ^-)\in \mathsf{CE}_{\infty}$ such that $\calF_{n}(\sfP_0^n)\to \calF_{\infty}(\sfP_0)$ as well, we have
\[ \liminf_{n\to \infty} \calI_n(\sfP^n,\sfJ^{n,+},\sfJ^{n,-})\geq \calI_{\infty}(\sfP,\sfJ^{+},\sfJ^{-}). \]
Finally, for any sequence $(\sfP^n,\sfJ^{n,+},\sfJ^{n,-})\in \mathsf{CE}_n$ such that 
\[\begin{aligned}
\limsup_{n\to \infty} \calF_n(\sfP_0^n)<\infty,\\
\limsup_{n\to \infty} \calI_n(\sfP^n,\sfJ^{n,+},\sfJ^{n,-})<\infty,
\end{aligned}\]
there exists a subsequence converging to some $(\sfP,\sfJ^+,\sfJ^-)\in \mathsf{CE}_{\infty}$.
\end{thm}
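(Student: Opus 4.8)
The plan is to prove Theorem~\ref{thm:maini_conv1} in three logically separate pieces: (i) $\varGamma$-convergence of the free energies $\calF_n \to \calF_\infty$; (ii) joint sequential lower semicontinuity of the dissipation potentials $\calR_n$ and Fisher informations $\calD_n$ along converging sequences in the continuity equations, yielding the liminf inequality for $\calI_n$; and (iii) a compactness statement extracting a limiting triple in $\mathsf{CE}_\infty$ from sequences of bounded initial energy and bounded $\calI_n$.

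For \textbf{(i)}, the recovery (limsup) sequence is essentially trivial: given $\sfP$ with $\calF_\infty(\sfP)<\infty$, take $\sfP^n \equiv \sfP$ and observe that $\tfrac{1}{2n}\Ent(\sfP|\Pi_n)$ should converge to $\tfrac12\int_\Gamma \Ent(\nu|\gamma)\,\dd\sfP$. This requires an asymptotic expansion of the relative entropy $\tfrac1n\Ent(\sfP|\Pi_n)$; the key input is that $\Pi_n$ is the push-forward of the Poisson measure $\pi_n$, so $\tfrac1n\Ent(\sfP|\Pi_n)$ splits into an entropy-of-the-configuration term and a combinatorial term, and a Stirling/large-deviation computation identifies the $n\to\infty$ rate as $\int_\Gamma\Ent(\nu|\gamma)\,\dd\sfP$ (this is the classical Sanov-type rate function for Poisson point processes). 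The liminf inequality for $\varGamma$-convergence then follows from lower semicontinuity of relative entropy under narrow convergence together with superadditivity, or from a direct lower bound using Legendre duality: $\Ent(\sfP|\Pi_n) = \sup_{G} \{\langle G,\sfP\rangle - \log\int e^G\,\dd\Pi_n\}$, and one passes to the limit in the dual problem restricting to cylinder functions $G$.

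For \textbf{(ii)}, the dissipation potentials are all of the form $\Ent(\sfJ^+|\Theta^+_\sfP) + \Ent(\sfJ^-|\Theta^-_\sfP)$, and relative entropy $(\mu,\eta)\mapsto\Ent(\mu|\eta)$ is jointly convex and jointly lower semicontinuous for the vague/narrow topology; the only real work is to show that the reference measures $\Theta^{n,\pm}_{\sfP^n}$ converge (vaguely, and in an appropriately uniform way in $t$) to $\Theta^\infty_\sfP$. Here one writes $\Theta^{n,\pm}_{\sfP^n}(\nu,x) = \sqrt{(\sfP^n(\nu)\kappa^\pm[\nu])(\sfP^n(\nu\pm\tfrac1n\delta_x)\kappa^\mp[\nu\mp\tfrac1n\delta_x])}$ and uses that $\nu\mapsto\nu\pm\tfrac1n\delta_x$ converges to the identity in $\Gamma$ uniformly, that $\kappa^\pm[\cdot]$ are continuous in the narrow topology with at-most-linear (resp.\ quadratic) growth, and that $\kappa^+[\nu]\kappa^-[\nu] = \theta_\nu^2$, so the geometric mean degenerates in the limit to $\theta_\nu(\dd x)\sfP(\dd\nu) = \Theta^\infty_\sfP$. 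For the discrete gradient $\dder^{n,\pm}$ one checks that for a cylinder function $F\in\Cyl_c(\Gamma)$ one has $\dder^{n,\pm}F(\nu,x)\to\pm\grad F(\nu,x)$ uniformly, so that passing to the limit in the continuity equation $\mathsf{CE}_n$ produces $\mathsf{CE}_\infty$ (this is also what identifies the limit of $\sfJ^{n,+}-\sfJ^{n,-}$ with $\sfP_t V[\nu]$). Lower semicontinuity of $\calD_n$ is obtained the same way, representing $\calD_n$ as a supremum over test functions (its Legendre form $\calR_n^*$), or by direct comparison after the reference measures converge; one must verify that the explicit form of $\calD_n$ in Definition~\ref{defi:fke} is itself an $n$-indexed Hellinger-type functional whose integrands converge monotonically or dominatedly to the Hellinger integrand defining $\calD_\infty$.

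For \textbf{(iii)}, compactness: from $\limsup_n\calI_n<\infty$ and $\limsup_n\calF_n(\sfP_0^n)<\infty$ one first bounds $\int_0^T\calR_n(\sfP^n_t,\sfJ^{n,\pm}_t)\,\dd t$ and $\int_0^T\calD_n\,\dd t$ and $\sup_t\calF_n(\sfP^n_t)$ uniformly (using that $\calF_n$ is bounded below and the energy-dissipation structure); the entropy bound on $\sfP^n_t$ gives, via de la Vallée-Poussin and the superlinearity of $\phi$, uniform integrability hence narrow precompactness of $\{\sfP^n_t\}$ for each $t$, and an equicontinuity-in-$t$ estimate from $\mathsf{CE}_n$ plus the $\calR_n$-bound (relative entropy controls total variation of the fluxes on compact sets via a Pinsker/Csiszár-type inequality, noting $\Theta^{n,\pm}_{\sfP^n}$ has locally bounded mass) lets one apply an Arzelà--Ascoli / Aubin--Lions argument to get a subsequence with $\sfP^n_t\to\sfP_t$ narrowly for all $t$. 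For the fluxes, the $\calR_n$-bound plus local finiteness of the reference measures gives local equi-boundedness of $\sfJ^{n,\pm}_t(\dd\nu,\dd x)\,\dd t$ in $\calM_{loc}([0,T]\times\Gamma\times\calT)$, hence vague precompactness; passing to the limit in $\mathsf{CE}_n$ using $\dder^{n,\pm}F\to\pm\grad F$ shows the limit lies in $\mathsf{CE}_\infty$. The \textbf{main obstacle} I expect is controlling the unboundedness of $\kappa^\pm[\nu]$ (hence of the reference measures $\Theta^{n,\pm}$) as $\langle1,\nu\rangle\to\infty$: one needs an a~priori moment bound — e.g.\ that $\int_\Gamma\langle1,\nu\rangle^k\,\dd\sfP^n_t$ stays bounded, presumably propagated from the initial condition via the gradient structure or the entropy bound against $\Pi_n$ whose mass concentrates at $O(1)$ scale — to localize all the above arguments and to upgrade vague convergence of the fluxes to something strong enough to pass to the limit in the nonlinear terms of $\calR_\infty$ and $\calD_\infty$. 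Handling this carefully, together with the $n$-dependent Stirling correction in the $\varGamma$-convergence of $\calF_n$, is where the real technical weight of the proof sits; everything else reduces to convexity/lower-semicontinuity of relative entropy and uniform convergence of discrete gradients and reference kernels.
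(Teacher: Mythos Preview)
Your overall three-part structure matches the paper's, and most of part~(iii) is on target (the paper indeed uses an Arzelà--Ascoli argument for $\sfP^n_t$ in a suitable metric, vague precompactness of the fluxes from the superlinear $\tilde\phi$-bound, and the Taylor expansion $\dder^{n,\pm}F\to\pm\grad F$ to pass to the limit in $\mathsf{CE}_n$). However, there are two genuine gaps.

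\textbf{The recovery sequence in (i) is wrong.} Taking $\sfP^n\equiv\sfP$ cannot work: the invariant measure $\Pi_n$ is concentrated on $\Gamma_n$, the set of purely atomic measures with common atom mass $\tfrac1n$. If $\calF_\infty(\sfP)<\infty$ then $\sfP$-a.e.\ $\nu$ satisfies $\nu\ll\gamma$, hence is non-atomic, so $\sfP(\Gamma_n)=0$ and $\Ent(\sfP|\Pi_n)=+\infty$ for every $n$. The paper instead invokes Mariani's reduction to the case $\sfP=\delta_{\bar\nu}$ and then takes $\sfP^n:=\Pi_{n,\bar\nu}$, the rescaled Poisson measure with intensity $\bar\nu$; an explicit computation of $\Ent(\Pi_{n,\bar\nu}|\Pi_n)=\Ent(\pi_{n,\bar\nu}|\pi_n)$ using the product structure of $\pi_n$ gives $\tfrac1n\Ent(\sfP^n|\Pi_n)\to\Ent(\bar\nu|\gamma)$. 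Your dual approach to the liminf via cumulant generating functionals is, by contrast, essentially what the paper does.

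\textbf{The convergence of reference measures in (ii) is not automatic.} You write that ``$\kappa^\pm[\cdot]$ are continuous in the narrow topology'', but this is false under the paper's standing assumption that $c$ is merely bounded \emph{measurable}. For non-continuous $c$, the vague convergence $\vartheta^\pm_{\sfP^n}\to\vartheta^\pm_\sfP$ and $\sfT^{n,\pm}_\#\vartheta^\pm_{\sfP^n}\to\vartheta^\pm_\sfP$ does not follow from narrow convergence of $\sfP^n$ alone. The paper devotes Appendix~C to exactly this point: it uses the uniform free-energy bound $\sup_n\calF_n(\sfP^n_t)<\infty$ together with a Hoeffding-decomposition estimate to show that integrals of the form $\int g(x,y)\omega(\nu,x)\,\nu(\dd x)\nu(\dd y)\,\sfP^n(\dd\nu)$ converge for arbitrary $g\in\calB_b(\calT^2)$, approximating $g$ by continuous $g_\eps$ and controlling the error in exponential Orlicz norm. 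Once this convergence is established, your argument via joint convexity and lower semicontinuity of the entropy integrand (the paper writes it as the function $\Upsilon$) goes through; but without it the liminf inequalities for $\calR_n$ and $\calD_n$ do not follow.

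A smaller remark on (iii): the paper does not propagate polynomial moment bounds on $\nu(\calT)$ along arbitrary curves (indeed it notes that such bounds need not be preserved when $\calI_n$ is merely finite). The unboundedness of $\kappa^\pm_\nu$ is handled throughout by carrying the weight $(1+\nu(\calT)^2)^{-1}$ in all flux estimates, and equicontinuity is proved for a transport-type metric $W$ built from test functions whose $\Gamma$-gradients decay with this weight.
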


Here the notion of EDP-convergence or evolutionary $\varGamma$-convergence (where the $\varGamma$ is not to be confused with our space of positive measures $\Gamma$) relates to the $\varGamma$-convergence of the free energies $\calF_n$ and suitable liminf-estimates for the dissipation potentials and Fisher-information functionals (or local slopes in a metric setting). 

In certain applications or for certain notions of convergence (e.g.\ see \cite{MMP2021}) one also establishes $\varGamma$-convergence for the total dissipation $\calR_n+\calD_n$ when written as functionals over $C([0,T];\calP(\Gamma))$. Moreover, $\varGamma$-convergence of the functionals $\calI_n$ over such path-spaces are related to the large deviations of the underlying process \cite{Kraaij2019}, as we briefly discuss in Appendix \ref{s:ldpmot}. In our framework this would require that for every $(\sfP,\sfJ^+,\sfJ^-)\in \mathsf{CE}_{\infty}$, we can find a sequence $(\sfP^n,\sfJ^{n,+},\sfJ^{n,-})\in \mathsf{CE}_n$ that converges to $(\sfP,\sfJ^+,\sfJ^-)$ and satisfies the limsup-estimate 
\[ \limsup_{n\to \infty} \calI_n(\sfP^n,\sfJ^{n,+},\sfJ^{n,-}) \leq \calI_{\infty}(\sfP,\sfJ^+,\sfJ^-). \]

However, in this paper we restrict ourselves only to the liminf-estimates, which is sufficient to obtain convergence of the solutions, an approach also taken in \cite{Erbar2016, Erbar2016a,Maas2020}. Namely, by a lower semicontinuity and compactness argument, Theorem \ref{thm:maini_conv1} implies the convergence of both the solutions and the free energies $\calF_n$, if the initial data are well prepared. 

\begin{thm}[cf.\ Theorem \ref{lm:main_conv2}]\label{thm:i_sol}
Suppose that $\sfP_0^n \to \sfP$ with $\calF_{n}(\sfP_0^n)\to \calF_{\infty}(\sfP_0)$ as well. Then for the sequence $\hat{\sfP}^n$ of gradient-flow solutions to \eqref{eq:Forward Kolmogorov}, and $\hat{\sfP}$ the gradient-flow solution to \eqref{eq:liouv}, we have that for all $t\in [0,T]$
\begin{equation*}
 \hat{\sfP}_t^n \to \hat{\sfP}_t \mbox{\; narrowly,\quad and}\quad \lim_{n\to \infty} \calF_n(\hat{\sfP}^n_t)= \calF_{\infty}(\hat{\sfP}_t).
\end{equation*}
In particular, if $\sfP_0=\delta_{\hat \nu_0}$ and $\hat \nu_t$ is the solution to the mean-field problem \eqref{eq:mf}, then for all $t\in [0,T]$
\begin{align*}
     \hat{\sfP}_t^n \to \delta_{\hat{\nu}_t} \mbox{\; narrowly,\quad and}\quad \lim_{n\to \infty} \frac{1}{n} \Ent(\hat{\sfP}^n_t|\Pi_n)=\Ent(\hat \nu_t|\gamma).
\end{align*}
\end{thm}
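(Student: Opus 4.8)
The plan is to run the standard EDP-convergence argument: all the hard estimates have already been done in Theorems~\ref{thm:ikf}, \ref{thm:liouvi} and~\ref{thm:maini_conv1}, so what remains is to assemble them. First I would record that, since $(\hat{\sfP}^n,\hat{\sfJ}^{n,+},\hat{\sfJ}^{n,-})$ is the gradient-flow solution of Theorem~\ref{thm:ikf}, one has $\calI_n(\hat{\sfP}^n,\hat{\sfJ}^{n,+},\hat{\sfJ}^{n,-})=0$, and, by the usual concatenation argument (splitting $[0,T]=[0,t]\cup[t,T]$, using $\calI_n\ge 0$ on each piece, and noting that the $\calF_n(\hat{\sfP}^n_t)$-terms cancel in the sum), the energy--dissipation balance holds on every subinterval:
\[
 \calF_n(\hat{\sfP}^n_t)+\int_0^t \big(\calR_n+\calD_n\big)(\hat{\sfP}^n_s,\hat{\sfJ}^{n,+}_s,\hat{\sfJ}^{n,-}_s)\,\dd s=\calF_n(\sfP^n_0),\qquad t\in[0,T].
\]
In particular $\calF_n(\hat{\sfP}^n_t)\le\calF_n(\sfP^n_0)$, which is bounded by hypothesis, and $\limsup_n\calI_n(\hat{\sfP}^n,\hat{\sfJ}^{n,\pm})=0<\infty$; hence the compactness statement of Theorem~\ref{thm:maini_conv1} yields a subsequence along which $(\hat{\sfP}^n,\hat{\sfJ}^{n,\pm})$ converges to some $(\bar{\sfP},\bar{\sfJ}^+,\bar{\sfJ}^-)\in\mathsf{CE}_\infty$ (narrowly in each time slice, vaguely in the time-integrated fluxes). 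Note also that $\calF_\infty(\sfP_0)=\lim_n\calF_n(\sfP^n_0)<\infty$, so the limiting gradient-flow solution $\hat{\sfP}$ of Theorem~\ref{thm:liouvi} is well defined.

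Next I would identify the limit. Since $\hat{\sfP}^n_0=\sfP^n_0\to\sfP_0$ narrowly, the limit satisfies $\bar{\sfP}_0=\sfP_0$, so $\calF_n(\sfP^n_0)\to\calF_\infty(\bar{\sfP}_0)$ by hypothesis; the liminf inequality of Theorem~\ref{thm:maini_conv1} then gives $0=\liminf_n\calI_n(\hat{\sfP}^n,\hat{\sfJ}^{n,\pm})\ge\calI_\infty(\bar{\sfP},\bar{\sfJ}^+,\bar{\sfJ}^-)\ge 0$, so $(\bar{\sfP},\bar{\sfJ}^\pm)$ is a gradient-flow solution of \eqref{eq:liouv} with initial datum $\sfP_0$, hence equals $(\hat{\sfP},\hat{\sfJ}^\pm)$ by the uniqueness part of Theorem~\ref{thm:liouvi}. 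Because every subsequence of $(\hat{\sfP}^n,\hat{\sfJ}^{n,\pm})$ has a further subsequence with this same limit, the whole sequence converges; in particular $\hat{\sfP}^n_t\to\hat{\sfP}_t$ narrowly for every $t\in[0,T]$.

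For the convergence of the free energies, the $\varGamma$-liminf inequality of Theorem~\ref{thm:maini_conv1} together with $\hat{\sfP}^n_t\to\hat{\sfP}_t$ gives $\calF_\infty(\hat{\sfP}_t)\le\liminf_n\calF_n(\hat{\sfP}^n_t)$. For the matching upper bound I would apply the subinterval balance also to the limiting solution $\hat{\sfP}$ (again via Theorem~\ref{thm:liouvi}, i.e.\ $\calI_\infty^{[0,t]}(\hat{\sfP},\hat{\sfJ}^\pm)=0$), and use the lower semicontinuity of the time-integrated dissipation on $[0,t]$ (obtained by localizing the lsc statement of Theorem~\ref{thm:maini_conv1} to $[0,t]$) to estimate $\int_0^t(\calR_\infty+\calD_\infty)\le\liminf_n\int_0^t(\calR_n+\calD_n)$, so that
\[
 \calF_\infty(\hat{\sfP}_t)=\calF_\infty(\sfP_0)-\int_0^t\big(\calR_\infty+\calD_\infty\big)(\hat{\sfP}_s,\hat{\sfJ}^\pm_s)\,\dd s \ge \calF_\infty(\sfP_0)-\liminf_n\int_0^t\big(\calR_n+\calD_n\big)(\hat{\sfP}^n_s,\hat{\sfJ}^{n,\pm}_s)\,\dd s.
\]
Since $\calF_n(\sfP^n_0)\to\calF_\infty(\sfP_0)$, the right-hand side equals $\limsup_n\big(\calF_n(\sfP^n_0)-\int_0^t(\calR_n+\calD_n)\big)$, which by the subinterval balance for $\hat{\sfP}^n$ is exactly $\limsup_n\calF_n(\hat{\sfP}^n_t)$; hence $\calF_n(\hat{\sfP}^n_t)\to\calF_\infty(\hat{\sfP}_t)$. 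The ``in particular'' case then follows: if $\sfP_0=\delta_{\hat\nu_0}$ with $\hat\nu_t$ the unique solution of \eqref{eq:mf}, then $\delta_{\hat\nu_t}$ solves \eqref{eq:liouv}, and by the superposition principle of Theorem~\ref{thm:liouvi} (whose representing measure must concentrate, via $\calI_{MF}\ge 0$, on the unique mean-field gradient-flow curve $\hat\nu$) the curve $t\mapsto\delta_{\hat\nu_t}$ is the gradient-flow solution, so $\hat{\sfP}_t=\delta_{\hat\nu_t}$; combining with $\calF_n=\tfrac{1}{2n}\Ent(\cdot\,|\Pi_n)$ and $\calF_\infty(\delta_{\hat\nu_t})=\tfrac{1}{2}\Ent(\hat\nu_t|\gamma)$ turns the free-energy convergence into $\tfrac{1}{n}\Ent(\hat{\sfP}^n_t|\Pi_n)\to\Ent(\hat\nu_t|\gamma)$.

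Granting Theorems~\ref{thm:ikf}, \ref{thm:liouvi} and~\ref{thm:maini_conv1}, the main obstacle is the handling of arbitrary subintervals $[0,t]$ rather than the full $[0,T]$: one must know the energy--dissipation balance is exact for every $t$ (not merely a.e.\ $t$), and that the liminf-estimates of Theorem~\ref{thm:maini_conv1} localize to $[0,t]$. Both are routine --- the balance for every $t$ is part of the construction of the gradient-flow solutions, and localization holds because $\calR_n,\calD_n\ge 0$ and the convergences are slice-by-slice --- but they need to be set up carefully; everything else is bookkeeping with liminf/limsup and the uniqueness from Theorem~\ref{thm:liouvi}.
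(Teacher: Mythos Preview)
Your proposal is correct and follows essentially the same route as the paper's proof of Theorem~\ref{lm:main_conv2}: extract a subsequential limit via the compactness in Theorem~\ref{thm:main_conv1}, identify it as the unique gradient-flow solution of \eqref{eq:liouv} via $\calI_\infty\le\liminf\calI_n=0$ and Theorem~\ref{thm:li_equiv}, upgrade to full-sequence convergence by uniqueness, and then obtain $\calF_n(\hat{\sfP}^n_t)\to\calF_\infty(\hat{\sfP}_t)$ by combining the $\varGamma$-liminf for $\calF_n$ with the subinterval energy--dissipation balance and the lower semicontinuity of $\int_0^t(\calR_n+\calD_n)$. You are slightly more explicit than the paper about the need to localize the liminf estimates of Theorem~\ref{thm:main_conv1} to $[0,t]$ and about the subinterval balance $\calI_n^{[0,t]}=0$ (both of which the paper uses without comment), but the argument is the same.
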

The second half of Theorem \ref{thm:i_sol}, on the concentration around mean-field solutions and convergence of entropies, follows directly from the definition of $\calF_{\infty}$ and uniqueness. 

\medskip

For interacting particle systems where the number of particles is fixed at $n\in \N$ the narrow convergence  $\hat{\sfP}_t^n\to \delta_{\hat{\nu}_t}$ is equivalent to propagation of chaos in the sense of Snitzman \cite{Sznitman1991}, and would imply narrow convergence of the $k$-particle marginals at time $t$ to $\nu_t^{\otimes k}$. However, in our setting, this implies convergence of the $k$-correlation functions, see \cite{BGSS2020}. 

Moreover, the convergence of the free energies $\calF_n$ implies the stronger notion of entropic propagation of chaos if the initial condition is sufficiently regular. 

\begin{thm}[cf.\ Theorem \ref{thm:edp_prop}]\label{thm:edp_propi}
Suppose that $\sfP^n_0\to \delta_{\hat \nu_0}$ with $C^{-1} \leq \dd \hat \nu_0/\dd \gamma\leq C$ for some $C>0$. If
\[ \lim_{n\to \infty} \frac{1}{n} \Ent(\hat{\sfP}_0^n|\Pi_{n,\hat \nu_0})=0, \]
then
\[ \lim_{n\to \infty} \frac{1}{n} \Ent(\hat{\sfP}_t^n|\Pi_{n,\hat \nu_t})=0, \qquad \fA t\ge 0, \]
where $\Pi_{n,\nu}\in \calP(\Gamma)$ stems from the Poisson measure $\pi_{n,\nu}$with intensity measure $\nu$, i.e.\ 
 \[
\pi_{n,\nu}:=\frac{1}{e^{n \nu(\calT)}-1} \sum_{N=1}^{\infty} \frac{n^N}{N!} \nu^{\otimes N}.
 \]
\end{thm}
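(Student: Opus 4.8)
\textbf{Proof proposal for Theorem~\ref{thm:edp_propi}.}

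The plan is to reduce the claim to the already-established convergence of free energies along the gradient-flow solutions (Theorem~\ref{thm:i_sol}) by relating the ``relative-to-$\Pi_{n,\hat\nu_t}$'' entropy to the ``relative-to-$\Pi_n$'' entropy that $\calF_n$ controls. The key algebraic fact is that, because $\Pi_n$ and $\Pi_{n,\nu}$ are both push-forwards of Poisson measures (with intensities $n\gamma$ and $n\nu$ respectively), the Radon--Nikodym derivative $\dd\Pi_{n,\nu}/\dd\Pi_n$ is explicit and has a product/exponential structure in the particle positions; concretely, on the $N$-particle sector it is proportional to $\prod_{i=1}^N \frac{\dd\nu}{\dd\gamma}(x_i)$ times a normalizing constant involving $e^{n\nu(\calT)}-1$ versus $e^{n\gamma(\calT)}-1$. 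Hence for any $\sfP\in\calP(\Gamma)$ one gets an identity of the schematic form
\begin{equation*}
 \frac1n\Ent(\sfP|\Pi_{n,\nu}) = \frac1n\Ent(\sfP|\Pi_n) - \int_\Gamma \langle \log\tfrac{\dd\nu}{\dd\gamma},\,\mu\rangle\,\sfP(\dd\mu) + (\text{explicit }o(1)\text{ constants}),
\end{equation*}
valid whenever $\nu$ is comparable to $\gamma$ (which is exactly why the hypothesis $C^{-1}\le \dd\hat\nu_0/\dd\gamma\le C$ is imposed, and one must first check this comparability propagates in time for the mean-field solution $\hat\nu_t$ — this follows from Theorem~\ref{thm:imf}, e.g.\ from a Gr\"onwall estimate on $u_t=\dd\hat\nu_t/\dd\gamma$ using boundedness of $m,c$). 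Taking $\nu=\hat\nu_0$ and $\sfP=\hat\sfP_0^n$ turns the hypothesis $\tfrac1n\Ent(\hat\sfP_0^n|\Pi_{n,\hat\nu_0})\to 0$ into $\tfrac1n\Ent(\hat\sfP_0^n|\Pi_n)\to \Ent(\hat\nu_0|\gamma)$ up to the bounded linear term, i.e.\ into $\calF_n(\hat\sfP_0^n)\to \calF_\infty(\delta_{\hat\nu_0})$; then Theorem~\ref{thm:i_sol} gives $\hat\sfP_t^n\to\delta_{\hat\nu_t}$ narrowly and $\calF_n(\hat\sfP_t^n)\to\calF_\infty(\delta_{\hat\nu_t})=\tfrac12\Ent(\hat\nu_t|\gamma)$; and finally one runs the entropy identity backwards at time $t$ with $\nu=\hat\nu_t$ to conclude $\tfrac1n\Ent(\hat\sfP_t^n|\Pi_{n,\hat\nu_t})\to 0$.

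More precisely I would organize the argument as follows. First, establish the two-sided bound $C'^{-1}\le \dd\hat\nu_t/\dd\gamma\le C'$ for all $t\ge0$ (with $C'$ depending on $C$, $T$, $\|m\|_\infty$, $\|c\|_\infty$), from the explicit form \eqref{eq:mfi1} of the mean-field flow. Second, derive the exact change-of-reference identity for $\Ent(\,\cdot\,|\Pi_{n,\nu})$ versus $\Ent(\,\cdot\,|\Pi_n)$ by writing $\log\frac{\dd\Pi_{n,\nu}}{\dd\Pi_n}$ on each $N$-sector and summing; the linear-in-$\mu$ term is $n\langle \log\frac{\dd\nu}{\dd\gamma},\tfrac1n\sum\delta_{x_i}\rangle$, which under the empirical-measure identification is $\langle\log\frac{\dd\nu}{\dd\gamma},\mu\rangle$ evaluated at the rescaled empirical measure $\mu\in\Gamma$, a bounded continuous function of $\mu$ on bounded subsets of $\Gamma$ since $\log\frac{\dd\nu}{\dd\gamma}$ is bounded. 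Third, note the normalizing-constant discrepancy $\tfrac1n\log\frac{e^{n\gamma(\calT)}-1}{e^{n\nu(\calT)}-1}\to \gamma(\calT)-\nu(\calT)$, an explicit $o(1)$-controlled term. Fourth, apply this at $t=0$ to convert the hypothesis into well-preparedness of $\hat\sfP_0^n$ for $\calF_n$, invoke Theorem~\ref{thm:i_sol}, and apply the identity again at time $t$; the continuity/boundedness of $\mu\mapsto\langle\log\frac{\dd\hat\nu_t}{\dd\gamma},\mu\rangle$ together with narrow convergence $\hat\sfP_t^n\to\delta_{\hat\nu_t}$ ensures $\int\langle\log\frac{\dd\hat\nu_t}{\dd\gamma},\mu\rangle\,\hat\sfP_t^n(\dd\mu)\to\langle\log\frac{\dd\hat\nu_t}{\dd\gamma},\hat\nu_t\rangle$, closing the loop.

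The main obstacle I anticipate is the integrability/tightness bookkeeping needed to justify the entropy change-of-reference identity rigorously: $\sfP_t^n$ is a measure on the \emph{non-compact} space $\Gamma$, the linear functional $\mu\mapsto\langle\log\frac{\dd\nu}{\dd\gamma},\mu\rangle$ grows with the total mass of $\mu$, and although $\log\frac{\dd\nu}{\dd\gamma}$ is bounded so this functional grows only linearly in $\langle 1,\mu\rangle$, one still needs a uniform-in-$n$ bound on $\int \langle 1,\mu\rangle\,\hat\sfP_t^n(\dd\mu)$ (i.e.\ control of the expected total mass) to pass to the limit and to guarantee the entropies are finite rather than formally $\infty-\infty$. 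Such a moment bound should follow either from the finiteness of $\calF_n(\hat\sfP_t^n)$ (entropy bounds give exponential-moment control against $\Pi_n$, which has light tails in total mass) or directly from a Gr\"onwall argument on $\frac{\dd}{\dd t}\mathbb{E}[\langle1,\nu_t\rangle]$ at the level of \eqref{eq:Forward Kolmogorov}; making this uniform and clean is the crux. A secondary technical point is handling the case $\hat\nu_0(\calT)=0$ or sectors where the empirical measure vanishes, but the hypothesis $\dd\hat\nu_0/\dd\gamma\ge C^{-1}>0$ together with $\gamma\in\Gamma$ rules out degeneracies, so this is a minor matter of being careful with the $N=0$ sector (excluded here since $\pi_{n,\nu}$ is supported on $N\ge1$).
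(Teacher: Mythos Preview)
Your approach is essentially the same as the paper's: establish the change-of-reference identity $\tfrac1n\Ent(\sfP|\Pi_{n,\nu})=\tfrac1n\Ent(\sfP|\Pi_n)-\int_\Gamma\langle\log u,\mu\rangle\,\sfP(\dd\mu)+\tfrac1n\log\frac{e^{n\gamma(\calT)}-1}{e^{n\nu(\calT)}-1}$, propagate the two-sided density bound via Lemma~\ref{lm:mfsolb}, convert the hypothesis into well-preparedness $\calF_n(\hat\sfP_0^n)\to\calF_\infty(\delta_{\hat\nu_0})$, invoke Theorem~\ref{lm:main_conv2}, and run the identity backwards at time $t$.

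One point to flag: you assert that $\mu\mapsto\langle\log u_t,\mu\rangle$ is continuous on bounded subsets of $\Gamma$, but since $m,c$ are only assumed bounded measurable, $u_t=\dd\hat\nu_t/\dd\gamma$ is in general merely bounded measurable, not continuous; hence this functional is \emph{not} narrowly continuous even on mass-bounded sets, and narrow convergence $\hat\sfP_t^n\to\delta_{\hat\nu_t}$ alone does not yield $\int\langle\log u_t,\mu\rangle\,\hat\sfP_t^n(\dd\mu)\to\langle\log u_t,\hat\nu_t\rangle$. The paper handles this (together with the unbounded-mass issue you correctly identify) by invoking the density argument of Theorem~\ref{thm:nc_1}, which uses the uniform bound on $\calF_n(\hat\sfP_t^n)$ and entropy duality to push convergence from continuous to bounded measurable integrands. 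Your suggestion of a moment bound via $\calF_n$ is the right instinct, but you need the full machinery of Theorem~\ref{thm:nc_1} to deal with the non-continuity simultaneously.
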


To the authors' knowledge, this is the first entropic propagation of chaos result for bounded competition kernels over compact Polish spaces, under the assumption of detailed balance.

\subsubsection*{\textbf{Comments}} 

We have given an overview of the generalized gradient structures that we introduced for the forward Kolmogorov equation of our underlying interacting particle system and eluded to how this sequence of structures converges to a gradient structure induced by the mean-field limit. Throughout, we assumed bounded measurable rates $m,c$ over a compact Polish space $\calT$ satisfying the detailed balance condition $m(x,y)=c(x,y)$ and $c(x,x)=0$ for all $x,y\in \calT$, and we would like to briefly touch on possible relaxations of these assumptions. 

First, for the limit inferior in Theorem  \ref{thm:main_conv1}, there is a technical issue concerning the possible non-continuity of the competition kernel $c$, which we resolve by an approximation argument from large deviation theory \cite{Hoeksema2020}, see Appendix \ref{s:nonc}. This argument can be straightforwardly extended to unbounded rates $m$ and $c$ under certain exponential integrability estimates with respect to the reference measure $\gamma$. However, the uniqueness of solutions and well-posed of variational formulations would be less clear.  

Moreover, it should be noted that although for brevity and clarity we chose $\calT$ to be compact, many of the listed results carry over to the case of $\calT$ Polish with finite $\gamma$, under suitable choices of topologies and by bootstrapping from the tightness of $\gamma$. For $\sigma$-finite $\gamma$, this is not necessarily the case and would depend strongly on newly constructed estimates on the propagation of tightness. 

A more fundamental restriction is the detailed balance assumption, which is necessary to phrase the variational structures in terms of generalized gradient systems and the evolution in terms of a gradient flow. However, there exist possible extensions and decompositions of variational structures for jump processes that do not assume detailed balance or even complex balance, see for example \cite{Zimmer2018} for an overview. Therefore, in future work, the authors plan to generalize the variational methods outlined here to more general evolutions.

\medskip
\subsubsection*{\textbf{Acknowledgments}} \sloppy{The authors acknowledge support from NWO Vidi grant 016.Vidi.189.102 on "Dynamical-Variational Transport Costs and Application to Variational Evolution".}


\subsection{Notation}
Below we collect some of the notation used throughout this paper.
\begin{longtable}{ll}
  $\calT$   & \mbox{trait space, Assumption \ref{assu:massu}} \\
  $m,c$ &\mbox{mutation/competition kernel, Assumption \ref{assu:massu}}\\
  $\gamma$ &\mbox{reference measure, Assumption \ref{assu:massu}}\\
  $n$ &\mbox{system size, Assumption \ref{assu:massu}}\\
  $\Ent$ & \mbox{relative entropy \eqref{eq:mf_ent}} \\
  $H$ & \mbox{Hellinger distance \eqref{eq:mf_hell}} \\
  $\Psi,\Psi^*$ & \mbox{dual pair \eqref{eq:defpsi},\eqref{eq:defpsis}}  \\
  $\calM^+$ &\mbox{space of finite non-negative measures, with narrow topology} \\
  $\calM^+_{loc}$ &\mbox{space of non-negative Radon measures, with vague topology} \\
  $\Gamma:=\calM^+(\calT)$ & \mbox{state space of measure-valued process} \\
  $\Gamma_n\subset \Gamma$ & \mbox{space of positive atomic measures with common mass $\tfrac{1}{n}$ \eqref{eq:gamman}} \\
  $\kappa^{\pm}_{\nu}=\kappa^{\pm}[\nu]$ & \mbox{measure-dependent birth/death kernels \eqref{eq:defk}} \\
  $\theta_{\nu}$ &\mbox{geometric mean of $\kappa^{+}_{\nu}$ and $\kappa^{-}_{\nu}$, Definition \ref{defi:mf}} \\
  $\mathscr{CE}$ & \mbox{continuity equation for mean-field $\eqref{eq:mf1}$, Definition \ref{defi:mfcont}}\\
  $\calR_{MF},\calF_{MF},\calD_{MF}$ & \mbox{ingredients of EDP-functional $\calI_{MF}$ for $\eqref{eq:mf1}$ , Definition \ref{defi:mf}}\\
  $Q_n,Q_n^*$ &\mbox{generator and dual generator \eqref{eq:defiq} of \eqref{eq:FKEn}} \\
  $\bar \kappa_n$ & \mbox{jump kernel \eqref{eq:defibk} corresponding to \eqref{eq:FKEn}}\\
  $L_n$ &\mbox{rescaled empirical measure map \eqref{eq:defiLn}}\\
  $\pi_n,\Pi_n$ &\mbox{invariant measures for particle system \eqref{eq:defipn} and measure-valued process \eqref{eq:defiPn}}\\
  $\sfT^{n,\pm}$
  &\mbox{creation/annihilation mappings \eqref{eq:defit}} \\
  $\dder^{n,\pm}, \mathrm{div}^{n,\pm}$ &\mbox{discrete $\Gamma_n$-gradient \eqref{eq:defidder} and divergence \eqref{eq:defiddiv}} \\
  $\vartheta_{\sfP}^{\pm}$ & \mbox{expected fluxes \eqref{eq:defief}}\\
  $\Theta_{\sfP}^{n,\pm}$ &\mbox{geometric average $\vartheta_{\sfP}^{\pm}$ along transition, Definition \eqref{defi:cen}} \\
  $\mathsf{CE}_n$ & \mbox{continuity equation for \eqref{eq:FKEn}, Definition \eqref{defi:cen}} \\
  $\calR_{n},\calF_{n},\calD_{n}$ & \mbox{ingredients of EDP-functional $\calI_{n}$ for \eqref{eq:FKEn}, Definition \ref{defi:fke}}\\
  $d_{TV,w}, W$ & \mbox{weighted total variation metric \eqref{eq_fked}/transportation metric \eqref{defi:Wm} over $\calP(\Gamma)$}\\
  $\mathsf{CE}_{\infty}$ &\mbox{continuity equation for $\eqref{eq:liouv2}$, Definition \ref{defi:contli}}\\
  $\calR_{\infty},\calF_{\infty},\calD_{\infty}$ &\mbox{ingredients of EDP-functional $\calI_{\infty}$ for \eqref{eq:liouv}, Definition \ref{defi:liouv}}\\
\end{longtable}

\section{Mean-field system}\label{s:mf}

In this section, we will discuss the gradient-flow formulation of the mean-field equation under the detailed balance condition. Let us first make precise the context of Theorem \ref{thm:imf}, and embed it within the more general statement of Theorem \ref{thm:equivlag} below. 

\medskip

Recall that the trait space $\calT$ is a compact Polish space, and $\Gamma:=\calM^+(\calT)$ is the space of finite non-negative measures over $\calT$ equipped with the narrow topology. Fix a reference measure $\gamma\in \Gamma$, and rates $m,c$ satisfying Assumption \ref{assu:massu}, i.e.\ $m,c\in \calB_b(\calT \times \calT)$ with $m(x,y)=c(y,x)$ for all $x,y\in \calT$, and $c(x,x)=0$ for all $x\in \calT$. The mean-field equation then reads 
\begin{equation}\tag{$\sf MF$}\label{eq:mf1}
    \partial_t \nu_t =\kappa^+[\nu_t]-\kappa^-[\nu_t],
\end{equation}
with measure-dependent birth and death kernels $\kappa^{\pm}:\Gamma \to \Gamma$ given by 
\begin{equation}\label{eq:defk}
\kappa^+[\nu](\dd x) :=   \int_{y\in \calT} c(x,y)\gamma(\dd x)\nu(\dd y) ,\qquad  \kappa^-[\nu](\dd x)  :=  \int_{y\in \calT}  c(x,y) \nu(\dd x) \nu(\dd y).
\end{equation}

Routinely, we will also adopt the shorthand notation $\kappa_\nu^{\pm} := \kappa^{\pm}[\nu]$. Now, setting $c_{\nu}(x):=\int_{\calT}  c(x,y)\, \nu(\dd y)$, it is clear that that $\kappa^+_{\nu}=c_{\nu} \gamma$, $\kappa^-_{\nu}=c_{\nu} \nu$, and the dynamics simplify to 
\begin{equation*}
    \partial_t \nu(\dd x)=c_{\nu}(x)(\gamma(\dd x)-\nu(\dd x)). 
\end{equation*}
Strong solutions to \eqref{eq:mf1} in either total variation or appropriate $L^1$ spaces follow straightforwardly via classical methods, see Section \ref{ss:mf_sol}. 

\medskip
The total variation norm $\|\cdot\|_{TV}$ on $\calM(\calT)$ is defined as
\[
\|\mu\|_{TV}:=\sup  \left\{ \int_{\calT} f \, \dd \mu : f\in \calB_b(\calT), \, \|f\|_{\infty}\leq 1 \right\}, \qquad \mu\in \calM(\calT),
\]
and the squared Hellinger distance $H^2$ is given by
\begin{equation}\label{eq:mf_hell}
    H^2(\nu,\eta):=\frac{1}{2}\int_{\calT} \left(\sqrt{\frac{\dd \nu}{\dd \sigma}}-\sqrt{\frac{\dd \mu}{\dd \sigma}}\right)^2 \dd \sigma,
\end{equation}
with $\sigma$ a measure dominating both $\mu$ and $\nu$. Note that the definition \eqref{eq:mf_hell} is independent of the choice for the dominating measure $\sigma$, and $\sigma=\nu+\eta$ is always admissible.

Moreover, recall the entropy function $\phi: \R_{\geq 0}\to  \R_{\geq 0}$ and its Legendre dual $\phi^*:\mathbb{R}\to\mathbb{R}$  by
\begin{equation*}
    \phi(s):=s \log s -s+1, \qquad \phi^*(z):=e^{z}-1,
\end{equation*}
and the relative entropy of $\nu$ with respect to $\mu$ as
\begin{equation}\label{eq:mf_ent}
    \Ent(\nu|\mu):=\left\{\begin{aligned} &
    \int_{\calT} \phi\left(\frac{\dd \nu}{\dd \mu} \right) \dd \mu, &&\quad \mbox{if $\nu\ll \mu$},\\
    &+\infty &&\quad \mbox{otherwise.}
    \end{aligned}\right.
\end{equation}

We will consider curves satisfying the continuity equation
\begin{equation}\label{contmf}\tag{$\mathscr{CE}$} 
    \partial_t \nu_t = \lambda_t^+-\lambda_t^-,
\end{equation}
in an appropriately weak sense. 

\begin{defi}[Mean-field continuity equation]\label{defi:mfcont}
A triple $(\nu,\lambda^+,\lambda^-)$ satisfies the mean-field continuity equation $\mathscr{CE}$ if
\begin{enumerate}
    \item the curve $[0,T]\ni t\mapsto\nu_t\in \Gamma$ is absolutely continuous with respect to $\|\cdot\|_{TV}$,
    \item the Borel family $(\lambda_t^\pm)_{t\in[0,T]}\subset\Gamma$ satisfies $\int_0^T \|\lambda_t^{\pm}\|_{TV} \, \dd t<\infty$,
    \item for every $s,t\in [0,T]$ and all $f\in C_b(\calT)$
     \begin{equation*}
        \int_{\calT} f \dd \nu_t - \int_{\calT} f \dd \nu_s = \int_s^t \left(\int_{\calT} f \dd \lambda_r^+-\int_{\calT} f \dd \lambda_r^- \right) \, \dd r, \qquad \mbox{for all $s,t$ with $0\leq s,t\leq T$.}
    \end{equation*}
\end{enumerate}
\end{defi}
We will refer to $\lnet=\lambda^+-\lambda^-$ as the \emph{net flux}. 

\begin{remark}
When seen as approximations of particle systems the birth/death fluxes $\lambda^{\pm}_t$ represent the \emph{observed} amount of mass being created/annihilated around a certain point, and $\nu_t$ represents the density of the particles, while $\kappa_{\nu}^{\pm}$ correspond to the \emph{expected} birth and death fluxes of the BPDL model. 
\end{remark}

\begin{remark}[Time-regularity]
As we will see in Lemma \ref{lm:mf_timereg}, if there exist a common dominating measure for $\{\nu_t,\lambda^+_t,\lambda_t^-\}_{t\in [0,T]}$ then the continuity equation holds in a strong sense: $\nu_t$ is an a.e.\ differentiable map from $[0,T]$ to $(\Gamma,\|\cdot\|_{TV})$ and 
\begin{equation*}
    \partial_t \nu_t=\lambda_t^+-\lambda_t^-, \qquad \mbox{for a.e.\ $t\in [0,T]$}.
\end{equation*}
\end{remark}

\begin{defi}\label{defi:mf}
Let $\theta_{\nu}$ be the geometric average of $\kappa_{\nu}^+$ and $\kappa_{\nu}^-$, i.e.\ 
\begin{equation*}
     \dd \theta_{\nu}:=\sqrt{\frac{\dd \kappa_{\nu}^+}{\dd \sigma} \frac{\dd \kappa_{\nu}^-}{\dd \sigma}} \dd \sigma, 
\end{equation*}
for any dominating measure $\sigma$. We define the following objects:
\begin{itemize}
    \item The dissipation potential $\calR_{MF}:\Gamma^3 \to [0,+\infty]$,
\[
    \calR_{MF}(\nu,\lambda^+,\lambda^-):=\Ent(\lambda^+|\theta_{\nu})+\Ent(\lambda^-|\theta_{\nu}),
\]
and the dual dissipation potential $\calR^*_{MF}:\Gamma\times \calB_b(\calX)^2 \to [0,+\infty]$,
\[
\calR^*_{MF}(\nu,w^+,w^-):=\int_{\calT} (e^{w^{+}}-1)\,\dd \theta_{\nu}+\int_{\calT} (e^{w^{-}}-1)\,\dd \theta_{\nu}.
\]
\item The free energy $\calF_{MF}:\Gamma\to [0,+\infty]$,
\[
  \calF_{MF}(\nu):=\tfrac{1}{2} \Ent(\nu|\gamma),
\]
and Fisher information $\calD_{MF}:\Gamma\to [0,+\infty]$,
\[
    \calD_{MF}(\nu):=\left\{\begin{aligned}
    &2H^2(\kappa_{\nu}^+,\kappa_{\nu}^-),&& \qquad \mbox{if $\nu\ll\gamma,$}\\
    &+\infty,&& \qquad \mbox{otherwise.}
    \end{aligned}\right.
\]
\item The \emph{EDP-functional} $\calI_{MF}:\mathscr{CE}\to [0,+\infty]$ for all curves with $\calF_{MF}(\nu_0)<\infty$
\begin{equation}\label{eq:mf_edpf}
    \calI_{MF}(\nu,\lambda^+,\lambda^-):=\int_0^T \calR_{MF}(\nu_t,\lambda_t,\lambda^-_t) \, \dd t + \calF(\nu_T)-\calF(\nu_0)+\int_0^T \calD_{MF}(\nu_t) \, \dd t.
\end{equation}
\end{itemize}

\end{defi}

\begin{remark}
Since $\theta_{\nu}(\calT)<\infty$ by Lemma \ref{lm:mf_est} all objects above are well-defined, and it is straightforward to verify via the dual representation of the entropy that $\calR_{MF}, \calR^*_{MF}$ are truly dual objects in the sense that 
\[ \calR(\nu,\lambda^+,\lambda^-):=\sup_{w^{\pm} \in \calB_b(\calT)} \left\{ \int_{\calT} w^+ \dd \lambda^++\int_{\calT} w^- \dd \lambda^- -\calR^*(\nu,w^+,w^-) \right\},  \]
and vice versa.
\end{remark}
\begin{remark}
If $\nu\ll \gamma$ with $\dd \nu=u \dd \gamma$, note that $\dd \theta_{\nu}=c_{\nu} \sqrt{u}\,  \dd \gamma$, and that the Fisher information simplifies to 
\[
    \calD_{MF}(\nu)= \int_{\calT} c_{\nu} \left(\sqrt{u}-1\right)^2 \dd \gamma.
\]
\end{remark}

We are now able to fully state the variational characterization of strong solutions to the mean-field equation \eqref{eq:mf}. 

\begin{thm}\label{thm:equivlag} For any $(\nu,\lambda^+,\lambda^-)\in \mathscr{CE}$ with $\calF_{MF}(\nu_0)<\infty$, we have $\calI_{MF}(\nu,\lambda^+,\lambda^-)\geq 0$ and 
\[ \calI_{MF}(\nu,\lambda^+,\lambda^-)=0 \iff \left\{ \begin{aligned}
\quad &\mbox{$\nu_t$  is the unique strong solution to \eqref{eq:mf}}, \quad \\
\quad \lambda^{\pm}_t&=(\kappa^{\pm}_{\nu_t}) \quad \mbox{for a.e.\ $t\in [0,T]$.} \quad
\end{aligned} \right. \]

Moreover, whenever $\calF_{MF}(\nu_0)<\infty$ and $\calI_{MF}(\nu,\lambda^+,\lambda^-)<\infty$ the chain rule for $\calF_{MF}$ holds: $\calF_{MF}(\nu_t)$ is absolutely continuous and 
\begin{equation*}
    \frac{\dd}{\dd t} \calF_{MF}(\nu_t)=\tfrac{1}{2} \int_{\calT} \log \frac{\dd \nu_t}{\dd \gamma}\,\dd (\lambda^+_t-\lambda^-_t) \, \dd t, \qquad \mbox{for a.e.\ $t\in [0,T]$}.
\end{equation*}
\end{thm}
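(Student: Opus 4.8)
The plan is to prove the three claims in order: (i) the Energy--Dissipation lower bound $\calI_{MF}\ge 0$ together with the characterization of its null-set, (ii) the chain rule for $\calF_{MF}$ along finite-dissipation curves. The natural strategy is the by-now-standard ``EDP'' argument: first establish the chain rule as a standalone fact, then use it to split $\calI_{MF}$ into manifestly non-negative pieces. So I would actually prove the chain rule first. Fix $(\nu,\lambda^+,\lambda^-)\in\mathscr{CE}$ with $\calF_{MF}(\nu_0)<\infty$ and $\calI_{MF}<\infty$. Finiteness of $\int_0^T\calD_{MF}(\nu_t)\,\dd t$ forces $\nu_t\ll\gamma$ for a.e.\ $t$, say $\dd\nu_t=u_t\,\dd\gamma$, and finiteness of $\int_0^T\calR_{MF}\,\dd t$ forces $\lambda_t^\pm\ll\theta_{\nu_t}\ll\gamma$ with entropic control of the densities. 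The key analytic input is the pointwise bound from Lemma \ref{lm:mf_est} on $\theta_\nu(\calT)$ (and hence on $c_\nu$, $\|\kappa_\nu^\pm\|_{TV}$) in terms of $\|\nu\|_{TV}$, which in turn is controlled because $t\mapsto\|\nu_t\|_{TV}$ is absolutely continuous with derivative $\lambda_t^+(\calT)-\lambda_t^-(\calT)$ and these are integrable. This gives a common dominating measure and, via the ``Time-regularity'' remark (Lemma \ref{lm:mf_timereg}), upgrades $\mathscr{CE}$ to the strong form $\partial_t\nu_t=\lambda_t^+-\lambda_t^-$ in $(\Gamma,\|\cdot\|_{TV})$ for a.e.\ $t$. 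The chain rule $\frac{\dd}{\dd t}\calF_{MF}(\nu_t)=\tfrac12\int_{\calT}\log u_t\,\dd(\lambda_t^+-\lambda_t^-)$ then follows by the usual regularization argument: approximate $\log s$ by truncations $\log(\max(s,\delta)\wedge\delta^{-1})$, differentiate $\int\phi(u_t^\delta)\,\dd\gamma$ using the strong-form continuity equation and absolute continuity, and pass to the limit using the entropy bounds to justify dominated convergence (the delicate term is near $u_t=0$, handled because $\lambda_t^-\ll\theta_{\nu_t}$ which vanishes where $u_t=0$, and near $u_t=\infty$ because $\Ent(\nu_t|\gamma)$ is integrated against the finite-mass dissipation).

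With the chain rule in hand, the Energy--Dissipation estimate is algebra. Using $-\tfrac12\log u_t$ as the ``force'' $w$, write, for a.e.\ $t$,
\begin{align*}
\calR_{MF}(\nu_t,\lambda_t^+,\lambda_t^-)+\tfrac{\dd}{\dd t}\calF_{MF}(\nu_t)+\calD_{MF}(\nu_t)
&=\Ent(\lambda_t^+|\theta_{\nu_t})+\Ent(\lambda_t^-|\theta_{\nu_t})\\
&\quad{}+\tfrac12\!\int\!\log u_t\,\dd(\lambda_t^+-\lambda_t^-)+\calR^*_{MF}(\nu_t,-\tfrac12\log u_t,\tfrac12\log u_t),
\end{align*}
where the last equality uses the explicit computation in the Remark that $\calD_{MF}(\nu_t)=\int_{\calT} c_{\nu_t}(\sqrt{u_t}-1)^2\,\dd\gamma=\calR^*_{MF}(\nu_t,-\tfrac12\log u_t,\tfrac12\log u_t)$ (this identity is exactly $\int(e^{-\frac12\log u}-1)\,\dd\theta_\nu+\int(e^{\frac12\log u}-1)\,\dd\theta_\nu$ with $\dd\theta_\nu=c_\nu\sqrt u\,\dd\gamma$). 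The right-hand side is $\sum_{\pm}\bigl[\Ent(\lambda_t^\pm|\theta_{\nu_t})-\int(\pm\tfrac12\log u_t)\,\dd\lambda_t^\pm+\int(e^{\mp\frac12\log u_t}-1)\,\dd\theta_{\nu_t}\bigr]$, and by the Fenchel--Young inequality for the entropy (the dual-pair property recorded in the Remark after Definition \ref{defi:mf}) each bracket is $\ge 0$, with equality iff $\dd\lambda_t^\pm/\dd\theta_{\nu_t}=e^{\mp\frac12\log u_t}=u_t^{\mp1/2}$, i.e.\ $\lambda_t^+=u_t^{-1/2}\theta_{\nu_t}=u_t^{-1/2}c_{\nu_t}\sqrt{u_t}\,\gamma=c_{\nu_t}\gamma=\kappa^+_{\nu_t}$ and likewise $\lambda_t^-=\kappa^-_{\nu_t}$. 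Integrating in $t$ and telescoping $\calF_{MF}$ yields $\calI_{MF}(\nu,\lambda^+,\lambda^-)\ge 0$, and $\calI_{MF}=0$ iff $\lambda_t^\pm=\kappa^\pm_{\nu_t}$ a.e., which plugged back into $\mathscr{CE}$ says exactly that $\nu_t$ solves \eqref{eq:mf1}; uniqueness of that strong solution is the already-cited Section \ref{ss:mf_sol} result. (Conversely, if $\nu_t$ is the strong solution and $\lambda_t^\pm=\kappa_{\nu_t}^\pm$, every bracket vanishes, so $\calI_{MF}=0$.)

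The step I expect to be the genuine obstacle is the rigorous chain rule, specifically justifying the interchange of $\frac{\dd}{\dd t}$ and $\int_{\calT}$ and the truncation limit, simultaneously controlling both the singularity of $\log u_t$ at $u_t=0$ and the possible unboundedness of $\|\nu_t\|_{TV}$ (hence of $c_{\nu_t}$, hence of $\theta_{\nu_t}$) along the curve. Everything hinges on propagating an a priori $L^\infty_t$ bound on $\|\nu_t\|_{TV}$ from the integrability of $\lambda^\pm_t(\calT)$ via absolute continuity of $t\mapsto\|\nu_t\|_{TV}$, and then on a Grönwall-type or direct estimate showing the entropy $\Ent(\nu_t|\gamma)$ stays finite and the approximation errors vanish; I would isolate these as preliminary lemmas (presumably Lemmas \ref{lm:mf_est}, \ref{lm:mf_timereg} referenced above) before assembling the two displayed identities. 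The remaining pieces --- Fenchel--Young for the entropy, the explicit $\calD_{MF}$ and $\calR^*_{MF}$ formulas, and uniqueness for \eqref{eq:mf1} --- are either quoted from earlier in the paper or one-line computations.
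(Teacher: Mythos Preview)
Your overall architecture matches the paper's: establish the chain rule by truncating $\phi'(u)=\log u$ and passing to the limit (this is Lemma~\ref{lm:mf_chain}), then use Fenchel--Young for the entropy/exponential pair to split the integrand into two nonnegative brackets whose equality case pins down $\lambda_t^\pm=\kappa_{\nu_t}^\pm$. The a~priori ingredients you name (Lemmas~\ref{lm:mf_est}, \ref{lm:mf_timereg}, absolute continuity of $t\mapsto\nu_t(\calT)$) are exactly what the paper uses.

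There is, however, a genuine gap in your displayed identity. You claim $\calD_{MF}(\nu)=\calR^*_{MF}(\nu,-\tfrac12\log u,\tfrac12\log u)$, but this is false in general: since $\dd\theta_\nu=c_\nu\sqrt{u}\,\dd\gamma$ vanishes on $\{u=0\}$, one has
\[
\calR^*_{MF}\bigl(\nu,-\tfrac12\log u,\tfrac12\log u\bigr)=\int_{\{u>0\}}c_\nu(\sqrt{u}-1)^2\,\dd\gamma=:\calD^-_{MF}(\nu)\le \calD_{MF}(\nu),
\]
with strict inequality whenever $\gamma(\{u=0,\,c_\nu>0\})>0$. The paper flags this explicitly (see the remark immediately after Theorem~\ref{thm:equivlag} and the discussion introducing $\calD^-_{MF}$ before Lemma~\ref{lm:mf_chain}). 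For the inequality $\calI_{MF}\ge 0$ and the implication $\calI_{MF}=0\Rightarrow\lambda^\pm=\kappa^\pm$ this causes no harm, because $\calD_{MF}\ge\calD^-_{MF}$ only strengthens the lower bound and the extra nonnegative term is forced to vanish. But your converse ``every bracket vanishes, so $\calI_{MF}=0$'' is incomplete: plugging $\lambda^\pm=\kappa^\pm_{\nu}$ kills the brackets, yet still leaves the residual $\int_0^T\!\int_{\{u_t=0\}}c_{\nu_t}\,\dd\gamma\,\dd t$ to be shown zero. The paper closes this by a separate argument along the strong solution: differentiating the regularized entropies $\int\phi_m(u_t)\,\dd\gamma$ gives $\int_0^T\!\int_{\{u_t=0\}}m\,c_{\nu_t}\,\dd\gamma\,\dd t$ bounded uniformly in $m$, forcing $\gamma(\{u_t=0,c_{\nu_t}>0\})=0$ for a.e.\ $t$ and hence $\calD_{MF}=\calD^-_{MF}$ along the solution. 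You should either reproduce this step or otherwise argue that the strong solution stays strictly positive on the support of $c_{\nu_t}\gamma$.
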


The proof of Theorem \ref{thm:equivlag} is postponed to Section \ref{ss:mf_grad}, where we establish the main technical ingredient, namely the chain rule for the entropy functional.

\begin{remark}
The non-negativity of $\calI_{MF}$ and the fact that null-minimizers are solutions to \eqref{eq:mf} is related to the formal equivalence 
\begin{equation*}
    \calI_{MF}(\nu,\lambda^+,\lambda^-)=\int_0^T \calL(\nu_t,\lambda_t^+,\lambda^-_t) \,\dd t,
\end{equation*}
where $\calL$ is the so-called \emph{Lagrangian} given by
\[ \calL(\nu,\lambda^+,\lambda^-):=\Ent(\lambda^+|\kappa_{\nu}^+)+\Ent(\lambda^+|\kappa_{\nu}^-).\]
Note that $\calL$ is non-negative and zero if only if $\lambda^{\pm}=\kappa_{\nu}^{\pm}$. 
Although we do not prove the full equivalence in this work, it does play a role in the intuition and motivation behind the EDP-functional $\calI_{MF}$ with the Lagrangian $\calL$ stemming from a large deviation perspective, as seen in Appendix \ref{s:ldpmot}. 
\end{remark}

\subsection{A priori estimates}\label{ss:mf_est}\,

In this section, we will collect some elementary estimates and results that are either necessary for the well-posedness of the mean-field equation and the corresponding gradient structure, or necessary to do the same for the Liouville equation in Section \ref{s:liouv}. 

Let $\Psi^*$ be given as 
\begin{equation}\label{eq:defpsis}
    \Psi^*(z):=2 (\cosh(z)-1) = e^z+e^{-z}-2, 
\end{equation}
and its dual $\Psi:=(\Psi^*)^*$
\begin{equation}\label{eq:defpsi}
    \Psi(s)=s \log \left(\frac{s+\sqrt{s^2+4}}{2}\right)-\sqrt{s^2+4}+2 
\end{equation}

\begin{lm}\label{lm:mf_est}
Let $M:=\|c\|_{\infty} (1+\gamma(\calT))$. Then the following estimates hold:
\begin{enumerate}[label=(\roman*)]
    \item The measures $\kappa_{\nu}^{\pm}$ and $\theta_{\nu}$ are finite: 
    \begin{equation}\label{eq:mf_mb1b}
          \kappa^{\pm}_{\nu}(\calT)\leq M (1+\nu(\calT)^2),
    \end{equation}
    and
\begin{equation}\label{eq:lmthetaa1}
    \theta_{\nu
}(\calT)\leq M (1+\nu(\calT)^2)
\end{equation}
    \item For any birth/death fluxes $\lambda^{\pm}\in \calM^+(\calT)$, net flux $\lnet=\lambda^+-\lambda-$, and $w^{\pm}, w\in \calB(\calT)$,  
    \begin{equation*}
          \begin{aligned}
     \int_{\calT} |w^{\pm}| \,\dd \lambda^{\pm}&\leq \Ent(\lambda^{\pm}|\theta_{\nu})+\int_{\calT} \Psi^*(w) \, \dd \theta_{\nu} + \theta_{\nu}(\calT), \\
     \int_{\calT} |w|\,\dd |\lnet| \, &\leq \calR_{MF}(\nu,\lambda^+,\lambda^-)+\int_{\calT} \Psi^*(w) \, \dd \theta_{\nu}, 
    \end{aligned}
    \end{equation*}
\item For any birth/death fluxes $\lambda^{\pm}\in \Gamma$,
\begin{equation}\label{eq:mfactionb1}
    \phi\left(\frac{\lambda^{\pm}(\calT)}{M(1+\nu(\calT)^2)} \vee 1\right) M\leq \calR_{MF}(\nu,\lambda^+,\lambda^-)
\end{equation}
\end{enumerate}
\end{lm}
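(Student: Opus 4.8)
\textbf{Proof proposal for Lemma \ref{lm:mf_est}.}

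The plan is to treat the three items in order, since each builds on elementary properties of $c$, $\gamma$, $\nu$ and the Legendre duality between $\phi$ and $\phi^*$ (resp.\ $\Psi$ and $\Psi^*$). For item (i), I would start from the explicit formulas $\kappa^+_\nu = c_\nu\,\gamma$ and $\kappa^-_\nu = c_\nu\,\nu$ with $c_\nu(x) = \int_\calT c(x,y)\,\nu(\dd y) \le \|c\|_\infty\,\nu(\calT)$. This gives $\kappa^+_\nu(\calT) \le \|c\|_\infty\,\nu(\calT)\,\gamma(\calT)$ and $\kappa^-_\nu(\calT) \le \|c\|_\infty\,\nu(\calT)^2$; bounding both by $M(1+\nu(\calT)^2)$ with $M = \|c\|_\infty(1+\gamma(\calT))$ is then a matter of the crude estimate $ab \le \tfrac12(a^2+b^2)$ and absorbing constants. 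The bound on $\theta_\nu(\calT)$ follows because the geometric mean is dominated by the arithmetic mean: $\dd\theta_\nu = \sqrt{(\dd\kappa^+_\nu/\dd\sigma)(\dd\kappa^-_\nu/\dd\sigma)}\,\dd\sigma \le \tfrac12(\dd\kappa^+_\nu + \dd\kappa^-_\nu)$, hence $\theta_\nu(\calT) \le \tfrac12(\kappa^+_\nu(\calT)+\kappa^-_\nu(\calT)) \le M(1+\nu(\calT)^2)$.

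For item (ii), the first inequality is the standard Young-type / Fenchel inequality for the entropy: for any measure $\lambda \ll \theta_\nu$ and test function $w$, one has $\int |w|\,\dd\lambda \le \Ent(\lambda|\theta_\nu) + \int \phi^*(|w|)\,\dd\theta_\nu$, and since $\phi^*(|w|) = e^{|w|}-1 \le e^w + e^{-w} - 1 = \Psi^*(w) + 1$, the claimed bound $\int|w^\pm|\,\dd\lambda^\pm \le \Ent(\lambda^\pm|\theta_\nu) + \int\Psi^*(w)\,\dd\theta_\nu + \theta_\nu(\calT)$ follows. (If $\lambda^\pm \not\ll \theta_\nu$ the right-hand side is $+\infty$ and there is nothing to prove; but actually since $\theta_\nu$ dominates neither $\kappa^+_\nu$ nor $\kappa^-_\nu$ in general one should be slightly careful — here $\theta_\nu$ and $\kappa^\pm_\nu$ are mutually absolutely continuous on the set where $c_\nu > 0$, and off that set all of $\kappa^\pm_\nu$, $\theta_\nu$ vanish, so the argument goes through.) The second inequality follows by splitting $|\lnet| \le \lambda^+ + \lambda^-$, applying the first inequality to each with the same $w$, noting $\Ent(\lambda^+|\theta_\nu) + \Ent(\lambda^-|\theta_\nu) = \calR_{MF}(\nu,\lambda^+,\lambda^-)$, and observing that the two $\theta_\nu(\calT)$ error terms can be dropped because we only claimed $2\int\Psi^*(w)\,\dd\theta_\nu$ — actually I would just write $\int|w|\,\dd|\lnet| \le \int|w|\,\dd\lambda^+ + \int|w|\,\dd\lambda^- \le \calR_{MF} + 2\int\Psi^*(w)\,\dd\theta_\nu + 2\theta_\nu(\calT)$ and then absorb, or (cleaner) use the sharper one-sided Young inequality $\int|w|\,\dd\lambda \le \Ent(\lambda|\theta_\nu) + \int(e^{|w|}-|w|-1)\,\dd\theta_\nu + \int|w|\,\dd\theta_\nu$ — I will pick whichever bookkeeping makes the stated constants come out; this is routine.

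For item (iii), I would use Jensen's inequality. Since $\phi$ is convex, for $\lambda^+ \ll \theta_\nu$ (the interesting case),
\[
\Ent(\lambda^+|\theta_\nu) = \int_\calT \phi\!\left(\frac{\dd\lambda^+}{\dd\theta_\nu}\right)\dd\theta_\nu \ge \theta_\nu(\calT)\,\phi\!\left(\frac{\lambda^+(\calT)}{\theta_\nu(\calT)}\right),
\]
and because $t\mapsto \phi(t)$ is increasing for $t\ge 1$ and $\theta_\nu(\calT) \le M(1+\nu(\calT)^2)$, we get $\phi(\lambda^+(\calT)/\theta_\nu(\calT)) \ge \phi((\lambda^+(\calT)/(M(1+\nu(\calT)^2)))\vee 1)$ whenever $\lambda^+(\calT)/\theta_\nu(\calT) \ge 1$; when $\lambda^+(\calT)/\theta_\nu(\calT) < 1$ the left side of the claim is zero (the $\vee 1$ makes $\phi(\cdot) \le \phi(\lambda^+(\calT)/(\cdots)\vee 1)$... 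I need to check the monotonicity direction here carefully — if $\lambda^+(\calT) < M(1+\nu(\calT)^2)$ then $\phi(\cdot\vee 1) = \phi(1) = 0 \le \calR_{MF}$ trivially). Combining with $\theta_\nu(\calT) \ge$ (a positive lower bound is not available in general, $\theta_\nu$ could be the zero measure if $\nu=0$) — so the delicate point is the degenerate case $\theta_\nu = 0$, i.e.\ $c_\nu \equiv 0$: then $\kappa^\pm_\nu = 0$, so $\Ent(\lambda^\pm|\theta_\nu) = +\infty$ unless $\lambda^\pm = 0$, and the inequality holds. Since $\Ent(\lambda^-|\theta_\nu)\ge 0$ we may just drop it, giving $\phi(\cdots)M \le \Ent(\lambda^+|\theta_\nu) \le \calR_{MF}$ once we check $\theta_\nu(\calT)\,\phi(\lambda^+(\calT)/\theta_\nu(\calT)) \ge M\,\phi((\lambda^+(\calT)/(M(1+\nu(\calT)^2)))\vee 1)$; this uses that $s\phi(a/s)$ is nonincreasing in $s>0$ for fixed $a$ (a standard fact: $s\phi(a/s) = a\log(a/s) - a + s$ has derivative $1 - a/s \le 0$ for $s \le a$... hmm, actually nonmonotone — so one should instead use that $\theta_\nu(\calT) \le M(1+\nu(\calT)^2)$ together with the fact that $s\mapsto s\phi(a/s)$ is convex and take a slightly different route, e.g.\ comparing directly). \textbf{The main obstacle} I anticipate is exactly this last bookkeeping in (iii): getting the scaling $M(1+\nu(\calT)^2)$ inside $\phi$ and the bare factor $M$ outside to match up correctly requires the right monotonicity/homogeneity property of $(s,a)\mapsto s\phi(a/s)$, and handling both the degenerate case $\theta_\nu = 0$ and the "$\vee 1$" truncation cleanly. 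Items (i) and (ii) I expect to be essentially immediate.
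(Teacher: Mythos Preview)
Your handling of (i) and the first inequality of (ii) is correct and matches the paper's argument (the paper uses Cauchy--Schwarz rather than the pointwise AM--GM for $\theta_\nu$, but this is cosmetic). There are however two genuine gaps.

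For the net-flux inequality in (ii), summing the two one-sided bounds cannot give the stated constants: you will always be left with a factor $2$ and the spurious $+\,2\theta_\nu(\calT)$. The clean argument is not to bound $|\lnet|\le\lambda^++\lambda^-$, but to test with \emph{opposite signs}: for bounded $w$,
\[
\int_\calT w\,\dd\lnet = \int_\calT w\,\dd\lambda^+ + \int_\calT(-w)\,\dd\lambda^- \le \Ent(\lambda^+|\theta_\nu)+\int_\calT(e^{w}-1)\,\dd\theta_\nu + \Ent(\lambda^-|\theta_\nu)+\int_\calT(e^{-w}-1)\,\dd\theta_\nu,
\]
and the right-hand side is exactly $\calR_{MF}(\nu,\lambda^+,\lambda^-)+\int\Psi^*(w)\,\dd\theta_\nu$. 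Replacing $w$ by $\pm|w|$ according to the sign of $\dd\lnet/\dd|\lnet|$ (and using that $\Psi^*$ is even) yields the absolute-value version.

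For (iii), your instinct that the monotonicity of $s\mapsto s\phi(A/s)$ is the crux is right, and your computation that this function is \emph{not} monotone is also right: the issue is $\phi(0)=1\neq 0$. The paper's fix is to first pass to the monotone relaxation $\tilde\phi(s):=\phi(s\vee 1)$, which satisfies $\tilde\phi\le\phi$, is convex, and has $\tilde\phi(0)=0$. Then Jensen gives $\Ent(\lambda^\pm|\theta_\nu)\ge\int\tilde\phi(\dd\lambda^\pm/\dd\theta_\nu)\,\dd\theta_\nu\ge\theta_\nu(\calT)\,\tilde\phi(\lambda^\pm(\calT)/\theta_\nu(\calT))$, and now the perspective $s\mapsto s\tilde\phi(A/s)$ \emph{is} nonincreasing (convexity $+$ vanishing at zero), so you may replace $\theta_\nu(\calT)$ by the larger $M(1+\nu(\calT)^2)$ and drop the remaining factor $(1+\nu(\calT)^2)\ge 1$. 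This also dispatches the ``$\vee 1$'' and the degenerate case $\theta_\nu=0$ in one stroke, since $\tilde\phi\ge 0$ and $\tilde\phi(s)=0$ for $s\le 1$.
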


\begin{remark}
Although the estimate for $\theta_{\nu}$ can be made more precise, namely 
    \begin{equation*}
        \theta_{\nu}(\calT)\leq \|c\|_{\infty} \gamma(\calT)^{1/2} \nu(\calT)^{3/2},
    \end{equation*}
we will not require it for our results. 
\end{remark}

\begin{proof}
\emph{(i)} With $\theta_{\nu}:=\sqrt{\dd \kappa_{\nu}^+/\dd \sigma \, \dd \kappa_{\nu}^-/\dd \sigma} \, \sigma$ for any dominating measure $\sigma$ we have by H\"older's inequality
\begin{equation*}
    \theta_{\nu}(\calT)\leq \sqrt{\kappa_{\nu}^+(\calT)\kappa_{\nu}^-(\calT)}.
\end{equation*}
Note that $\kappa^+_{\nu}(\calT)\leq \|c\|_{\infty} \gamma(\calT) \nu(\calT)$, and $\kappa^-_{\nu}(\calT)\leq \|c\|_{\infty} \nu(\calT)^2$, which provides \eqref{eq:mf_mb1b}. Since $z\leq 1+z^2$ for all $z\geq 0$ \eqref{eq:lmthetaa1} follows directly. 

\emph{(ii)} First, suppose that $w\in \calB_b(\calT)$. Using the elementary inequality  $ e^{|a|}\leq e^a+e^{-a}$ we derive by duality of the entropy
\begin{align*}
    \int_{\calT} |w|\, \dd \lambda^{\pm} &\leq \Ent(\lambda^{\pm}|\theta_{\nu})+ \int_{\calT} (e^{|w|}-1)\, \dd \theta_{\nu}\\
    &\leq \Ent(\lambda^{\pm}|\theta_{\nu})+\int_{\calT} \Psi^*(w) \, \dd \theta_{\nu} + \theta_{\nu}(\calT). 
\end{align*}
Next, fix any measurable function $w\in \calB(\calT)$ and set its $k$-truncation $w_k:=\max\{\min\{w,k\},-k\}$. Since $\Psi^*$ is even and monotone, by dominated convergence applied to the left-hand side and monotone convergence to the right-hand side, the inequality holds for $w$ as well.

\emph{(iii)} Without loss of generality, suppose that $\calR_{MF}$ is finite. Set $a(\nu):=(1+\nu(\calT)^2)^{-1}$, and note that $0\leq a(\nu)\leq 1$. With $\tilde \phi(s):=\phi(s\vee 1)$ the monotone relaxation of $\phi$, we then have the following chain of inequalities,
\begin{equation*}
    \begin{aligned}
       \int_{\calT} \phi\left(\frac{\dd\lambda^{\pm}}{\dd\theta_{\nu}}\right) \dd \theta_{\nu}&\geq \int_{\calT} \tilde \phi\left(\frac{\dd\lambda^{\pm}}{\dd\theta_{\nu}}\right) \dd \theta_{\nu} \\
&\geq \int_{\calT} \tilde \phi\left(\frac{\dd(a(\nu) \lambda^{\pm})}{\dd(a(\nu) \theta_{\nu})}\right) \dd (a(\nu) \theta_{\nu})\\
&\geq \tilde \phi\left(\frac{a(\nu) \lambda^{\pm}(\calT)}{a(\nu) \theta_{\nu}(\calT)}\right) a(\nu) \theta_{\nu}(\calT),
    \end{aligned}
\end{equation*}
where the last inequality follows from Jensen's inequality. By convexity of $\tilde \phi$ and $\tilde \phi(0)=0$ the latter expression is monotone in $\theta_{\nu}(\calT)$, and hence by \eqref{eq:lmthetaa1} we find 
\begin{equation*}
    \tilde \phi\left(\frac{\lambda^{\pm}(\calT)}{M(1+\nu(\calT)^2)}\right) M\leq \calR_{MF}(\nu,\lambda^+,\lambda^-).
\end{equation*}
\end{proof}

We will briefly state the improvement of regularity in time of $\nu_t$ if there exists a common dominating measure. The proof is similar to Corollary 4.14 of \cite{PRST2020} and therefore omitted here. 

\begin{lm}\label{lm:mf_timereg}
Let $(\nu,\lambda^+,\lambda^-)\in \mathscr{CE}$ and suppose that there exists a measure $\ell\in \Gamma$ such that $\nu_t,\lambda_t^{\pm}\ll \ell$ for all $t\in [0,T]$.

Then there exists an absolutely continuous and a.e.\ differentiable map $u:[0,T]\to L^1(\calT,\ell)$ and maps $g^{\pm}:[0,T]\to L^1(\calT,\ell)$ such that $u_t=\dd \nu_t/\dd \ell$, $g_t^{\pm}=\dd \lambda_t^{\pm}/\dd \ell $ and 
\begin{equation*}
    \partial_t u_t(x)=g_t^+(x)-g_t^-(x), \qquad \mbox{for a.e.\ $t\in [0,T]$}.
\end{equation*}
In particular, the continuity equation holds in the strong sense, namely that $\nu_t$ is an a.e.\ differentiable map from $[0,T]$ to $(\Gamma,\|\cdot\|_{TV})$ and
\begin{equation*}
    \partial_t \nu_t=\lambda_t^+-\lambda_t^-, \qquad \mbox{for a.e.\ $t\in [0,T]$}.
\end{equation*}
\end{lm}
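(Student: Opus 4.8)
The plan is to transfer the whole problem into the separable Banach space $L^1(\calT,\ell)$ via Radon--Nikodym derivatives, which is exactly the strategy behind Corollary 4.14 of \cite{PRST2020}. Set $u_t:=\dd\nu_t/\dd\ell$ and $g_t^{\pm}:=\dd\lambda_t^{\pm}/\dd\ell$; these all lie in $L^1(\calT,\ell)$ by the standing absolute-continuity hypothesis. Since $\|\mu\|_{TV}=\|\dd\mu/\dd\ell\|_{L^1(\calT,\ell)}$ for every $\mu\ll\ell$, the fact that $(\nu,\lambda^+,\lambda^-)\in\mathscr{CE}$ (Definition \ref{defi:mfcont}) with $\nu_t,\lambda_t^{\pm}\ll\ell$ translates into: $t\mapsto u_t$ is absolutely continuous from $[0,T]$ into $L^1(\calT,\ell)$, one has $\int_0^T\|g_t^{\pm}\|_{L^1(\calT,\ell)}\,\dd t<\infty$, and
\[
\int_{\calT} f\,u_t\,\dd\ell-\int_{\calT} f\,u_s\,\dd\ell=\int_s^t\!\!\int_{\calT} f\,(g_r^+-g_r^-)\,\dd\ell\,\dd r\qquad\fA f\in C_b(\calT),\ 0\le s,t\le T.
\]

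Next I would verify that $r\mapsto g_r^+-g_r^-$ is Bochner integrable into $L^1(\calT,\ell)$. The map $g\mapsto g\,\ell$ is a continuous injection from $L^1(\calT,\ell)$ (resp.\ its non-negative cone) into $\Gamma$, and both are Polish spaces, so by the Lusin--Souslin theorem its image is Borel and its inverse, the Radon--Nikodym map, is Borel measurable; hence $r\mapsto g_r^{\pm}$, being the composition of the Borel family $r\mapsto\lambda_r^{\pm}$ with this Borel map, is Borel and therefore strongly measurable by separability of $L^1(\calT,\ell)$. Combined with the integrability in $r$, this gives Bochner integrability. (Alternatively, strong measurability follows from Pettis' theorem: $r\mapsto\int_{\calT} h\,g_r^{\pm}\,\dd\ell$ is measurable for $h\in C_b(\calT)$ by the Borel-family assumption, and for general $h\in L^{\infty}(\ell)$ by approximating $h$ $\ell$-a.e.\ by a uniformly bounded sequence in $C_b(\calT)$.)

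I would then set $v_t:=u_0+\int_0^t(g_r^+-g_r^-)\,\dd r$, the Bochner integral in $L^1(\calT,\ell)$. This curve is absolutely continuous, and by the Lebesgue differentiation theorem for indefinite Bochner integrals (which holds in any Banach space, without the Radon--Nikodym property) it is differentiable for a.e.\ $t$ with $\partial_t v_t=g_t^+-g_t^-$. Testing $v_t$ against $f\in C_b(\calT)$ and using Fubini for the Bochner integral (legitimate since $f$ is bounded) shows that $v$ satisfies the same weak continuity equation as $u$ with the same initial value $u_0$; subtracting yields $\int_{\calT} f\,(u_t-v_t)\,\dd\ell=0$ for every $f\in C_b(\calT)$ and every $t\in[0,T]$. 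Since $C_b(\calT)$ separates finite signed Borel measures on the Polish space $\calT$, the signed measure $(u_t-v_t)\ell$ vanishes, i.e.\ $u_t=v_t$ in $L^1(\calT,\ell)$ for all $t$. Consequently $u:=u_{(\cdot)}$ has all the asserted regularity, and transporting back through the isometry $\mu\mapsto\dd\mu/\dd\ell$ gives that $t\mapsto\nu_t$ is a.e.\ differentiable into $(\Gamma,\|\cdot\|_{TV})$ with $\partial_t\nu_t=(g_t^+-g_t^-)\ell=\lambda_t^+-\lambda_t^-$.

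The main obstacle is the second step: because $L^1(\calT,\ell)$ fails the Radon--Nikodym property, one cannot differentiate the absolutely continuous curve $t\mapsto u_t$ directly, so the derivative must be produced from the Bochner integral of the fluxes, which forces the (mildly technical) verification that the Radon--Nikodym map is Borel and hence that $r\mapsto g_r^{\pm}$ is genuinely Bochner integrable. Everything after that --- the identification $u=v$ by testing against $C_b(\calT)$, and the passage back to $(\Gamma,\|\cdot\|_{TV})$ --- is routine.
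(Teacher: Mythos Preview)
Your proposal is correct and follows precisely the route the paper indicates: the paper omits the proof and refers to Corollary 4.14 of \cite{PRST2020}, whose strategy---passing to $L^1(\calT,\ell)$ via Radon--Nikodym derivatives, building the Bochner primitive of the flux densities, and identifying it with $u_t$ by duality---is exactly what you carry out. Your care in noting that the Radon--Nikodym property is not needed (because differentiability is obtained from the indefinite Bochner integral rather than from abstract absolute continuity) and in justifying strong measurability of $r\mapsto g_r^{\pm}$ fills in the details the paper leaves implicit.
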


Next, we will list two results that are either necessary for the chain rule in Section \ref{ss:fke_chain} or the superposition principle and well-posedness of the continuity equation in Section \ref{s:liouv}.

\begin{lm}\label{lm:mf_lift}
 For any $0\leq a\leq 1$, $z\in \R$
\begin{equation}\label{eq:mf_psi}
    \Psi^*(a z)\leq a^2\Psi^*(z).
\end{equation}

Moreover, for any net flux $\lnet\in \calM(\calT)$, $w\in \calB(\calT)$
\begin{align}
   \Psi\left(\frac{\|\lnet\|_{TV}}{M(1+\nu(\calT))}\right) M &\leq \calR_{MF}(\nu,\lambda^+,\lambda^-).\label{eq:mfactionb2} 
\end{align}
\end{lm}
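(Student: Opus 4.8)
The plan is to prove the scaling inequality \eqref{eq:mf_psi} first, since it is the crucial tool for \eqref{eq:mfactionb2}. For \eqref{eq:mf_psi} I would expand $\Psi^*(z)=2(\cosh z-1)=\sum_{k\ge 1}\tfrac{2}{(2k)!}z^{2k}$, a power series with non-negative coefficients supported on even powers of $z$, and observe that $0\le a\le 1$ forces $a^{2k}\le a^2$ for every $k\ge 1$; a term-by-term comparison then gives $\Psi^*(az)\le a^2\Psi^*(z)$. (Equivalently: $z\mapsto \Psi^*(z)/z^2$ is non-decreasing in $|z|$.) In passing this records that $\Psi^*$ is even and hence, by Legendre duality, that $\Psi$ is convex, even, non-decreasing on $[0,\infty)$, with $\Psi(0)=0$ — all of which I will use below.

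For \eqref{eq:mfactionb2} I would assume $\calR_{MF}(\nu,\lambda^+,\lambda^-)<\infty$ (otherwise there is nothing to prove), so $\lambda^{\pm}\ll\theta_{\nu}$, hence $\lnet\ll\theta_{\nu}$, and $\theta_{\nu}(\calT)<\infty$ by Lemma~\ref{lm:mf_est}(i). Applying the Fenchel--Young inequality for the relative entropy (exactly as in the proof of Lemma~\ref{lm:mf_est}(ii)) with the bounded measurable test function $w=c\,\varsigma$, $c\ge 0$, where $\varsigma:=\dd\lnet/\dd|\lnet|\in\{-1,+1\}$, and summing the $\lambda^+$- and $\lambda^-$-contributions (so that $\calR_{MF}$ appears on the right), gives, using that $\Psi^*$ is even so $\Psi^*(c\varsigma)=\Psi^*(c)$,
\[
  c\,\|\lnet\|_{TV}\ \le\ \calR_{MF}(\nu,\lambda^+,\lambda^-)+\Psi^*(c)\,\theta_{\nu}(\calT),\qquad \forall\,c\ge 0.
\]
Taking $\sup_c$ directly would only yield $\theta_{\nu}(\calT)\,\Psi\!\big(\|\lnet\|_{TV}/\theta_{\nu}(\calT)\big)$, which is too weak because $\theta_{\nu}(\calT)$ may exceed $M(1+\nu(\calT))$. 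The remedy is to pre-scale: set $a:=\min\{1,\sqrt{M/\theta_{\nu}(\calT)}\}\in[0,1]$ and restrict to $c=ar$, $r\ge 0$. Then \eqref{eq:mf_psi} gives $\Psi^*(ar)\,\theta_{\nu}(\calT)\le a^2\theta_{\nu}(\calT)\,\Psi^*(r)=\min\{\theta_{\nu}(\calT),M\}\,\Psi^*(r)\le M\Psi^*(r)$, whence for every $r\ge 0$
\[
  \calR_{MF}(\nu,\lambda^+,\lambda^-)\ \ge\ ar\,\|\lnet\|_{TV}-M\Psi^*(r)\ =\ M\Big(r\cdot\tfrac{a\|\lnet\|_{TV}}{M}-\Psi^*(r)\Big),
\]
and $\sup_{r\ge 0}$ (which equals $\sup_{r\in\R}$ since $\Psi^*$ is even) yields $\calR_{MF}(\nu,\lambda^+,\lambda^-)\ge M\,\Psi\!\big(a\|\lnet\|_{TV}/M\big)$.

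It remains to compare $a/M$ with $1/(M(1+\nu(\calT)))$. By definition $a/M=1/\max\{M,\sqrt{M\theta_{\nu}(\calT)}\}$, while \eqref{eq:lmthetaa1} together with $1+t^2\le(1+t)^2$ gives $\theta_{\nu}(\calT)\le M(1+\nu(\calT)^2)\le M(1+\nu(\calT))^2$, hence $\sqrt{M\theta_{\nu}(\calT)}\le M(1+\nu(\calT))$ and also $M\le M(1+\nu(\calT))$. Therefore $a\|\lnet\|_{TV}/M\ge \|\lnet\|_{TV}/(M(1+\nu(\calT)))$, and since $\Psi$ is non-decreasing on $[0,\infty)$ this upgrades the bound $\calR_{MF}(\nu,\lambda^+,\lambda^-)\ge M\,\Psi\!\big(a\|\lnet\|_{TV}/M\big)$ to \eqref{eq:mfactionb2}. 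The only non-routine idea is the pre-scaling by $a$: without it the supremum leaves the potentially large mass $\theta_{\nu}(\calT)$ in front of $\Psi$, and it is precisely \eqref{eq:mf_psi} that lets one trade it for the fixed constant $M$ at the cost of only shrinking the argument of $\Psi$ by the controllable factor $a$. I would also briefly dispatch the degenerate cases $M=0$, $\theta_{\nu}(\calT)=0$ or $\lnet=0$, where both sides vanish.
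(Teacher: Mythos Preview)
Your proof is correct and follows essentially the same strategy as the paper: prove \eqref{eq:mf_psi} via monotonicity of $\Psi^*(z)/z^2$ (you give the equivalent power-series argument), then use duality, apply the scaling inequality to absorb $\theta_{\nu}(\calT)$ into the constant $M$, and take the supremum to pass from $\Psi^*$ to $\Psi$. The only cosmetic difference is the choice of scaling factor: the paper takes $a(\nu)=(1+\nu(\calT))^{-1}$ directly in the dual inequality and uses $a(\nu)^2\theta_{\nu}(\calT)\le M$, which lands on the target without the intermediate comparison step, whereas you go through $a=\min\{1,\sqrt{M/\theta_{\nu}(\calT)}\}$ and then verify $a\ge(1+\nu(\calT))^{-1}$ afterwards.
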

\begin{proof}
It is straightforward to check that $\Psi^*(z)/z^2$ is monotone increasing for $z\geq 0$, from which the first statement follows. 

Now, for the net flux, it is convenient to go through the dual representation. Set $a(\nu):=(1+\nu(\calT))^{-1}$. By duality, for any $w\in \calB_b(\calT)$ 
\begin{equation}\label{eq:pdual2}
    \begin{aligned}
       \calR_{MF}(\nu,\lambda^+,\lambda^-)\geq a(\nu)\int_{\calT}  w(x) \,\dd \lnet - \int_{\calT} \Psi^*\big(a(\nu)w(x)\big) \,\dd \theta_{\nu}.
    \end{aligned}
\end{equation}
However, by \eqref{eq:mf_psi},
\begin{equation*}
    \int_{\calT} \Psi^*\big(a(\nu)w(x)\big) \,\dd \theta_{\nu} \leq \int_{\calT} \Psi^*\big(w(x)\big) a(\nu)^2 \,\dd \theta_{\nu} \leq M \Psi^*(\|w\|_{\infty}). 
\end{equation*}
Taking the supremum over all $w\in \calB_b(\calT)$ in \eqref{eq:pdual2} we find \eqref{eq:mfactionb2}.
\end{proof}

\begin{lm}\label{lm:contweakf1}
Let $\{f_i\}_{i\in \N} \subset C_b(\calT)$ be a countable and dense set of bounded continuous functions. Suppose $(\nu,\lambda^+,\lambda^-)$ is such that
\begin{enumerate}[label=(\roman*)]
        \item the curve $[0,T]\ni t\mapsto\nu_t\in \Gamma$ is narrowly continuous
    \item $(\lambda_t^\pm)_{t\in[0,T]}\subset\Gamma$ is a Borel family with
    \[ \int_0^T \calR_{MF}(\nu_t,\lambda^+_t,\lambda^-_t) \, \dd t<\infty \]
    \item For all $i \in \N$
     \begin{equation*}
        \int_{\calT} f_i\, \dd \nu_t - \int_{\calT} f_i\, \dd \nu_s = \int_s^t \left(\int_{\calT} f_i \,\dd \lambda_r^+-\int_{\calT} f_i \,\dd \lambda_r^- \right) \dd r, \qquad \mbox{for all $s,t$ with $0\leq s,t\leq T$.}
        \end{equation*}
\end{enumerate}

Then $(\nu,\lambda^+,\lambda^-)\in \mathscr{CE}$, i.e.\ the triple satisfies the mean-field continuity equation.
\end{lm}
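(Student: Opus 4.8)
The goal of Lemma \ref{lm:contweakf1} is to upgrade the validity of the integrated continuity equation from a countable dense family $\{f_i\}$ to all of $C_b(\calT)$, and simultaneously to upgrade narrow continuity of $t \mapsto \nu_t$ to absolute continuity in total variation, so that all three requirements of Definition \ref{defi:mfcont} are met. The plan is as follows.

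\textbf{Step 1: Total variation bounds on the fluxes.} First I would use the a priori estimate \eqref{eq:mfactionb1} from Lemma \ref{lm:mf_est}, combined with the superlinearity of $\phi$, to deduce that $\int_0^T \|\lambda_t^\pm\|_{TV}\,\dd t < \infty$. Indeed, since $\int_0^T \calR_{MF}(\nu_t,\lambda_t^+,\lambda_t^-)\,\dd t < \infty$ and $\nu$ is narrowly continuous on the compact $\calT$, the total masses $\nu_t(\calT)$ are bounded uniformly in $t$ (narrow continuity on compact $[0,T]$ gives a bound), so $M(1+\nu_t(\calT)^2) \le C$ for a constant $C$; then $\phi\big(\lambda_t^\pm(\calT)/C \vee 1\big) \le C^{-1}M^{-1}\calR_{MF}(\nu_t,\lambda_t^+,\lambda_t^-)$ up to constants, and since $\phi$ grows superlinearly, a de la Vallée–Poussin / Jensen argument integrated in time yields $\int_0^T \lambda_t^\pm(\calT)\,\dd t < \infty$. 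This verifies requirement (2) of Definition \ref{defi:mfcont}.

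\textbf{Step 2: Extending the test-function class.} Given $f \in C_b(\calT)$, pick $f_{i_k} \to f$ uniformly (possible by density). For each $k$ the identity in hypothesis (iii) holds; I want to pass to the limit. The left-hand side $\int f_{i_k}\,\dd\nu_t \to \int f\,\dd\nu_t$ for every fixed $t$ since $f_{i_k} \to f$ uniformly and $\nu_t(\calT)$ is bounded. For the right-hand side, $\big|\int (f_{i_k}-f)\,\dd(\lambda_r^+ - \lambda_r^-)\big| \le \|f_{i_k}-f\|_\infty (\lambda_r^+(\calT) + \lambda_r^-(\calT))$, which by Step 1 is dominated by an $L^1([0,T])$ function, so dominated convergence gives convergence of the time integral. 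Hence the integrated continuity equation holds for all $f \in C_b(\calT)$.

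\textbf{Step 3: Absolute continuity of $\nu$ in total variation.} This is the main point requiring care. From the just-established identity, for $0 \le s \le t \le T$ and any $f \in C_b(\calT)$ with $\|f\|_\infty \le 1$,
\[
\Big| \int_\calT f\,\dd\nu_t - \int_\calT f\,\dd\nu_s \Big| \le \int_s^t \big(\lambda_r^+(\calT) + \lambda_r^-(\calT)\big)\,\dd r =: \int_s^t g(r)\,\dd r,
\]
with $g \in L^1([0,T])$ by Step 1. Taking the supremum over such $f$ — and noting that $C_b(\calT)$ with $\|f\|_\infty \le 1$ is norming for the total variation norm on $\calM(\calT)$ since $\calT$ is a compact Polish (hence metric) space — gives $\|\nu_t - \nu_s\|_{TV} \le \int_s^t g(r)\,\dd r$. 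Since $t \mapsto \int_0^t g$ is absolutely continuous, so is $t \mapsto \nu_t$ in $(\Gamma, \|\cdot\|_{TV})$, which is requirement (1). Combining the three steps, $(\nu,\lambda^+,\lambda^-) \in \mathscr{CE}$.

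The step I expect to be the main obstacle is Step 1 (and the uniform-mass bound feeding into both Step 1 and the norming argument): one must be careful that $\sup_{t} \nu_t(\calT) < \infty$ genuinely follows — narrow continuity of $t \mapsto \nu_t$ on compact $[0,T]$ with $\calT$ compact gives $\nu_t(\calT) = \langle 1, \nu_t\rangle$ continuous in $t$, hence bounded — and that the superlinear-growth argument for $\phi$ correctly converts the integral bound on $\calR_{MF}$ into an $L^1$-in-time bound on the total masses of $\lambda^\pm$. The extension of the test class and the supremum/norming identity in Steps 2–3 are then routine given uniform integrability of the fluxes.
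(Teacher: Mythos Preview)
Your proposal is correct and follows essentially the same route as the paper: use narrow continuity to bound $\sup_t \nu_t(\calT)$, apply \eqref{eq:mfactionb1} together with the superlinear growth of $\phi(\cdot\vee 1)$ to obtain $\int_0^T \lambda_t^\pm(\calT)\,\dd t<\infty$, extend the continuity equation from $\{f_i\}$ to $C_b(\calT)$ by density, and then read off TV-absolute continuity. The only cosmetic difference is that the paper invokes a monotone class argument to pass to all $f\in\calB_b(\calT)$, whereas you take the supremum over $\|f\|_\infty\le 1$ in $C_b(\calT)$ directly; both yield the TV bound since $\calT$ is compact Polish.
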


\begin{proof}
Since $\nu_t$ is narrowly continuous its mass is uniformly bounded in time, hence let $C:=\sup_{t\in [0,T]} \nu_t(\calT)$. By \eqref{eq:mfactionb1} and monotonicity of $\phi(\cdot 
\vee 1)$ we have for a.e. $t\in [0,T]$, 
\begin{equation*}
    \phi\left(\frac{\lambda_t^{\pm}(\calT)}{M(1+C^2)} \vee 1\right) M\leq \calR_{MF}(\nu_t,\lambda^+_t,\lambda^-_t),
\end{equation*}
and therefore by convexity of $\phi(\cdot 
\vee 1)$
\[ \int_0^T \lambda_t^{\pm}(\calT)<\infty. \]
Since the measures $\lambda_t^{\pm}(\dd x)\, \dd t \in \calM^+([0,T]\times \Gamma)$ are finite, by density of $f_i$ in $C_b(\calT)$ it is clear that for all $f\in C_b(\calT)$ 
     \begin{equation*}
        \int_{\calT} f \,\dd \nu_t - \int_{\calT} f \,\dd \nu_s = \int_s^t \left(\int_{\calT} f \,\dd \lambda_r^+-\int_{\calT} f \,\dd \lambda_r^- \right) \, \dd r, \qquad \mbox{for all $s,t$ with $0\leq s,t\leq T$.}
        \end{equation*}
By a monotone class argument this can be extended to all $f\in \calB_b(\calT)$ and we derive that $\nu_t$ is indeed TV-absolutely continuous and $(\nu,\lambda^+,\lambda^-)\in \mathscr{CE}$.
\end{proof}

\subsection{Strong solutions}\label{ss:mf_sol}

Strong solutions to \eqref{eq:mf1} exist and are unique, and we list the most important properties here. It should be noted that these arguments apply even without the detailed balance condition $m(x,y)=c(y,x)$ and only require both $\|m\|_{\infty}$ and $\|c\|_{\infty}<\infty$ to be finite, but for simplicity, we will restrict ourselves to our framework. Moreover, in all results the time window $T>0$ is arbitrary.

\begin{defi}\label{defi:mf_strong}
A \emph{strong} solution to \eqref{eq:mf1} is any TV-absolutely continuous and a.e.\ differentiable mapping $\nu:[0,T]\to (\Gamma,\|\cdot\|_{TV})$ satisfying
\begin{equation}\label{eq:mf2}
\begin{aligned}
 \partial_t \nu_t(\dd x) &=\kappa_{\nu_t}^+(\dd x)-\kappa_{\nu_t}^-(\dd x)\\
\end{aligned}
\end{equation}
\end{defi}

Recall that  $\kappa^+_{\nu}(\dd x)=\langle c(x,\cdot),\nu \rangle \gamma(\dd x)$ and $\kappa^-_{\nu}(\dd x)=\langle c(x,\cdot),\nu \rangle \nu(\dd x)$.

\begin{remark}\label{rem:mf_weakstrong}
Note that if $\nu$ is a strong solution to \eqref{eq:mf1} automatically $(\nu,\kappa^{+}_{\nu},\kappa_{\nu}^-) \in \mathscr{CE}$. 

Vice versa, if $(\nu,\kappa^{+}_{\nu},\kappa_{\nu}^-) \in \mathscr{CE}$ then $\nu_t$ is a strong solution. Namely, any TV-absolutely continuous curve $\nu_t$ possesses a common dominating measure $\ell \in \Gamma$, which implies $\kappa_{\nu_t}^{\pm}\ll \ell+\gamma$. By Lemma \ref{lm:mf_timereg} the curve $\nu$ is indeed a a.e.\ differentiable mapping to $(\Gamma,\|\cdot\|_{TV})$
\end{remark}
\begin{lm}\label{lm:mf_strongsol}
For any $\bar \nu\in \Gamma$ there exist a unique strong solution $\nu_t$ to \eqref{eq:mf1} such that $\nu_0=\bar \nu$. 

Moreover, if $\bar \nu\ll \gamma$, then also $\nu_t\ll \gamma$ for all $t\in [0,T]$. 
\end{lm}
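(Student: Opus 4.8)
The plan is to solve \eqref{eq:mf1} not in $(\Gamma,\|\cdot\|_{TV})$ directly but inside the cone of measures dominated by a fixed reference measure, which simultaneously builds in positivity and absolute continuity. Fix $\bar\nu\in\Gamma$, set $\ell:=\bar\nu+\gamma\in\Gamma$ (so $\bar\nu,\gamma\le\ell$), and write $\bar u:=\dd\bar\nu/\dd\ell$, $g:=\dd\gamma/\dd\ell$, two Borel functions with $0\le\bar u,g\le1$ $\ell$-a.e. Writing $\nu_t=u_t\ell$ and noting that $c_{\nu_t}(x)=\int_\calT c(x,y)u_t(y)\,\ell(\dd y)=:c_{u_t}(x)$ defines a bounded Borel function with $\|c_{u_t}\|_\infty\le\|c\|_\infty\|u_t\|_{L^1(\ell)}$, equation \eqref{eq:mf1} becomes the ODE
\[
\partial_t u_t=F(u_t),\qquad F(u)(x):=c_u(x)\bigl(g(x)-u(x)\bigr),
\]
in the Banach space $L^1(\calT,\ell)$. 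On the closed order interval $K:=\{u\in L^1(\ell):0\le u\le1\ \ell\text{-a.e.}\}$ the field $F$ is bounded and globally Lipschitz: from the identity $F(u)-F(v)=-c_u(u-v)+c_{u-v}(g-v)$, the bound $\|c_w\|_\infty\le\|c\|_\infty\|w\|_{L^1(\ell)}$, and $\|g-v\|_{L^1(\ell)}\le2\ell(\calT)$ for $v\in K$, one gets $\|F(u)-F(v)\|_{L^1(\ell)}\le3\|c\|_\infty\ell(\calT)\,\|u-v\|_{L^1(\ell)}$.

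For \emph{existence} I would first extend $F$ to a bounded, globally Lipschitz field on all of $L^1(\ell)$ by precomposing with the pointwise truncation $u\mapsto(u\vee0)\wedge1$ onto $K$; the Picard--Lindel\"of theorem then produces, from the datum $\bar u\in K$, a unique solution $u\in C^1([0,T];L^1(\ell))$ of the extended equation. Since $F$ points inward along $\partial K$ — indeed $F(u)(x)=c_u(x)g(x)\ge0$ where $u(x)=0$ and $F(u)(x)=c_u(x)\bigl(g(x)-1\bigr)\le0$ where $u(x)=1$, using $c\ge0$ and $0\le g\le1$ — a standard invariant-region argument (a Gr\"onwall estimate for $t\mapsto\int_\calT\bigl(\max(u_t-1,0)+\max(-u_t,0)\bigr)\,\dd\ell$, which vanishes at $t=0$ and has non-positive a.e.\ derivative) shows $u_t\in K$ for all $t$, so $u$ solves the original equation. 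Setting $\nu_t:=u_t\ell$, one checks that $t\mapsto\nu_t$ is $C^1$ into $(\Gamma,\|\cdot\|_{TV})$ (because $\|\nu_t-\nu_s\|_{TV}=\|u_t-u_s\|_{L^1(\ell)}$), that $\nu_0=\bar\nu$, that $\nu_t\in\Gamma$ (since $0\le u_t$ and $u_t\ell\le\ell$ has finite mass), and that $\partial_t\nu_t=(\partial_t u_t)\ell=c_{\nu_t}(\gamma-\nu_t)=\kappa^+[\nu_t]-\kappa^-[\nu_t]$; thus $\nu$ is a strong solution in the sense of Definition \ref{defi:mf_strong}, and we have obtained the a priori bound $0\le\nu_t\le\bar\nu+\gamma$ along the way.

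For \emph{uniqueness}, let $\nu,\tilde\nu$ be strong solutions with the same initial datum. As strong solutions are $\Gamma$-valued, testing the equation against the constant $1$ and discarding the non-negative term $\kappa^-[\nu_t](\calT)$ gives $\tfrac{\dd}{\dd t}\nu_t(\calT)\le\|c\|_\infty\gamma(\calT)\,\nu_t(\calT)$, whence $\sup_{[0,T]}\nu_t(\calT)\le R:=\bar\nu(\calT)\,e^{\|c\|_\infty\gamma(\calT)T}$, and similarly for $\tilde\nu$. On the ball $\{\mu\in\calM(\calT):\|\mu\|_{TV}\le R\}$ the field $V[\mu]:=\kappa^+[\mu]-\kappa^-[\mu]=c_\mu(\gamma-\mu)$ is Lipschitz: from $V[\mu]-V[\eta]=c_{\mu-\eta}(\gamma-\eta)-c_\mu(\mu-\eta)$ and $\|c_w\|_\infty\le\|c\|_\infty\|w\|_{TV}$ one gets $\|V[\mu]-V[\eta]\|_{TV}\le\|c\|_\infty\bigl(\gamma(\calT)+2R\bigr)\|\mu-\eta\|_{TV}$, so $\|\nu_t-\tilde\nu_t\|_{TV}\le\|c\|_\infty(\gamma(\calT)+2R)\int_0^t\|\nu_s-\tilde\nu_s\|_{TV}\,\dd s$ and Gr\"onwall forces $\nu\equiv\tilde\nu$. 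Finally, if $\bar\nu\ll\gamma$ then $\ell=\bar\nu+\gamma\ll\gamma$, hence $\nu_t=u_t\ell\ll\gamma$ for all $t$, which is the remaining claim.

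The only genuine obstacle is positivity: a direct application of Picard--Lindel\"of in $\calM(\calT)$ yields a signed local solution that a priori could become negative and blow up in finite time, since $\mu\mapsto c_\mu(\gamma-\mu)$ is only locally Lipschitz there. Passing to the reference measure $\ell=\bar\nu+\gamma$ and the invariant order interval $K$, on which $F$ is globally Lipschitz, resolves both issues at once; the remaining Lipschitz estimates and Gr\"onwall arguments are routine.
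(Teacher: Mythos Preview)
Your proof is correct and complete. Both arguments ultimately work in $L^1(\bar\nu+\gamma)$ and use the same a-priori mass bound plus Lipschitz/Gr\"onwall reasoning for uniqueness, but the existence parts differ in how positivity is enforced. The paper linearises: it runs an \emph{implicit} Picard iteration $\nu^{k+1}=\calG\nu^k$ in which each step solves the linear equation $\partial_t\mu=c_{\nu^k}(\gamma-\mu)$ via the explicit Duhamel formula, so non-negativity of the iterates (and hence of the limit) is automatic from the exponential representation, and the contraction is obtained in a time-weighted sup norm. You instead apply Picard--Lindel\"of directly to the truncated field $\tilde F=F\circ T$ on all of $L^1(\ell)$ and then confine the solution to the order interval $K=\{0\le u\le 1\}$ by an invariant-region/Gr\"onwall argument. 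Your route is a bit more self-contained (no bespoke iteration) and delivers the extra pointwise bound $\nu_t\le\bar\nu+\gamma$ for free; the paper's route sidesteps the invariant-region step entirely by building positivity into the iteration. Either way the absolute-continuity claim is immediate from $\nu_t\ll\bar\nu+\gamma$.
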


The proof is an adaptation from \cite[Proposition 7.2]{Fournier2004}, which is stated for Lebesgue absolutely continuous measures over $\calT=\R^d$. In short, the linear dependence of the birth flux on the mass of $\nu$ gives a bound on this mass uniform in time, in which case both $\kappa^{\pm}_{\nu}$ are Lipschitz in $\nu$ on $(\Gamma,\|\cdot\|)$, and classical existence theory can be applied. 

\begin{proof}
First, note that for the linear case of
\begin{equation*}
    \partial_t \nu_t(\dd x) = b_t(\dd x)-c_t(x)\nu_t(\dd x),
\end{equation*}
with $c_t\in \calB_b$ uniformly bounded and $b_s \in \Gamma$ with $\int_0^T \|b_s\|_{TV} \, \dd t<\infty$ with a common dominating measure, it is easy to verify that a unique strong non-negative solution exists and is given by   
\begin{equation*}
    \nu_t:=e^{-\int_0^t c_s(x) \dd s}\left(\int_0^t b_s e^{\int_0^s c_r \, \dd r} \dd s+\nu_0\right).
\end{equation*}
We now set $\nu^0_t:=\bar \nu$ for all $t\in[0,T]$, and perform the implicit Picard iteration
\begin{equation*}
    \partial_t \nu^{k+1}_t(\dd x)=\langle c(x,\cdot),\nu^k)\rangle \gamma(\dd x) -\langle c(x,\cdot),\nu^k \rangle \nu^{k+1}(\dd x), \qquad \nu^{k+1}_0:=\bar \nu,
\end{equation*}
i.e.\ $\nu^{k+1}=(\mathcal{G}\nu^k)$ with
\begin{equation*}
    (\mathcal{G}\nu)_t(\dd x):=e^{-\int_0^t \langle c(x,\cdot),\nu_r \rangle \dd s}\left(\int_0^t \langle c(x,\cdot),\nu_s)\gamma(\dd x)\rangle e^{\int_0^s \langle c(x,\cdot),\nu_r \rangle \, \dd r} \dd s+\bar \nu(\dd x)\right).
\end{equation*}
It is straightforward to check that for all $t\in [0,T]$
\[\sup_{k\ge 1}  \nu^k_t(\calT) \leq e^{\|c\|_{\infty} \gamma(\calT)t}\bar \nu(\calT)\leq e^{\|c\|_{\infty} \gamma(\calT)T}\bar \nu(\calT)=:C.\]

We will show that $\calG$ is contractive under a suitable metric on the space of curves with initial data $\bar \nu$ and mass bounded by $C$. This implies there exists a $TV$-absolutely continuous curve $\nu$ such that 
\begin{equation*}
    \nu_t-\nu_s = \int_s^t \left(\kappa_{\nu_r}+\kappa_{\nu_r}^-\right) \, \dd r, \qquad \mbox{for all $s,t$ with $0\leq s,t\leq T$.}
\end{equation*}
Moreover, since in the iterations $\nu^k\ll \bar \nu+\gamma$ for all $\nu$ it is clear that we obtain strong solutions in $L^1(\bar \nu+\gamma)$. In particular, for $\bar \nu\ll \gamma$ we have $\nu_t\ll \gamma$ for all $t\in [0,T]$ as well.

Now, note that $\langle c(x,\cdot),\nu\rangle$ depends Lipschitz on $\nu$ in $(\Gamma,\|\cdot\|_{TV})$ due to the uniform bound on mass. This implies that there exists a constant $K$ such that for any two admissible curves $\nu,\tilde \nu$:
\begin{equation*}
       \|(\calG \nu)_t-(\calG \tilde \nu)_t\|_{TV}\leq K \int_0^t \|\nu_s-\tilde \nu_s\|_{TV} \, \dd s, \qquad \mbox{for all $t\in [0,T]$.}
\end{equation*}
Hence, by a Gronwall-type argument, we find that for any $\eps>0$ for all $t\in [0,T]$
\[
   \|(\calG \nu)_t-\calG (\tilde \nu)_t\|_{TV} e^{-(K+\eps) t} \le\frac{K}{K+\eps}\left(\sup_{s\in[0,T]} \|\nu_s-\tilde \nu_s\|_{TV} e^{-(K+\eps) s} \right),
\]
thus yielding the contraction required to apply the Banach fixed-point theorem.
\end{proof}

Finally, for the use in entropic propagation chaos of Theorem \ref{thm:edp_prop}, it is convenient to characterize the conditions for which $u_t$ is bounded from above and below. The following statement follows directly from a Gronwall-type argument. 

\begin{lm}\label{lm:mfsolb}
Suppose $\nu_0$ is such that $C^{-1} \leq \dd \nu_0/\dd \gamma (x) <C$ for some constant $C>0$ and all $x\in \calT$. Then there exist a constant $C_T>0$ such that for the corresponding solution
\begin{equation*}
    C_T^{-1} \leq \frac{\dd \nu_0}{\dd \gamma} (x) <C_T, \qquad \fA x\in \calT, \fA t\in [0,T].
\end{equation*} 
\end{lm}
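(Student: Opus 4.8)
The plan is to reduce the statement to a one‑dimensional linear ODE solved explicitly, exactly in the spirit of the proof of Lemma~\ref{lm:mf_strongsol}. Since $\nu_0\ll\gamma$, that lemma produces a unique strong solution with $\nu_t\ll\gamma$ for every $t\in[0,T]$; write $u_t:=\dd\nu_t/\dd\gamma$ and $c_{\nu_t}(x):=\int_{\calT}c(x,y)\,\nu_t(\dd y)$, so that the density form of \eqref{eq:mf1}, namely $\partial_t\nu_t=c_{\nu_t}(\gamma-\nu_t)$, becomes for each fixed $x$ the scalar linear equation
\[
  \partial_t u_t(x)=c_{\nu_t}(x)\,\bigl(1-u_t(x)\bigr).
\]
By Assumption~\ref{assu:massu} the coefficient is nonnegative, and $0\le c_{\nu_t}(x)\le\|c\|_\infty\,\nu_t(\calT)$ is bounded uniformly on $[0,T]\times\calT$, since $t\mapsto\nu_t(\calT)$ is bounded (the solution is $\|\cdot\|_{TV}$‑continuous, with the explicit mass bound recorded in the proof of Lemma~\ref{lm:mf_strongsol}). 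Setting $a_t(x):=\int_0^t c_{\nu_s}(x)\,\dd s\in[0,\infty)$ and using $\tfrac{\dd}{\dd s}e^{a_s(x)}=c_{\nu_s}(x)e^{a_s(x)}$, the variation‑of‑constants formula (equivalently, evaluating the fixed point of the map $\mathcal{G}$ from the proof of Lemma~\ref{lm:mf_strongsol}) yields the closed form
\[
  u_t(x)=e^{-a_t(x)}\,u_0(x)+\bigl(1-e^{-a_t(x)}\bigr),\qquad x\in\calT,\ t\in[0,T].
\]

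The estimate is then purely algebraic. Since $e^{-a_t(x)}\in(0,1]$, the right‑hand side is a convex combination of $u_0(x)$ and $1$. The hypothesis $C^{-1}\le u_0(x)<C$ (for a nonempty $\calT$) forces $C^{-1}<C$, hence $C>1$, so $C^{-1}<1<C$. The lower bound follows from $u_0(x)\ge C^{-1}$ and $1\ge C^{-1}$:
\[
  u_t(x)\ge e^{-a_t(x)}C^{-1}+\bigl(1-e^{-a_t(x)}\bigr)C^{-1}=C^{-1}.
\]
For the upper bound, using $u_0(x)<C$, $e^{-a_t(x)}>0$ and $1\le C$,
\[
  u_t(x)<e^{-a_t(x)}C+\bigl(1-e^{-a_t(x)}\bigr)\le e^{-a_t(x)}C+\bigl(1-e^{-a_t(x)}\bigr)C=C.
\]
Thus the conclusion holds with $C_T:=C$; in fact the bound is independent of $T$, and we keep the notation $C_T$ only for uniformity with its later use in Theorem~\ref{thm:edp_prop}.

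The one point deserving care is the passage between the $L^1(\calT,\gamma)$‑valued continuity equation (Lemma~\ref{lm:mf_timereg}) and the pointwise‑in‑$x$ ODE. I would handle this by working directly with the fixed‑point representation: the map $\mathcal{G}$ in the proof of Lemma~\ref{lm:mf_strongsol} is defined for \emph{every} $x\in\calT$, so the displayed closed form for $u_t(x)$ holds pointwise and furnishes an everywhere‑defined representative of the density obeying the two‑sided bound, which is precisely the pointwise phrasing of the lemma. (Alternatively, Fubini gives the ODE for $\gamma$‑a.e.\ $x$, which suffices since densities are defined only up to $\gamma$‑null sets.) No genuine obstacle arises: once the closed form is in hand the argument is soft, consistent with the lemma's remark that it "follows directly from a Gronwall‑type argument."
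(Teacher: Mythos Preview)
Your proof is correct and is precisely the ``Gronwall-type argument'' the paper alludes to without spelling out: you integrate the linear scalar ODE $\partial_t u_t(x)=c_{\nu_t}(x)(1-u_t(x))$ explicitly and read off the bounds from the resulting convex combination of $u_0(x)$ and $1$. In fact your argument is sharper than the statement, since it yields $C_T=C$ independently of $T$; the paper's phrasing with a $T$-dependent constant is simply not optimal here.
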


\subsection{Variational characterization}\label{ss:mf_grad}

We will now prove the non-negativity of our EDP-functional $\calI_{MF}$ and the characterization of strong solutions to \eqref{eq:mf1} as minimizers of $\calI_{MF}$. To do so we first need the prove the chain rule for the free energy $\calF_{MF}$ along curves with finite $\calI_{MF}$. 

\medskip
There is an important technical issue concerning the Fisher information, in the sense that on curves with finite $\calI_{MF}$ the chain rule inequality holds for the following replacement:
\begin{equation*}
    \calD^-_{MF}(\nu):=\int_{\calT} \Psi^*\left(\frac{1}{2}\log u\right) \dd \theta_{\nu} = \int_{u>0} c_{\nu}(x)\left(\sqrt{u}-1\right)^2 \dd \gamma, 
\end{equation*}
for any $\nu\ll \gamma$ with $u:=\dd \nu/\dd \gamma$. Note that $0\leq \calD^-_{MF}(\nu)\leq \calD_{MF}(\nu)$ and $\calD^-_{MF}=\calR_{MF}^*(\partial_{\nu} \calF_{MF})$.

We will see the same principle arise in Section \ref{s:fke} for the variational characterization of the forward Kolmogorov equation, which is also observed in \cite[Section 5]{PRST2020}.

\begin{lm}\label{lm:mf_chain}
For any curve $(\nu,\lambda^+,\lambda^-)\in \mathscr{CE}$ with $\calF_{MF}(\nu_0)<\infty$ and $\calI_{MF}(\nu,\lambda^+,\lambda^-)<\infty$ it holds that $[0,T]\ni t\mapsto \calF_{MF}(\nu_t)$ is absolutely continuous and a.e. differentiable with
\[ 
 \frac{\dd \,}{\dd\, t} \calF_{MF}(\nu_t)=\frac{1}{2} \int_{\calT} \log \left(\frac{\dd \nu_t}{\dd \gamma}\right)\,\dd \lnet_t, \qquad \mbox{for a.e.\ $t\in [0,T]$}.
\]
Moreover, for such a curve 
\[\calI_{MF}(\nu,\lambda^+,\lambda^-)\geq \calI_{MF}^-:=\int_0^T \left(\calR_{MF}(\nu_t,\lambda^+_t,\lambda^-_t) +\frac{1}{2} \int_{\calT} \log \frac{\dd \nu_t}{\dd \gamma}\,\dd \lnet_t+\calD^-_{MF}(\nu_t)\right) \, \dd t \geq 0.\]
\end{lm}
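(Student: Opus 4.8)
The plan is to establish the chain rule for $\calF_{MF}(\nu_t) = \tfrac12\Ent(\nu_t|\gamma)$ along curves with finite EDP-functional, and then read off the claimed lower bound by a completion-of-squares argument. The main obstacle is the usual one for entropy chain rules along continuity equations with merely measure-valued fluxes: the logarithm $\log(\dd\nu_t/\dd\gamma)$ need not be bounded, so one cannot directly plug it in as a test function in the weak continuity equation; one must regularize, differentiate, and pass to the limit while controlling the error terms, and one must also show $t\mapsto\calF_{MF}(\nu_t)$ is absolutely continuous in the first place.

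\emph{Step 1: reduction to a dominated setting and absolute continuity.} First I would invoke Lemma~\ref{lm:mf_est}(iii)/(\ref{eq:mfactionb1}) together with finiteness of $\int_0^T\calR_{MF}\,\dd t$ to deduce $\int_0^T\lambda_t^{\pm}(\calT)\,\dd t<\infty$ and a uniform-in-time mass bound $\sup_t\nu_t(\calT)=:C<\infty$ (using narrow continuity of $\nu_t$, which follows from the continuity equation). Next, since $\calD_{MF}(\nu_t)<\infty$ for a.e.\ $t$ forces $\nu_t\ll\gamma$ for a.e.\ $t$, and since finiteness of $\calF_{MF}(\nu_0)$ plus TV-absolute continuity of the curve propagates $\nu_t\ll\gamma$ (mimicking Remark~\ref{rem:mf_weakstrong}/Lemma~\ref{lm:mf_timereg}), write $u_t:=\dd\nu_t/\dd\gamma$ and $g_t^{\pm}:=\dd\lambda_t^{\pm}/\dd\ell$ for a common dominating measure $\ell\in\Gamma$; by Lemma~\ref{lm:mf_timereg} the curve is strongly differentiable, $\partial_t u_t = g_t^+-g_t^-$ in $L^1$. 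Then I would show $t\mapsto\Ent(\nu_t|\gamma)=\int\phi(u_t)\,\dd\gamma$ is absolutely continuous: the formal derivative is $\int\log u_t\,(g_t^+-g_t^-)\,\dd\ell$, and its integrability over $[0,T]$ is exactly what has to be controlled.

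\emph{Step 2: controlling $\int\!\int|\log u_t|\,\dd|\lnet_t|\,\dd t$ and the regularized chain rule.} The key estimate is that $\log u$ is controlled by the dissipation potential: using $\dd\theta_{\nu}=c_\nu\sqrt{u}\,\dd\gamma$ and the Young-type inequality of Lemma~\ref{lm:mf_est}(ii) with the test function $w=\tfrac12\log u$, one gets $\int_{\calT}|\tfrac12\log u|\,\dd|\lnet|\le \calR_{MF}(\nu,\lambda^+,\lambda^-)+\int_{\calT}\Psi^*(\tfrac12\log u)\,\dd\theta_\nu = \calR_{MF}+\calD^-_{MF}(\nu)$, because $\Psi^*(\tfrac12\log u)\,\dd\theta_\nu = (e^{\frac12\log u}+e^{-\frac12\log u}-2)c_\nu\sqrt u\,\dd\gamma = c_\nu(\sqrt u-1)^2\,\dd\gamma$. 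Since $\int_0^T\calD^-_{MF}(\nu_t)\,\dd t\le\int_0^T\calD_{MF}(\nu_t)\,\dd t\le\calI_{MF}+\calF_{MF}(\nu_0)<\infty$, this integrates in $t$. Hence $t\mapsto\calF_{MF}(\nu_t)$ is absolutely continuous. To identify the derivative, I would truncate $\log u_t$ at level $k$, use the $C_b$ (in fact measurable, via a monotone-class/truncation argument as in Lemma~\ref{lm:mf_est}) form of the continuity equation, differentiate $\int\phi_k(u_t)\,\dd\gamma$ for a smoothed/truncated $\phi_k$, and let $k\to\infty$ using the just-established uniform integrability to get $\tfrac{\dd}{\dd t}\calF_{MF}(\nu_t)=\tfrac12\int_{\calT}\log u_t\,\dd\lnet_t$ for a.e.\ $t$. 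This is the statement of Theorem~\ref{thm:equivlag}'s chain rule as well.

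\emph{Step 3: the pointwise-in-time algebraic lower bound.} With the chain rule in hand, for a.e.\ $t$ I must show the integrand of $\calI_{MF}^-$ is non-negative, i.e.
\[
\calR_{MF}(\nu,\lambda^+,\lambda^-)+\tfrac12\int_{\calT}\log u\,\dd\lnet+\calD^-_{MF}(\nu)\ge 0 .
\]
By the dual representation of $\calR_{MF}$ (the Fenchel inequality for the entropy), $\calR_{MF}(\nu,\lambda^+,\lambda^-)\ge \int w^+\dd\lambda^+ + \int w^-\dd\lambda^- - \calR^*_{MF}(\nu,w^+,w^-)$ for all bounded $w^\pm$; choosing $w^+=-\tfrac12\log u$, $w^-=\tfrac12\log u$ (truncated, then passing to the limit using Step~2's integrability and monotone/dominated convergence) gives $\calR_{MF}\ge -\tfrac12\int\log u\,\dd(\lambda^+-\lambda^-) - \calR^*_{MF}(\nu,-\tfrac12\log u,\tfrac12\log u)$, and $\calR^*_{MF}(\nu,-\tfrac12\log u,\tfrac12\log u)=\int(e^{-\frac12\log u}+e^{\frac12\log u}-2)\,\dd\theta_\nu=\int\Psi^*(\tfrac12\log u)\,\dd\theta_\nu=\calD^-_{MF}(\nu)$. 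Rearranging gives exactly the desired inequality, hence $\calI_{MF}\ge\calI_{MF}^-\ge 0$. I expect the main technical friction to be the justification of using the unbounded function $\tfrac12\log u$ in both the weak continuity equation (Step 2) and the Fenchel inequality (Step 3); both are handled by truncation plus the integrability bound $\int_0^T\!\int|\log u_t|\,\dd|\lnet_t|\,\dd t<\infty$ established in Step 2, with $\calD^-_{MF}$ (rather than $\calD_{MF}$) appearing precisely because $\calD^-_{MF}=\calR^*_{MF}(\nu,\partial_\nu\calF_{MF},-\partial_\nu\calF_{MF})$ is the value of the dual potential at the gradient of the energy.
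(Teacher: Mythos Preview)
Your proposal is correct and follows essentially the same route as the paper: deduce $\nu_t\ll\gamma$ and $\lambda_t^\pm\ll\theta_{\nu_t}\ll\gamma$ from finiteness of $\calR_{MF},\calD_{MF}$, apply Lemma~\ref{lm:mf_timereg} to get strong $L^1(\gamma)$-differentiability of $u_t$, control $\tfrac12\int|\log u_t|\,\dd|\lnet_t|$ by $\calR_{MF}+\calD^-_{MF}$ via Lemma~\ref{lm:mf_est}(ii) with $w=\tfrac12\log u$ and the identity $\int\Psi^*(\tfrac12\log u)\,\dd\theta_\nu=\calD^-_{MF}(\nu)$, and pass to the limit in the chain rule for the truncated entropies $\int\phi_m(u_t)\,\dd\gamma$. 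Your Step~3 via the Fenchel inequality is just a rephrasing of that same bound (the paper reads off $\calI_{MF}^-\ge 0$ directly from $-\tfrac12\int\phi'(u)\,\dd\lnet\le \calR_{MF}+\calD^-_{MF}$), so there is no substantive difference.
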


\begin{remark}
In fact, for such curves, for a.e. $t$ both the terms 
\begin{equation*}
    \int_{\calT} \log \left(\frac{\dd \nu_t}{\dd \gamma}\right)\,\dd \lambda^{\pm}_t,
\end{equation*}
will be finite, and hence 
\[ \int_{\calT} \log \left(\frac{\dd \nu_t}{\dd \gamma}\right)\,\dd \lambda_t=\int_{\calT} \log \left(\frac{\dd \nu_t}{\dd \gamma}\right)\,\dd \lambda_t^+-\int_{\calT} \log \left(\frac{\dd \nu_t}{\dd \gamma}\right)\,\dd \lambda_t^-,\]
\end{remark}

\begin{proof}
Fix any curve $(\nu,\lambda^+,\lambda^-)\in \mathscr{CE}$ with $\calF_{MF}(\nu_0)<\infty$. We will show that whenever $\calI_{MF}<\infty$ the mapping $t \mapsto \Ent(\nu_t|\gamma)$ is absolutely continuous and satisfies the chain rule, i.e.\ 
\begin{equation*}
    \frac{\dd \, \Ent(\nu_t|\gamma)}{\dd\, t}=\int_{\calT} \log \left(\frac{\dd \nu_t}{\dd \gamma}\right)\dd (\lambda^+_t-\lambda^-_t) \, \dd t, \qquad \mbox{for a.e.\ $t\in [0,T]$}.
\end{equation*}

Suppose that $\calI_{MF}<\infty$. Since $\Ent$ is bounded from below, $\Ent(\nu_0|\gamma)<\infty$ implies that
\begin{equation*}
\int_0^T \calR_{MF}(\nu_t,\lambda^+_t,\lambda^-_t)\, \dd t<\infty, \qquad \int_0^T \calD_{MF}(\nu_t) \, \dd t < \infty.
\end{equation*}
In particular for a.e.\ $t\in [0,T]$ it holds that $\nu_t\ll \gamma$, $\lambda_t^{\pm}\ll \theta_{\nu_t}$, and in turn $\theta_{\nu_t} \ll \gamma$. In fact, due to TV-continuity of $\nu_t$, we have $\nu_t\ll \gamma$ for all $t\in[0,T]$. Moreover, $\int_0^T \lambda^{\pm}_t(\calT)<\infty$ and $\sup_t \nu_t(\calT)<\infty$.

Setting $u_t:=\dd \nu_t/\dd \gamma$, we have
\[
\dd \theta_{\nu_t}=c_{\nu_t} \sqrt{u_t} \,\dd \gamma,
\]
and in particular $\theta_{\nu_t}(\{u_t=0\})=0$. Similarly, $\lambda_t^{\pm}\ll\theta_{\nu_t}$ for a.e.\ $t$ and hence $u_t>0$ for $\lambda^{\pm}_t,\lnet_t$-a.e.\ $x$ for such $t$ as well. Furthermore, since for a.e. $t$ we have $\lambda_t^{\pm}\ll\theta_{\nu_t}\ll \gamma$ we find by Lemma \ref{lm:mf_timereg} that $u: [0,T]\to L^1(\calT,\gamma)$ is absolutely continuous and differentiable at a.e. $r\in [0,T]$.  

Consider any such $r$ with $\calR_{MF}(\nu_r,\lambda_r^+,\lambda_r^-),\calD(\nu_r)<\infty$. By Lemma \ref{lm:mf_est}, for any $w\in \calB_b(\calT)$, 
\begin{equation}\label{eq:equivlagb2}
\left| \int_{\calT} w \,\dd \lnet_r \right| \leq  \int_{\calT}|w| \,\dd |\lambda_r| \leq \calR_{MF}(\nu_r,\lambda^+_r,\lambda^-_r)+\int_{\calT} \Psi^*(w) \dd \theta_{\nu_r}.
\end{equation}
Now let $\phi_m$ be the convex and uniformly Lipschitz regularizations of $\phi$ constructed by using the truncations $\phi_m':=[\phi']_m=\max\{\min\{\phi,m\},-m\}$ and $\phi(s):=\int_1^s \phi_m'(z)\, \dd z$. Note that $\phi_m'$ converges pointwise to $\phi'$, and both $\phi_m$ and $|\phi'_m|$ converge monotonically to $\phi$ and $|\phi'|$ respectively. 

Moreover, note that $\phi'(u_r)=\log u_r$ is $\theta_{\nu_r}$-a.e.\ finite, and similarly $\lambda^{\pm}_r$-a.e.\ as well. Therefore, since $\Psi^*$ is even and monotone on $\R_{\geq 0}$ we derive
\begin{equation*}
\begin{aligned}
    \int_{\calT} \Psi^*(\tfrac{1}{2}\phi'_m(u_r)) \dd \theta_{\nu_r} &\leq \int_{\calT} \Psi^*(\tfrac{1}{2}\phi'(u_r)) \dd \theta_{\nu_r}= \calD^-_{MF}(\nu_r).\\
\end{aligned}
\end{equation*}
Recall that $\calD^-_{MF}(\nu_r)\leq \calD_{MF}(\nu_r)$. By substituting $w=\tfrac{1}{2}\phi_m'$ in \eqref{eq:equivlagb2} we find
\begin{equation*}
  \frac{1}{2}\int_{\calT}  \phi_m'(u_r) \,\dd \lnet_r\leq \frac{1}{2} \int_{\calT}   |\phi_m'(u_r)|\, \dd |\lnet_r|\leq \calR_{MF}(\nu_r,\lambda^+_r,\lambda^-_r)+\calD^-_{MF}(\nu_r),
\end{equation*}
and after a monotone convergence argument
\begin{equation}\label{eq:equivlagb3b}
  \frac{1}{2}\int_{\calT}  \phi'(u_r) \, \dd \lnet_r\leq \frac{1}{2} \int_{\calT}   |\phi'(u_r)|\,\dd |\lnet_r|\leq \calR_{MF}(\nu_r,\lambda^+_r,\lambda^-_r)+\calD^-_{MF}(\nu_r).
\end{equation}

Note that for every $m$ the function $\phi_m$ is smooth and uniformly Lipschitz, thus the functional $\int \phi_m(u_r) \,\dd \gamma$ is $\|\cdot\|_{TV}$-Lipschitz continuous and hence absolutely continuous by TV-regularity of $\nu_r$. Moreover, since $\lambda_r^{\pm}\ll \gamma$ and $u_r$ is a.e.\ differentiable in $L^1(X,\gamma)$ it is straightforward to check that
\begin{equation*}
    \int_{\calT} \phi_m(u_t) \,\dd \gamma-\int_{\calT} \phi_m(u_s) \,\dd \gamma=\int_s^t \int_{\calT} \phi_m'(u_r)\,\dd (\lambda^+_r-\lambda^-_r) \, \dd r, \qquad \mbox{for all $s,t\in [0,T]$}.
\end{equation*}
Therefore, since $\Ent(\nu_0|\gamma)$ is finite by assumption and the functionals $\int \phi_m(u_t)\, \dd \gamma$ converge monotonically to $\Ent(\nu_t|\gamma)$, we find 
\begin{equation*}
    \begin{aligned}
      \frac{1}{2} \left|\int_{\calT} \phi(u_t) \,\dd \gamma-\int_{\calT} \phi(u_0) \, \dd \gamma\right|&\leq \frac{1}{2}\limsup_{m\to \infty} \int_0^T \int_{\calT} |\phi_m'(u_r)|\,\dd |\lambda_r^+-\lambda_r^-|\, \dd r \\
       &\leq \int_0^T \left(\calR_{MF}(\nu_r,\lambda_r^+,\lambda_r^-)+\calD^-_{MF}(\nu_r)\right) \, \dd r.
    \end{aligned}
\end{equation*}
In particular $\Ent(\nu_t|\gamma)$ is finite for all $t\in [0,T]$, and after repeating the argument for $s,t\in [0,T]$ we conclude by a dominated convergence argument that
\begin{equation*}
          \int_{\calT} \phi(u_t)\, \dd \gamma-\int_{\calT} \phi(u_s) \, \dd \gamma
       =\int_s^t \int_{\calT} \phi'(u_r) \,\dd \lnet_r  \, \dd r,
\end{equation*}
and 
\begin{equation*}
\begin{aligned}
    \calI_{MF} &= 
  \int_0^T \left(\calR_{MF}(\nu_t,\lambda^+_t,\lambda_t^-)+\frac{1}{2}\int_{\calT} \phi'(u_t)\, \dd \lnet_t+\calD_{MF}(\nu_t) \right) \, \dd t \\
   &\geq \int_0^T \left(\calR_{MF}(\nu_t,\lambda^+_t,\lambda_t^-)+\frac{1}{2}\int_{\calT} \phi'(u_t)\, \dd \lnet_t+\calD^-_{MF}(\nu_t) \right) \, \dd t \geq 0.
\end{aligned}
\end{equation*}
\end{proof}

We are now finally in a position to prove Theorem \ref{thm:equivlag}. With the chain rule above all that remains is on one hand showing that $\calI^-_{MF}(\nu,\lambda^+,\lambda_t^-)=0$ implies that $\lambda_t^{\pm}=\kappa_{\nu_t}^{\pm}$ for a.e. $t$, and on the other hand, showing that if $\nu$ is a strong solution it holds that $\calI^-_{MF}(\nu,\kappa_{\nu}^+,\kappa_{\nu}^-)=0$ and $\calD^-_{MF}=\calD_{MF}$ for a.e. $t\in [0,T]$. The second part again involves proving a chain rule, but now along the solution curve. 

\begin{proof}[Proof of Theorem \ref{thm:equivlag}]
First, consider any $(\nu,\lambda^+,\lambda^-)\in \mathscr{CE}$ with $\calF_{MF}(\nu_0)<\infty$, and $\calI_{MF}=0$. By Lemma \ref{lm:mf_chain}, 
\begin{equation*}
  \int_0^T \left(\calR_{MF}(\nu_t,\lambda^+_t,\lambda_t^-)+\frac{1}{2}\int_{\calT} \phi'(u_t)\, \dd \lnet_t+\calD^-_{MF}(\nu_t) \right) \, \dd t = 0.
\end{equation*}
Now, recall that $\dd \theta_{\nu}=c_{\nu} \sqrt{u}\, \dd \gamma$. Setting $g_t^{\pm}:=\dd \lambda^{\pm}_t/\dd \theta_{\nu}$, it holds that $\log (u_t)\, g_t^{\pm}<\infty$ for $\theta_{\nu_t}$-a.e.\ $x$ and a.e.\ $t$, and by the inequality \eqref{eq:equivlagb3b} that \ $|\log u_t|\, |g_t^+-g_t^-|$ is $\theta_{\nu_t}$-integrable. Therefore, by straightforward algebraic manipulations we find that for a.e.\ $t$,
\begin{equation*}
\begin{aligned}
   &\calR_{MF}(\nu_t,\lambda^+_t,\lambda_t^-)+\frac{1}{2}\int_{\calT} \phi'(u_t)\, \dd \lnet_t+\calD_{MF}^-(\nu_t)\\
   &=\int_{\calT} \left(\phi(g^+_t)+\tfrac{1}{2} \log(u_t) g^+_t+ \phi^*(\tfrac{1}{2}\log u_t) + \phi(g^-_t)-\tfrac{1}{2} \log(u_t) g^-_t+ \phi^*(-\tfrac{1}{2}\log u_t) \right) \dd \theta_{\nu_t}.
\end{aligned}
\end{equation*}
Due to the duality between $\phi$  and $\phi^*$ this expression is zero if only if $\theta_{\nu_t}$-a.e.\
\[g_t^{\pm}=(\phi')^{-1}(\mp \tfrac{1}{2}\log u_t).\]
Recalling that $\theta_{\nu}=c_{\nu} \sqrt{u} \gamma$, $\kappa_{\nu}^+=c_{\nu} \gamma$ and $\kappa^-_{\nu}=c_{\nu} u \gamma$ we find that indeed for a.e.\ $t$, 
\[ \lambda^{\pm}_t = \kappa_{\nu_t}^{\pm}. \]

\medskip

Vice versa, assume that $\nu_t$ is a strong solution with $\calF_{MF}(\nu_0)<\infty$. Recall that $\nu_t\ll \gamma$ for all $t\in [0,T]$ by Lemma \ref{lm:mf_strongsol}, and hence $\kappa_{\nu_t}^{\pm}\ll \gamma$ as well. Therefore we can again write $u_t:=\dd \nu_t/\dd \gamma$, $\kappa_{\nu}^+=c_{\nu} \gamma$, $\kappa^-_{\nu}=c_{\nu} u \gamma$ and $\theta_{\nu}=c_{\nu} \sqrt{u} \gamma $. Moreover, $u: [0,T]\to L^1(\calT,\gamma)$ is absolutely continuous and a.e.\ differentiable, and thus for every regularized entropy function:
\begin{equation*}
    \int_{\calT} \phi_m(u_T) \,\dd \gamma-\int_{\calT} \phi_m(u_0) \,\dd \gamma=\int_0^T \int_{\calT} c_{\nu_t} \phi_m'(u_t)(1-u_t)\,\dd \gamma \, \dd t.
\end{equation*}
Note that the latter expression is non-positive since $\phi'(z)(z-1)$ is non-negative, due to the convexity of $\phi_m$ and $\phi_m(1)=0$. Moreover, recall that the regularized entropies converge for every $\nu$, are non-negative, and $\Ent(\nu_0|\gamma)<0$ by assumption. Therefore
\[ \limsup_{m\to \infty} \int_0^T \int_{\calT} c_{\nu_t} \phi_m'(u_t)(u_t-1)\,\dd \gamma \, \dd t \leq \Ent(\nu_0|\gamma) < \infty.\] 

It is clear that to obtain $\calI_{MF}=0$ it is sufficient to prove that for any $\nu$ with $\nu \ll \gamma$, 
\begin{equation*}
   \frac{1}{2} \lim_{m\to \infty} \int_{\calT} c_{\nu} \phi_m'(u)(u-1)\,\dd \gamma=\calR(\nu,\kappa_{\nu}^+,\kappa_{\nu}^-)+\calD(\nu).
\end{equation*}
By non-negativity of the integrand both 
\begin{equation*}
\lim_{m\to \infty}      \int_{u=0} c_{\nu} \phi_m'(u)(u-1)\,\dd \gamma < \infty.
\end{equation*}
and
\begin{equation*}
 \lim_{m\to \infty}    \int_{u>0} c_{\nu} \phi_m'(u)(u-1)\,\dd \gamma < \infty.
\end{equation*}
Since $\phi'(0)=-m$ this implies that in fact for all $m$
\begin{equation*}
   \int_{u=0} c_{\nu} \phi_m'(u)(u-1)\,\dd \gamma=m \int_{u=0} c_{\nu} \,\dd \gamma,
\end{equation*}
but since the former is finite after taking the limit $m\to \infty$, we deduce that 
\[\int_{u=0} c_{\nu} \,\dd \gamma=0,\]
and hence $\gamma(\{u=0,c_{\nu}>0)=0$. Moreover, by monotone convergence we have 
\begin{equation*}
\int_{u>0} c_{\nu} \log(u)(u-1)\,\dd \gamma=\lim_{m\to \infty}     \int_{u>0} c_{\nu} \phi_m'(u)(u-1)\,\dd \gamma.
\end{equation*}
Note by straightforward algebraic manipulation that 
\begin{equation*}
    \tfrac{1}{2}\log(z)(z-1)=\phi(\sqrt{z})\sqrt{z}+\phi(1/\sqrt{z})\sqrt{z}+(\sqrt{z}-1)^2\, \qquad \fA z>0.
\end{equation*}
Therefore
\begin{equation*}
\begin{aligned}
\frac{1}{2}\int_{u>0} c_{\nu} \log(u)(u-1) \dd \gamma 
&=\int_{u>0,\,c_{\nu}>0}c_{\nu} \left(\phi\left(\sqrt{u}\right)\sqrt{u}+\phi\left(1/\sqrt{u}\right)\sqrt{u(x)}+(\sqrt{u}-1)^2 \right) \dd \gamma\\
&=\int_{u>0,\,c_{\nu}>0}\left(\phi\left(\frac{\dd \kappa_{\nu}^+}{\dd \theta_{\nu}}\right)\frac{\dd \theta_{\nu}}{\dd \gamma}+\phi\left(\frac{\dd \kappa_{\nu}^-}{\dd \theta_{\nu}}\right)\frac{\dd \theta_{\nu}}{\dd \gamma}+c_{\nu}(\sqrt{u}-1)^2 \right) \dd \gamma.\\
\end{aligned}
\end{equation*}
Since all terms are non-negative we can separate terms and reduce the expression to
\begin{equation*}
    \tfrac{1}{2}\int_{u>0} c_{\nu} \log(u)(u-1) \dd \gamma = \calR_{MF}(\nu,\kappa_{\nu}^+,\kappa_{\nu}^-) +\calD_{MF}(\nu).
\end{equation*}
Here the equality follows from the fact that $\gamma(\{u=0,c_{\nu}>0)=0$ and hence
\begin{equation*}
   \int_{u>0,\,c_{\nu}>0} c_{\nu} (\sqrt{u}-1)^2 \dd \gamma= \int_{\calT} c_{\nu} (\sqrt{u}-1)^2 \dd \gamma=\calD_{MF}(\nu),
\end{equation*}
i.e.\ $\calD_{MF}^ -(\nu)=\calD_{MF}(\nu)$, and 
\[ \int_{u>0,\,c_{\nu}>0} \phi\left(\frac{\dd \kappa_{\nu}^{\pm}}{\dd \theta_{\nu}}\right)\frac{\dd \theta_{\nu}}{\dd \gamma} \dd \gamma = \Ent(\kappa^{\pm}_{\nu}|\theta_{\nu}).\]
\end{proof}

\section{Forward Kolmogorov equation}\label{s:fke}

In the introduction, we discussed how the BPDL model describes a measure-valued process $\nu^n_t$ in $\Gamma$ involving particles being created and annihilated, with the corresponding Forward Kolmogorov equation
\begin{equation}\label{eq:FKEn}\tag{$\sf FKE_n$}
     \partial_t \sfP_t = Q_n^* \sfP_t,
\end{equation}
where $\sfP_t \in \calP(\Gamma)$ for all $t\in [0,T]$ and $Q_n^*$ is the dual of the infinitesimal generator $Q_n$ with
\begin{equation}\label{eq:defiq}
    \qquad  (Q_n F)(\nu) = n \int_{\calT} \big(F(\nu+\tfrac{1}{n}\delta_x)-F(\nu)\big)\, \kappa^+_{\nu}(\dd x)+n \int_{\calT} \big(F(\nu-\tfrac{1}{n}\delta_x)-F(\nu)\big)\, \kappa^-_{\nu}(\dd x), 
\end{equation}
for all $F\in C_c(\Gamma)$. Throughout this section the parameter $n>0$ will be fixed.

In the case of $\calT=\mathbb{R}^d$ it is shown in \cite{Fournier2004} that a measure-valued process with generator $Q_n$ exists, and is in fact a jump process in $\Gamma$ corresponding to the jump kernel $\bar \kappa_n$ shown below. However, for our general setting with $\calT$ a compact Polish space, we will take \eqref{eq:FKEn} simply as a starting point, and do not consider the existence or convergence of the measure-valued process $\nu_t^n$ itself---even though we will sometimes borrow the language of jump processes for illustration purposes.

In this section, we will state the general version of Theorem \ref{thm:ikf}, by showing that a detailed balance condition holds, establishing a generalized gradient structure for the Forward-Kolmogorov equation, and characterizing the solutions as minimizers of corresponding EDP-functionals. Similar to Section \ref{s:mf} we first give an overview of the ingredients to state the main results and then leave the proofs for the existence of solutions and the variational characterization to Sections \ref{ss:fke_sol} and \ref{ss:fke_chain}. 

Note that due to the fact that 
\[\sup_{\nu\in \Gamma} \kappa^{\pm}_{\nu}(\calT)=+\infty,\] 
the operator $Q_n$ is not bounded on $\calB_b(\Gamma)$. If it were, suitable solutions and possible variational formulation would fall into the framework of \cite{PRST2020}, where triples $(V,\pi,\kappa)$ are considered, with $V$ a Polish space,  $\pi$ a finite measure, and $\kappa(x,\dd y)$ a jump kernel satisfying a detailed balance condition with respect to $\pi$ and the boundedness condition
\begin{equation*}
    \sup_{x} \int_{V} k(x,\dd y) < \infty.
\end{equation*}
They construct solutions to the forward Kolmogorov equation that are absolutely continuous to $\pi$ and characterize them as minimizers of a suitable EDP functional involving the net flux. In this section, we generalize part of this framework to unbounded kernels and so-called one-way or uni-directional fluxes and tailor it to our setting of interacting particle systems. 

\medskip

Namely, let the rescaled empirical measure mapping $L_n:\coprod_{N\geq 1} \calT^{N} \to \Gamma$ be given as 
\begin{equation}\label{eq:defiLn}
    L_n(x_1,\dots,x_N):=\frac{1}{n} \sum_{i=1}^N \delta_{x_i}.
\end{equation}
and let $\Gamma_n\subset \Gamma$ be the space of finite \emph{positive} discrete measures with common unit weight $\tfrac{1}{n}$, i.e.
  \begin{equation}\label{eq:gamman}
 \Gamma_n := L_n\left(\coprod_{N\geq 1}\calT^N\right).
\end{equation}
Note that the operators $Q_n, Q_n^*$ can be represented as
\begin{equation*}
\begin{aligned}
(Q_n F)(\nu)&= \int_{\Gamma_n} \left(F(\eta)-F(\nu)\right) \bar \kappa(\nu,\dd \eta), \\
(Q_n^* \sfP)(\dd \nu) &= \int_{\eta \in \Gamma_n}  \sfP(\dd \eta) \bar \kappa_n(\eta,\dd \nu)  -  \sfP(\dd \nu)\int_{\eta \in \Gamma_n}  \bar \kappa_n(\nu,\dd \eta),
\end{aligned}
\end{equation*}
where $\bar \kappa_n(\nu,\cdot) \in \calM^+(\Gamma_n)$ for all $\nu\in \Gamma_n$ is a jump kernel over $\Gamma_n$ given by 
\begin{equation}\label{eq:defibk}
      \bar\kappa_n(\nu,\dd \eta) := n \int_{\calT}\delta_{\nu + \tfrac{1}{n} \delta_x}(\dd\eta)\, \kappa^+_{\nu}(\dd x) + n \int_{\calT} \delta_{\nu-\tfrac{1}{n}\delta_x}(\dd\eta)\,\kappa^-_{\nu}(\dd x).
\end{equation}
Moreover, we consider Poisson measures $\Pi_n\in \calP(\Gamma_n)$ induced by the reference measure $\gamma$. Namely, with the measure $\pi_n \in \calP(\coprod_{N\geq 1}\calT^N)$ given by
\begin{equation}\label{eq:defipn}
    \pi_n:=\frac{1}{e^{n \gamma(\calT)}-1}\sum_{N=1}^{\infty} \frac{n^N}{N!}\gamma^{\otimes N},
\end{equation}
we define
\begin{equation}\label{eq:defiPn}
    \Pi_n:=(L_n)_{\#} \pi.
\end{equation}
We will show in Lemma \ref{lm:revers} that the measures $\Pi_n$ are invariant measures of \eqref{eq:FKEn} and that $\bar \kappa_{n}$ satisfies the detailed balance condition with respect to $\Pi_n$, i.e.\ we have the symmetry 
\begin{equation}\label{eq:fke_rev1}
     \Pi_n(\dd \nu) \bar \kappa_n(\nu,\dd \eta)=\Pi_n(\dd \nu) \bar \kappa_n(\nu,\dd \eta).
\end{equation}
It is straightforward to check that even though $\bar \kappa_n$ is unbounded, we still have the weighted integrability condition
\begin{equation*}
    \sup_{\nu\in \Gamma_n} \left\{ (1+\nu(\calT)^{-2}) \int_{\Gamma_n} \bar \kappa_{\nu}(\nu,\dd \eta)\right\} < +\infty.
\end{equation*}
Therefore we can still bootstrap from gradient-flow solutions in the sense of \cite{PRST2020} for regularized triples $(\Gamma_n,\Pi_n,\bar \kappa^{\eps}_n)$, after passing from a net flux to a one-way flux formulation, see Appendix \ref{s:ldpmot}, to obtain unique gradient-flow solutions as defined in Section \ref{ss:fke_sol}. 
\medskip

To discuss the continuity equation and the dissipation potentials properly, we need to introduce some additional notation. We define the following creation and annihilation operators: 
\begin{equation}\label{eq:defit}
    \begin{aligned}
    \sfT^{n,+}&:\Gamma_n\times \calT\to\Gamma_{n}\times \calT,\qquad \sfT^{n,+}(\nu,x) = (\nu + \tfrac{1}{n}\delta_x,x) =: (\sfT_x^{n,+}\nu,x),\\
    \sfT^{n,-}&:\Gamma_{n}\times \calT\to\Gamma_{n}\times \calT,\qquad \sfT^{n,-}(\nu,x) = (\nu - \tfrac{1}{n}\delta_x,x) =: (\sfT_x^{n,-}\nu,x),
\end{aligned}
\end{equation}
with the convention that $\sfT^{n,-}(\nu,x)=\nu$ if $x\notin \mathrm{supp}(\nu)$. Note that  $\sfT^{n,-} \circ \sfT^{n,+}=\mathsf{Id}$ always holds, and $\sfT^{n,+} \circ \sfT^{n,-} (\nu,x)=(\nu,x)$ whenever $x\in \mathrm{supp}(\nu)$. 

We further define the discrete $\Gamma_n$-gradients $\dnabla^{n,\pm} : C_c(\Gamma_n)\to C_c( \Gamma_n\times \calT)$:
\begin{equation}\label{eq:defidder}
 (\dnabla^{n,\pm} F)(\nu,x) := n(F(\sfT_x^{n,\pm}\nu) - F(\nu)),
\end{equation}
and the corresponding $\Gamma_n$-divergence $\ddiv^{n,\pm} : \calM_{loc}^+(\Gamma_n\times \calT)\to \calM_{loc}(\Gamma_n)$, dual to $\dnabla^{n,\pm}$, given by
\begin{equation}\label{eq:defiddiv}
 (\ddiv^{n,\pm} \sfJ) = n\left(\sfp^{\Gamma_n}_\#\sfJ - (\sfp^{\Gamma_n}\circ\sfT^{n,\pm})_\# \sfJ\right),
\end{equation}
where $\sfp^{\Gamma_n}:{\Gamma_n}\times \calT\to \Gamma_n$ denotes the projection to the first variable. 

We consider the familes of curves satisfying 
\begin{equation}\label{eq:fke_cke}\tag{$\mathsf{CE}_n$}
    \partial_t\sfP_t + (\ddiv^{n,+} \sfJ_t^+)+(\ddiv^{n,-} \sfJ_t^-)=0
\end{equation}
in the following appropriate distributional sense. 

\begin{defi}[Continuity equation]\label{defi:cen}\,

A triple  $(\sfP,\sfJ^+,\sfJ^-)$ satisfies the continuity equation $\mathsf{CE}_n$, if 
\begin{enumerate}
\itemsep0.1em 
\item\label{fke:cond0} the curve $[0,T]\ni t\mapsto \sfP_t\in \calP(\Gamma_n)$ is narrowly continuous,
\item\label{fke:cond1} the Borel family $(\sfJ^{\pm}_t)_{t\in [0,T]}\in \calM^+_{loc}(\Gamma_n\times \calT)$ satisfies 
\[ \mathrm{supp}(\sfJ^-_t) \subseteq \left\{ (\nu,x)\, :\, \nu(\calT)\geq \tfrac{2}{n}, \, x\in \mathrm{supp}(\nu) \right\}, \]
\item\label{fke:cond3} $\int_0^T \int_{\Gamma_n\times \calT} (1+\nu(\calT)^2)^{-1}\,\dd \sfJ^{\pm}_{t} \, \dd t<\infty$,
    \item\label{fke:cond4} for every $s,t\in [0,T]$ and all $F\in C_c(\Gamma_n)$
     \begin{equation}\label{eq:fkecont1}
     \int_{\Gamma_n} F(\nu) \,\dd \sfP_t - \int_{\Gamma_n} F(\nu) \,\dd \sfP_s = \int_s^t \int_{\Gamma_n\times \calT} \left((\dder^{n,+} F) \,\dd \sfJ_r^++(\dder^{n,-} F) \, \dd \sfJ_r^{-} \right) \, \dd r.
    \end{equation}
\end{enumerate}
\end{defi}

Throughout we will call arbitrary measures $\sfJ^{\pm} \in \calM^+_{loc}(\Gamma_n\times \calT)$ \emph{admissible} if 
\[ \mathrm{supp}(\sfJ^-) \subseteq \left\{ (\nu,x)\, :\, \nu(\calT)\geq \tfrac{2}{n}, \, x\in \mathrm{supp}(\nu) \right\}\]
and 
\[\int_{\Gamma_n\times \calT} (1+\nu(\calT)^2)^{-1}\,\dd \sfJ^{\pm}<\infty.\]
Moreover, since $\Gamma_n$ is a closed subspace of the Polish space $\Gamma$, the extension of $\sfP$ to $\calP(\Gamma)$ and the extension of $\sfJ^{\pm}$ to $\calM^+_{loc}(\Gamma\times \calT)$ are well-defined. For simplicity we will simply refer to them as $\sfP$, $\sfJ^{\pm}$ as well, and drop the $n$-dependence in most arguments. 

It is also clear that for any admissible $\sfJ^{\pm}$
\begin{equation*}
(\dder^{n,\pm} F)(\nu,x):=n\left(F(\nu\pm \tfrac{1}{n}\delta_x)-F(\nu)\right), \qquad (\nu,x)\in \mathrm{supp}(\sfJ^\pm)
\end{equation*}
and in particular \eqref{eq:fkecont1} is equivalent to 
     \begin{equation*}
        \int_{\Gamma} F(\nu) \,\dd \sfP_t - \int_{\Gamma} F(\nu) \,\dd \sfP_s = \int_s^t \int_{\Gamma\times \calT} \Big( n\big(F(\nu+\tfrac{1}{n}\delta_x)-F(\nu)\big)\,\dd \sfJ_r^++n\big(F(\nu-\tfrac{1}{n}\delta_x)-F(\nu)\big)\, \dd \sfJ_r^- \Big) \, \dd r.
    \end{equation*}
for all $F\in C_c(\Gamma)$. Note that this can again be extended to all $F\in \calB_c(\Gamma)$ via a monotone class argument. 

\begin{remark}
Condition (\ref{fke:cond1}) represents the restriction that particles can only be deleted if there are at least two particles in the system, consistent with the fact that $\sfP\in \calP(\Gamma_n)$ and hence the underlying process never attains $\nu=0$.

Moreover, condition (\ref{fke:cond3}) reflects the unboundedness of the observed fluxes $\sfJ^{\pm}$, which stems from the unboundedness of the birth/death kernels $\kappa^{\pm}_{\nu}$ in $\nu$.
\end{remark}

\begin{remark}
Whenever $\sfJ^{\pm}$ are of the form 
\[ \sfJ_t^{\pm}(\dd \nu,\dd x)=\sfP_t(\dd \nu) \lambda^{\pm}[t,\nu](\dd x)\]
with $\lambda^{\pm}[t,\nu]\in \calM^+(\calT)$ for all $\nu \in \Gamma$ and $t\in [0,T]$, the continuity equation \eqref{eq:fkecont1}  describes the forward Kolmogorov equation corresponding to an interacting birth/death process with the birth/death kernels $\lambda^{\pm}[t,\nu]$ depending on both time and the empirical measure of the particles $\nu$. The time-dependent jump kernel is then given by 
\[
\bar \kappa_{n,t}(\dd \nu,\dd \eta)= n \left(\int_{\calT}\delta_{\nu +\tfrac{1}{n}\delta_x}(\dd\eta)  \, \lambda^{+}[t,\nu](\dd x) + \int_{\calT} \delta_{\nu-\tfrac{1}{n}\delta_x}(\dd\eta)\, \lambda^{-}[t,\nu](\dd x)\right).
\]
\end{remark}

\medskip

In order to define the dissipation potentials, let us introduce the measures $\teta_\sfP^{\pm} \in \calM_{loc}^+(\Gamma\times \calT)$
\begin{equation}\label{eq:defief}
	\teta_\sfP^{\pm}(\dd\nu\,\dd x) := \sfP(\dd \nu)\kappa^{\pm}_{\nu}(\dd x).
\end{equation}
Note that for any curve $(\sfP_t)_{t\in [0,T]}$ the measures $\sfJ_t^{\pm}:=\teta_{\sfP_t}^{\pm}$ satisfy the conditions \eqref{fke:cond1} and \eqref{fke:cond3}, where the latter holds because $c(x,x)=0$. 

Moreover, as will be shown in Lemma \ref{lm:revers}, we have the following symmetry
\begin{equation}\label{eq:fke_rev2}
    \vartheta_{\Pi_n}^{\pm}=\sfT^{n,\mp}_{\#}\vartheta^{\mp}_{\Pi_n}.
\end{equation}
from which the detailed balance condition $\eqref{eq:fke_rev1}$ directly follows.  

\begin{defi}\label{defi:fke}
Let $\Theta^{n,\pm}_{\sfP}\in \calM_{loc}(\Gamma\times \calT)$ be the geometric average of $\vartheta^{\pm}_{\sfP}$ and $\sfT^{n,\mp}_{\#}\vartheta^{\mp}_{\sfP}$, i.e.\ 
\begin{equation}\label{eq:Theta}
    \Theta^{n,\pm}_{\sfP}(\dd \nu,\dd x):=\sqrt{\frac{\dd \vartheta^{\pm}_{\sfP}}{\dd \Sigma}\frac{\dd (\sfT^{n,\mp}_{\#}\vartheta^{\mp}_{\sfP})}{\dd\Sigma}}\, \,\dd \Sigma,
\end{equation}
for any dominating measure $\Sigma$. 

The dissipation potential $\calR_n:\calP(\Gamma)\times \calM_{loc}^+(\Gamma\times \calT)^2\to[0,+\infty]$ and dual dissipation potential $\calR^*_n:\calP(\Gamma)\times \calB_c(\Gamma \times \calT)^2$ are given by
\begin{equation*}
\begin{aligned}
  \calR_n(\sfP,\sfJ^+,\sfJ^-)&:=\Ent(\sfJ^{+}|\Theta^{n,+}_{\sfP})+\Ent(\sfJ^{-}|\Theta^{n,-}_{\sfP}),\\
     \calR_n^*(\sfP,\omega^+,\omega^-)&:=\int_{\Gamma\times \calT} (e^{\omega^{+}}-1)\, \dd \Theta^{n,+}_{\sfP}+\int_{\Gamma\times \calT} (e^{\omega^{-}}-1)\, \dd \Theta^{n,-}_{\sfP}
\end{aligned}
\end{equation*}

For the free energy $\calF_n:\calP(\Gamma)\to[0,+\infty]$ and Fisher information $\calD_n:\calP(\Gamma)\to [0,+\infty]$ 
\begin{equation*}
\begin{aligned}
\calF_n(\sfP)&:=\tfrac{1}{2n} \Ent(\sfP|\Pi_n)\\
\calD_n(\sfP)&:=\left\{ 
    \begin{aligned}
    &H^2(\vartheta_{\sfP}^{+},\sfT_{\#}^{n,-}\vartheta_{\sfP}^{-})+H^2(\vartheta_{\sfP}^{-},\sfT_{\#}^{n,+}\vartheta_{\sfP}^{+})\qquad &&  \mbox{if $\sfP\ll \Pi_n$,}\\
    &+\infty &&  \mbox{otherwise}.\\
    \end{aligned}\right.
\end{aligned}
\end{equation*}

For the \emph{EDP-functional} $\calI_{n}:\mathsf{CE}_n\to [0,+\infty]$ for all curves with $\calF_{n}(\sfP_0)<\infty$
\begin{equation*}
    \calI_{n}(\sfP,\sfJ^+,\sfJ^-):=\int_0^T \calR_{n}(\sfP_t,\sfJ_t^+,\sfJ^-_t) \, \dd t + \calF_n(\nu_T)-\calF_n(\nu_0)+\int_0^T \calD_{n}(\sfP_t) \, \dd t.
\end{equation*}
\end{defi}

\begin{remark}
The definition of $\Theta_{\sfP}^{n,\pm}$ is independent of the dominating measure $\Sigma$. Moreover, formally 
\[\Theta_{\sfP}^{n,+}(\nu,x)=\sqrt{ (\sfP(\nu)\kappa^+[\nu])(\sfP(\nu+\tfrac{1}{n}\delta_x)\kappa^-[\nu+\tfrac{1}{n}\delta_x}]), \]
i.e.\ it represents the geometric mean of the expected fluxes going forwards and backwards along the transition $\nu \leftrightarrow\nu+\tfrac{1}{n}\delta_x$.

In addition, due to the symmetry \eqref{eq:fke_rev2} the measures $\Theta_{\sfP}^{n,\pm}$ simplify whenever $\sfP\ll \Pi_n$, i.e.\ if $\dd \sfP = U \dd \Pi_n$ we have
\[
    \Theta^{n,\pm}_{\sfP}(\dd \nu,\dd x)=\sqrt{U(\nu)U(\nu\pm\tfrac{1}{n}\delta_x)}\, \vartheta_{\Pi_n}^{\pm}(\dd \nu,\dd x).
\]
\end{remark}

\begin{remark}\label{rem:fke_fischereq}
Note that $\calD_n$ is a jointly convex function in $(\vartheta^{\pm}_{\sfP},\sfT_{\#}^{n,\mp} \vartheta_{\sfP}^{\mp})$, and lower semicontinuous if $\calF_{n}$ is bounded.
Moreover, it is straightforward to check that whenever $\sfP\ll \Pi_n$ with $\dd \sfP = U \Pi_n$ it holds
\begin{align*}
    \calD_n(\sfP)&=\frac{1}{2}\int_{\Gamma\times \calT}  \left(\sqrt{U(\nu+\tfrac{1}{n}\delta_x)}-\sqrt{U(\nu)}\right)^2 \dd \vartheta^{+}_{\Pi_n}+\frac{1}{2}\int_{\Gamma\times \calT}  \left(\sqrt{U(\nu-\tfrac{1}{n}\delta_x)}-\sqrt{U(\nu)}\right)^2 \dd \vartheta^{-}_{\Pi_n}\\
&=\int_{\Gamma\times \calT}  \left(\sqrt{U(\nu\pm\tfrac{1}{n}\delta_x)}-\sqrt{U(\nu)}\right)^2 \dd \vartheta^{\pm}_{\Pi_n}.
\end{align*}
\end{remark}

Finally, for technical purposes, we also introduce a version for net fluxes.
\begin{defi}
The \emph{upward} net flux $\sfJn$ is defined as 
\begin{equation*}
    \jnet:=\sfJ^+-\sfT^{n,-}_{\#} \sfJ^{-}
\end{equation*}
\end{defi}
Note that $\jnet(\nu,x)$ can be interpreted as the net flux along the jump $\nu\leftrightarrow\nu+\tfrac{1}{n}\delta_x$.  

The continuity equation for the net flux reduces to 
 \begin{equation*}
      \int_{\Gamma} F(\nu) \,\dd \sfP_t - \int_{\Gamma} F(\nu) \,\dd \sfP_s = \int_s^t \int_{\Gamma\times \calT}  n\big(F(\nu+\tfrac{1}{n}\delta_x)-F(\nu)\big)\,\dd \jnet_r\, \dd r
\end{equation*}

\medskip

We are now in a position to give the general version of Theorem \ref{thm:ikf}.
\begin{thm}\label{thm:fke_main}
For any $(\sfP,\sfJ^+,\sfJ^-)\in \mathsf{CE}_{n}$ with $\calF_n(\sfP_0)<\infty$ we have $\calI_{n}(\sfP,\sfJ^+,\sfJ^-)\geq 0$,
\begin{equation}\label{eq:fken_equiv}
    \calI_{n}(\sfP,\sfJ^+,\sfJ^-)=0 \implies \left\{ \begin{aligned}
\quad &\mbox{$\sfP_t$ is a weak solution to \eqref{eq:FKEn}} \quad \\
\quad &\sfJ^{\pm}_t=\sfP_t \kappa_{\nu}^{\pm} \quad \mbox{for a.e.\ $t\in [0,T]$}, \quad
\end{aligned} \right.
\end{equation}
and there exist a unique gradient-flow solution, i.e.\ a curve $(\sfP)$ such that $\calI_{n}(\sfP,\sfP_t \kappa_{\nu}^{+},\sfP_t \kappa_{\nu}^{-})=0$.

Moreover, whenever $\calF_n(\sfP_0)<\infty$ and $\calI_{n}(\sfP,\sfJ^+,\sfJ^-) < \infty$, the chain rule for $\calF_{n}$ and the net flux holds holds: $\calF_{n}(\sfP_t)$ is absolutely continuous and 
\begin{equation*}
    \frac{\dd \,}{\dd t} \calF_n(\sfP_t)=\frac{n}{2}\int_{\Gamma\times \calT} (\log U(\nu+\tfrac{1}{n}\delta_x)-\log U(\nu))\,\dd \jnet_t \, \dd t, \qquad \mbox{for a.e.\ $t\in [0,T]$.}
\end{equation*}
\end{thm}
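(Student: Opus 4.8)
The plan is to transcribe the three-part argument behind Theorem~\ref{thm:equivlag} to the level of $\calP(\Gamma_n)$: (i) non-negativity of $\calI_n$ via a chain rule for $\calF_n$ together with a pointwise Fenchel--Young inequality; (ii) identification of null-minimizers as weak solutions of \eqref{eq:FKEn} with $\sfJ^\pm=\sfP\kappa_\nu^\pm$; (iii) existence and uniqueness of a gradient-flow solution, plus the reverse implication. The genuinely new point, relative to Section~\ref{ss:mf_grad}, is that $\Gamma_n$ is non-compact and $\nu\mapsto\kappa_\nu^\pm$ is unbounded, so the weight $(1+\nu(\calT)^2)^{-1}$ of condition~(\ref{fke:cond3}) must be carried through every estimate, playing the role that the finiteness $\theta_\nu(\calT)<\infty$ played in Lemma~\ref{lm:mf_est}. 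Throughout one uses the symmetry \eqref{eq:fke_rev2} and the representation $\Theta_{\sfP}^{n,\pm}(\dd\nu,\dd x)=\sqrt{U(\nu)\,U(\nu\pm\tfrac1n\delta_x)}\,\vartheta_{\Pi_n}^{\pm}(\dd\nu,\dd x)$, valid when $\dd\sfP=U\,\dd\Pi_n$, which replace the mean-field identity $\dd\theta_\nu=c_\nu\sqrt u\,\dd\gamma$.

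For the lower bound I would first establish, as in Lemma~\ref{lm:mf_chain}, that $\calI_n<\infty$ and $\calF_n(\sfP_0)<\infty$ force $\int_0^T\calR_n(\sfP_t,\sfJ_t^+,\sfJ_t^-)\,\dd t<\infty$ and $\int_0^T\calD_n(\sfP_t)\,\dd t<\infty$, hence $\sfP_t\ll\Pi_n$ and $\sfJ_t^\pm\ll\Theta_{\sfP_t}^{n,\pm}\ll\vartheta_{\Pi_n}^\pm$ for a.e.\ $t$; write $\sfP_t=U_t\Pi_n$. Next, the $\mathsf{CE}_n$-version of the duality estimate Lemma~\ref{lm:mf_est}(ii), obtained from $e^{|a|}\le e^a+e^{-a}$ and the finiteness of the $(1+\nu(\calT)^2)^{-1}$-weighted mass of $\Theta_{\sfP_t}^{n,\pm}$, gives $\int|\omega|\,\dd|\jnet_t|\le\calR_n(\sfP_t,\sfJ_t^+,\sfJ_t^-)+\int\Psi^*(\omega)\,\dd\Theta_{\sfP_t}^{n,+}$ for bounded $\omega$. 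One then tests the continuity equation against $F=\phi_m(U_t)$, with $\phi_m$ the uniformly Lipschitz monotone regularizations of $\phi$, after an additional mass cut-off to $\{\nu(\calT)\le R\}$ to land in $C_c(\Gamma_n)$, the cut-off error being controlled by condition~(\ref{fke:cond3}) as $R\to\infty$; and plugging $\omega=\dder^{n,+}\tfrac1{2n}\phi_m'(U_t)$ into the duality estimate bounds the corresponding $\jnet$-integral by $\calR_n(\sfP_t,\sfJ_t^+,\sfJ_t^-)+\calD_n^-(\sfP_t)$ uniformly in $m$, where $\calD_n^-(\sfP):=\calR_n^*(\sfP,-\dder^{n,+}\tfrac1{2n}\log U,-\dder^{n,-}\tfrac1{2n}\log U)$ is the Fisher-type functional of the dual dissipation potential, with $0\le\calD_n^-\le\calD_n$. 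Monotone convergence in $m$ then yields the absolute continuity of $t\mapsto\calF_n(\sfP_t)$, the chain rule for $\calF_n$ along the net flux $\jnet$, and $\calI_n\ge\calI_n^-\ge0$, the last inequality coming from the pointwise bound $\phi(g)+gw+\phi^*(-w)\ge0$ applied with $g=\dd\sfJ_t^\pm/\dd\Theta_{\sfP_t}^{n,\pm}$ and $w=\mp\tfrac12\log\!\big(U_t(\nu\pm\tfrac1n\delta_x)/U_t(\nu)\big)$.

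From that representation, $\calI_n^-=0$ forces $\dd\sfJ_t^\pm/\dd\Theta_{\sfP_t}^{n,\pm}=(\phi')^{-1}\!\big(\mp\tfrac12\log(U_t(\nu\pm\tfrac1n\delta_x)/U_t(\nu))\big)$ for a.e.\ $t$, i.e.\ $\sfJ_t^\pm=U_t(\nu)\,\vartheta_{\Pi_n}^\pm=\sfP_t\kappa_\nu^\pm$, and inserting this into $\mathsf{CE}_n$ shows $\sfP_t$ solves \eqref{eq:FKEn} weakly. Conversely, for the gradient-flow solution $\hat\sfP$ constructed in Section~\ref{ss:fke_sol}, one checks $\calI_n(\hat\sfP,\hat\sfP\kappa^+,\hat\sfP\kappa^-)=0$ by a second chain rule, now along the solution, together with $\calD_n^-=\calD_n$ a.e.\ along it; exactly as in the proof of Theorem~\ref{thm:equivlag}, this reduces to the elementary identity $\tfrac12\log(z)(z-1)=\sqrt z\,\phi(\sqrt z)+\sqrt z\,\phi(1/\sqrt z)+(\sqrt z-1)^2$ with $z=U(\nu+\tfrac1n\delta_x)/U(\nu)$ integrated against $\vartheta_{\Pi_n}^+$, plus the vanishing of the $\{U(\nu)=0\}$ contribution. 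Existence of the gradient-flow solution itself follows by bootstrapping \cite{PRST2020} via the regularized triples $(\Gamma_n,\Pi_n,\bar\kappa_n^\eps)$, which is legitimate thanks to the weighted integrability $\sup_\nu(1+\nu(\calT)^{-2})\int_{\Gamma_n}\bar\kappa_n(\nu,\dd\eta)<\infty$, after rephrasing their net-flux structure in the one-way flux language of Appendix~\ref{s:ldpmot}; uniqueness then follows since, by part (ii), any gradient-flow solution is a weak solution of \eqref{eq:FKEn} with $\calF_n(\sfP_0)<\infty$, and these are unique by Section~\ref{ss:fke_sol}.

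The main obstacle is the chain rule in the second paragraph: unlike in the mean-field case, $\log U_t$ is neither bounded nor compactly supported on $\Gamma_n$, so the continuity equation must be tested against a function truncated \emph{both} in the entropy variable (via $\phi_m$) and in mass (via $\{\nu(\calT)\le R\}$), and the two limits $m\to\infty$, $R\to\infty$ must be reconciled using only the $(1+\nu(\calT)^2)^{-1}$-weighted integrability available from condition~(\ref{fke:cond3}), $\calR_n$ and $\calD_n$; this is precisely where the unboundedness of $\kappa_\nu^\pm$ intervenes, and the remaining bookkeeping is a faithful transcription of Section~\ref{ss:mf_grad}.
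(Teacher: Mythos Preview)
Your outline is largely correct and matches the paper's architecture: the two-fold truncation (in the entropy variable via $\phi_m$, and in mass via a cut-off $\chi_k(\nu)=f_k(\nu(\calT))$) is exactly what Theorem~\ref{thm:fke_chain} does, and your identification of the Fenchel--Young structure, the role of $\calD_n^-\le\calD_n$, and the bootstrapping from \cite{PRST2020} via regularized kernels is accurate.

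There is, however, a genuine gap in your uniqueness argument. You write that uniqueness follows because ``any gradient-flow solution is a weak solution \dots\ and these are unique by Section~\ref{ss:fke_sol}''. Section~\ref{ss:fke_sol} contains no uniqueness statement for weak solutions of \eqref{eq:FKEn}; Theorem~\ref{thm:fke_solex} only establishes \emph{existence} of a weak solution with $\calI_n\le 0$. The paper instead proves uniqueness of the \emph{gradient-flow} solution directly by strict convexity of $\calF_n$: if $\sfP^1,\sfP^2$ are two null-minimizers with the same initial datum, then $\tilde\sfP:=\tfrac12\sfP^1+\tfrac12\sfP^2$ also lies in $\mathsf{CE}_n$ with fluxes $\vartheta_{\tilde\sfP}^\pm=\tfrac12\vartheta_{\sfP^1}^\pm+\tfrac12\vartheta_{\sfP^2}^\pm$ (by linearity of $\sfP\mapsto\vartheta_{\sfP}^\pm$), and convexity of $\calR_n,\calD_n$ together with \emph{strict} convexity of $\calF_n$ forces $\calI_n^t(\tilde\sfP)<0$ at any $t$ where $\sfP_t^1\ne\sfP_t^2$, contradicting $\calI_n^t\ge 0$.

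A second, minor point: your ``second chain rule'' to verify $\calI_n(\hat\sfP,\hat\sfP\kappa^+,\hat\sfP\kappa^-)=0$ along the constructed solution is not wrong, but the paper bypasses it entirely. Theorem~\ref{thm:fke_solex} already yields a weak solution with $\calI_n\le 0$, and combining this with the general lower bound $\calI_n\ge 0$ from Theorem~\ref{thm:fke_chain} immediately gives $\calI_n=0$; no separate verification that $\calD_n^-=\calD_n$ along the solution is needed. Your approach would work (it is the analogue of what is done for the mean field in the second half of the proof of Theorem~\ref{thm:equivlag}), but it is more laborious here because one would again have to handle the mass cut-off.
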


\medskip

The proof of Theorem~\ref{thm:fke_main} is postponed to Section \ref{ss:fke_chain}, and follows from the existence of a gradient-flow solution via EDP-convergence of a sequence of regularized problems established in Section \ref{ss:fke_sol}, and its uniqueness via a convexity argument. 

\begin{remark}
Similar to the mean-field case, the non-negativity of $\calI_{n}$ and the identification of solutions to \eqref{eq:mf} as null-minimizers of $\calI_n$ is related to the formal equivalence
\begin{equation*}
    \calI_{n}(\sfP,\sfJ^+,\sfJ^-)=\int_0^T \calL_n(\sfP_t,\sfJ^+_t,\sfJ^-_t) \,\dd t,
\end{equation*}
where $\calL_n$ is the so-called \emph{Lagrangian} given by
\[ \calL_n(\sfP,\sfJ^+,\sfJ^-):=\Ent(\sfJ^+|\sfP \kappa_{\nu}^+)+\Ent(\sfJ^-|\sfP \kappa_{\nu}^-).\]
We discuss the implication of this relation in Appendix \ref{s:ldpmot}. 
\end{remark}

\begin{remark}[Net flux]
To show the existence of gradient-flow solutions in the sense of null-minimizers of $\calI_n$ we will have to jump from gradient-flow solutions in the sense of \cite{PRST2020}, see Theorem \ref{thm:fke_solex}. The expressions for net-fluxes are in fact contractions of those for one-way or uni-directional fluxes, as discussed in Section \ref{s:ldpmot}, which we use to show that the two notions of gradient-flow solutions are equivalent. 
\end{remark}

\subsection{A priori estimates}\label{ss:fke_est}

Below we will state the estimates and identities necessary to prove the chain rule and establish the existence of solutions. 

Recall that $\vartheta_{\sfP}^{\pm}$ satisfies the same restrictions (Conditions \eqref{fke:cond1} and \eqref{fke:cond3}) as the fluxes $\sfJ^{\pm}$. This is easily verified, but since we will use it repeatedly let us state it here precisely. 

\begin{lm}\label{lm:Tidenty}
For any $\sfP\in \calP(\Gamma_n)$
\[ \mathrm{supp}(\vartheta_{\sfP}^-) \subseteq \left\{ (\nu,x)\, :\, \nu(\calT)\geq \tfrac{2}{n}, \, x\in \mathrm{supp}(\nu) \right\}. \]
In particular, for any $\omega\in C_c(\Gamma \times \calT)$ 
\begin{align*}
\int_{\Gamma\times \calT} \omega(\nu,x) \,\dd (\sfT^{n,\pm}_{\#} \vartheta_{\sfP}^\pm)&=\int_{\Gamma\times \calT} \omega(\nu\pm\tfrac{1}{n}\delta_x,x)\, \dd  \vartheta_{\sfP}^\pm,
\end{align*}
and 
\[ \sfT^{n,\mp}_{\#} \circ\sfT^{n,\pm}_{\#} \vartheta_{\sfP}^{\pm}=\vartheta_{\sfP}^{\pm}.\]
Finally, 
\begin{equation*}
    \sfT^{n,\pm}_{\#} \Theta^{n,\pm}_{\sfP} = \Theta^{n,\mp}_{\sfP}.
\end{equation*}
\end{lm}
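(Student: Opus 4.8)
The plan is to establish the four assertions in order, each one reducing to an elementary measure-theoretic manipulation once the support of $\vartheta^-_{\sfP}$ has been pinned down.

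\textbf{Support of $\vartheta^-_{\sfP}$.} Recall from \eqref{eq:defk} and \eqref{eq:defief} that $\vartheta^-_{\sfP}(\dd\nu\,\dd x) = \sfP(\dd\nu)\,\kappa^-_\nu(\dd x)$ with $\kappa^-_\nu = c_\nu\,\nu$ and $c_\nu(x) = \int_{\calT} c(x,y)\,\nu(\dd y)$ bounded; hence $\kappa^-_\nu \ll \nu$ and $\kappa^-_\nu$ is concentrated on $\mathrm{supp}(\nu)$. Since $\sfP\in\calP(\Gamma_n)$, $\sfP$-a.e.\ $\nu$ has the form $\tfrac1n\sum_{i=1}^N\delta_{x_i}$ with $N\ge 1$; if $N=1$ then $c_\nu(x_1) = \tfrac1n c(x_1,x_1) = 0$ by Assumption \ref{assu:massu}, so $\kappa^-_\nu \equiv 0$. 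Therefore $\vartheta^-_{\sfP}$ is concentrated on $\{(\nu,x): \nu(\calT)\ge \tfrac2n,\ x\in\mathrm{supp}(\nu)\}$, which is the first claim. (The companion integrability bound $\int_{\Gamma\times\calT}(1+\nu(\calT)^2)^{-1}\dd\vartheta^\pm_{\sfP} \le M$ follows at once from $\kappa^\pm_\nu(\calT)\le M(1+\nu(\calT)^2)$ in Lemma \ref{lm:mf_est}.)

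\textbf{Change of variables and double pushforward.} For the $+$ case the stated identity is simply the definition of the pushforward applied to the Borel map $\sfT^{n,+}(\nu,x) = (\nu+\tfrac1n\delta_x,x)$. For the $-$ case, by the previous step $\vartheta^-_{\sfP}$ charges only points with $x\in\mathrm{supp}(\nu)$, where the convention in the definition of $\sfT^{n,-}$ is inactive and $\sfT^{n,-}(\nu,x) = (\nu-\tfrac1n\delta_x,x)$, so the change-of-variables formula follows. For the double pushforward: $\sfT^{n,-}\circ\sfT^{n,+} = \mathrm{Id}$ holds identically, whence $\sfT^{n,-}_\#\sfT^{n,+}_\#\vartheta^+_{\sfP} = \vartheta^+_{\sfP}$; and $\sfT^{n,+}\circ\sfT^{n,-}(\nu,x) = (\nu,x)$ whenever $x\in\mathrm{supp}(\nu)$ (the removed atom is re-added), so by the support property $\sfT^{n,+}_\#\sfT^{n,-}_\#\vartheta^-_{\sfP} = \vartheta^-_{\sfP}$.

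\textbf{Symmetry of $\Theta^{n,\pm}_{\sfP}$.} The underlying fact is that the geometric mean of measures commutes with pushforward along an injective Borel map: if $T$ is injective with Borel inverse on its image (e.g.\ any injective Borel map between standard Borel spaces) and $\Sigma$ dominates $\mu_1,\mu_2$, then $T_\#\Sigma$ dominates $T_\#\mu_1,T_\#\mu_2$, one has $\frac{\dd(T_\#\mu_i)}{\dd(T_\#\Sigma)} = \frac{\dd\mu_i}{\dd\Sigma}\circ T^{-1}$ ($T_\#\Sigma$-a.e.), and therefore
\[
T_\#\!\left(\sqrt{\tfrac{\dd\mu_1}{\dd\Sigma}\tfrac{\dd\mu_2}{\dd\Sigma}}\,\Sigma\right) = \sqrt{\tfrac{\dd(T_\#\mu_1)}{\dd(T_\#\Sigma)}\tfrac{\dd(T_\#\mu_2)}{\dd(T_\#\Sigma)}}\,T_\#\Sigma .
\]
Applying this with $T = \sfT^{n,+}$ (injective, with Borel left inverse $\sfT^{n,-}$), $\mu_1 = \vartheta^+_{\sfP}$, $\mu_2 = \sfT^{n,-}_\#\vartheta^-_{\sfP}$, so that the left-hand integrand measure is exactly $\Theta^{n,+}_{\sfP}$, and using $\sfT^{n,+}_\#\sfT^{n,-}_\#\vartheta^-_{\sfP} = \vartheta^-_{\sfP}$ from the previous step, the right-hand side is the geometric mean of $\sfT^{n,+}_\#\vartheta^+_{\sfP}$ and $\vartheta^-_{\sfP}$, i.e.\ $\Theta^{n,-}_{\sfP}$. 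Hence $\sfT^{n,+}_\#\Theta^{n,+}_{\sfP} = \Theta^{n,-}_{\sfP}$; pushing forward by $\sfT^{n,-}_\#$ and using $\sfT^{n,-}\circ\sfT^{n,+} = \mathrm{Id}$ yields $\sfT^{n,-}_\#\Theta^{n,-}_{\sfP} = \Theta^{n,+}_{\sfP}$, which is the $-$ case.

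\textbf{Main obstacle.} There is no genuine difficulty here; the only points that need care are (a) the convention in the definition of $\sfT^{n,-}$, which is exactly why the support computation must be carried out first, and (b) justifying the Radon–Nikodym/pushforward identity for the geometric mean, for which one invokes that an injective Borel map between standard Borel spaces is a Borel isomorphism onto a Borel subset. Everything else is bookkeeping.
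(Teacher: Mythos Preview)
Your proof is correct and follows exactly the natural elementary argument; the paper itself states the lemma as ``easily verified'' and omits the proof entirely, so there is nothing to compare against. The only substantive step you add beyond bookkeeping is the justification that geometric means commute with pushforward along injective Borel maps, which is the right way to handle the $\Theta^{n,\pm}_{\sfP}$ identity rigorously.
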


The above identities allow us to prove the symmetry condition that implies the detailed balance condition \eqref{eq:fke_rev1}.
\begin{lm}[Detailed balance]\label{lm:revers}
\begin{equation*}
    \vartheta_{\Pi_n}^{\pm}=\sfT^{n,\mp}_{\#}\vartheta^{\mp}_{\Pi_n}.
\end{equation*}

\end{lm}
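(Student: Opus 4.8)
The plan is to reduce the stated symmetry to the single identity $\vartheta^{-}_{\Pi_n}=\sfT^{n,+}_{\#}\vartheta^{+}_{\Pi_n}$ and then verify it by an explicit series computation. Since $\sfT^{n,-}_{\#}\circ\sfT^{n,+}_{\#}\vartheta^{+}_{\Pi_n}=\vartheta^{+}_{\Pi_n}$ by Lemma~\ref{lm:Tidenty}, applying $\sfT^{n,-}_{\#}$ to that identity immediately yields the companion $\vartheta^{+}_{\Pi_n}=\sfT^{n,-}_{\#}\vartheta^{-}_{\Pi_n}$, so the two cases of \eqref{eq:fke_rev2} are equivalent. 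Both $\vartheta^{-}_{\Pi_n}$ and $\sfT^{n,+}_{\#}\vartheta^{+}_{\Pi_n}$ are locally finite measures on $\Gamma_n\times\calT$, so it suffices to test them against an arbitrary $\omega\in C_c(\Gamma_n\times\calT)$; because $\mathrm{supp}(\omega)\subseteq\{\nu(\calT)\le R\}\times\calT$ for some $R>0$ and an atomic measure $\tfrac1n\sum_{i=1}^{N}\delta_{x_i}$ has mass $N/n$, only the finitely many terms with $N\le\floor{nR}$ ever contribute, so all the series that appear are in fact finite sums and no integrability question arises.

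The next step is to unfold both sides using $\Pi_n=(L_n)_{\#}\pi_n$ with $\pi_n$ as in \eqref{eq:defipn}, writing $\nu=\tfrac1n\sum_{i=1}^{N}\delta_{x_i}$, and recalling $\kappa^{-}_{\nu}=c_{\nu}\,\nu$, $\kappa^{+}_{\nu}=c_{\nu}\,\gamma$ with $c_\nu(x)=\int_\calT c(x,y)\,\nu(\dd y)$. For the left-hand side this gives
\begin{equation*}
\int \omega\,\dd\vartheta^{-}_{\Pi_n}=\frac{1}{e^{n\gamma(\calT)}-1}\sum_{N\ge 1}\frac{n^{N-2}}{N!}\int_{\calT^N}\sum_{j,k=1}^{N}\omega\!\Big(\tfrac1n\sum_{i=1}^{N}\delta_{x_i},\,x_j\Big)c(x_j,x_k)\,\gamma^{\otimes N}(\dd x_1\cdots\dd x_N),
\end{equation*}
in which the diagonal $j=k$ drops out because $c(x,x)=0$ by Assumption~\ref{assu:massu}; this is the decisive point, as it is exactly what removes single-atom configurations from $\vartheta^{-}$. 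By permutation symmetry of $\gamma^{\otimes N}$ all $N(N-1)$ off-diagonal terms are equal, so one replaces the double sum by $N(N-1)$ copies of the term with $(j,k)=(N,N-1)$; renaming $y:=x_N$ then presents the configuration as $\tfrac1n\delta_y+\tfrac1n\sum_{i=1}^{N-1}\delta_{x_i}$, and using $\tfrac{n^{N-2}}{N!}\,N(N-1)=\tfrac{n^{N-2}}{(N-2)!}$ one arrives at
\begin{equation*}
\int \omega\,\dd\vartheta^{-}_{\Pi_n}=\frac{1}{e^{n\gamma(\calT)}-1}\sum_{N\ge 2}\frac{n^{N-2}}{(N-2)!}\int_{\calT^{N-1}}\!\!\int_{\calT}\omega\!\Big(\tfrac1n\delta_y+\tfrac1n\sum_{i=1}^{N-1}\delta_{x_i},\,y\Big)c(y,x_{N-1})\,\gamma(\dd y)\,\gamma^{\otimes(N-1)}(\dd x_1\cdots\dd x_{N-1}).
\end{equation*}
For the right-hand side the identical manipulation, now using $\sfT^{n,+}(\nu,x)=(\nu+\tfrac1n\delta_x,x)$ and symmetrizing the sum over the index $k$ appearing in $c_\nu(x)=\tfrac1n\sum_{k}c(x,x_k)$, produces
\begin{equation*}
\int \omega\,\dd(\sfT^{n,+}_{\#}\vartheta^{+}_{\Pi_n})=\frac{1}{e^{n\gamma(\calT)}-1}\sum_{N\ge 1}\frac{n^{N-1}}{(N-1)!}\int_{\calT^N}\!\!\int_{\calT}\omega\!\Big(\tfrac1n\delta_x+\tfrac1n\sum_{i=1}^{N}\delta_{x_i},\,x\Big)c(x,x_N)\,\gamma(\dd x)\,\gamma^{\otimes N}(\dd x_1\cdots\dd x_N);
\end{equation*}
re-indexing $N\mapsto N-1$ (so that the full configuration carries $N$ atoms) and renaming $x$ to $y$ turns this into precisely the series just obtained for $\vartheta^{-}_{\Pi_n}$. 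Hence the two measures agree, and the companion identity follows as noted above.

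The computation itself is routine bookkeeping; the two points that need care are (a) killing the diagonal $j=k$ via $c(x,x)=0$ — without it the $N=1$ term of $\vartheta^{-}_{\Pi_n}$ would survive and the claimed identity would simply be false — and (b) keeping the two factorial and index shifts consistent so that the $\vartheta^{+}$ and $\vartheta^{-}$ series line up term by term. Conceptually there is nothing mysterious here: the identity is just the (conditioned) Mecke--Slivnyak formula for the Poisson measure with intensity $n\gamma$, applied to the symmetric pair-integrand $c(x,y)$; from \eqref{eq:fke_rev2} the detailed balance \eqref{eq:fke_rev1} for $\bar\kappa_n$ then follows at once, since a birth transition out of $\nu$ and the corresponding death transition out of $\nu+\tfrac1n\delta_x$ are exactly the two sides that have been identified.
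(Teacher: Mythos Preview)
Your proof is correct and follows essentially the same approach as the paper: an explicit series computation exploiting the Poisson structure of $\Pi_n$, the vanishing diagonal $c(x,x)=0$, and the permutation symmetry of $\gamma^{\otimes N}$. The only cosmetic difference is that the paper verifies the identity $\vartheta^{+}_{\Pi_n}=\sfT^{n,-}_{\#}\vartheta^{-}_{\Pi_n}$ directly, whereas you prove $\vartheta^{-}_{\Pi_n}=\sfT^{n,+}_{\#}\vartheta^{+}_{\Pi_n}$ and then invoke Lemma~\ref{lm:Tidenty} to obtain the companion; your added remark linking the computation to the Mecke--Slivnyak formula is a nice conceptual gloss not present in the paper.
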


\begin{proof}
Fix an arbitrary $\omega\in C_c(\Gamma\times \calT)$, and for any ordered collection of $N$ variables in $\calT$ set $\mathbf{x}^{N}:=(x_1,\dots,x_N) \in X^N$. We then have the following. 
\begin{align*}
    \int_{\Gamma\times \calT}\omega(\nu,x) \,\dd \vartheta^+_{\Pi_n}&=\frac{1}{e^{n \gamma (\calT)}-1}\sum_{N=1}^{\infty} \frac{n^N}{N!} \int_{\calT^N}  \left(\int_{\calT} \omega\left(L_n(\mathbf{x}^{N}),x\right) \kappa^+\left[L_n(\mathbf{x^N})\right](\dd x) \right)  \gamma^{\otimes N}(\dd \mathbf{x}^N),\\
  \int_{\Gamma\times \calT}\omega(\nu,x) \, \dd(\sfT^{n,-}_{\#} \vartheta^-_{\Pi_n})&=\frac{1}{e^{n\gamma(\calT)}-1}\sum_{N=1}^{\infty} \frac{n^N}{N!} \int_{\calT^N}  \left(\int_{\calT} \omega\left(L_n(\mathbf{x}^{N})-\tfrac{1}{n}\delta_x,x\right) \kappa^-\left[L_n(\mathbf{x}^N)\right](\dd x) \right)  \gamma^{\otimes N}(\dd \mathbf{x}^N).
\end{align*}
Since $\kappa^-[\tfrac{1}{n}\delta_{y}]=0$ for any $y\in \calT$, the sum in the right-hand side of the last expression starts from $N=2$, thus reducing the expression to
\[\frac{1}{e^{n \gamma(\calT)}-1}\sum_{N=1}^{\infty} \frac{n^{N+1}}{(N+1)!} \int_{\calT^{N+1}}  \left(\int_{\calT} \omega\left(L(\mathbf{x}^{N+1})-\tfrac{1}{n}\delta_x,x\right) \kappa^-\left[L_n(\mathbf{x}^{N+1})\right](\dd x) \right)  \gamma^{\otimes (N+1)}(\dd \mathbf{x}^{N+1}),\]
It is clear that, for our desired equality, it is enough to show that for every $N$,
\begin{equation*}
\begin{aligned}
 n \int_{\calT^{N+1}} & \left(\int_{\calT^2} \omega\left(L_n(\mathbf{x}^{N+1})-\tfrac{1}{n}\delta_x,x\right)c(x,y)\, L_n(\mathbf{x}^{N+1})^{\otimes 2}(\dd x,\dd y)\right) \, \gamma^{\otimes (N+1)}(\dd \mathbf{x}^{N+1})\\
  &=(N+1)\int_{\calT^{N}} \left(\int_{X^2} \omega\left(L_n(\mathbf{x}^{N}),x\right)c(x,y)\gamma(\dd x)  L_n(\mathbf{x}^{N})(\dd y)\right)\, \gamma^{\otimes N}(\dd \mathbf{x}^N).
\end{aligned}
\end{equation*}
To do so, note that since $c(x,x)=0$,
\begin{align*}
  &n \int_{X^2} \omega\left(L_n(\mathbf{x}^{N+1})-\tfrac{1}{n}\delta_x,x\right)c(x,y)\, L_n(\mathbf{x}^{N+1})^{\otimes 2}(\dd x,\dd y)\\
  &\hspace{8em}=\frac{1}{n}\sum_{i=1}^{N+1} \sum_{j\neq i} \omega\left(L_n(\mathbf{x}^{N+1})-\tfrac{1}{n}\delta_{x_i},x_i\right)c(x_i,x_j)\\
  &\hspace{8em}=\sum_{i=1}^{N+1} \int_{\calT}  \omega\left(L_n(\mathbf{x}^{N+1})-\tfrac{1}{n}\delta_{x_i},x_i\right)c(x_i,y) \left(L_n(\mathbf{x}^{N+1})-\tfrac{1}{n}\delta_{x_i}\right)(\dd y).
\end{align*}
Hence, by symmetry of $\gamma^{\otimes (N+1)}$, we obtain
\begin{align*}
\int_{\calT^{N+1}} &\left(\sum_{i=1}^{N+1} \int_{\calT}  \omega\left(L_n(\mathbf{x}^{N+1})-\tfrac{1}{n}\delta_{x_i},x_i\right)c(x_i,y) \left(L_n(\mathbf{x}^{N+1})-\tfrac{1}{n}\delta_{x_i}\right)(\dd y) \right) \gamma^{\otimes (N+1)}(\dd \mathbf{x}^{N+1})\\
&=(N+1) \int_{\calT^N} \left( \int_{\calT^2}  \omega\left(L_n(\mathbf{x}^{N}),x\right)c(x,y) \gamma(\dd x)L_n(\mathbf{x}^{N})(\dd y) \right) \gamma^{\otimes N}(\dd \mathbf{x}^{N}),
\end{align*}
as desired. 
\end{proof}

Recall from Lemma \ref{lm:mf_est} that that
    \begin{equation*}
          \kappa^{\pm}_{\nu}(\calT)\leq M (1+\nu(\calT)^2),
    \end{equation*}
where $M:=(1+\gamma(\calT))\|c\|_{\infty}$. Now let 
\[ M_n :=\max\bigl\{1+2/n^2,2\bigr\} M, \]
and the jointly convex and lower semicontinuous function $\Upsilon:\R_{\geq 0}^3\to [0,+\infty]$ given by
\begin{equation*}
    \Upsilon(w,u,v):= \begin{cases}
    \sqrt{u v} & \quad \mbox{if $w=0$,}\\
    \phi\left(\frac{w}{\sqrt{u v}}\right)\sqrt{u v} & \quad \mbox{if $u,v>0$,}\\
    +\infty & \quad \mbox{if $w>0$, and either $u=0$ or $v=0$.}
    \end{cases}
\end{equation*}
We then have the following result. 

\begin{lm}\label{lm:fke_est} The following statements hold:
\begin{enumerate}[label=(\roman*)]
    \item For all $\sfP$  
\begin{equation*}
        \int_{\Gamma\times \calT} (1+\nu(\calT))^{-2}\,\Theta_{\sfP}^{n,\pm}(\dd\nu\,\dd y)\leq \int_{\Gamma\times \calT}  (1+\nu(\calT)^2)^{-1}\,\Theta_{\sfP}^{n,\pm}(\dd\nu\,\dd y) \le M_n.
\end{equation*}

\item For any $\sfP$, admissible $\sfJ^{\pm}$, and net flux $\jnet=\sfJ^+-\sfT^{n,-}_{\#}\sfJ^-$,  $\omega\in \calB(\Gamma\times \calT)$, we have 
\begin{equation*}
 \int_{\Gamma\times \calT} |\omega|\,\dd |\jnet| \, \leq \calR_n(\sfP,\sfJ^+,\sfJ^-)+\int_{\Gamma\times \calT} \Psi^*(\omega) \, \dd \Theta_{\sfP}^{n,+}.
\end{equation*}

Moreover,
\begin{subequations}\label{eq:fke_actionb}
\begin{align}
    \phi\left(1 \vee \frac{1}{M_n} \int_{\Gamma\times \calT} (1+\nu(X)^2)^{-1}\,\sfJ^{\pm}(\dd \nu,\dd x) \right)M_n\, \leq \calR_n(\sfP,\sfJ^+,\sfJ^-), \label{eq:fke_actionb1} \\
    \Psi\left(\frac{1}{M_n} \int_{\Gamma\times \calT} (1+\nu(X))^{-1}\,|\jnet|(\dd \nu,\dd x) \right)M_n\, \leq \calR_n(\sfP,\sfJ^+,\sfJ^-).  \label{eq:fke_actionb2}
\end{align}
\end{subequations}
\item For all admissible $\sfP,\sfJ^{\pm}$,
\begin{equation}\label{eq:fke_entequiv2}
      \Ent(\sfJ^{\pm}|\Theta^{n,\pm})=\int_{\Gamma\times \calT} \Upsilon\left(\frac{\dd \sfJ^\pm}{\dd \Sigma},\frac{\dd \vartheta_{\sfP}^\pm}{\dd \Sigma},\frac{\dd (\sfT^{n,\mp}_{\#}\vartheta_{\sfP}^\mp)}{\dd \Sigma}\right)\dd  \Sigma,
\end{equation}
for any common dominating measure $\Sigma$. Moreover, if $\dd \sfP=U \dd  \Pi_n$,
\begin{equation*}
    \Ent(\sfJ^{\pm}|\Theta^{n,\pm})=\int_{\Gamma\times \calT} \Upsilon\left(\frac{\dd \sfJ^{\pm}}{\dd \vartheta_{\sfP}^{\pm}},U(\nu),U(\nu\pm\tfrac{1}{n}\delta_x)\right)\dd \vartheta^{\pm}_{\sfP}.
\end{equation*}
\end{enumerate}
\end{lm}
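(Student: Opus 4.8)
The three parts are essentially independent, and the plan is to treat them in the order (i), (iii), (ii).

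For (i), I would bound the geometric mean by the arithmetic mean: since $\sqrt{ab}\le\tfrac12(a+b)$ pointwise in the Radon--Nikodym densities, $\Theta^{n,\pm}_{\sfP}\le\tfrac12\bigl(\vartheta^{\pm}_{\sfP}+\sfT^{n,\mp}_{\#}\vartheta^{\mp}_{\sfP}\bigr)$ as measures. Then $\int(1+\nu(\calT)^2)^{-1}\,\dd\vartheta^{\pm}_{\sfP}=\int_{\Gamma}(1+\nu(\calT)^2)^{-1}\kappa^{\pm}_{\nu}(\calT)\,\sfP(\dd\nu)\le M$ by Lemma~\ref{lm:mf_est}(i) and $\sfP\in\calP(\Gamma)$, while $\int(1+\nu(\calT)^2)^{-1}\,\dd(\sfT^{n,\mp}_{\#}\vartheta^{\mp}_{\sfP})=\int_{\Gamma}(1+(\nu(\calT)\mp\tfrac1n)^2)^{-1}\kappa^{\mp}_{\nu}(\calT)\,\sfP(\dd\nu)$; in the $+$ direction the weight only decreases, while in the $-$ direction one uses $\kappa^{-}_{\nu}(\calT)\le\|c\|_{\infty}\nu(\calT)^2$ together with the support property $\nu(\calT)\ge\tfrac2n$ on $\mathrm{supp}(\vartheta^{-}_{\sfP})$ (Lemma~\ref{lm:Tidenty}) and an elementary maximization of $t\mapsto t^2/(1+(t-\tfrac1n)^2)$. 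Collecting constants and using $\|c\|_{\infty}\le M$ yields the bound $M_n$; the first inequality is just $(1+t)^2\ge 1+t^2$.

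For (iii), the identity \eqref{eq:fke_entequiv2} is an unfolding of the definition of $\Theta^{n,\pm}_{\sfP}$: writing $u,v$ for the $\Sigma$-densities of $\vartheta^{\pm}_{\sfP}$ and $\sfT^{n,\mp}_{\#}\vartheta^{\mp}_{\sfP}$, on $\{u>0,\,v>0\}$ one has $\dd\sfJ^{\pm}/\dd\Theta^{n,\pm}=(\dd\sfJ^{\pm}/\dd\Sigma)/\sqrt{uv}$ and $\dd\Theta^{n,\pm}=\sqrt{uv}\,\dd\Sigma$, so the integrand of $\Ent(\sfJ^{\pm}|\Theta^{n,\pm})$ coincides with $\Upsilon$; on the complement $\sfJ^{\pm}\ll\Theta^{n,\pm}$ forces $\dd\sfJ^{\pm}/\dd\Sigma=0$ (otherwise both sides are $+\infty$, matching $\Upsilon=+\infty$), and $\Upsilon(0,u,v)=\sqrt{uv}=0$ there. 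Independence of $\Sigma$ follows from positive $1$-homogeneity of $\Upsilon$. For the case $\dd\sfP=U\,\dd\Pi_n$ I would take $\Sigma=\vartheta^{\pm}_{\Pi_n}$ and use the symmetry $\vartheta^{\pm}_{\Pi_n}=\sfT^{n,\mp}_{\#}\vartheta^{\mp}_{\Pi_n}$ (Lemma~\ref{lm:revers}) to get $\vartheta^{\pm}_{\sfP}=U(\nu)\,\vartheta^{\pm}_{\Pi_n}$ and $\sfT^{n,\mp}_{\#}\vartheta^{\mp}_{\sfP}=U(\nu\pm\tfrac1n\delta_x)\,\vartheta^{\pm}_{\Pi_n}$, hence $\Theta^{n,\pm}_{\sfP}=\sqrt{U(\nu)\,U(\nu\pm\tfrac1n\delta_x)}\,\vartheta^{\pm}_{\Pi_n}$ (the Remark after Definition~\ref{defi:fke}); the stated formula then drops out of the first identity.

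Part (ii) is the substantial one, and its engine is the dual inequality $\int\omega\,\dd\jnet\le\calR_n(\sfP,\sfJ^+,\sfJ^-)+\int\Psi^*(\omega)\,\dd\Theta^{n,+}_{\sfP}$ for $\omega\in\calB_b(\Gamma\times\calT)$, obtained by applying the pointwise entropy bound $\int\varphi\,\dd\mu\le\Ent(\mu|\sigma)+\int(e^{\varphi}-1)\,\dd\sigma$ to the pair $(\sfJ^+,\Theta^{n,+}_{\sfP})$ with $\varphi=\omega$ and to $(\sfJ^-,\Theta^{n,-}_{\sfP})$ with $\varphi(\nu,x)=-\omega(\nu-\tfrac1n\delta_x,x)$, summing, and using $\sfT^{n,+}_{\#}\Theta^{n,+}_{\sfP}=\Theta^{n,-}_{\sfP}$ (Lemma~\ref{lm:Tidenty}) to convert $\int(e^{-\omega(\nu-\tfrac1n\delta_x,x)}-1)\,\dd\Theta^{n,-}_{\sfP}$ into $\int(e^{-\omega}-1)\,\dd\Theta^{n,+}_{\sfP}$, so that the exponential terms assemble into $\Psi^*(\omega)=e^{\omega}+e^{-\omega}-2$. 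Applying this to $\omega\cdot h$ with $h:=\sgn(\dd\jnet/\dd|\jnet|)$ and using evenness of $\Psi^*$ gives the first inequality of (ii) for bounded $\omega$, which I would extend to general $\omega\in\calB(\Gamma\times\calT)$ by truncation plus monotone/dominated convergence, as in Lemma~\ref{lm:mf_est}(ii). For \eqref{eq:fke_actionb1}, I would estimate $\Ent(\sfJ^{\pm}|\Theta^{n,\pm})\ge\int\tilde\phi\bigl(\dd\sfJ^{\pm}/\dd\Theta^{n,\pm}\bigr)\,\dd\mu$ with $\tilde\phi(s)=\phi(s\vee1)$ and $\mu:=(1+\nu(\calT)^2)^{-1}\Theta^{n,\pm}_{\sfP}$ (so $\mu(\Gamma\times\calT)\le M_n$ by (i)), note that $\dd\sfJ^{\pm}/\dd\Theta^{n,\pm}$ is also the $\mu$-density of $(1+\nu(\calT)^2)^{-1}\sfJ^{\pm}$, apply Jensen, and use that $t\mapsto t\,\tilde\phi(c/t)$ is nonincreasing to enlarge the total mass to $M_n$. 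For \eqref{eq:fke_actionb2}, in the dual inequality I would replace $\omega$ by $(1+\nu(\calT))^{-1}\omega$, bound $\Psi^*\bigl((1+\nu(\calT))^{-1}\omega\bigr)\le(1+\nu(\calT))^{-2}\Psi^*(\omega)$ via \eqref{eq:mf_psi} and $\int(1+\nu(\calT))^{-2}\,\dd\Theta^{n,+}_{\sfP}\le M_n$ by (i), then optimize over $\omega=R\,h$ with $R\ge0$ and recognize $\sup_{R\ge0}\{R\,Y-M_n\Psi^*(R)\}=M_n\Psi(Y/M_n)$ since $\Psi=(\Psi^*)^*$.

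The main obstacle throughout is the unboundedness of $\Theta^{n,\pm}_{\sfP}$ and of the fluxes: each entropy--duality step must be run on bounded truncations before passing to the limit (so that the exponential terms are integrable against the only weighted-integrable $\Theta^{n,\pm}_{\sfP}$), and each change of variables along $\sfT^{n,\pm}$ must be justified by restricting to the common support $\{\nu(\calT)\ge\tfrac2n,\ x\in\mathrm{supp}(\nu)\}$ of $\sfJ^-,\vartheta^-_{\sfP},\Theta^{n,-}_{\sfP}$ (Lemma~\ref{lm:Tidenty}), on which $\sfT^{n,-}$ genuinely removes an atom and $\sfT^{n,+}\circ\sfT^{n,-}=\mathsf{Id}$. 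These bookkeeping points, rather than any single delicate estimate, are where care is needed.
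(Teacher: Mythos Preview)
Your proposal is correct and follows essentially the same route as the paper. In (i) the paper bounds both $\int(1+\nu(\calT)^2)^{-1}\dd\vartheta_{\sfP}^{\pm}$ and $\int(1+\nu(\calT)^2)^{-1}\dd(\sfT^{n,\mp}_{\#}\vartheta_{\sfP}^{\mp})$ by $M_n$ via the single inequality $1+(z+\tfrac1n)^2\le\max\{1+\tfrac{2}{n^2},2\}(1+z^2)$ and then concludes for $\Theta^{n,\pm}_{\sfP}$ by Cauchy--Schwarz/AM--GM, whereas you split into cases and maximize $t^2/(1+(t-\tfrac1n)^2)$; both bookkeeping choices land on $M_n$. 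In (ii) the paper argues exactly as you do (entropy duality with $\omega^+=\omega$, $\omega^-=-\omega\circ\sfT^{n,-}$, the identity $\sfT^{n,-}_{\#}\Theta^{n,-}_{\sfP}=\Theta^{n,+}_{\sfP}$, evenness of $\Psi^*$, then truncation) and dismisses \eqref{eq:fke_actionb1}--\eqref{eq:fke_actionb2} with a pointer back to Lemma~\ref{lm:mf_est}; your Jensen/monotonicity and dual-optimization arguments are precisely what that pointer unpacks. Part (iii) is handled identically in both.
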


\begin{remark}
Since $M_n\leq 3M$ for all $n\geq 1$ the estimates \eqref{eq:fke_actionb} are uniform in $n$, which we will use in the EDP-convergence to establish tightness of sequences $\sfJ^{n,\pm}$ under bound on $\calI_n$. Moreover, the representation \eqref{eq:fke_entequiv2} is used to deduce the lower-semicontinuity of $\calI_n$ for sequences of curves. 
\end{remark}

\begin{proof}
\emph{(i)} For any $x^*\in \calT$, $\nu \in \Gamma$, we have
\begin{align*}
    \max\{\kappa^{\pm}(\calT), \kappa^{\pm}[\sfT^{n,\pm}_{x^*}(\nu)](\calT)\}&\le M \max\left\{1+\nu(\calT)^2,\, 1+(\sfT^{n,+}_{x^*}(\nu))(\calT)^2,\, 1+(\sfT^{n,-}_{x^*}(\nu))(\calT)^2\right\}\\
    &\leq M_n (1+\nu(\calT)^2)
    \end{align*}
due to the inequality \[1+(\tfrac{1}{n}+z)^2\leq 1+\tfrac{2}{n^2}+2z^2, \qquad \fA z\geq 0.\]
In particular, 
\begin{equation*}
\max \left\{ \int_{\Gamma\times \calT} (1+\nu(\calT)^2)^{-1} \dd \teta_{\sfP}^\pm, \int_{\Gamma\times \calT} (1+\nu(\calT)^2)^{-1} \dd \sfT^{n,\mp}_{\#} \teta_{\sfP}^\pm \right\} \leq M_n, 
\end{equation*}
and hence the desired statement follows after applying Jensen's inequality. 

\emph{(ii)} By duality we have for any $\omega \in \calB_c(\Gamma\times \calT)$, 
\begin{align*}
    \int_{\Gamma\times \calT} \omega^+\, \dd \sfJ^++\int_{\Gamma\times \calT} \omega^- \,\dd\sfJ^-\leq \calR_n(\sfP,\sfJ^+,\sfJ^-)+\int_{\Gamma\times \calT} (e^{\omega^+}-1) \,\dd \Theta_{\sfP}^{n,+}+\int_{\Gamma\times \calT} (e^{\omega^-}-1)\, \dd \Theta_{\sfP}^{n,-}.
\end{align*}
Substituting $\omega^+=\omega$, $ \omega^-=-\omega\circ \sfT^{n,-}$ and using the fact that $\sfT^{n,-}_{\#}\Theta^{n,-}_{\sfP}=\Theta^{n,+}_{\sfP}$ we derive
\begin{align*}
    \int_{\Gamma\times \calT} \omega \,\dd \jnet \leq \calR_n(\sfP,\sfJ^+,\sfJ^-)+\int_{\Gamma\times \calT} \Psi^*(\omega) \,\dd \Theta_{\sfP}^{n,+}.
\end{align*}
Since $\Psi^*$ is even we can replace $\omega$ and $\sfJ$ by their absolutes in the inequality, after substituting for $\omega$ appropriately, and we conclude with a monotone convergence argument. The inequalities  \eqref{eq:fke_actionb1} and \eqref{eq:fke_actionb2} now follow similarly as in Lemma \ref{lm:mf_est} via respectively Jensen's inequality and a dual approach. 

\medskip

\emph{(iii)} Let us only consider $\sfJ^+$, $\Theta^{n,+}$ (the case for $\sfJ^-$, $\Theta^{n,-}$ is similar). Suppose $\Ent(\sfJ^{+}|\Theta^{n,+})<\infty$ and recall that 
\begin{equation*}
    \Theta^{n,+}_{\sfP}(\dd \nu,\dd x):=\sqrt{\frac{\dd \vartheta^{+}_{\sfP}}{\dd \Sigma}\frac{\dd (\sfT^{n,-}_{\#}\vartheta^{-}_{\sfP})}{\dd\Sigma}}\, \,\dd \Sigma,
\end{equation*}
where $\Sigma$ is a dominating measure, e.g  $\Sigma=\vartheta^{+}_{\sfP}+\sfT^{n,-}_{\#}\vartheta^{-}_{\sfP}$. 
Then $\sfJ^+\ll \Theta_{\sfP}^{n,+} \ll  \Sigma$, and it follows that $\sfJ^+$-a.e.\ $\dd \vartheta_{\sfP}^{+}/\dd \Sigma$, $\dd (\sfT^{n,-}_{\#}\vartheta_{\sfP}^{-})/\dd \Sigma>0$, from which one can easily verifies \eqref{eq:fke_entequiv2}.

Vice versa, suppose that 
\[\int_{\Gamma\times \calT} \Upsilon\left(\frac{\dd \sfJ^+}{\dd \Sigma},\frac{\dd \vartheta_{\sfP}^+}{\dd \Sigma},\frac{\dd (\sfT^{n,-}_{\#}\vartheta_{\sfP}^-)}{\dd \Sigma}\right)\dd  \Sigma <\infty,\]
for some dominating measure $\Sigma$. Then again $\sfJ^+$-a.e.\ we have that $\dd \vartheta_{\sfP}^{+}/\dd \Sigma$, $\dd (\sfT^{n,-}_{\#}\vartheta_{\sfP}^{-})/\dd \Sigma>0$, and by super-linearity of $\phi$ deduce that in fact $\sfJ^+\ll \tilde \Sigma$ for any dominating measure of $\vartheta_{\sfP}^+$ and $\sfT^{n,-}_{\#}\vartheta_{\sfP}^-$, which together implies $\sfJ^+\ll \Theta_{\sfP}^{n,+}$ and the result follows similarly as above. 
\end{proof}

Finally, we discuss the time-regularity of $\sfP_t$ for admissible curves and state the analog of Lemma \ref{lm:mf_timereg}. Let the weighted total variation metric $d_{TV,w}$ be given as 
\begin{equation}\label{eq_fked}
    \begin{aligned}
         d_{TV,w}(\sfP^1,\sfP^2)&:=\int_{\Gamma} (1+\nu(\calT)^2)^{-1}\, \dd |\sfP^1-\sfP^2|.
    \end{aligned}
    \end{equation}
Note that $d_{TV,w}$ is lower semicontinuous with respect to the narrow topology, and while convergence in $d_{TV,w}$ does not directly imply narrow convergence, it does so on narrowly pre-compact sets. 

\begin{lm}\label{lm:fke_timereg}
For any $(\sfP,\sfJ^+,\sfJ^-)\in \mathsf{CE}_n$ we have $\fA s,t\in [0,T]$:
\begin{equation}\label{eq:fle_timereg1}
    d_{TV,w}(\sfP_s,\sfP_t)\leq 4 n \max\Bigl\{1+\tfrac{2}{n^2},2\Bigr\} \int_s^t \int_{\Gamma\times \calT} (1+\nu(\calT)^2)^{-1} \dd (\sfJ_r^{+}+\sfJ^-_r)\, \dd r.
\end{equation}
Suppose in addition that $\sfP_t\ll \Pi_n, \sfJ^{\pm}_t\ll \teta_{\Pi_n}^{\pm}$ for all $t\in [0,T]$ and set 
\begin{align*}
    \ell:=(1+\nu(\calT)^2)^{-1} \Pi_n,\qquad
    \Sigma^{\pm}:=(1+\nu(\calT)^2)^{-1} \teta^{\pm}_{\Pi_n}.
\end{align*} 
Then there exists an absolutely continuous and a.e. differentiable map $U:[a,b]\to L^1(\calP(\Gamma),\ell)$ and maps $G^\pm:[0,T]\to L^1(\Sigma^{\pm})$ such that $U_t=\dd \sfP_t/\dd \Pi_n$, $G_t^{\pm}=\dd \sfJ^{\pm}/\dd \teta_{\Pi_n}^{\pm}$, and 
\begin{equation}\label{eq:fke_timereg3}
\begin{aligned}
    \partial_t U_t(\nu) &= n \int_{\calT} (G_t^-(\nu+\tfrac{1}{n}\delta_x,x)-G_t^+(\nu,x)) \, \kappa^+_{\nu}(\dd x)\\
    &\hspace{4em}+ n\int_{\calT} (G_t^+(\nu-\tfrac{1}{n}\delta_x,x)-G_t^-(\nu,x)) \, \kappa^-_{\nu}(\dd x).
\end{aligned}
\end{equation}
Alternatively, in terms of the net-flux $\jnet=G \teta_{\sfP}^+$ with $\Gnet:=G^+-G^-\circ \sfT^{n,+}$,
\begin{equation*}
\begin{aligned}
    \partial_t U_t(\nu)= n \int_{\calT} \Gnet_t(\nu-\tfrac{1}{n}\delta_x,x) \, \kappa^-_{\nu}(\dd x)-n \int_{\calT} \Gnet_t(\nu,x) \, \kappa^+_{\nu}(\dd x).
\end{aligned}
\end{equation*}
\end{lm}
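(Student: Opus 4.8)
The plan is to first establish the weighted total-variation estimate \eqref{eq:fle_timereg1} by a duality argument on the continuity equation, and then, under the additional absolute-continuity hypotheses, to upgrade it to the pointwise evolution \eqref{eq:fke_timereg3} by rewriting the weak formulation via detailed balance and invoking the Lebesgue differentiation theorem for Bochner integrals. Structurally this is the $\mathsf{CE}_n$-analogue of Lemma \ref{lm:mf_timereg}.

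For the estimate \eqref{eq:fle_timereg1}, write $W(\nu):=(1+\nu(\calT)^2)^{-1}$ and recall the elementary bound $1+(\tfrac{1}{n}+z)^2\le c_n(1+z^2)$ with $c_n:=\max\{1+\tfrac{2}{n^2},2\}$ already used in the proof of Lemma \ref{lm:fke_est}; it gives $W(\sfT^{n,+}_x\nu)\le W(\nu)$ for all $(\nu,x)$, and $W(\sfT^{n,-}_x\nu)\le c_n W(\nu)$ whenever $\nu(\calT)\ge\tfrac{2}{n}$ and $x\in\mathrm{supp}(\nu)$, i.e.\ on $\mathrm{supp}(\sfJ^-_t)$ by condition \eqref{fke:cond1}. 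Hence $|(\dnabla^{n,\pm}F)(\nu,x)|\le 2nc_n\,W(\nu)$ on $\mathrm{supp}(\sfJ^\pm_t)$ for any Borel $F$ with $|F|\le W$ and compact support. Plugging such $F$ into \eqref{eq:fkecont1} and taking the supremum over all of them — which, by inner regularity of finite measures on the Polish space $\Gamma_n$ together with a truncation, computes $\int_{\Gamma_n}W\,\dd|\sfP_t-\sfP_s|$ — yields \eqref{eq:fle_timereg1}.

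For the second part, assume $\sfP_t\ll\Pi_n$ and $\sfJ^\pm_t\ll\teta^\pm_{\Pi_n}$ for all $t$, and set $U_t:=\dd\sfP_t/\dd\Pi_n$, $G^\pm_t:=\dd\sfJ^\pm_t/\dd\teta^\pm_{\Pi_n}$. Taking $F\in C_c(\Gamma_n)$ in \eqref{eq:fkecont1}, I rewrite $\int(\dnabla^{n,+}F)\,\dd\sfJ^+_r=n\int(F(\sfT^{n,+}_x\nu)-F(\nu))\,G^+_r\,\dd\vartheta^+_{\Pi_n}$ by pushing the first summand forward by $\sfT^{n,+}$ and using $\sfT^{n,+}_\#\vartheta^+_{\Pi_n}=\vartheta^-_{\Pi_n}$ (from Lemma \ref{lm:revers}) together with $\sfT^{n,-}\circ\sfT^{n,+}=\mathsf{Id}$; symmetrically for the $\sfJ^-$ term, where $\sfT^{n,+}\circ\sfT^{n,-}=\mathsf{Id}$ on $\mathrm{supp}(\sfJ^-_r)$ by condition \eqref{fke:cond1} and Lemma \ref{lm:Tidenty}. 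Collecting the coefficients of $F(\nu)$ turns \eqref{eq:fkecont1} into
\[
\int_{\Gamma_n}F\,(U_t-U_s)\,\dd\Pi_n=\int_s^t\int_{\Gamma_n}F(\nu)\,h_r(\nu)\,\Pi_n(\dd\nu)\,\dd r,\qquad F\in C_c(\Gamma_n),
\]
with $h_r(\nu)$ precisely the right-hand side of \eqref{eq:fke_timereg3}. The same change of variables and the weight bounds above show that each summand of $\int_{\Gamma_n}|h_r|\,W\,\dd\Pi_n$ is dominated by $c_n n\int_{\Gamma\times\calT}W\,\dd\sfJ^\pm_r$, so condition \eqref{fke:cond3} gives $h\in L^1([0,T];L^1(\ell))$ with $\ell=W\Pi_n$. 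A Fubini argument, legitimate because $W$ is bounded below on the compact support of any such $F$, then identifies $U_t-U_s$ with the Bochner integral $\int_s^t h_r\,\dd r$ in $L^1(\ell)$; combined with the continuity of $t\mapsto U_t$ in $L^1(\ell)$ from the first part this is the fundamental theorem of calculus $U_t=U_0+\int_0^t h_r\,\dd r$, whence $U$ is a.e.\ differentiable with $\partial_t U_t=h_t$, i.e.\ \eqref{eq:fke_timereg3}. Finally, substituting the net-flux density $\Gnet_t=G^+_t-G^-_t\circ\sfT^{n,+}$ into $h_t$ and using $\Gnet_t(\sfT^{n,-}_x\nu,x)=G^+_t(\sfT^{n,-}_x\nu,x)-G^-_t(\nu,x)$ on $\mathrm{supp}(\kappa^-_\nu)$ produces the net-flux form.

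The main obstacle is this last step. Since $L^1(\ell)$ lacks the Radon–Nikodym property, absolute continuity of $t\mapsto U_t$ alone does not yield a.e.\ differentiability; one genuinely needs the Bochner fundamental theorem of calculus $U_t-U_s=\int_s^t h_r\,\dd r$ before differentiating. Producing it requires the somewhat delicate passage from the $C_c(\Gamma_n)$-tested identity to a pointwise formula for $h_r$, for which the detailed-balance change of variables of Lemma \ref{lm:revers} and the asymmetric weight comparisons $W\circ\sfT^{n,+}_x\le W$ and $W\circ\sfT^{n,-}_x\le c_n W$ on the relevant supports are exactly the tools needed; verifying $h\in L^1([0,T];L^1(\ell))$ uses the same ingredients.
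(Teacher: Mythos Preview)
Your proposal is correct and follows essentially the same route as the paper: the duality argument against test functions $F$ with $|F|\le W$, together with the weight comparisons $W\circ\sfT^{n,+}_x\le W$ and $W\circ\sfT^{n,-}_x\le c_n W$, is exactly how the paper derives \eqref{eq:fle_timereg1}; and for the second part the paper also rewrites the weak formulation using the detailed-balance identity $\sfT^{n,\pm}_\#\vartheta^\pm_{\Pi_n}=\vartheta^\mp_{\Pi_n}$ from Lemma~\ref{lm:revers} to obtain the weak form of \eqref{eq:fke_timereg3}, and then passes to the strong $L^1(\ell)$-statement. The only difference is cosmetic: where you spell out the Bochner fundamental theorem of calculus and the failure of the Radon--Nikodym property for $L^1(\ell)$, the paper simply invokes \cite[Corollary~4.14]{PRST2020} after noting that $(1+\nu(\calT)^2)^{-1}\sfP_t$ is TV-regular.
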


\begin{remark}
Note that the estimate \eqref{eq:fle_timereg1} for the weighted total variation metric blows up as $n\to \infty$. For the proof of EDP-convergence we instead use a weaker metric, the transportation-like metric $W$ defined by \eqref{eq:c_uni_m}, which does behave uniform-in-$n$ for a sequence of curves with finite $\limsup_{n\to \infty} \calI_n$.
\end{remark}

\begin{proof}
Due to the continuity equation and after a monotone class argument, we have the crude estimate
\begin{equation*}
  \left|  \int_{\Gamma} F \dd (\sfP_t-\sfP_s) \right| \leq n \int_s^t \left[\int_{\Gamma\times \calT} (|F(\nu+\tfrac{1}{n}\delta_x)|+|F(\nu)|) \, \dd \sfJ_r^{+} + \int_{\Gamma\times \calT} (|F(\nu-\tfrac{1}{n}\delta_x)|+|F(\nu)|) \, \dd \sfJ_r^{-} \right] \dd r,
\end{equation*}
for any $F\in \calB_c(\Gamma)$.
Now fix $F\in \calB_c(\Gamma)$, and let $K:=\sup_{\nu\in \Gamma} F(\nu)(1+\nu(\calT)^2)$. Note that by the bounds of Lemma \ref{lm:fke_est} for any $\nu\in \Gamma_n$, we have the following estimates 
\begin{align*}
    |F|(\nu)&\leq K (1+\nu(\calT)^2)^{-1}\\
    |F|(\nu+\tfrac{1}{n}\delta_x)&\leq K \big(1+(\tfrac{1}{n}+\nu(\calT))^2\big)^{-1}
    \leq K (1+\nu(\calT)^2)^{-1},\\
    |F|(\nu-\tfrac{1}{n}\delta_x)&\leq K \big(1+(\tfrac{-1}{n}+\nu(\calT))^2\big)^{-1}
    \leq K \max\{1+\tfrac{2}{n^2},2\} (1+\nu(\calT)^2)^{-1},
\end{align*}
and therefore
\begin{equation*}
  \left|  \int_{\Gamma} F \dd (\sfP_t-\sfP_s) \right| \leq 4 n K \max\Bigl\{1+\tfrac{2}{n^2},2\Bigr\} \int_s^t \int_{\Gamma\times \calT} (1+\nu(\calT)^2)^{-1} \dd( \sfJ^+_r+\sfJ^-_r) \, \dd r.
\end{equation*}
Taking the supremum over all $F\in \calB_c(\Gamma)$ with $\sup_{\nu\in \Gamma} F(\nu)(1+\nu(\calT)^2)\leq 1$ we conclude that
\begin{align*}
   d_{TV,w}(\sfP_s,\sfP_t)&=\int_{\Gamma} (1+\nu(\calT)^2)^{-1}\, \dd |\sfP_t^1-\sfP_s^2|\\
   &\leq 4 n \max\Bigl\{1+\tfrac{2}{n^2},2\Bigr\} \int_s^t (1+\nu(\calT)^2)^{-1} (\dd \sfJ^+_r+\sfJ^-_r) \, \dd r.
\end{align*}

Next, suppose that $\sfP_t\ll \Pi_n, \sfJ^{\pm}_t\ll \teta_{\Pi_n}^{\pm}$ for all $t\in [0,T]$. Let $U_t=\dd \sfP_t/\dd \Pi_n$, $G_t^{\pm}=\dd \sfJ^{\pm}/\dd \teta_{\Pi_n}^{\pm}$. Note that by the absolutely continuity of $\sfP_t$ with respect to $d_{TV,w}$, the map $t\mapsto U_t$ is absolutely continuous in $L^1(\ell)$. Moreover, for every $F\in \calB_c(\Gamma)$ the continuity equation reads as 
\begin{align*}
    \int_{\Gamma} F (U_t-U_s) \,\dd \Pi_n &= \int_s^t \int_{\Gamma\times \calT} (F(\nu+\tfrac{1}{n}\delta_x)-F(\nu)) G_r^+(\nu,x) \, \dd \teta_{\Pi_n}^+ \, \dd r\\
    &\hspace{2em} + \int_s^t \int_{\Gamma\times \calT} (F(\nu-\tfrac{1}{n}\delta_x)-F(\nu)) G_r^-(\nu,x) \, \dd \teta_{\Pi_n}^- \, \dd r.
\end{align*}
But due to Lemma \ref{lm:revers}, the integrands can be rewritten as follows 
\begin{align*}
    \int_{\Gamma\times \calT} F(\nu\pm \tfrac{1}{n}\delta_x) G_r^{\pm}(\nu,x) \, \dd \teta_{\Pi_n}^{\pm}&= \int_{\Gamma\times \calT} F(\nu) G_r^{\pm}(\nu\mp\tfrac{1}{n}\delta_x,x) \, \dd (\sfT^{n,\pm}_{\#} \teta_{\Pi_n}^{\pm})\\
    &=\int_{\Gamma\times \calT} F(\nu) G_r^{\pm}(\nu\mp \tfrac{1}{n}\delta_x,x) \, \dd \teta_{\Pi_n}^\mp, 
\end{align*}
and therefore
\begin{align*}
    \int_{\Gamma} F (U_t-U_s) \,\dd \Pi_n &= \int_s^t \int_{\Gamma\times \calT} F(\nu)(G_r^-(\nu+\tfrac{1}{n}\delta_x,x)-G_r^+(\nu,x)) \kappa^+_{\nu}(\dd x)\, \dd \Pi_n(\dd \nu)\,  \, \dd r\\
    &\hspace{2em} +\int_s^t \int_{\Gamma\times \calT} F(\nu)(G_r^+(\nu-\tfrac{1}{n}\delta_x,x)-G_r^-(\nu,x)) \, \kappa^-_{\nu}(\dd x)\, \dd \Pi_n(\dd \nu) \, \dd r,
\end{align*}
which is the weak formulation of \eqref{eq:fke_timereg3}. Putting in the pre-factors $(1+\nu(\calT)^2)^{-1}$ to state the expression in terms of the finite measures $\ell$ and $\Sigma$, and noting that due to time-regularity $(1+\nu(\calT)^2)^{-1} \sfP_t$ is TV-regular, we can proceed as in Corollary 4.14 of \cite{PRST2020} and conclude the proof after redefining $U,G^{\pm}$ on negligible sets. 
\end{proof}

\subsection{Weak solutions}\label{ss:fke_sol}

In this section we will discuss the existence of weak solutions to \eqref{eq:FKEn}, i.e. solutions to
\begin{equation*}
    \partial_t \sfP = \ddiv^{n,+} \vartheta_{\sfP}^{+}+\ddiv^{n,-} \vartheta_{\sfP}^-,
\end{equation*}
in appropriate weak form, but with the property that $\calI_n(\sfP,\vartheta_{\sfP}^+,\vartheta_{\sfP}^-)\leq 0$. In the next section we will show that $\calI_n\geq 0$ and that gradient-flow solutions, i.e.\ those with $\calI_n=0$, are in fact unique. 

\begin{defi}
A curve $(\sfP_t)_{t\in [0,T]}$ is a weak solution to \eqref{eq:FKEn} if $\mathrm{supp} \,\sfP_t\in \Gamma_n$ for all $t\in [0,T]$, $\sfP_t$ is continuous in the narrow topology and for all $s,t\in [0,T]$, and all $F\in C_c(\Gamma)$,
\begin{equation*}
 \int_{\Gamma} F(\nu) \,\dd \sfP_t - \int_{\Gamma} F(\nu) \,\dd \sfP_s = \int_s^t \int_{\Gamma\times \calT} \left((\dder^{n,+} F) \,\dd \vartheta_{\sfP_t}^++(\dder^{n,-} F) \, \dd \vartheta_{\sfP_t}^- \right) \, \dd r.
    \end{equation*}
\end{defi}

\begin{remark}
Recall that $\int (1+\nu(X)^2)\,\dd \vartheta_{\sfP_t}^{\pm}\leq M_n$ independently of $\sfP_t$. Hence it is easy to check that $(\sfP)$ is a weak solution if and only if $(\sfP,\vartheta_{\sfP}^+,\vartheta_{\sfP}^-)\in \mathsf{CE}_{n}$.
\end{remark}

Moreover, solutions turn out to inherit polynomial mass-estimates from the initial condition, see e.g.\ Theorem 3.1 of \cite{Fournier2004} for the case in $\R^d$. While throughout we do not assume more from the initial condition than having finite entropy with respect to $\Pi_n$ (which does imply the finiteness of the first moment) and unfortunately arbitrary curves $(\sfP,\sfJ^+,\sfJ^-)$ with finite $\calI_n$ do not preserve moment estimates, we will include the statement for completeness. 

\begin{lm}
Fix any $p\geq 0$, and assume that $(\sfP)$ is a weak solution with initial datum satisfying
\begin{equation*}
    \int_{\Gamma} \nu(X)^p \,\sfP_0(\dd \nu)<\infty.
\end{equation*}
Then 
\begin{equation*}
 \sup_{t\in [0,T]} \int_{\Gamma} \nu(X)^p \,\sfP_t(\dd \nu)<\infty.
\end{equation*}
\end{lm}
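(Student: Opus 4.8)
The plan is to run a Gr\"onwall argument for the truncated moments $m_p^{(k)}(t):=\int_{\Gamma}(\nu(\calT)\wedge k)^p\,\sfP_t(\dd\nu)$, which are finite ($\le k^p$) by construction, obtain a bound on them that is uniform in the truncation level $k$, and then pass to the limit $k\to\infty$ by monotone convergence. The case $p=0$ is trivial, so fix $p>0$, and take as test function $F_k(\nu):=(\nu(\calT)\wedge k)^p$: it is narrowly continuous on $\Gamma$ (since $\nu\mapsto\nu(\calT)=\langle 1,\nu\rangle$ is), bounded by $k^p$, and nondecreasing in the total mass $\nu(\calT)$. Two pointwise estimates drive everything. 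First, because $F_k$ is mass-nondecreasing and $\nu-\tfrac1n\delta_x$ has strictly smaller mass than $\nu$ for $x\in\mathrm{supp}(\nu)$, one has $(\dder^{n,-}F_k)(\nu,x)\le 0$ on $\mathrm{supp}(\vartheta_\sfP^-)$, so the death flux only helps. Second, a mean-value estimate on $R\mapsto(R\wedge k)^p$ (its $\tfrac1n$-increment is $O\bigl((\nu(\calT)\wedge k)^{p-1}\bigr)$, and vanishes identically on $\{\nu(\calT)\ge k\}$) together with the \emph{linear} mass bound $\kappa_\nu^+(\calT)\le\|c\|_\infty\gamma(\calT)\,\nu(\calT)$ from Lemma~\ref{lm:mf_est} gives, uniformly in $k$ and $\nu$,
\begin{equation*}
    \int_{\calT}(\dder^{n,+}F_k)(\nu,x)\,\kappa_\nu^+(\dd x)\le C_{n,p}\bigl(1+(\nu(\calT)\wedge k)^p\bigr),
\end{equation*}
where one checks separately the regimes $\nu(\calT)+\tfrac1n\le k$, $k-\tfrac1n<\nu(\calT)<k$, and $\nu(\calT)\ge k$.

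Granting that $F_k$ may be inserted into the continuity equation (equivalently, that $(\sfP,\vartheta_\sfP^+,\vartheta_\sfP^-)\in\mathsf{CE}_n$ may be tested against $F_k$), these two estimates give the integrated inequality
\begin{equation*}
    m_p^{(k)}(t)\le m_p^{(k)}(0)+\int_0^t C_{n,p}\bigl(1+m_p^{(k)}(s)\bigr)\,\dd s,\qquad t\in[0,T],
\end{equation*}
whence Gr\"onwall yields $m_p^{(k)}(t)\le \bigl(1+m_p(0)\bigr)e^{C_{n,p}T}$ for all $t\in[0,T]$ and all $k$; letting $k\to\infty$ (monotone convergence) and using $m_p(0)<\infty$ finishes the proof. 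The delicate word is ``granting'': $F_k$ is bounded but not compactly supported on $\Gamma$, and the fluxes $\vartheta_\sfP^\pm$ need not be finite measures (only $\int_0^T\!\int(1+\nu(\calT)^2)^{-1}\,\dd\vartheta_{\sfP_r}^\pm\,\dd r<\infty$ is available). I would handle this by testing against $F_{k,m}(\nu):=F_k(\nu)\,\chi_m(\nu(\calT))$ with $\chi_m\in C_c([0,\infty))$, $\chi_m\equiv 1$ on $[0,m]$, $0\le\chi_m\le 1$ nonincreasing; then $F_{k,m}\in C_c(\Gamma)$ because $\{\nu(\calT)\le m+1\}$ is narrowly compact in $\Gamma$ ($\calT$ compact), so the continuity equation applies with all terms finite. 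Writing
\begin{equation*}
    (\dder^{n,\pm}F_{k,m})(\nu,x)=\chi_m(\nu(\calT))\,(\dder^{n,\pm}F_k)(\nu,x)+n\,F_k(\nu\pm\tfrac1n\delta_x)\bigl(\chi_m(\nu(\calT)\pm\tfrac1n)-\chi_m(\nu(\calT))\bigr),
\end{equation*}
the first summand tends monotonically (up for $+$, down for $-$) to $(\dder^{n,\pm}F_k)$ and is dealt with by monotone convergence (the limit of the $+$ term being finite by the estimate above), while the ``cutoff error'' is supported on the thin shell $\{\nu(\calT)\in[m-\tfrac1n,m+1]\}$.

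The main obstacle is precisely this cutoff error, and it behaves asymmetrically. The birth part is bounded by $n k^p\!\int_0^T\vartheta_{\sfP_r}^+(\{\nu(\calT)\in[m-\tfrac1n,m+1]\}\times\calT)\,\dd r$, which is $\lesssim k^p\,m\!\int_0^T\sfP_r(\{\nu(\calT)\in[m-\tfrac1n,m+1]\})\,\dd r$; since these shells (over integer $m$) overlap boundedly, $\sum_m\int_0^T\sfP_r(\{\nu(\calT)\in[m-\tfrac1n,m+1]\})\,\dd r\le 3T<\infty$, which forces $\liminf_m m\int_0^T\sfP_r(\{\nu(\calT)\in[m-\tfrac1n,m+1]\})\,\dd r=0$, so the birth error vanishes along a suitable subsequence $m_j\to\infty$. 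For the death part the analogous bound carries an extra factor $\nu(\calT)\lesssim m$, because $\kappa_\nu^-(\calT)$ grows \emph{quadratically}, and the summable-series trick no longer forces the shell contribution to zero absent an a priori second-moment bound --- which is exactly what we are trying to establish. I expect this to be the crux of the argument. The cleanest way around it is to avoid the difficulty at the level of the dynamics: carry out the entire Gr\"onwall estimate on the regularized problems with the \emph{bounded} kernels $\bar\kappa_n^{\eps}$ of Section~\ref{ss:fke_sol} --- for which $\vartheta^\pm$ are finite measures and $F_k$ (indeed $\nu(\calT)^p$ itself) is an admissible test function --- obtain the bound with a constant $C_{n,p}$ independent of $\eps$, and then pass to the limit using the convergence of the regularized solutions. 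This is the functional-analytic counterpart of the stopping-time localization $\tau_R=\inf\{t:\nu_t(\calT)\ge R\}$ used in \cite[Theorem~3.1]{Fournier2004}; everything else reduces to the elementary Gr\"onwall estimate above.
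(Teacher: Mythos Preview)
Your approach and the paper's are the same Gr\"onwall argument on the truncated moment $\int_\Gamma F_k\,\dd\sfP_t$ with $F_k(\nu)=(\nu(\calT)\wedge k)^p$: drop the non-positive death term, use the linear bound $\kappa_\nu^+(\calT)\le\|c\|_\infty\gamma(\calT)\,\nu(\calT)$ together with $z\bigl(f_k(z+\tfrac1n)-f_k(z)\bigr)\le C_{p,n}\,f_k(z)$, apply Gr\"onwall, and let $k\to\infty$. The only divergence is that you treat the extension of the continuity equation from $C_c(\Gamma)$ to $F_k$ as a real obstacle, introduce a second cutoff $\chi_m$, and then get stuck on the death-shell error that this cutoff generates.

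The point you are missing is that $F_k$ already differs from a constant by a compactly supported function. Since $f_k(z)=(z\wedge k)^p$ equals $k^p$ for $z\ge k$, the function $G_k(\nu):=k^p-F_k(\nu)$ is supported in $\{\nu(\calT)\le k\}$, which is narrowly compact because $\calT$ is compact; hence $G_k\in C_c(\Gamma)$ is a legitimate test function. Constants satisfy the continuity equation trivially (each $\sfP_t$ is a probability measure and $\dder^{n,\pm}$ kills constants), so the equation holds for $F_k=k^p-G_k$ with all terms finite, and in particular $\dder^{n,\pm}F_k=-\dder^{n,\pm}G_k$ are themselves compactly supported. No additional cutoff $\chi_m$ is needed, and the problematic death-shell contribution you isolate simply never arises. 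The paper glosses over this justification, but that is what is behind its one-line assertion of the continuity equation for $F_k$.

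Your proposed fallback via the $\eps$-regularized kernels would also not prove the lemma as stated: the claim is for an \emph{arbitrary} weak solution, whereas the regularization route only yields the bound for the particular solution constructed in Theorem~\ref{thm:fke_solex}, and no uniqueness of weak solutions is available at this stage of the paper.
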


\begin{proof}
Set $F(\nu):=f(\nu(X))$ with $f(z):=z^p$ and let $F_k(\nu):=f_k(\nu(X))$ be its sequence of truncations. Then for every $k$, 
\begin{align*}
    \int_{\Gamma} F_k\, \dd (\sfP_t-\sfP_0)&=\int_0^t \left(\int_{\Gamma\times \calT}\dder^{n,+} F_k \, \dd \vartheta_{\sfP_r}^++\int_{\Gamma\times \calT}\dder^{n,-} F \dd \vartheta_{\sfP_r}^-  \right) \, \dd r\\
    &\leq\int_0^t \int_{\Gamma\times \calT} \big(f_k(\nu(X)+\tfrac{1}{n})-f_k(\nu(X))\big) \,\sfP_r(\dd \nu)\,\kappa^+_{\nu}(\dd x)\, \dd r,
\end{align*}
since $f_k$ is non-decreasing and hence $\dder^{n,-} F_k\leq 0$. Moreover, note that $z(f(z+\tfrac{1}{n})-f(z))\leq C_{p,n}\, f(z)$ for a suitable constant $C_{p,n}$, and by monotonicity and non-negativeness of $f$ the same inequality holds for the truncations $f_k$. By a Gronwall-type argument we then obtain
\begin{equation*}
    \int_{\Gamma} F_k(\nu) \,\dd \sfP_t \leq e^{t K_{p,n}}\int_{\Gamma} F_k(\nu) \, \dd \sfP_0 \leq e^{t K}\int_{\Gamma} F(\nu) \, \dd \sfP_0,    
\end{equation*}
with the constant $K_{p,n}=C_{p,n}\|c\|_{\infty}\gamma(X)$ independent of $k$. Taking $k\to\infty$ we derive the desired inequality by monotone convergence.
\end{proof}

We can now state the existence result of a weak solution satisfying one-half of the Energy-Dissipation principle, which is complemented by the chain rule proved in Section \ref{ss:fke_chain}. The existence proof is one of EDP-convergence (see also Section \ref{s:edp}), bootstrapping from problems with bounded kernels and the results of \cite{PRST2020}.

\begin{thm}\label{thm:fke_solex}
Suppose that 
\[ \Ent(\bar{\sfP}|\Pi_n) < \infty. \]
Then there exist a weak solution $(\sfP)$ with initial datum $\bar{\sfP}$ such that 
\begin{equation*}
    \int_0^T \calR_n(\sfP_t,\vartheta_{\sfP_t}^+,\vartheta_{\sfP_t}^-)\, \dd t + \calF_n(\sfP_T)-\calF_n(\sfP_0)+\int_0^T \calD_n(\sfP_t) \, \dd t \leq 0.
\end{equation*}
\end{thm}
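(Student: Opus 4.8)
The plan is to prove Theorem~\ref{thm:fke_solex} by an EDP-convergence argument that bootstraps from the framework of \cite{PRST2020}, which applies to jump kernels satisfying a \emph{bounded} integrability condition. First I would introduce, for each $\epsilon>0$, a regularized (truncated) jump kernel $\bar\kappa_n^\epsilon$ obtained by cutting off $\bar\kappa_n$ at large masses, e.g.\ setting $\bar\kappa_n^\epsilon(\nu,\cdot):=\bar\kappa_n(\nu,\cdot)\,\mathbbm 1_{\{\nu(\calT)\le 1/\epsilon\}}$ (equivalently replacing $\kappa_\nu^\pm$ by $\kappa_\nu^\pm\,\mathbbm 1_{\{\nu(\calT)\le 1/\epsilon\}}$). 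Since $\calT$ is compact and $c$ is bounded, the estimate $\kappa_\nu^\pm(\calT)\le M(1+\nu(\calT)^2)$ from Lemma~\ref{lm:mf_est} shows $\sup_{\nu}\int_{\Gamma_n}\bar\kappa_n^\epsilon(\nu,\dd\eta)<\infty$, so the triple $(\Gamma_n,\Pi_n,\bar\kappa_n^\epsilon)$ falls squarely into the setting of \cite{PRST2020}: I would check that $\bar\kappa_n^\epsilon$ still satisfies detailed balance with respect to $\Pi_n$ (this is immediate from Lemma~\ref{lm:revers}, since the truncation is symmetric under $\sfT^{n,\pm}$ — one should truncate in a way that respects the transition $\nu\leftrightarrow\nu+\tfrac1n\delta_x$, e.g.\ by a cutoff depending symmetrically on both endpoints, so that the reversed kernel is truncated consistently). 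Then \cite{PRST2020} provides a gradient-flow solution $(\sfP^\epsilon,\sfJ^{\epsilon,+},\sfJ^{\epsilon,-})$ — after the passage from net to one-way fluxes indicated in the excerpt and Appendix~\ref{s:ldpmot} — with $\calF_n^\epsilon(\bar{\sfP})<\infty$ (note $\calF_n^\epsilon\le\calF_n$ and the truncation only decreases the relevant energy), satisfying the Energy-Dissipation \emph{balance} for the $\epsilon$-problem; in particular the corresponding $\calI_n^\epsilon$ vanishes, hence
\[
\int_0^T \calR_n^\epsilon(\sfP_t^\epsilon,\vartheta^{\epsilon,+}_{\sfP_t^\epsilon},\vartheta^{\epsilon,-}_{\sfP_t^\epsilon})\,\dd t + \calF_n(\sfP_T^\epsilon)-\calF_n(\bar{\sfP})+\int_0^T \calD_n^\epsilon(\sfP_t^\epsilon)\,\dd t \le 0,
\]
using $\calF_n^\epsilon=\calF_n$ (the energy does not see the truncation) and dropping the (nonnegative) difference between $\calD_n$ and its truncated version where convenient.

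Next I would establish compactness of the family $(\sfP^\epsilon,\sfJ^{\epsilon,\pm})$ as $\epsilon\to 0$. From the above bound and $\calF_n\ge 0$ one gets $\int_0^T\calR_n^\epsilon\,\dd t\le\calF_n(\bar{\sfP})$ uniformly in $\epsilon$; the uniform-in-$\epsilon$ action estimates~\eqref{eq:fke_actionb1} (which are of the same form at the truncated level) then give $\int_0^T\int_{\Gamma\times\calT}(1+\nu(\calT)^2)^{-1}\,\dd\sfJ^{\epsilon,\pm}_t\,\dd t$ bounded, yielding vague precompactness of $\sfJ^{\epsilon,\pm}_t(\dd\nu\,\dd x)\,\dd t$ on $[0,T]\times\Gamma\times\calT$. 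For the curves $\sfP^\epsilon$, the time-regularity estimate~\eqref{eq:fle_timereg1} (whose proof only uses the continuity equation and the mass bounds, hence survives truncation) gives equi-$d_{TV,w}$-continuity; combined with tightness of $\{\sfP_t^\epsilon\}$ — which follows because $\calF_n(\sfP^\epsilon_t)$ stays bounded (by the EDB and monotonicity of $\calF_n$ along the $\epsilon$-flow) so that $\Ent(\sfP^\epsilon_t|\Pi_n)$ is bounded and $\Pi_n$ is tight — an Arzel\`a–Ascoli/Helly argument produces a subsequence with $\sfP^\epsilon_t\to\sfP_t$ narrowly for all $t$. Passing to the limit in the (truncated) continuity equation is routine since $\vartheta^{\epsilon,\pm}_{\sfP^\epsilon}$ converges to $\vartheta^\pm_{\sfP}$ in the appropriate vague sense (here one uses that $\kappa_\nu^\pm$ is continuous in $\nu$ on bounded-mass sets, plus the uniform $(1+\nu(\calT)^2)^{-1}$-integrability to control the mass tails), so the limit $(\sfP)$ is a weak solution to~\eqref{eq:FKEn} with initial datum $\bar{\sfP}$.

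Finally I would pass the half Energy-Dissipation inequality to the limit using lower semicontinuity: $\calF_n(\sfP_T^\epsilon)\to\calF_n(\sfP_T)$ will require care but lower semicontinuity $\calF_n(\sfP_T)\le\liminf\calF_n(\sfP_T^\epsilon)$ (from lower semicontinuity of relative entropy under narrow convergence) is enough for the inequality in the desired direction, and $\calF_n(\sfP_0)=\calF_n(\bar{\sfP})$ is fixed; for the dissipation terms I would use joint lower semicontinuity of $\calR_n$ and $\calD_n$ in $(\sfP,\sfJ^\pm)$ (Remark~\ref{rem:fke_fischereq} and the $\Upsilon$-representation~\eqref{eq:fke_entequiv2}, together with the fact that the truncated functionals increase monotonically to the untruncated ones as $\epsilon\to 0$, so a Monotone-convergence/Fatou argument upgrades $\liminf_\epsilon\calR_n^\epsilon(\sfP^\epsilon_t,\cdot)\ge\calR_n(\sfP_t,\cdot)$ and similarly for $\calD_n$). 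Combining these yields
\[
\int_0^T \calR_n(\sfP_t,\vartheta_{\sfP_t}^+,\vartheta_{\sfP_t}^-)\,\dd t + \calF_n(\sfP_T)-\calF_n(\sfP_0)+\int_0^T \calD_n(\sfP_t)\,\dd t \le 0,
\]
which is the claim. The main obstacle I anticipate is the limit passage for the dissipation and Fisher-information terms: one must simultaneously handle (a) the truncation parameter $\epsilon\to 0$ and (b) the narrow-only convergence of $\sfP^\epsilon$ against the vague-only convergence of the fluxes, on the non-locally-compact space $\Gamma$, all while the reference measures $\vartheta^\pm_{\sfP}$ themselves move with $\sfP$. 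The cleanest route is probably to phrase $\calR_n$ and $\calD_n$ via the jointly convex, jointly lower semicontinuous integrands $\Upsilon$ and the Hellinger-type functional against a \emph{fixed} dominating measure (e.g.\ built from $\Pi_n$ and $\vartheta^\pm_{\Pi_n}$, which are $\epsilon$-independent), so that standard lower-semicontinuity results for integral functionals of measures apply directly; the monotonicity in $\epsilon$ of the truncated functionals then does the rest.
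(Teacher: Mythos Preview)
Your proposal is essentially correct and follows the same EDP-bootstrapping strategy as the paper: regularize to land in the \cite{PRST2020} framework, extract the half-EDB inequality for the regularized problem, establish compactness via the uniform entropy and action bounds, and pass to the limit by lower semicontinuity of $\calR_n$, $\calD_n$ (via the $\Upsilon$/Hellinger representations) and of $\calF_n$.

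Two technical points where the paper's execution differs from yours are worth noting. First, the paper regularizes by the multiplicative factor $\bar\kappa_n^\epsilon(\nu,\dd\eta):=\frac{1}{1+\epsilon\,\nu(\calT)\eta(\calT)}\bar\kappa_n(\nu,\dd\eta)$, which is \emph{manifestly} symmetric in $(\nu,\eta)$ and hence preserves detailed balance with respect to $\Pi_n$ automatically; your hard cutoff $\mathbbm 1_{\{\nu(\calT)\le 1/\epsilon\}}$ does not, and you correctly flag that a symmetric cutoff on both endpoints is needed---the paper's choice sidesteps this entirely. Second, for the limit passage $\vartheta^{\pm,\epsilon}_{\sfP^\epsilon_t}\to\vartheta^\pm_{\sfP_t}$ you invoke ``continuity of $\kappa_\nu^\pm$ in $\nu$ on bounded-mass sets'', but since $c$ is only assumed bounded \emph{measurable} this need not hold narrowly; the paper instead exploits that the uniform entropy bound $\Ent(\sfP_t^\epsilon|\Pi_n)\le\Ent(\bar\sfP|\Pi_n)$ upgrades narrow convergence of $\sfP_t^\epsilon$ to \emph{setwise} convergence, from which the vague convergence of $\vartheta^{\pm,\epsilon}_{\sfP^\epsilon_t}$ and $\sfT^{n,\pm}_\#\vartheta^{\pm,\epsilon}_{\sfP^\epsilon_t}$ follows (combined with the monotone convergence $\kappa^{\pm,\epsilon}_\nu\nearrow\kappa^\pm_\nu$). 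Your fixed-dominating-measure idea for the lower-semicontinuity step is exactly what the paper does.
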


\begin{proof}
Fix any $\bar{\sfP}$ with $\Ent(\bar{\sfP}|\Pi_n)<\infty$. We proceed by approximating the unbounded kernel $\bar\kappa_n$ with bounded ones. For every $\eps>0$, we introduce the regularized jump kernel $\bar \kappa_{n}^{\eps}(\nu,\dd \eta)$ over $\Gamma$ defined by
\begin{equation*}
    \bar \kappa_{n}^{\eps}(\nu,\dd \eta):=\frac{1}{1+\eps \nu(\calT)\eta(\calT)} \bar \kappa_{n}(\nu,\dd \eta).
\end{equation*}
In terms of birth/death kernels this can be rewritten as
\begin{equation*}
      \bar\kappa_{n}^{\eps}(\nu,\dd \eta) = n \int_{\calT}\delta_{\nu + \tfrac{1}{n} \delta_x}(\dd\eta)\, \kappa^{+,\eps}_{\nu}(\dd x) + n \int_{\calT} \delta_{\nu-\tfrac{1}{n} \delta_x}(\dd\eta)\,\kappa^{-,\eps}_{\nu}(\dd x),
\end{equation*}
where
\begin{equation*}
    \kappa^{\pm,\eps}_{\nu}:=\frac{1}{1+\eps \nu(\calT)(\nu(\calT)\pm \tfrac{1}{n})}\kappa^{\pm}_{\nu}.
\end{equation*}
Note that
\begin{equation}\label{eq:fke_soledp1}
 \sup_{\nu\in \Gamma} \kappa^{\pm,\eps}_{\nu} < \infty \qquad \fA \eps>0.
\end{equation}
Correspondingly, we denote $\vartheta_{\sfP}^{\pm,\eps}$, $\Theta^{n,\pm,\eps}_{\sfP}$, $Q_{n,\eps}$, $Q_{n,\eps}^*$, $\calR_{n,\eps}$, $\calD_{n,\eps}$, $\calI_{n,\eps}$, $(\mathsf{FKE}_{n,\eps})$ as the relevant quantities, operators, functionals and forward Kolmogorov equations induced by $\kappa^{\pm,\eps}_{\nu}$. We will first show existence of gradient-flow solutions for the regularized problems, i.e.\ curves such that $\calI_{n,\eps}=0$, and then construct an appropriate limit curve as $\eps\to 0$.  

Thus, fix any $\eps>0$. Due to the bound \eqref{eq:fke_soledp1} it is clear that $Q_{n,\eps}$ is a bounded operator since 
\begin{equation*}
    \sup_{\nu\in \Gamma} \int_{\Gamma} \bar \kappa_n^{\eps}(\nu,\dd \eta) < \infty.
\end{equation*}
Moreover, since the prefactor $\nu(\calT)\eta(\calT)$ is symmetric under swapping of $\nu$ and $\eta$, it straightforward to verify that $\bar\kappa_{n}^{\eps}$ is still reversible with respect to the same invariant measure $\Pi_n$, i.e.\ we have
\[ \Pi_n(\dd \nu)\bar \kappa_{n}^{\eps}(\nu,\dd \eta) = \Pi_n(\dd \eta)\bar \kappa_{n}^{\eps}(\eta,\dd \nu). \]
The triple $(\Gamma,\Pi_n,\bar \kappa_{n}^{\eps})$ therefore satisfies the assumptions of \cite{PRST2020}. Keeping in mind the difference in definitions of $\Psi^*$ due to extra the factor $2$, by \cite[Theoren 6.6]{PRST2020} there exist a unique curve $U^{\eps}\in C^1([0,T],L^1(\Gamma,\Pi_n))$ such that $U_0=\dd \bar{\sfP}/\dd  \Pi_n$, and
\begin{equation*}
    \left\{\begin{aligned}
       \partial_t U_t(\nu) &= \int_{\Gamma} (U_t(\eta)-U_t(\nu))\bar \kappa_n^{\eps}(\nu,\dd \eta), \qquad \mbox{for a.e.\ $t\in [0,T]$},  \\
        \Ent(\sfP_0|\Pi_n)- \Ent(\sfP_T|\Pi_n)&= \int_0^T \int_{\Gamma\times \Gamma} \Psi\left(U_t(\eta)-U_t(\nu) \right) \sqrt{U_t(\nu)U_t(\eta)} \, \Pi_n(\dd \nu) \, \bar \kappa_{n}^{\eps}(\nu,\dd \eta) \, \dd t\\
 & \quad + \int_{\Gamma\times \Gamma} \left(\sqrt{U_t(\eta)}-\sqrt{U_t(\nu)}\right)^2 \Pi_n(\dd \nu) \, \bar \kappa_{n}^{\eps}(\nu,\dd \eta) \, \dd t,
    \end{aligned}\right.
\end{equation*}
with $\sfP_t:=U_t \Pi_n$ as usual. In particular the entropy $\Ent(\sfP|\Pi_n)$ decreases along the solution and hence 
\begin{equation*}
   \sup_{t\in [0,T]} \Ent(\sfP_t|\Pi_n) \leq  \Ent(\bar{\sfP}|\Pi_n).
\end{equation*} 

By evenness of $\Psi$, symmetry of $\Pi_n \bar \kappa_n^{\eps}$ and the identity \eqref{eq:fke_psiphi}, we can express for any $U$ after substituting for $\bar \kappa_{n}^{\eps}(\nu,\dd \eta)$ 
\begin{align*}
   \frac{1}{2} \int_{\Gamma\times \Gamma} &\Psi\left(U(\eta)-U(\nu) \right) \sqrt{U(\nu)U(\eta)} \, \Pi_n(\dd \nu) \, \bar \kappa_{n}^{\eps}(\nu,\dd \eta) \\
   &=  \int_{U(\eta)>0,U(\nu)>0}\phi\left(\sqrt{U(\nu)/U(\eta)}\right)\sqrt{U(\eta)U(\nu)} \, \Pi_n(\dd \nu) \, \bar \kappa_{n}^{\eps}(\nu,\dd \eta)\\
   &= \int_{U(\nu+n^{-1}\delta_x)>0,U(\nu)>0} \phi\left(\frac{\dd \teta_{\sfP}^{+,\eps}}{\dd \Theta^{+,n,\eps}_{\sfP}} \right)\sqrt{U(\nu+\tfrac{1}{n}\delta_x)U(\nu)} \, \Pi_n(\dd \nu) \, \kappa^{+,\eps}_{\nu}(\dd x) \\
   &\qquad +  \int_{U(\nu-n^{-1}\delta_x)>0,U(\nu)>0}  \phi\left(\frac{\dd \teta_{\sfP}^{-,\eps}}{\dd \Theta^{-,n,\eps}_{\sfP}} \right)\sqrt{U(\nu-\tfrac{1}{n}\delta_x)U(\nu)} \, \Pi_n(\dd \nu) \, \kappa^{-,\eps}_{\nu}(\dd x) \\
   &=\calR_{n,\eps}\left(\sfP,\teta_{\sfP}^{+,\eps},\teta_{\sfP}^{-,\eps}\right).
\end{align*}
Moreover, it is straightforward to check that 
\[ \int_{\Gamma\times \Gamma} \left(\sqrt{U_t(\eta)}-\sqrt{U_t(\nu)}\right)^2 \Pi_n(\dd \nu) \, \bar \kappa_{n}^{\eps}(\nu,\dd \eta) = \calD_{n,\eps}(\sfP), \] 
and therefore with $J^{\pm}:=\teta_{\sfP}^{\pm,\eps}$ we conclude
\[ \calI_{n,\eps}(\sfP,\sfJ^+,\sfJ^-)=0. \]
Finally, note that by Lemma \ref{lm:fke_est} and Remark \ref{rem:fke_fischereq}
\begin{equation*}
  \begin{aligned}
 \Ent\left(\sfJ^{\pm}|\Theta_{\sfP}^{n,+,\eps}\right)&=\int_{\Gamma\times \calT} \Upsilon\left(\frac{\dd \teta_{\sfP}^{\pm,\eps}}{\dd \Sigma},\frac{\dd \vartheta_{\sfP}^{\pm,\eps}}{\dd \Sigma},\frac{\dd (\sfT^{n,\mp}_{\#}\vartheta_{\sfP}^{\mp,\eps})}{\dd \Sigma}\right)\dd  \Sigma,\\
\calD_{n,\eps}&=2H^2(\vartheta_{\sfP}^{\pm,\eps},\sfT^{n,\mp}_{\#}\vartheta_{\sfP}^{\mp,\eps}),
  \end{aligned}
\end{equation*}
for any dominating measure $\Sigma$, which are both non-negative, convex and vaguely lower-semicontinuous functionals of $\vartheta_{\sfP}^{\pm,\eps},\sfT^{n,\mp}_{\#}\vartheta_{\sfP}^{\mp,\eps}$ in $\calM_{loc}(\Gamma\times \calT)$, see \cite[Theorem 3.4.3]{Buttazzo1989}. 

\medskip

Next, we consider the sequence of pairs $(\sfP^{\eps},\sfJ^{\pm,\eps})$ stemming from the regularized problems above, satisfying 
\[ \calI_{n,\eps}(\sfP^{\eps},\sfJ^{+,\eps},\sfJ^{-,\eps})=0 \qquad \fA \eps>0. \] 
As for a priori estimates, we have 
\begin{equation}\label{eq:fke_edpent2}
   \sup_{\eps,t\in [0,T]} \Ent(\sfP_t^{\eps}|\Pi_n) \leq  \Ent(\bar{\sfP}|\Pi_n),
\end{equation} 
and
\begin{equation*}
 \kappa^{\pm,\eps}_{\nu}(\calT)\leq \kappa^{\pm}_{\nu}(\calT) \qquad \fA \eps>0.
\end{equation*}
From the latter, it can be shown similarly as in Lemma \ref{lm:fke_timereg} that we have the equicontinuity result
\begin{equation*}
    d_{TV,w}(\sfP^{\eps}_t,\sfP^{\eps}_s)\leq 2 n \max\{1+\tfrac{2}{n^2},2\}   |t-s|.
\end{equation*}
Here $d_{TV,w}$ is the weighted total variation-metric defined in \eqref{eq_fked} as
\begin{equation*}
    \begin{aligned}
         d_{TV,w}(\sfP^{\eps}_t,\sfP^{\eps}_s)&:=\int_{\Gamma} (1+\nu(\calT)^2)^{-1}\, \dd |\sfP^1-\sfP^2|, \fA \eps>0,\, \fA s,t\in [0,T].
    \end{aligned}
    \end{equation*}
Recall that $d$ is lower semicontinuous with respect to the narrow topology and convergence in $d$ implies narrow convergence on narrowly pre-compact sets. Since $\Ent(\sfP^{\eps}_t|\Pi_n)$ is bounded uniformly in $\eps$ and $t$ and $\Ent(\cdot|\Pi_n)$ is narrowly coercive we obtain by a standard Arzel\'a-Ascoli argument, up to choosing a subsequence, the existence of a curve $t\mapsto\sfP_t$ such that 
\[
\sfP_t^{\eps} \to \sfP_t \quad \mbox{narrowly\; for all $t\in [0,T]$}.
\]
Note that by the estimate \eqref{eq:fke_edpent2} and lower-semicontinuity of the entropy, we have that for every $t\in [0,T]$, the sequence of measures $\sfP_t^{\eps}$ converge \emph{setwise} to $\sfP_t$ and $\Ent(\sfP_t|\Pi_n)\leq \Ent(\bar \sfP|\Pi_n)<\infty$. Moreover, $\kappa^{\pm,\eps}_{\nu} \nearrow \kappa^{\pm}_{\nu}$ as $\eps\to 0$ for every $\nu$, and hence setwise convergence of $\sfP^{\eps}_t$ implies setwise convergence on pre-compact sets of $\Gamma\times \calT$ for 
\[ \teta_{\sfP^{\eps}_t}^{\pm,\eps}(\dd \nu,\dd x)=\sfP_t^{\eps}(\dd \nu)\kappa^{\pm,\eps}[\nu](\dd x), \]
see e.g.\ \cite[Lemma 2.4]{PRST2020} for the case of set-wise convergence for bounded jump kernels. In particular we have the vague convergence
\begin{align*}
    \vartheta^{\pm,\eps}_{\sfP_t^{\eps}} \to \vartheta_{\sfP_t}^\pm,\qquad 
     \sfT^{n,\pm}_{\#}\vartheta^{\pm,\eps}_{\sfP_t^{\eps}} \to \sfT^{n,\pm}_{\#}\vartheta_{\sfP_t}^\pm.
\end{align*}
It is straightforward to check that we can pass to the limit in the continuity equation \eqref{eq:fkecont1}, and in particular, derive that $\sfP$ is a weak solution to the unregularized problem. 

Finally, recall that $\calF_n(\sfP_T)$ is convex in and narrowly lower semicontinuous in $\sfP^{\eps}_T$, and as shown above the action $\calR^{\eps}_n$ is jointly convex and lower semicontinuous in $(\vartheta^{\pm,\eps}_{\sfP^{\eps}},\sfT^{n,\mp}_{\#}\vartheta^{\mp,\eps}_{\sfP^{\eps}})$. Proceeding as in Remark \ref{rem:fke_fischereq}, we also find that the Fisher information is jointly convex and lower semicontinuous in $(\vartheta^{\pm,\eps}_{\sfP^{\eps}},\sfT^{n,\mp}_{\#}\vartheta^{\mp,\eps}_{\sfP^{\eps}})$ if $\sfP^{\eps}$ are contained in sub-level sets of $\calF_n$. 
Therefore, we conclude that 
\begin{align*}
    \calI_{n}(\sfP)&\leq \liminf_{\eps\to 0} \left( \int_0^T \calR_{n,\eps}(\sfP^{\eps}_t,\sfJ_t^{+,\eps},\sfJ_t^{-,\eps}) \, \dd t+\calF_n(\sfP_T^{\eps})-\calF_n(\bar \sfP)+\int_0^T \calD_{n,\eps}(\sfP^ {\eps}_t) \, \dd t  \right)\\ &=\calI_{n,\eps}(\sfP^{\eps},\sfJ^{+,\eps},\sfJ^{-,\eps}) =0,
\end{align*}
thus establishing the claim.
\end{proof}

\subsection{Variational characterization}\label{ss:fke_chain}

We will now present the chain rule for the entropy. The strategy of the proof is similar to the mean-field case and the proof for jump processes of \cite{PRST2020}, with the difference that due to the unboundedness of $\bar \kappa$ we need a two-fold regularization of the entropy, namely via truncations and compactly supported multipliers. 

\begin{thm}\label{thm:fke_chain}
For any $(\sfP,\sfJ^+,\sfJ^-)\in \mathsf{CE}_{n}$ with $\calF_n(\sfP_0)<\infty$ and $\calI_{n}(\sfP,\sfJ^+,\sfJ^-) < \infty$, it holds that $t \mapsto \calF_{n}(\sfP_t)$ is absolutely continuous and
\begin{equation*}
    \frac{\dd \,}{\dd t} \calF_n(\sfP_t)=\int_{\Gamma\times \calT} \bigl(\log U(\nu+\delta_x)-\log U(\nu)\bigr)\,\dd \jnet_t \, \dd t, \qquad \mbox{for a.e.\ $t\in [0,T]$.}
\end{equation*}
Moreover, $\calI_n\geq 0$, and if $\calI_n=0$ we have
\[ \sfJ^{\pm}_t=\sfP_t \kappa_{\nu}^{\pm} \qquad \mbox{for a.e.\ $t\in [0,T]$.}\]
\end{thm}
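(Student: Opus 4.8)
The plan is to mirror the mean-field argument of Lemma~\ref{lm:mf_chain} and its companion in the proof of Theorem~\ref{thm:equivlag}, but now on the space $\Gamma_n$ with the jump kernel $\bar\kappa_n$. First I would observe that finiteness of $\calI_n(\sfP,\sfJ^+,\sfJ^-)$ together with $\calF_n(\sfP_0)<\infty$ and the boundedness from below of the relative entropy forces
\begin{equation*}
    \int_0^T \calR_n(\sfP_t,\sfJ^+_t,\sfJ^-_t)\,\dd t<\infty, \qquad \int_0^T \calD_n(\sfP_t)\,\dd t<\infty,
\end{equation*}
so that $\sfP_t\ll\Pi_n$ for a.e.\ $t$ and, by the time-regularity estimate \eqref{eq:fle_timereg1}, for \emph{all} $t\in[0,T]$; then Lemma~\ref{lm:fke_timereg} applies, giving an absolutely continuous curve $U:[0,T]\to L^1(\ell)$ with densities $G^\pm_t=\dd\sfJ^\pm_t/\dd\teta^\pm_{\Pi_n}$ satisfying \eqref{eq:fke_timereg3}. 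Note that $\sfJ^\pm_t\ll\Theta^{n,\pm}_{\sfP_t}\ll\teta^\pm_{\Pi_n}$ for a.e.\ $t$, hence $U(\nu)>0$ and $U(\nu\pm\tfrac1n\delta_x)>0$ on $\mathrm{supp}(\sfJ^\pm_t)$, which makes the integrand $\bigl(\log U(\nu+\tfrac1n\delta_x)-\log U(\nu)\bigr)$ well-defined $\jnet_t$-a.e.

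Next I would prove the chain rule by a double regularization of the logarithm, exactly as announced before the statement: truncate $\phi'$ at level $m$ to get uniformly Lipschitz $\phi'_m$, and additionally multiply by a compactly supported cutoff on $\Gamma$ to handle the unboundedness of $\bar\kappa_n$ (equivalently, test against $F_k$ supported in $\{\nu(\calT)\le k\}$). For each fixed $m,k$ the map $t\mapsto\int F_k\,\phi_m(U_t)\,\dd\Pi_n$ is absolutely continuous (Lipschitz in $d_{TV,w}$ along the curve), and differentiating gives a discrete chain rule whose right-hand side is $n\int (\text{discrete gradient of }F_k\phi_m)\,\dd\jnet_t$. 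To pass $m\to\infty$ and $k\to\infty$ one needs the key a priori bound: substituting $\omega=\tfrac12(\log U(\nu+\tfrac1n\delta_x)-\log U(\nu))$ (or its truncation) into the estimate of Lemma~\ref{lm:fke_est}(ii),
\begin{equation*}
    \int_{\Gamma\times\calT}|\omega|\,\dd|\jnet_t|\le\calR_n(\sfP_t,\sfJ^+_t,\sfJ^-_t)+\int_{\Gamma\times\calT}\Psi^*(\omega)\,\dd\Theta^{n,+}_{\sfP_t},
\end{equation*}
and observing that $\int\Psi^*(\tfrac12(\log U(\nu+\tfrac1n\delta_x)-\log U(\nu)))\,\dd\Theta^{n,+}_{\sfP_t}=\calD^-_n(\sfP_t)\le\calD_n(\sfP_t)$, where $\calD^-_n:=\calR^*_n(\sfP,-\dnabla^{n,+}\partial_\sfP\calF_n,\dots)$ is the Fisher-information surrogate. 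This yields an $L^1$-in-space-and-time domination uniform in $m,k$, so dominated/monotone convergence closes the chain rule and simultaneously gives
\begin{equation*}
    \calI_n(\sfP,\sfJ^+,\sfJ^-)\ge\int_0^T\!\Bigl(\calR_n(\sfP_t,\sfJ^+_t,\sfJ^-_t)+\tfrac{n}{2}\!\int(\log U(\nu+\tfrac1n\delta_x)-\log U(\nu))\,\dd\jnet_t+\calD^-_n(\sfP_t)\Bigr)\dd t.
\end{equation*}

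Finally, for nonnegativity and the identification of null-minimizers I would rewrite the integrand pointwise. Writing $g^\pm_t:=\dd\sfJ^\pm_t/\dd\Theta^{n,\pm}_{\sfP_t}$ and using that, after the detailed-balance symmetry \eqref{eq:fke_rev2}, $\Theta^{n,\pm}_{\sfP}=\sqrt{U(\nu)U(\nu\pm\tfrac1n\delta_x)}\,\teta^\pm_{\Pi_n}$, the Fisher term $\calD^-_n$ decomposes (via the algebraic identity $\tfrac12\log z\,(z-1)=\phi(\sqrt z)\sqrt z+\phi(1/\sqrt z)\sqrt z+(\sqrt z-1)^2$ applied to $z=U(\nu\pm\tfrac1n\delta_x)/U(\nu)$) exactly so that each summand combines with $\phi(g^\pm_t)$ and the $\tfrac12\log U$-term into $\phi(g^\pm_t)+g^\pm_t\,(\mp\tfrac12\log(\dots))+\phi^*(\mp\tfrac12\log(\dots))\ge0$ by Young's inequality, with equality iff $g^\pm_t=(\phi')^{-1}(\mp\tfrac12\log(U(\nu\pm\tfrac1n\delta_x)/U(\nu)))$; unwinding $\kappa^\pm$ and $\Theta^{n,\pm}$ this is precisely $\sfJ^\pm_t=\sfP_t\kappa^\pm_\nu$ for a.e.\ $t$, which also forces $\calD^-_n(\sfP_t)=\calD_n(\sfP_t)$ along the solution (arguing as in the mean-field case that the ``$u=0$'' contribution vanishes). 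I expect the main obstacle to be the bookkeeping of the \emph{double} limit $m,k\to\infty$ in the chain rule while controlling the unbounded kernel: one must choose the cutoffs compatibly so that the discrete-gradient terms $\dnabla^{n,+}(F_k\phi_m(U))$ stay dominated by an integrable function of $(\nu,x)$ against $|\jnet_t|\,\dd t$ uniformly, which is where the weighted integrability $\int(1+\nu(\calT)^2)^{-1}\dd\sfJ^\pm_t<\infty$ and the $M_n$-bounds of Lemma~\ref{lm:fke_est}(i) must be invoked carefully.
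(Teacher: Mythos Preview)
Your proposal is correct and follows essentially the same route as the paper: a double regularization $\chi_k(\nu)\phi_m(U)$ with a mass cutoff $\chi_k$ and truncated logarithm $\phi_m'$, the chain rule for the regularized functional via Lemma~\ref{lm:fke_timereg}, and passage to the limit using the domination from Lemma~\ref{lm:fke_est}(ii) together with $\calD_n^-\le\calD_n$; the null-minimizer identification is likewise done through the $\phi/\phi^*$ duality after rewriting the integrand in terms of $U,U(\nu\pm\tfrac1n\delta_x)$ and the flux densities. The only cosmetic difference is that the paper carries out the duality computation directly with $G^\pm=\dd\sfJ^\pm/\dd\teta^\pm_{\Pi_n}$ rather than invoking the mean-field algebraic identity, and it does not need (nor prove) $\calD_n^-=\calD_n$ here, since that equality is only required for the reverse implication handled separately in Theorem~\ref{thm:fke_solex}.
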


\begin{proof}
For any curve $\sfP$ with $\sfP\ll \Pi_n$ for all $t\in [0,T]$ we will use 
\begin{equation*}
    S^{k,m}_t=:\int_{\Gamma} \phi_{k,m}(U_t)\, \dd \Pi_n, \qquad S^{m}_t=:\int_{\Gamma} \phi_{m}(U_t)\, \dd \Pi_n,
\end{equation*}
where $\phi_{k,m}(U,\nu)=\chi_k(\nu)\phi_m(U)$, $k,m\in \N$ with $\phi_m$ the previously defined regularized entropy functions, and $\chi_k:=f_k(\nu(\calT))  \in C_c(\Gamma)$ compactly supported multipliers defined via
\begin{equation*}
  f_k(z):=\left\{ \begin{aligned}
      1&, \qquad &&0\leq z\leq k, \\
      2-\frac{z}{k} &, \qquad && k\leq z\leq 2k, \\
      0 &, \qquad && z\geq 2k. \\
    \end{aligned}\right.
\end{equation*}
Note that $|f_k|\leq 1$,  $|f_k'(z)|z\leq 2$ uniformly in $k$, $f_k$ converges monotonically to $1$, and $|\dder^{n,+} \xi_k|(\nu,x)\leq 3/(1+\nu(\calT))$ if $k\geq 1$. In addition, recall that $\phi_m'$ converges pointwise to $\phi'$ and $|\phi'_m|,\phi_m$ converge monotonically to $|\phi'|,\phi$ respectively, and in particular, 
\begin{equation*}
    \lim_{k\to \infty} S^{k,m}_t:=S^m_t, \qquad \lim_{m\to \infty} S^m_t=\Ent(\sfP_t|\Pi_n).
\end{equation*}

Moreover, let the distributional derivatives with respect to $\sfP$ be defined as 
\[ DS^{k,m}_t(\nu):=\chi_k(\nu) \phi'_m(U_t(\nu)), \qquad DS^{m}_t(\nu):= \phi'_m(U_t(\nu))  \]
Note that pointwise $\lim_{k\to \infty} \dder^{n,\pm}  DS^{k,m}_t=\dder^{n,\pm}  DS^{m}_t$ and $\lim_{m\to \infty} \dder^{n,\pm} DS^{m}_t = \dder^{n,\pm}  \phi'(U_t)$. 

\medskip

Now, consider a curve $(\sfP,\sfJ^+,\sfJ^-)\in \mathsf{CE}_n$ with $\calF_n(\sfP_0)<\infty$ and $\calI_n<\infty$. Since $\Ent$ is bounded from below 
\begin{equation*}
\int_0^T \calR_n(\sfP_t,\sfJ^+_t,\sfJ^-_t)\, \dd t<\infty, \qquad \int_0^T \calD_{n}(\sfP_t) \, \dd t < \infty,
\end{equation*}
and therefore $\sfP_t\ll \Pi_n$, $\sfJ^{\pm}_t\ll \Theta^{n,\pm}_{\sfP_t} \ll \vartheta^{\pm}_{\Pi_n}$ for a.e.\ $t\in [0,T]$, with
\[
    \Theta^{n,\pm}_{\sfP_t}(\dd \nu,\dd x)=\sqrt{U_t(\nu)U_t(\nu\pm\tfrac{1}{n}\delta_x)}\, \vartheta_{\Pi_n}^{\pm}(\dd \nu,\dd x). \]
In particular $U_t(\nu)$, $U_t(\nu\pm\tfrac{1}{n}\delta_x)>0$ for $\sfJ^{\pm}_t, \Theta^{n,\pm}_{\sfP_t}$-a.e. $\nu,x$. 

Moreover, set $\sfJ^{\pm}_t=G_t^{\pm} \vartheta_{\Pi}^{\pm}$, $\jnet_t=G^{\mathrm{net}}_t \vartheta_{\Pi}^+$ (or $G^{\mathrm{net}}_t:=G_t^+-G_t^-\circ \sfT^{n,+})$, and
\begin{align*}
    \ell:=(1+\nu(\calT)^2)^{-1} \Pi_n,\qquad \Sigma^{\pm}:=(1+\nu(\calT)^2)^{-1} \teta^{\pm}_{\Pi_n}.
\end{align*} 
By Lemma \ref{lm:fke_timereg}, the map $t\mapsto U_t$ is absolutely continuous and a.e.\ differentiable in $L^1(\calP(\Gamma),\ell)$ with 
\begin{equation*}
\begin{aligned}
    \partial_t U_t(\nu) &= n \int_{\calT} (G_t^-(\nu+\tfrac{1}{n}\delta_x,x)-G_t^+(\nu,x)) \, \kappa^+_{\nu}(\dd x)\\
    &\hspace{4em}+ n\int_{\calT} (G_t^+(\nu-\tfrac{1}{n}\delta_x,x)-G_t^-(\nu,x)) \, \kappa^-_{\nu}(\dd x),
\end{aligned}
\end{equation*}
or in terms of the net-flux,
\begin{equation*}
\begin{aligned}
    \partial_t U_t(\nu)= n \int_{\calT} G^{\mathrm{net}}_t(\nu-\tfrac{1}{n}\delta_x,x) \, \kappa^-_{\nu}(\dd x)-n \int_{\calT} G^{\mathrm{net}}_t(\nu,x) \, \kappa^+_{\nu}(\dd x).
\end{aligned}
\end{equation*}
Therefore, since $(1+\nu(\calT)^2)$ is bounded from above and below on the support of $\xi_k$, it is clear that for every $m,n$ the maps $t\mapsto S_t^{k,m}$ are Lipschitz, absolutely continuous and for a.e.\ $t\in [0,T]$
\begin{align*}
    \frac{\dd \,}{\dd t} S_t^{k,m}&=n \int_{\calT} DS_t^{k,m}(\nu)  G^{\mathrm{net}}_t(\nu-\tfrac{1}{n}\delta_x,x) \, \kappa^-_{\nu}(\dd x)-n \int_{\calT} DS_t^{k,m}(\nu) G^{\mathrm{net}}_t(\nu,x) \, \kappa^+_{\nu}(\dd x)\\
    &=\int_{\Gamma\times \calT} \dder^{n,+} DS_t^{k,m} \, \dd \sfJ_t,
\end{align*}
and in particular, for all $s,t\in [0,T]$, 
\begin{equation}\label{eq:fke_crmods}
    S_t^{k,m}-S_s^{k,m} = \int_s^t \int_{\Gamma\times \calT} \dder^{n,+} DS_r^{k,m} \, \dd \sfJ_r.
\end{equation} 
Recall that the following convergences hold pointwisely:
\[
\lim_{m\to \infty} \lim_{k\to\infty} \dder^{n,+} DS_t^{k,m}=\dder^{n,+} \phi'(U_t),\qquad\text{and}\qquad \lim_{k\to \infty} \dder^{n,+} \xi_k=0.
\]
Moreover, the following estimate holds for every $(\nu,x)$:
\begin{align*}
    |\dder^{n,+} DS_t^m|(\nu,x) &\leq \|\xi_k\|_{\infty} |\dder^{n,+} DS_t^m|(\nu,x)+\|\phi'_m\|_{\infty} |\dder^{n,+} \xi_k|(\nu,x) \\
    &\leq |\dder^{n,+} DS_t^m|(\nu,x)+3 m (1+\nu(\calT))^{-1}\\
    &\leq |\dder^{n,+} \phi'(U_t)|(\nu,x)+3 m (1+\nu(\calT))^{-1},
\end{align*}
where the final inequality follows from the truncation inequality for discrete derivatives, i.e.\ $|\phi_m(\eta)-\phi_m(\nu)|\leq |\phi(\eta)-\phi(\nu)|$. Note that by Lemma \ref{lm:fke_est}, for any $\sfP,\sfJ^{\pm}$ with finite $\calR_n$ that 
\[ \int_{\Gamma \times \calT} (1+\nu(\calT))^{-1} \dd |\jnet|<\infty,  \]
and moreover 
\begin{equation*}
 \frac{1}{2n}\int_{\Gamma\times \calT} |\dder^{n,+} \phi'(U)|\,\dd |\jnet| \, \leq \calR_n(\sfP,\sfJ^+,\sfJ^-)+\int_{\Gamma\times \calT} \Psi^*\left(\frac{1}{2n}\dder^{n,+} \phi'(U)\right) \dd \Theta_{\sfP}^{n,+},
\end{equation*}
with
\begin{align*}
   \calD_n^-(\sfP)&:= \int_{\Gamma\times \calT} \Psi^*\left(\frac{1}{2n}\dder^{n,+} \phi'(U)\right) \dd \Theta_{\sfP}^{n,+} \\
   &=\int_{U(\nu+n^{-1}\delta_x)>0,U(\nu)>0} \Psi^*\left(\log U(\nu+\tfrac{1}{n}\delta_x)-U(\nu)\right) \sqrt{U(\nu+\tfrac{1}{n}\delta_x)U(\nu)}\, \dd \teta_{\sfPi_n}^+\\
   &=\int_{U(\nu+n^{-1}\delta_x)>0,U(\nu)>0}  \left(\sqrt{U(\nu+\tfrac{1}{n}\delta_x)}-\sqrt{U(\nu)}\right)^2 \dd \vartheta^+_{\Pi_n}\\
   &\leq \calD_n(\sfP).
\end{align*}
Therefore, since $\Ent(\sfP_0|\Pi_n)<\infty$ we find by a dominated convergence argument and taking subsequent limits in $k$ and $m$ in \eqref{eq:fke_crmods} that $\Ent(\sfP_t|\Pi_n)<\infty$ for all $t\in [0,T]$,
\begin{align*}
    \Ent(\sfP_t)-\Ent(\sfP_s) &= \int_s^t \int_{\Gamma\times \calT} \dder^{n,+} \phi'(U_r) \, \dd \jnet_r\, \dd r, \qquad s,t\in [0,T]\\
    \int_{\Gamma\times \calT} |\dder^{n,+} \phi(U_t)| \,\dd |\jnet_r| &\leq \calR_n(\sfP_t,\sfJ^+_t,\sfJ^-_t)+\calD^-_n(\sfP_t), \qquad t\in [0,T]. 
\end{align*}
and 
\begin{equation*}
  \calI_n\geq \int_0^T \calR_n(\sfP_t,\sfJ^+_t,\sfJ^-_t)\, \dd t + \tfrac{1}{2n}\left(\Ent(\sfP_T)-\Ent(\sfP_0)\right) +  \int_0^T \calD^-_n(\sfP_t)\, \dd t\geq 0.
\end{equation*}
\medskip

Next, assume that $\calI_n=0$. Then the above arguments imply that for a.e.\ $t\in [0,T]$,
\begin{equation}\label{eq:fke_chainzero}
    \calR_n(\sfP_t,\sfJ^+_t,\sfJ^-_t)\,  +\frac{1}{2n}\int_{\Gamma\times \calT} \dder^{n,+} \phi'(U_t) \, \dd \jnet_t+  \calD^-_n(\sfP_t) =0.
\end{equation} 
To simplify manipulations, let $U^{\pm}(\nu,x):=U\circ \sfT_x^{n,\pm}=U(\nu\pm\tfrac{1}{n}\delta_x)$. Note that for the actions, 
\begin{align*}
    \Ent(\sfJ^{+}|\Theta^{n,+}_{\sfP})&=\int_{\Gamma\times \calT} 1_{U,V>0} \, \phi\left(G^{+} U/U^+\right) \sqrt{U U^+} \, \dd \teta_{\Pi_n}^{+},\\
    \Ent(\sfJ^{-}|\Theta^{n,-}_{\sfP})&=\int_{\Gamma\times \calT} 1_{U,U^->0} \, \phi\left(G^{-} U^-/U\right) \sqrt{U U^-} \, \dd \teta_{\Pi_n}^{-}\\
    &=\int_{\Gamma\times \calT} 1_{U,U^+>0} \, \phi\left(G^{-} U^+/U\right) \sqrt{U U^+} \, \dd \teta_{\Pi_n}^{+},
\end{align*}
for the modified Fisher information $\calD_n^-$,
\begin{align*}
  \calD_n^-(\sfP)&=\int_{\Gamma\times \calT}  1_{U,U^+>0}  \left(\sqrt{U^+}-\sqrt{U}\right)^2 \, \dd \vartheta^+_{\Pi_n},
\end{align*}
and finally 
\[
    \frac{1}{2n}\int_{\Gamma\times \calT} \dder^{n,+} \phi'(U) \, \dd \sfJ=\frac{1}{2} \int_{\Gamma\times \calT} (\phi'(U^+)-\phi'(U)) (G^+-G^-\circ\sfT^{n,-}) \, \dd \teta_{\Pi_n}^+,
\] 
which due to $\sfJ^{\pm}\ll \Theta^{n,\pm}_{\sfP}$ is equal to 
\[\frac{1}{2} \int_{\Gamma\times \calT} 1_{U,U^+>0} (\phi'(U^+)-\phi'(U) (G_t^+-G_t^-\circ\sfT^{n,-}) \, \dd \teta_{\Pi_n}^+.\]
Therefore, after some cumbersome rewriting, the integrands of the left-hand side of \eqref{eq:fke_chainzero} reads as the indicator functions over $\{U,U^+>0\}$ multiplied by the terms
\begin{align*}
&\phi\left(G^{+} U/U^+\right) \sqrt{U U^+} + \tfrac{1}{2} (\phi'(U^+)-\phi'(U) )G_t^++\phi^*\left(-\tfrac{1}{2}(\phi'(U^+)-\phi'(U))\right)\\
&\quad+\;\phi\left(G^{-}\circ \sfT^{n,+} U^+/U\right)\sqrt{U U^+} - \tfrac{1}{2} (\phi'(U^+)-\phi'(U)) G^{-}\circ \sfT^{n,+}+\phi^*\left(-\tfrac{1}{2}(\phi'(U^+)-\phi'(U))\right),
\end{align*}
since 
\[\phi^*\left(-\tfrac{1}{2}(\phi'(U^+)-\phi'(U))\right)=U-\sqrt{U U^+}, \quad \phi^*\left(\tfrac{1}{2}(\phi'(U^+)-\phi'(U))\right)=U^+-\sqrt{U U^+}. \]
By duality of $\phi,\phi^*$ we have  $G^+=U$ and $G^{-}\circ \sfT^{n,+}=U^+$, hence $G^-=U$ as well. Subsequently we can conclude that $\calI_n=0$ if and only if $\sfJ^{\pm}_t=\teta_{\sfP_t}^{\pm}$ for a.e. $t\in [0,T]$ and a.e.\ $\nu,x$.
\end{proof}

Together, Theorems \ref{thm:fke_chain} and \ref{thm:fke_solex} provide a proof of the variational characterization for the forward Kolmogorov equation. 
\begin{proof}[Proof of Theorem \ref{thm:fke_main}]
Under the assumption of $\calF_n(\sfP_0)<\infty$ we have by Theorem \ref{thm:fke_chain} a chain rule for the entropy, the inequality $\calI_n\geq 0$, and the statement that $\calI_n(\sfP,\sfJ^+,\sfJ^-)=0$ implies that $\sfP$ is a weak solution. Moreover, due to Theorem \ref{thm:fke_solex} there exists a weak solution with $\calI_n\leq 0$.

It remains to show that gradient-flow solutions are unique, which is a classical argument using the strict convexity of $\calF_n$, e.g. see Theorem 5.9 of \cite{PRST2020}. Suppose that there exist two curves $\sfP^1,\sfP^2$ such that $\sfP^1_0=\sfP^2_0=\bar \sfP$, $\calI_n(\sfP^1,\vartheta_{\sfP^1}^+,\vartheta_{\sfP^1}^-)$ and $\calI_n( \sfP^2,\vartheta_{\sfP^2}^+,\vartheta_{\sfP^2}^-)=0$. Applying the chain rule it is straightforward to verify that for a gradient-flow solution $\calI_{n}^t=0$ for every $t \in [0,T]$, where 
\[ \calI_n^t(\sfP,\sfJ^+,\sfJ^-):=  \int_0^t \calR_n(\sfP_r,\sfJ^+_r,\sfJ_r^-)\, \dd r + \calF_n(\sfP_t)-\calF_n(\bar \sfP)+\int_0^t \calD_n(\sfP_r) \, \dd r,\]
and that $\calI_n^t\geq 0$ for arbitrary curves with initial condition $\bar \sfP$. 

Now, define $\tilde \sfP_t=\tfrac{1}{2}\sfP^1+\tfrac{1}{2}\sfP^2$ and note that $(\tilde \sfP,\vartheta_{\tilde \sfP}^+,\vartheta_{\tilde \sfP}^-)\in \mathsf{CE}_n$  as well, and 
\[ \vartheta_{\tilde \sfP}^{\pm}=\tfrac{1}{2}\vartheta_{\sfP^1}^{\pm}+\tfrac{1}{2}\vartheta_{\sfP^2}^{\pm}. \]
Fix any $t \in [0,T]$ and suppose that $\sfP_t^1\neq \sfP_t^{2}$. Then by convexity of $\calR_n$ and $\calD_n$, and strict convexity of $\calF_n$, we have 
\begin{align*}
    \calI_n^t(\tilde \sfP,\vartheta_{\tilde \sfP}^{+},\vartheta_{\tilde \sfP}^{-})&=  \int_0^t \calR_n(\tilde \sfP_r,\vartheta_{\tilde \sfP_r}^+,\vartheta_{\tilde \sfP_r}^-)\, \dd r + \calF_n(\tilde \sfP_t)-\calF_n(\bar \sfP)+\int_0^t \calD_n(\sfP_r) \, \dd r\\
    &< \tfrac{1}{2}\calI_n^t(\sfP^1,\vartheta_{\sfP^1}^+,\vartheta_{\sfP^1}^-)+\tfrac{1}{2}\calI_n^t(\sfP^2,\vartheta_{\sfP^2}^+,\vartheta_{\sfP^2}^-)=0,
\end{align*}
which leads to a contradiction, and hence $\sfP_t^1= \sfP_t^{2}$ for all $t \in [0,T]$. 
\end{proof}

\section{Liouville equation and lifted dynamics}\label{s:liouv}

In this section, we will consider the variational formulation for our proposed limit of the forward Kolmogorov equation \ref{eq:FKEn}, namely the \emph{Liouville equation} 
\begin{equation}\label{eq:liouv2}\tag{{\sf Li}}
    \partial_t \sfP_t +\mathrm{div}_{\Gamma} \left(\sfP_t \bigl(\kappa^+-\kappa^- \bigr)\right)=0.
\end{equation}
It can be interpreted as a transport equation lifted from the mean-field dynamics, in the sense that it describes the evolution of the law of a deterministic process satisfying the mean-field equation but with possibly random initial conditions. We will consider the same ingredients as in previous sections, namely a non-negative EDP functional consisting of an action term, a difference of free energies, and a corresponding Fisher information term. The main technical tool that we use is a new superposition principle, which allows us to prove the chain rule via the results on mean-field curves of Section \ref{s:mf}. 

\medskip

Solutions to \eqref{eq:liouv2} are defined as appropriate weak solutions to
\begin{equation*}
    \partial_t \sfP_t =Q_{\infty}^* \, \sfP_t,
\end{equation*}
where $\sfP_t \in \calP(\Gamma)$ for all $t\in [0,T]$ and the operator $Q_{\infty}^*$ is the dual of $Q_{\infty}$ given by
\begin{equation*}
\begin{aligned}
   (Q_{\infty} F)(\nu) &= \int_{\calT}(\grad F)(\nu,x) V[\nu](\dd x),\\
   V[\nu]&:=\kappa^+[\nu]-\kappa^-[\nu],
\end{aligned}
\end{equation*}
for all $F\in \Cyl_c(\Gamma)$. Here $\Cyl_c(\Gamma)$ is the space of all compactly supported smooth cylinder functions, i.e.\ those of the form 
\begin{equation*}
    F(\nu)=g\left(\langle 1,\nu\rangle,\langle f_1,\nu\rangle,\dots,\langle f_m,\nu\rangle \right),
\end{equation*}
where $g\in C^{\infty}_c(\mathbb{R}^{m})$ with $m \in \N$, and $f_1,\dots,f_m\in C_b(\calT)$, and $\grad$ is the distributional gradient defined by
\begin{equation*}
    \mathrm{grad}_{\Gamma}\, F(\nu,x)= (\nabla g)\left(\langle 1,\nu\rangle,\langle f_1,\nu\rangle,\dots,\langle f_m,\nu\rangle \right) \cdot (1,f_1(x),\dots,f_m(x))^\top.
\end{equation*}
To be precise, we consider the following type of solutions.
\begin{defi}
A curve $(\sfP_t)_{t\in [0,T]}$ is a weak solution to \eqref{eq:liouv} if $\sfP_t$ is continuous in the narrow topology and for all $s,t\in [0,T]$, and all $F\in \Cyl_c(\Gamma)$,
\begin{equation}\label{eq:liouv3}
 \int_{\Gamma} F(\nu) \,\dd \sfP_t - \int_{\Gamma} F(\nu) \,\dd \sfP_s = \int_s^t \int_{\Gamma\times \calT} (\grad F)(\nu,x) V[\nu](\dd x) \sfP_t(\dd \nu) \, \dd r.
    \end{equation}
\end{defi}

\begin{remark}\label{rem:li_solsup}
Note that \eqref{eq:liouv2} is the transport equation associated to the measure-valued vector field $V[\nu]$. Now let the flow $G:[0,T]\times \Gamma\to \Gamma$ be the unique strong solution to the mean-field equation, i.e.\ with
\begin{equation}\label{eq:li_sol}
   \partial_t G_t[\nu]=V[G_t[\nu]]  .
\end{equation}
As will be shown in Section \ref{ss:li_sol}, $\sfP_t:=(G_t)_{\#} \bar \sfP$ is a weak solution to \eqref{eq:liouv} for any initial data $\bar \sfP\in \calP(\Gamma)$. In particular, if $\nu_t$ is a solution to \eqref{eq:mf} than $\sfP_t:=\delta_{\nu_t}$ is a weak solution to \eqref{eq:liouv2}. 
\end{remark}

Instead of the solution to \eqref{eq:liouv2}, we will now consider arbitrary curves satisfying
\begin{equation}\label{eq:liouv_cei}\tag{$\mathsf{CE}_{\infty}$}
    \partial_t \sfP_t + \mathrm{div}_{\Gamma}(J_t^+-J_t^-)=0,
\end{equation}
in the following appropriate distributional sense. 

\begin{defi}[Continuity equation]\label{defi:contli}\,

A triple  $(\sfP,\sfJ^+,\sfJ^-)$ satisfies the continuity equation $\mathsf{CE}_{\infty}$, if 
\begin{enumerate}
\itemsep0.1em 
\item\label{li:cond0} the curve $[0,T]\ni t\mapsto \sfP_t\in \calP(\Gamma)$ is narrowly continuous,
\item\label{li:cond1} the Borel family $(\sfJ^{\pm}_t)_{t\in [0,T]}\in \calM^+_{loc}(\Gamma\times \calT)$ satisfies 
\[ \int_0^T \int_{\Gamma\times \calT} (1+\nu(\calT)^2)^{-1}\,\dd \sfJ^{\pm}_{t} \, \dd t<\infty, \]
    \item\label{li:cond2} for every $s,t\in [0,T]$ and all $F\in \Cyl_c(\Gamma)$
     \begin{equation*}
        \int_{\Gamma} F(\nu) \,\dd \sfP_t - \int_{\Gamma} F(\nu) \,\dd \sfP_s = \int_s^t \int_{\Gamma\times \calT} \mathrm{grad}_{\Gamma}F \,(\dd \sfJ_r^+-\dd \sfJ_r^-) \, \dd r.
    \end{equation*}
\end{enumerate}
\end{defi}

Moreover, let us introduce the EDP-functional. Recall from Section \ref{s:fke} the notation $\vartheta_{\sfP}^{\pm}(\dd \nu,\dd x):=\kappa^{\pm}[\nu](\dd x)\sfP(\dd \nu)$. 

\begin{defi}\label{defi:liouv}
Let $\Theta^{\infty}_{\sfP}\in \calM_{loc}(\Gamma\times \calT)$ be the geometric average of $\vartheta^{+}_{\sfP}$ and $\vartheta^{-}_{\sfP}$, i.e.\ 
\begin{equation*}
    \Theta^{\infty}_{\sfP}(\dd \nu,\dd x):=\sqrt{\frac{\dd \vartheta^{+}_{\sfP}}{\dd \Sigma}\frac{\dd \vartheta^{-}_{\sfP}}{\dd\Sigma}}\, \,\dd \Sigma,
\end{equation*}
for any dominating measure $\Sigma$. We define the following objects:
\begin{itemize}
    \item The dissipation potential $\calR_{\infty}:\calP(\Gamma)\times \calM_{loc}^+(\Gamma\times \calT)^2\to[0,+\infty]$,
\begin{equation*}
  \calR_{\infty}(\sfP,\sfJ^+,\sfJ^-):=\Ent(\sfJ^{+}|\Theta^{\infty}_{\sfP})+\Ent(\sfJ^{-}|\Theta^{\infty}_{\sfP}).
\end{equation*}
\item The dual dissipation potential $\calR^*_{\infty}:\calP(\Gamma)\times \calB_c(\Gamma \times \calT)^2\to \R$,
\begin{equation*}
      \calR_{\infty}^*(\sfP,\omega^+,\omega^-):=\int_{\Gamma\times \calT} (e^{\omega^{+}}-1)\, \dd \Theta^{\infty}_{\sfP}+\int_{\Gamma\times \calT} (e^{\omega^{-}}-1)\, \dd \Theta^{\infty}_{\sfP}.
\end{equation*}
\item The free energy $\calF_{\infty}:\calP(\Gamma)\to[0,+\infty]$,
\begin{equation*}
\calF_{\infty}(\sfP):=\int_{\Gamma} \calF_{MF}(\nu)\, \sfP(\dd \nu).
\end{equation*}
\item The Fisher information $\calD_{\infty}:\calP(\Gamma)\to [0,+\infty]$,
\begin{equation*}
    \calD_{\infty}(\sfP):=\int_{\Gamma} \calD_{MF}(\nu) \, \sfP(\dd \nu).
\end{equation*}
\item The \emph{EDP-functional} $\calI_{\infty}:\mathsf{CE}_{\infty}\to [0,+\infty]$ for all curves with $\calF_{\infty}(\sfP_0)<\infty$,
\begin{equation*}
    \calI_{\infty}(\sfP,\sfJ^+,\sfJ^-):=\int_0^T \calR_{\infty}(\sfP_t,\sfJ_t^+,\sfJ^-_t) \, \dd t + \calF_\infty(\nu_T)-\calF_\infty(\nu_0)+\int_0^T \calD_{\infty}(\sfP_t) \, \dd t.
\end{equation*}
\end{itemize}
\end{defi}

\begin{remark}\label{rem_lifisher}
Recall from Section \ref{s:mf} that $\calF_{MF}(\nu):=\tfrac{1}{2}\Ent(\nu|\gamma)$ and 
\[  \calD_{MF}(\nu):=\left\{\begin{aligned}
    &2H^2(\kappa_{\nu}^+,\kappa_{\nu}^-),&& \qquad \mbox{if $\nu\ll\gamma,$}\\
    &+\infty,&& \qquad \mbox{otherwise.}
    \end{aligned}\right. \]
In particular, if $\calF_{\infty}(\sfP)<\infty$ we have 
\begin{align*}
    \calD_{\infty}(\sfP)&=2  \int_{\Gamma} H^2(\kappa_{\nu}^+,\kappa_{\nu}^-) \, \sfP(\dd \nu)
    =2  H^2(\teta_{\sfP}^+,\teta_{\sfP}^-).
\end{align*}
\end{remark}

\begin{remark}\label{rem:li_irep}
Note that  $\Theta_{\sfP}^ {\infty}(\dd \nu,\dd x)=\sfP(\dd \nu) \theta_{\nu}(\dd x)$. Moreover, if $\Ent(\sfJ_t^{\pm}|\Theta_{\sfP_t}^{\infty})$ is finite, we can set 
\[ \lambda^{\pm}_{t}[\nu](\dd x):=\frac{\dd \sfJ_t^{\pm}}{\dd \Theta_{\sfP_t}^ {\infty}}(\nu,x)\,\theta_{\nu}(\dd x),
\]
and it is straightforward to verify that we have the disintegration
\begin{equation*}
    \sfJ^{\pm}(\dd \nu,\dd x)=\lambda_t^{\pm}[\nu](\dd x)\sfP_t(\dd \nu),
\end{equation*}
and the equivalence
\begin{equation}\label{eq:li_arep}
      \Ent(\sfJ_t^\pm|\Theta^{\infty}_{\sfP_t})=\int_{\Gamma} \Ent(\lambda_{t}^\pm[\nu]|\theta_{\nu})\, \dd \sfP_t.
\end{equation}
Together with the definitions of $\calF_{\infty}$ and $\calD_{\infty}$ this implies that if $\calI_{\infty}(\sfP,\sfJ^+,\sfJ^-)$ is finite then the $\lambda_{t}^{\pm}[\nu]$ are well-defined for a.e. $t\in [0,T]$, and 
\begin{equation*}
   \int_0^T \int_{\Gamma} \left(\calR_{MF}(\nu_t,\lambda^+_{t}[\nu],\lambda^-_{t}[\nu])+\calD_{MF}(\nu)\right)  \sfP_t(\dd \nu) \, \dd t + \int_{\Gamma} \calF_{MF}(\nu) \sfP_T(\dd \nu)-\int_{\Gamma} \calF_{MF}(\nu) \sfP_0(\dd \nu).
\end{equation*}
Throughout the rest of this section we will simply write $\lambda_{t,\nu}^{\pm}=\lambda^\pm_t[\nu]$.
\end{remark}

We will show the following equivalence, which subsumes Theorem \eqref{thm:liouvi}.
\begin{thm}\label{thm:li_equiv}
For any $(\sfP,\sfJ^+,\sfJ^-)\in \mathsf{CE}_{\infty}$ with $\calF_{\infty}(\sfP_0)<\infty$, the EDP-functional $\calI_{\infty}$ is finite if and only if there exists a Borel probability measure $Q$ over $C([0,T];\Gamma)$ such that
\begin{enumerate}
    \item for the time-evaluations $e_t$ we have $(e_t)_{\#}Q=\sfP_t$ for all $t\in [0,T]$,
    \item the measure $Q$ is concentrated on the family of curves $\nu\in AC([0,T];(\Gamma,\|\cdot\|_{TV}))$ such that $(\nu,\lambda^+_{\nu},\lambda^-_{\nu}) \in \mathscr{CE}$, where $\lambda_\nu^\pm$ is defined via the disintegration
    \[  \sfJ_t^{\pm}(\dd \nu,\dd x)=\lambda_{t,\nu}^{\pm}(\dd x)\sfP_t(\dd \nu)\qquad\text{for a.e. $t\in [0,T]$,}  \] 
    \item we have the representation
   \begin{equation*}
     \calI_{\infty}(\sfP,\sfJ^+,\sfJ^-)= \int \calI_{MF}\left(\nu,\lambda^+_{\nu},\lambda^-_{\nu}\right) \dd Q, 
\end{equation*}
 with the latter term finite. 
\end{enumerate} 

In particular, $\calI_{\infty}\geq 0$, and
\[ \calI_{\infty}(\sfP,\sfJ^+,\sfJ^-)=0 \iff \left\{ \begin{aligned}
\quad &\mbox{$\sfP_t$\; is the weak solution to \eqref{eq:liouv2} with $\sfP_t=(G_t)_{\#} \sfP_0$}  \\
\quad \sfJ^{\pm}_t&=\sfP_t \kappa_{\nu}^{\pm} \quad \mbox{for a.e.\ $t\in [0,T]$}\\
\end{aligned} \right. \]
\end{thm}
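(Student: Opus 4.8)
\textbf{Proof plan for Theorem \ref{thm:li_equiv}.}
The plan is to establish the equivalence between finiteness of $\calI_\infty$ and the existence of the probability measure $Q$ over curves in both directions, and then to read off the characterization of null-minimizers.

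\emph{From the superposition measure to the EDP-functional ($\Leftarrow$).} Suppose such a $Q$ exists. Then for $Q$-a.e.\ curve $\nu$ we have $(\nu,\lambda^+_\nu,\lambda^-_\nu)\in\mathscr{CE}$ and $\calF_{MF}(\nu_0)<\infty$ (the latter because $\calF_{MF}(\nu_0)$ is $\sfP_0$-integrable by assumption and $(e_0)_\#Q=\sfP_0$). By Theorem \ref{thm:equivlag} (more precisely the chain-rule part, Lemma \ref{lm:mf_chain}) each $\calI_{MF}(\nu,\lambda^+_\nu,\lambda^-_\nu)\ge 0$, so integrating against $Q$ gives $\calI_\infty\ge 0$ once we verify the representation formula. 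For that, I would use the disintegration $\sfJ^\pm_t(\dd\nu,\dd x)=\lambda^\pm_{t,\nu}(\dd x)\sfP_t(\dd\nu)$ together with $(e_t)_\#Q=\sfP_t$ and Remark \ref{rem:li_irep}: the equivalence \eqref{eq:li_arep} turns $\int_0^T\calR_\infty(\sfP_t,\sfJ^+_t,\sfJ^-_t)\,\dd t$ into $\int\!\int_0^T\calR_{MF}(\nu_t,\lambda^+_{t,\nu},\lambda^-_{t,\nu})\,\dd t\,\dd Q$, and the definitions of $\calF_\infty,\calD_\infty$ as $\sfP$-averages of $\calF_{MF},\calD_{MF}$ do the same for the remaining two terms. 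A measurability check (that $\nu\mapsto\calI_{MF}(\nu,\lambda^+_\nu,\lambda^-_\nu)$ is Borel, so that Fubini applies) is needed but routine.

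\emph{From finiteness of $\calI_\infty$ to the superposition measure ($\Rightarrow$).} This is the heart of the matter and relies on the modified superposition principle announced as \eqref{eq:superp} (proved in Appendix \ref{s:super}). Given $\calI_\infty<\infty$, the a priori estimates of Lemma \ref{lm:fke_est}/Lemma \ref{lm:mf_est} applied pointwise in $\nu$ (via \eqref{eq:li_arep}) show that for a.e.\ $t$ the disintegrated fluxes $\lambda^\pm_{t,\nu}$ exist, are $\sigma$-integrable in the right sense, and the map $t\mapsto\sfP_t$ together with $\mathsf{CE}_\infty$ gives exactly the hypotheses of the superposition principle for transport in $(\Gamma,\|\cdot\|_{TV})$. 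Applying it yields a Borel probability $Q$ on $C([0,T];\Gamma)$ with $(e_t)_\#Q=\sfP_t$ and concentrated on curves solving $\partial_t\nu_t=\lambda^+_{t,\nu}-\lambda^-_{t,\nu}$; one then checks via Lemma \ref{lm:contweakf1} that $Q$-a.e.\ such curve actually lies in $AC([0,T];(\Gamma,\|\cdot\|_{TV}))$ with $(\nu,\lambda^+_\nu,\lambda^-_\nu)\in\mathscr{CE}$, using that the $\calR_{MF}$-integrability needed there follows from $\int\calR_\infty<\infty$ and Fubini. The representation formula then holds by the same computation as in the $(\Leftarrow)$ direction. \textbf{The main obstacle} is precisely this reduction to the superposition principle: one must transfer the global bounds on $\calI_\infty$ into bounds that hold $\sfP_t$-a.e.\ in $\nu$ (so that, after disintegration, individual mean-field curves have finite action), and one must handle the fact that $\calF_{MF}(\nu_t)$ need only be finite for $\sfP_t$-a.e.\ $\nu$, not uniformly — so Theorem \ref{thm:equivlag} is applied along $Q$-a.e.\ curve rather than globally. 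Measurable-selection/disintegration bookkeeping to make $\lambda^\pm_{t,\nu}$ jointly measurable in $(t,\nu,x)$ is the technical sticking point.

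\emph{Null-minimizers.} Finally, if $\calI_\infty=0$ then $\int\calI_{MF}(\nu,\lambda^+_\nu,\lambda^-_\nu)\,\dd Q=0$ with a non-negative integrand, so $\calI_{MF}(\nu,\lambda^+_\nu,\lambda^-_\nu)=0$ for $Q$-a.e.\ curve. By Theorem \ref{thm:equivlag} each such $\nu$ is the unique strong solution of \eqref{eq:mf} with its own initial datum and $\lambda^\pm_{t,\nu}=\kappa^\pm_{\nu_t}$ for a.e.\ $t$; uniqueness of the mean-field flow (Lemma \ref{lm:mf_strongsol}) forces $\nu_t=G_t[\nu_0]$, hence $\sfP_t=(e_t)_\#Q=(G_t)_\#\sfP_0$, and integrating $\lambda^\pm_{t,\nu}=\kappa^\pm_{\nu_t}$ against $Q$ gives $\sfJ^\pm_t=\vartheta^\pm_{\sfP_t}=\sfP_t\kappa^\pm_\nu$ for a.e.\ $t$. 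Conversely, if $\sfP_t=(G_t)_\#\sfP_0$ and $\sfJ^\pm_t=\sfP_t\kappa^\pm_\nu$, pushing forward $\sfP_0$ along the flow map produces a $Q$ concentrated on mean-field solutions, for which $\calI_{MF}=0$ pointwise by Theorem \ref{thm:equivlag}, whence $\calI_\infty=0$ by the representation formula; this also shows existence of a gradient-flow solution, and uniqueness follows since any null-minimizer must be of this form by the forward implication. (Alternatively, one can invoke strict convexity of $\calF_\infty$ in $\sfP$ exactly as in the proof of Theorem \ref{thm:fke_main}.)
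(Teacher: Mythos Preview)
Your proposal is correct and follows essentially the same route as the paper: apply the superposition principle (Theorem~\ref{thm:lsuper}) to produce $Q$, use the disintegration identity \eqref{eq:li_arep} together with the definitions of $\calF_\infty,\calD_\infty$ as $\sfP$-averages to obtain the representation formula, and read off non-negativity and the null-minimizer characterization from Theorem~\ref{thm:equivlag} applied $Q$-a.e.\ (with Lemma~\ref{lm:contweakf1} checking that $Q$-a.e.\ curve lies in $\mathscr{CE}$).

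One small caveat: your parenthetical alternative for uniqueness via ``strict convexity of $\calF_\infty$ in $\sfP$ exactly as in the proof of Theorem~\ref{thm:fke_main}'' does not work here, since $\calF_\infty(\sfP)=\int_\Gamma\calF_{MF}(\nu)\,\sfP(\dd\nu)$ is \emph{linear} in $\sfP$. The paper (and your main argument) obtains uniqueness the right way: any null-minimizer is forced, via the superposition and Theorem~\ref{thm:equivlag}, to be the push-forward of $\sfP_0$ along the unique mean-field flow $G_t$, which pins down $\sfP_t$ completely.
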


Here $G_t:\Gamma\to\Gamma$ maps $\bar \nu$ to the unique mean-field solution $\nu_t$ at time $t$, see Remark \ref{rem:li_solsup}. It is determined by
\begin{equation*}
   \partial_t G_t[\nu]=V[G_t[\nu]].
\end{equation*}

We do not have a priori uniqueness of the Liouville equation. However, we do have uniqueness of weak solutions for which a superposition holds, in particular for curves with finite $\calI_{\infty}$. Therefore gradient-flow solutions (null-minimizers of $\calI_{\infty}$) are in fact unique.

In the case of $\sfP_t:=\delta_{\nu_t}$ with $\nu_t$ the solution to the mean-field equation there is a trivial superposition principle, and we have the following consequence. 

\begin{cory}
Suppose $\sfP_0=\delta_{\nu_0}$ with $\calF_{MF}(\nu_0)<\infty$. Then 
\[ \calI_{\infty}(\sfP,\sfJ^+,\sfJ^-)=0 \iff \left\{ \begin{aligned}
\quad &\mbox{$\sfP_t=\delta_{\nu_t}$,\quad $\nu_t$ is the unique strong solution to \eqref{eq:mf}} \quad \\
\quad &\sfJ^{\pm}_t=\sfP_t \kappa_{\nu}^{\pm} \quad \mbox{for a.e.\ $t\in [0,T]$} \quad
\end{aligned} \right. \]
\end{cory}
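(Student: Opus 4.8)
The plan is to read off the corollary from Theorem \ref{thm:li_equiv} together with Theorem \ref{thm:equivlag}, the only new observation being that when the initial datum is a Dirac mass the flow map $G_t$ transports it to another Dirac mass, concentrated on the mean-field solution. Recall from Remark \ref{rem:li_solsup} (and Lemma \ref{lm:mf_strongsol}) that $t\mapsto G_t[\nu_0]$ is precisely the unique strong solution of \eqref{eq:mf1} starting at $\nu_0$, so that $(G_t)_{\#}\delta_{\nu_0}=\delta_{G_t[\nu_0]}$.

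For the implication ``$\Leftarrow$'', I would assume $\sfP_t=\delta_{\nu_t}$ with $\nu_t$ the unique strong solution of \eqref{eq:mf1} with initial datum $\nu_0$, and $\sfJ^{\pm}_t=\sfP_t\kappa_\nu^{\pm}$. Then $\nu_t=G_t[\nu_0]$, hence $\sfP_t=(G_t)_{\#}\sfP_0$, and together with $\sfJ^{\pm}_t=\sfP_t\kappa_\nu^{\pm}$ this is exactly the right-hand side of the characterisation of null-minimizers in Theorem \ref{thm:li_equiv}, so $\calI_{\infty}(\sfP,\sfJ^+,\sfJ^-)=0$. I would also note that one can verify this by hand: $(\sfP,\sfJ^+,\sfJ^-)\in\mathsf{CE}_{\infty}$ follows from Remark \ref{rem:li_solsup} together with the a priori bound $\kappa^{\pm}_{\nu}(\calT)\le M(1+\nu(\calT)^2)$ of Lemma \ref{lm:mf_est}, and using the disintegration of Remark \ref{rem:li_irep} with $\lambda^{\pm}_{t,\nu}=\kappa_\nu^{\pm}$ one obtains $\Ent(\sfJ^{\pm}_t|\Theta^{\infty}_{\sfP_t})=\Ent(\kappa^{\pm}_{\nu_t}|\theta_{\nu_t})$, $\calF_{\infty}(\sfP_t)=\calF_{MF}(\nu_t)$ and $\calD_{\infty}(\sfP_t)=\calD_{MF}(\nu_t)$, so that $\calI_{\infty}(\sfP,\sfJ^+,\sfJ^-)=\calI_{MF}(\nu,\kappa^+_\nu,\kappa^-_\nu)$, which vanishes by Theorem \ref{thm:equivlag}.

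For ``$\Rightarrow$'', I would assume $\calI_{\infty}(\sfP,\sfJ^+,\sfJ^-)=0$. Since $\calF_{\infty}(\sfP_0)=\calF_{MF}(\nu_0)<\infty$, Theorem \ref{thm:li_equiv} applies and gives $\sfP_t=(G_t)_{\#}\sfP_0=(G_t)_{\#}\delta_{\nu_0}=\delta_{G_t[\nu_0]}=\delta_{\nu_t}$, with $\nu_t:=G_t[\nu_0]$ the unique strong solution of \eqref{eq:mf1}, and $\sfJ^{\pm}_t=\sfP_t\kappa_\nu^{\pm}$ for a.e.\ $t\in[0,T]$; alternatively one argues through the superposition measure $Q$, whose marginal $(e_0)_{\#}Q=\delta_{\nu_0}$ forces $Q$ to be concentrated on curves starting at $\nu_0$, so that $0=\int\calI_{MF}(\nu,\lambda^+_\nu,\lambda^-_\nu)\,\dd Q$ together with the non-negativity of $\calI_{MF}$ (valid since $\calF_{MF}(\nu_0)<\infty$) gives $\calI_{MF}(\nu,\lambda^+_\nu,\lambda^-_\nu)=0$ for $Q$-a.e.\ $\nu$, and Theorem \ref{thm:equivlag} then identifies each such curve with the mean-field solution, whence $Q=\delta_{(\nu_t)_{t\in[0,T]}}$. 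There is no genuine obstacle here: the corollary is a specialisation of Theorem \ref{thm:li_equiv}, the only extra input being uniqueness of strong solutions to the mean-field equation (Lemma \ref{lm:mf_strongsol}), which turns the pushforward of a Dirac mass into a Dirac mass.
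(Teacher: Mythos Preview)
Your proposal is correct and matches the paper's approach: the corollary is stated immediately after Theorem \ref{thm:li_equiv} without a separate proof, since it is a direct specialisation using $(G_t)_{\#}\delta_{\nu_0}=\delta_{\nu_t}$. Your write-up is in fact more detailed than what the paper provides, and both the direct route through the characterisation in Theorem \ref{thm:li_equiv} and the alternative via the superposition measure $Q$ are valid.
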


\subsection{A priori estimates}

Due to the representation \eqref{eq:li_arep} of the dissipation potential in terms of mean-field objects, we can directly derive the following estimates from Lemma's \ref{lm:mf_est} and \ref{lm:mf_lift}. 
\begin{cory}\label{cory:li_est1}
Let $\sfP\in \calP(\Gamma),\sfJ^{\pm}\in \calM^+_{loc}(\Gamma\times \calT)$  be such that $\calR_{\infty}(\sfP,\sfJ^+,\sfJ^-)<\infty$, and set 
    \[ \lnet_{\nu}:=\lambda^{+}_{\nu}-\lambda^{-}_{\nu}. \]
Then the following estimates hold:
\begin{align*}
   \int_{\Gamma} M\phi\left(\frac{\lambda_{\nu}^{\pm}(\calT)}{M(1+\nu(X)^2)}\vee 1\right) \sfP(\dd \nu) &\leq \mathcal{R}_{\infty}(\sfP,\sfJ^+,\sfJ^-),\\
   \int_{\Gamma} M\Psi\left(\frac{\|\lnet_{\nu}\|_{TV}}{M(1+\nu(X))}\right) \sfP(\dd \nu) &\leq \mathcal{R}_{\infty}(\sfP,\sfJ^+,\sfJ^-).
\end{align*}
\end{cory}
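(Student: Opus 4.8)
The plan is to reduce the two inequalities to the pointwise-in-$\nu$ mean-field estimates already proved and then integrate against $\sfP$.

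First I would use the representation of $\calR_\infty$ in terms of mean-field quantities. Since $\calR_\infty(\sfP,\sfJ^+,\sfJ^-)<\infty$, both entropies $\Ent(\sfJ^\pm|\Theta^\infty_\sfP)$ are finite, hence $\sfJ^\pm\ll\Theta^\infty_\sfP$. Recalling from Remark~\ref{rem:li_irep} that $\Theta^\infty_\sfP(\dd\nu,\dd x)=\theta_\nu(\dd x)\,\sfP(\dd\nu)$, I would define the Borel family $\lambda^\pm_\nu(\dd x):=\frac{\dd\sfJ^\pm}{\dd\Theta^\infty_\sfP}(\nu,x)\,\theta_\nu(\dd x)$; this gives the disintegration $\sfJ^\pm(\dd\nu,\dd x)=\lambda^\pm_\nu(\dd x)\,\sfP(\dd\nu)$ and, by additivity of relative entropy along a disintegration, the identity~\eqref{eq:li_arep},
\[ \Ent(\sfJ^\pm|\Theta^\infty_\sfP)=\int_\Gamma \Ent(\lambda^\pm_\nu|\theta_\nu)\,\sfP(\dd\nu). \]
Adding the two contributions yields $\calR_\infty(\sfP,\sfJ^+,\sfJ^-)=\int_\Gamma\calR_{MF}(\nu,\lambda^+_\nu,\lambda^-_\nu)\,\sfP(\dd\nu)$; in particular $\nu\mapsto\calR_{MF}(\nu,\lambda^+_\nu,\lambda^-_\nu)$ is Borel and finite for $\sfP$-a.e.\ $\nu$.

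Next, for $\sfP$-a.e.\ $\nu$ I would invoke the pointwise bounds of Section~\ref{s:mf}: estimate~\eqref{eq:mfactionb1} (Lemma~\ref{lm:mf_est}, in the form with the monotone relaxation $\phi(\cdot\vee1)$) gives $M\,\phi\!\big(\lambda^\pm_\nu(\calT)/(M(1+\nu(\calT)^2))\vee1\big)\le\calR_{MF}(\nu,\lambda^+_\nu,\lambda^-_\nu)$, and estimate~\eqref{eq:mfactionb2} (Lemma~\ref{lm:mf_lift}) gives $M\,\Psi\!\big(\|\lnet_\nu\|_{TV}/(M(1+\nu(\calT)))\big)\le\calR_{MF}(\nu,\lambda^+_\nu,\lambda^-_\nu)$, where $\lnet_\nu=\lambda^+_\nu-\lambda^-_\nu$. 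Both left-hand sides are non-negative Borel functions of $\nu$ — here one needs measurability of $\nu\mapsto\lambda^\pm_\nu(\calT)$ and $\nu\mapsto\|\lnet_\nu\|_{TV}$, which follows from the disintegration and from writing the total variation as a countable supremum over a fixed dense sequence of test functions — so integrating each inequality against $\sfP$ over $\Gamma$ and using the identity from the first step produces exactly the two claimed estimates.

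There is no genuine obstacle here: the only point needing a little care is the first step — legitimizing the disintegration and the entropy identity~\eqref{eq:li_arep} from finiteness of $\calR_\infty$ — and this is precisely the content of Remark~\ref{rem:li_irep}. If one prefers a self-contained argument, the disintegration can be built directly by applying the disintegration theorem to the finite measures $(1+\nu(\calT)^2)^{-1}\sfJ^\pm$ with respect to their (finite) marginal $(1+\nu(\calT)^2)^{-1}\sfP$ on $\Gamma$, after which~\eqref{eq:li_arep} follows from the standard chain rule for relative entropy along a disintegration; once that is in place the corollary is immediate from the Fubini–Tonelli theorem applied to the mean-field inequalities.
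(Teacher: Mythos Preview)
Your proposal is correct and follows exactly the paper's own route: the paper states that the corollary is derived directly from Lemmas~\ref{lm:mf_est} and~\ref{lm:mf_lift} via the representation~\eqref{eq:li_arep} of Remark~\ref{rem:li_irep}, which is precisely what you do. Your added remarks on measurability and the disintegration are reasonable elaborations but do not depart from the paper's argument.
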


Moreover, the following equivalence follows straightforwardly from Lemma \ref{lm:fke_est}.
\begin{cory}\label{cory:li_est2}
For any $\sfP\in \calP(\Gamma),\sfJ^{\pm}\in \calM^+_{loc}(\Gamma\times \calT)$ 
\begin{equation*}
    \Ent(\sfJ^{\pm}|\Theta^{\infty}_{\sfP})=\int_{\Gamma\times \calT} \Upsilon\left(\frac{\dd \sfJ^{\pm}}{\dd \Sigma},\frac{\dd \vartheta_{\sfP}^+}{\dd \Sigma},\frac{\dd \vartheta_{\sfP}^-}{\dd \Sigma}\right)\dd  \Sigma,
\end{equation*}
for any common dominating measure $\Sigma$.
\end{cory}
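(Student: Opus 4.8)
The plan is to observe that Corollary~\ref{cory:li_est2} is the exact analogue of Lemma~\ref{lm:fke_est}(iii) in which the transported measure $\sfT^{n,\mp}_{\#}\vartheta^{\mp}_{\sfP}$ is replaced by $\vartheta^{\mp}_{\sfP}$ itself: by Definition~\ref{defi:liouv} the measure $\Theta^{\infty}_{\sfP}$ is the geometric mean of $\vartheta^{+}_{\sfP}$ and $\vartheta^{-}_{\sfP}$, and the proof of \eqref{eq:fke_entequiv2} uses nothing about the creation/annihilation maps beyond the fact that $\sfT^{n,\mp}_{\#}\vartheta^{\mp}_{\sfP}\in\calM^+_{loc}(\Gamma\times\calT)$. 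Hence the same argument applies verbatim; for completeness I would record the short computation. First I would fix a common dominating measure $\Sigma$ for $\sfJ^{\pm}$, $\vartheta^{+}_{\sfP}$, $\vartheta^{-}_{\sfP}$ (locally one may take $\Sigma=\sfJ^{+}+\sfJ^{-}+\vartheta^{+}_{\sfP}+\vartheta^{-}_{\sfP}$) and abbreviate $w:=\dd\sfJ^{\pm}/\dd\Sigma$, $u:=\dd\vartheta^{+}_{\sfP}/\dd\Sigma$, $v:=\dd\vartheta^{-}_{\sfP}/\dd\Sigma$, so that $\dd\Theta^{\infty}_{\sfP}/\dd\Sigma=\sqrt{uv}$ by definition of $\Theta^{\infty}_{\sfP}$.

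The core of the proof is the case $\sfJ^{\pm}\ll\Theta^{\infty}_{\sfP}$. Then $\sfJ^{\pm}$-a.e., and hence $\Sigma$-a.e.\ on $\{w>0\}$, one has $\sqrt{uv}>0$, i.e.\ $u,v>0$, and $\dd\sfJ^{\pm}/\dd\Theta^{\infty}_{\sfP}=w/\sqrt{uv}$ holds $\Theta^{\infty}_{\sfP}$-a.e. Splitting $\Gamma\times\calT$ into $\{w>0\}$, $\{w=0,\sqrt{uv}>0\}$ and $\{w=0,\sqrt{uv}=0\}$, using $\phi(0)=1$, and using that $\Theta^{\infty}_{\sfP}$ charges no subset of $\{\sqrt{uv}=0\}$ while $\int_{\{\sqrt{uv}=0\}}\sqrt{uv}\,\dd\Sigma=0$, one obtains
\[
\Ent(\sfJ^{\pm}|\Theta^{\infty}_{\sfP})=\int_{\Gamma\times\calT}\phi\!\left(\tfrac{\dd\sfJ^{\pm}}{\dd\Theta^{\infty}_{\sfP}}\right)\dd\Theta^{\infty}_{\sfP}
=\int_{\{w>0\}}\phi\!\left(\tfrac{w}{\sqrt{uv}}\right)\sqrt{uv}\,\dd\Sigma+\int_{\{w=0\}}\sqrt{uv}\,\dd\Sigma,
\]
which is precisely $\int_{\Gamma\times\calT}\Upsilon(w,u,v)\,\dd\Sigma$ by the three-case definition of $\Upsilon$: the first branch handles $\{w=0\}$ (where $\Upsilon(0,u,v)=\sqrt{uv}$ whether or not $u,v>0$), and the second branch handles $\{w>0\}$, which coincides up to a $\Sigma$-null set with $\{w>0,u>0,v>0\}$. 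For the converse case, if $\sfJ^{\pm}\not\ll\Theta^{\infty}_{\sfP}$ then $\Ent(\sfJ^{\pm}|\Theta^{\infty}_{\sfP})=+\infty$; on the other hand there is a set of positive $\Sigma$-measure on which $w>0$ while $\sqrt{uv}=0$ (so $u=0$ or $v=0$), whence $\Upsilon(w,u,v)=+\infty$ there and $\int_{\Gamma\times\calT}\Upsilon(w,u,v)\,\dd\Sigma=+\infty$ as well.

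Finally I would note that the value of $\int_{\Gamma\times\calT}\Upsilon(w,u,v)\,\dd\Sigma$ is independent of the choice of $\Sigma$: since $\Upsilon$ is positively $1$-homogeneous, jointly convex and lower semicontinuous, the functional is a well-defined perspective functional in the sense of \cite[Theorem~3.4.3]{Buttazzo1989}, exactly as already invoked for \eqref{eq:fke_entequiv2}. The argument presents essentially no obstacle; the only point that requires a little care is the bookkeeping on the $\Sigma$-null sets where one or more of $w,u,v$ vanish, which is precisely what the three branches of $\Upsilon$ and the absolute-continuity dichotomy for $\Ent$ are designed to encode.
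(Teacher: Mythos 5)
Your proposal is correct and follows essentially the same route as the paper, which simply notes that the identity "follows straightforwardly from Lemma \ref{lm:fke_est}" (part (iii)), since the argument there never uses the specific structure of $\sfT^{n,\mp}_{\#}\vartheta^{\mp}_{\sfP}$. Your explicit case-splitting on $\{w>0\}$, $\{w=0,\sqrt{uv}>0\}$, $\{w=0,\sqrt{uv}=0\}$ and the absolute-continuity dichotomy is exactly the bookkeeping the paper leaves implicit, and it is carried out correctly.
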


Finally, we consider the time-regularity for arbitrary curves, with respect to the following metric. 

\begin{defi}\label{defi:Wm} We define the following metric:
\begin{equation}\label{eq:c_uni_m}
    W(\sfP^1,\sfP^2):=\sup_{F\in \mathbb{F}} \left\{ \int_{\Gamma} F \,\dd (\sfP^1-\sfP^2) \right\},\qquad \sfP^1,\sfP^2\in \calP(\Gamma),
\end{equation}
where 
\begin{equation*}
    \mathbb{F}:=\left\{ F\in \Cyl_c(\Gamma)\, : \,  \left\|(1+\nu(\calT)^2) \mathrm{grad}_{\Gamma}F\right\|_{\infty}\leq 1 \right\}.
\end{equation*}
\end{defi}

Note that $W$ is narrowly lower semicontinuous. Moreover, for any $F\in \Cyl_c(\Gamma)$ automatically $\left\|(1+\nu(\calT)^2) \mathrm{grad}_{\Gamma}\,F\right\|_{\infty}<\infty$, and hence by a density argument it is straightforward to verify that convergence in $W$ implies vague convergence on $\Gamma$, and therefore narrow convergence on narrowly pre-compact subsets. 

\begin{remark}
Formally, one can represent $W$ as a transport distance, in the sense that 
\begin{equation*}
    W(\sfP^1,\sfP^2)=W_{d_{\Gamma}}(\sfP^1,\sfP^2),
\end{equation*}
where $W_{d_{\Gamma}}$ is the 1-Wasserstein metric on $\calP(\Gamma)$ induced by the metric $d_{\Gamma}$ over $\Gamma$ given by
\begin{equation*}
    d_{\Gamma}(\nu^1,\nu^2):=\inf_{(\nu_t)_{t\in [0,1]}} \left\{ \int_0^1 \frac{|\dot \nu_t|_{TV}}{1+\nu_t(\calT)^2}\, \dd t\, : \, \nu_0=\nu^0, \, \nu_1=\nu^2\right\}.
\end{equation*}
However, we do not require such representations in this current work.
\end{remark}

\begin{lm}\label{lm:liouv_timereg}
For any $(\sfP,\sfJ^+,\sfJ^-)\in \mathsf{CE}_{\infty}$ we have 
\begin{equation*}
    W(\sfP_s,\sfP_t)\leq 2 \int_s^t \int_{\Gamma\times \calT} (1+\nu(\calT)^2)^{-1} \dd (\sfJ_r^{+}+\sfJ^-_r)\, \dd r, \qquad \fA s,t\in [0,T]. 
\end{equation*}
\end{lm}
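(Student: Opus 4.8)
\textbf{Proof plan for Lemma \ref{lm:liouv_timereg}.}
The plan is to estimate $\left| \int_\Gamma F \,\dd(\sfP_t - \sfP_s) \right|$ uniformly over all test functions $F \in \mathbb{F}$, and then take the supremum. Fix $F \in \mathbb{F}$, so $F \in \Cyl_c(\Gamma)$ with $\|(1+\nu(\calT)^2)\,\grad F\|_\infty \le 1$. By condition (\ref{li:cond2}) in Definition \ref{defi:contli},
\begin{equation*}
    \int_\Gamma F\,\dd \sfP_t - \int_\Gamma F \,\dd \sfP_s = \int_s^t \int_{\Gamma\times\calT} \grad F \,(\dd \sfJ_r^+ - \dd \sfJ_r^-)\,\dd r,
\end{equation*}
so, bounding the integrand pointwise by $|\grad F|(\nu,x) \le (1+\nu(\calT)^2)^{-1}$ and using the triangle inequality together with the non-negativity of $\sfJ_r^\pm$,
\begin{equation*}
    \left| \int_\Gamma F\,\dd(\sfP_t - \sfP_s) \right| \le \int_s^t \int_{\Gamma\times\calT} (1+\nu(\calT)^2)^{-1}\,(\dd \sfJ_r^+ + \dd \sfJ_r^-)\,\dd r.
\end{equation*}
The right-hand side is finite by condition (\ref{li:cond1}) and, crucially, is independent of $F$, so taking the supremum over $F \in \mathbb{F}$ in the definition \eqref{eq:c_uni_m} of $W$ yields the claimed bound. (The factor $2$ in the statement is a harmless slack: the estimate above in fact gives it with constant $1$; alternatively one splits $\sfJ^+$ and $\sfJ^-$ and bounds each by the same integral.)

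There is essentially no obstacle here: the only point requiring a word of care is that $W$ is defined as a supremum over $F \in \mathbb{F} \subset \Cyl_c(\Gamma)$, which is exactly the class for which the continuity equation (\ref{li:cond2}) is postulated, so no density/approximation argument is needed to apply it. One should also note that every $F \in \mathbb{F}$ has $\|(1+\nu(\calT)^2)\grad F\|_\infty \le 1$ by definition, which is precisely what makes the pointwise bound $|\grad F(\nu,x)| \le (1+\nu(\calT)^2)^{-1}$ available. Finally, the supremum defining $W(\sfP_s,\sfP_t)$ is genuinely attained as a finite number because the common upper bound just derived does not depend on $F$; this also shows $t \mapsto \sfP_t$ is ($W$-)absolutely continuous, which will be used later when invoking the superposition principle.

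Thus the proof is a two-line estimate: plug the test function into the weak continuity equation, bound $|\grad F|$ by the prescribed weight, and take the supremum. I would write it in exactly that order.
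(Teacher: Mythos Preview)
Your proof is correct and follows essentially the same approach as the paper: apply the weak continuity equation to $F\in\mathbb{F}$, bound $|\grad F(\nu,x)|\le(1+\nu(\calT)^2)^{-1}$, and take the supremum. Your observation about the constant is also accurate—the paper's own proof yields constant $1$ as well, so the factor $2$ in the statement is indeed just slack.
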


\begin{proof}
This follows directly from the continuity equation, since for any $F\in \mathbb{F}$, $s,t\in [0,T]$: 
\begin{align*}
\left|\int_{\Gamma} F(\nu) \,\dd \sfP_t - \int_{\Gamma} F(\nu) \,\dd \sfP_s \right|&\leq  \int_s^t \int_{\Gamma\times \calT} \left|(1+\nu(\calT)^2) \mathrm{grad}_{\Gamma}F \right| (1+\nu(\calT)^2)^{-1}\,\dd (\sfJ_r^++\dd \sfJ_r^-) \, \dd r\\
&\leq \int_s^t \int_{\Gamma\times \calT} (1+\nu(\calT)^2)^{-1}\,\dd (\sfJ_r^++\dd \sfJ_r^-) \, \dd r.
\end{align*}
Taking the supremum over all $F\in \mathbb{F}$ we obtain the desired statement. 
\end{proof}

\subsection{Weak solutions}\label{ss:li_sol} Here we briefly consider existence and representations for solutions to the Liouville equation. 
\begin{lm}
For any $\bar \sfP_t\in \calP(\Gamma)$ there exists a solution $\sfP$ to \eqref{eq:liouv2} with initial data $\bar \sfP$. 
\end{lm}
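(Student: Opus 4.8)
The plan is to construct the solution directly via the flow of the mean-field equation. By Lemma \ref{lm:mf_strongsol}, for every $\bar\nu\in\Gamma$ there is a unique strong solution to \eqref{eq:mf1}, so the map $G_t:\Gamma\to\Gamma$, $G_t[\bar\nu]:=\nu_t$, is well-defined; moreover the uniform-in-time mass bound $\nu_t(\calT)\le e^{\|c\|_\infty\gamma(\calT)T}\bar\nu(\calT)$ from the proof of that lemma, together with the Lipschitz dependence of $\kappa^\pm_\nu$ on $\nu$ in $(\Gamma,\|\cdot\|_{TV})$ on mass-bounded sets, gives continuous dependence of $G_t[\bar\nu]$ on $\bar\nu$. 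First I would check that $(t,\nu)\mapsto G_t[\nu]$ is jointly continuous (indeed narrowly continuous in $\nu$ on narrowly compact sets by the TV-estimate plus dominated convergence), so that the push-forward $\sfP_t:=(G_t)_\#\bar\sfP$ is a well-defined narrowly continuous curve in $\calP(\Gamma)$: narrow continuity in $t$ follows from $t\mapsto G_t[\nu]$ being TV-continuous uniformly on mass-bounded sets and the fact that $\bar\sfP$ concentrates its mass, up to $\eps$, on such a set (Markov's inequality applied to $\nu\mapsto\nu(\calT)$, which is $\bar\sfP$-integrable is not automatic — but one can truncate and use that $\bar\sfP$ is tight and $\nu(\calT)$ is lower semicontinuous; alternatively restrict first to $\bar\sfP$ with bounded support and pass to the limit).

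Next I would verify the weak formulation \eqref{eq:liouv3}. Fix $F\in\Cyl_c(\Gamma)$, say $F(\nu)=g(\langle 1,\nu\rangle,\langle f_1,\nu\rangle,\dots,\langle f_m,\nu\rangle)$. For each fixed $\bar\nu$, the curve $t\mapsto G_t[\bar\nu]$ is TV-absolutely continuous with $\partial_t G_t[\bar\nu]=V[G_t[\bar\nu]]$, so by the chain rule for the smooth function $g$ composed with the (absolutely continuous, real-valued) curves $t\mapsto\langle f_i,G_t[\bar\nu]\rangle$ one gets
\begin{equation*}
  F(G_t[\bar\nu])-F(G_s[\bar\nu])=\int_s^t (\nabla g)(\dots)\cdot\big(\langle 1,V[G_r[\bar\nu]]\rangle,\dots,\langle f_m,V[G_r[\bar\nu]]\rangle\big)\,\dd r=\int_s^t\int_\calT (\grad F)(G_r[\bar\nu],x)\,V[G_r[\bar\nu]](\dd x)\,\dd r.
\end{equation*}
Then I would integrate this identity against $\bar\sfP(\dd\bar\nu)$ and apply Fubini, which is justified because $\grad F$ is bounded with compact support in the $\nu$-variable and $V[\nu](\calT)\le\kappa^+_\nu(\calT)+\kappa^-_\nu(\calT)\le 2M(1+\nu(\calT)^2)$ is bounded on that support; after changing variables $\nu=G_r[\bar\nu]$ so that $(G_r)_\#\bar\sfP=\sfP_r$, the right-hand side becomes exactly $\int_s^t\int_{\Gamma\times\calT}(\grad F)(\nu,x)V[\nu](\dd x)\sfP_r(\dd\nu)\,\dd r$, which is \eqref{eq:liouv3}.

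The main obstacle — really the only non-routine point — is controlling the growth of the mass $\nu(\calT)$: the vector field $V[\nu]$ and the gradient interaction are only locally bounded in $\nu$, so one must ensure that all the push-forward measures $\sfP_t$ do not "escape to infinity" in mass and that the Fubini/dominated-convergence steps are legitimate. This is handled by the a priori bound $G_t[\bar\nu](\calT)\le e^{\|c\|_\infty\gamma(\calT)t}\bar\nu(\calT)$: it shows that if $\bar\sfP$ is supported on $\{\nu(\calT)\le R\}$ then every $\sfP_t$ is supported on $\{\nu(\calT)\le e^{\|c\|_\infty\gamma(\calT)T}R\}$, a narrowly compact set (since $\calT$ is compact, bounded-mass sets in $\Gamma$ are narrowly compact), on which $F$, $\grad F$ and $V$ behave well. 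For general $\bar\sfP\in\calP(\Gamma)$ one writes $\bar\sfP=\lim_{R\to\infty}\bar\sfP|_{\{\nu(\calT)\le R\}}/\bar\sfP(\{\nu(\calT)\le R\})$, builds the solutions $\sfP^R_t$ for the truncations, and passes to the limit using narrow continuity of $G_t$ and a diagonal/tightness argument; the weak formulation \eqref{eq:liouv3} passes to the limit since each term is continuous under narrow convergence on the relevant mass-bounded sets. This also proves the claim of Remark \ref{rem:li_solsup} that $\sfP_t=(G_t)_\#\bar\sfP$, and in particular that $\sfP_t=\delta_{\nu_t}$ when $\bar\sfP=\delta_{\nu_0}$.
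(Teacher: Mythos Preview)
Your proposal is correct and follows essentially the same route as the paper: define $\sfP_t:=(G_t)_\#\bar\sfP$ via the mean-field flow, use the chain rule for $F\circ G_t$ with $F\in\Cyl_c(\Gamma)$, and integrate against $\bar\sfP$. The truncation-in-mass step you add is in fact unnecessary, since $F$ and $\grad F$ are bounded with compact support in $\nu$ (so the Fubini integrand is uniformly bounded) and narrow continuity of $t\mapsto\sfP_t$ follows directly from dominated convergence with dominating constant $\|F\|_\infty$; but this only means you were more careful than needed.
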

\begin{proof}
Recall the flow $G:[0,T]\times \Gamma\to \Gamma$ determined by
\begin{equation*}
   \partial_t G_t[\nu]=V[G_t[\nu]],
\end{equation*}
Set $\sfP_t:=(G_t)_{\#} \bar \sfP$. We will show that $\sfP_t$ is weak solution in the sense of \eqref{eq:liouv3}. Namely, consider any $F\in \Cyl_c(\Gamma)$. Due the strong regularity of solutions to the mean-field equation it is straightforward to show that for all $s,t\in [0,T]$ we have the chain rule 
\begin{equation*}
    F\circ G_t(\nu)-F(\nu)=\int_s^t (\mathrm{grad}_{\Gamma} F)(G_r \circ \nu,x) \, \dd V[G_r\circ \nu] \, \dd r,
\end{equation*}
and hence 
\begin{align*}
    \int_{\Gamma} F \dd \sfP_t-\int_{\Gamma} F \dd \sfP_s &= \left(\int_s^t (\mathrm{grad}_{\Gamma} F)(G_r \circ \nu,x) \, V[G_r[\nu]](\dd x)\, \dd t\right) \bar \sfP(\dd \nu)\\
    &=\int_s^t \int_{\Gamma\times \calT}(\mathrm{grad}_{\Gamma} F)(\nu,x) V[\nu](\dd x) \sfP_r(\dd \nu) \, \dd t,
\end{align*}
and thus $\sfP_t$ is indeed a weak solution. 
\end{proof}

\subsection{Superposition principle}\label{ss:super}

One of our main tools in proving the chain rule, uniqueness of solutions, and the variational representation of Theorem \ref{thm:li_equiv} is the superposition principle. It guarantees that we can represent the action as an expectation of the mean-field action under some measure over curves in $\mathscr{CE}$, and allows us to use the theory on mean-field dynamics of Section \ref{s:mf}. In this section, we will make this notion precise. 

\begin{thm}\label{thm:lsuper}
Let $(\sfP,\sfJ^+,\sfJ^-)\in \mathsf{CE}_{\infty}$ with
\begin{equation*}
    \int_0^T \mathcal{R}_{\infty}(\sfP_t,\sfJ^+_t,\sfJ^-_t) \, \dd t < \infty.
\end{equation*}
Then there exists a Borel probability measure $Q\in\calP(C([0,T];\Gamma))$ satisfying $(e_t)_{\#}Q=\sfP_t$ for all $t\in [0,T]$, and concentrated on curves $\nu\in AC([0,T];(\Gamma,\|\cdot\|_{TV}))$, for which $(\nu,\lambda^+_{\nu},\lambda^-_{\nu}) \in \mathscr{CE}$.
Moreover, 
\begin{equation}\label{eq:liouvsequiv1}
     \int_0^T  \mathcal{R}_{\infty}(\sfP_t,\sfJ^+_t,\sfJ^-_t)\, \dd t = \int_{C([0,T];\Gamma)} \left( \int_0^T \calR_{MF}\left(\nu_t,\lambda^+_{t,\nu},\lambda^-_{t,\nu}\right) \, \dd t \right)Q(\dd \nu).
\end{equation}

Conversely, if there is a Borel probability measure $Q\in\calP(C([0,T];\Gamma))$ concentrated on curves $\nu\in AC([0,T];(\Gamma,\|\cdot\|_{TV}))$ and a Borel family $\{\lambda^{\pm}_{t,\nu}\}$, for which $(\nu,\lambda^+_{\nu},\lambda^-_{\nu})\in \mathscr{CE}$, with
\begin{equation*}
    \int_{C([0,T];\Gamma)} \left( \int_0^T \calR_{MF}\left(\nu_t,\lambda^+_{t,\nu},\lambda^-_{t,\nu}\right) \, \dd t \right)Q(\dd \nu)<\infty,
\end{equation*}
then $(\sfP,\sfJ^+,\sfJ^-)\in \mathscr{CE}$ for $\sfP_t:=(e_t)_{\#}Q$, $\sfJ_t^{\pm}:=\sfP_t \lambda^{\pm}_{t,\nu}$, and \eqref{eq:liouvsequiv1} holds as well.
\end{thm}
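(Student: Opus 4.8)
The plan is to reduce the statement to the classical superposition principle for continuity equations on Euclidean/metric spaces, adapting the version in \cite{ambrosio2008} (and its metric-measure refinement used in \cite{Erbar2016,Erbar2016a}) to our setting where the ``velocity field'' is the measure-valued vector field $\lambda^+_{t,\nu}-\lambda^-_{t,\nu}$ on the infinite-dimensional state space $\Gamma=\calM^+(\calT)$. The first step is to record the \emph{a priori estimates}: under $\int_0^T \calR_\infty(\sfP_t,\sfJ^\pm_t)\,\dd t<\infty$, Corollary \ref{cory:li_est1} gives that $\int_0^T \int_\Gamma \Psi\!\big(\|\lnet_\nu\|_{TV}/(M(1+\nu(\calT)))\big)\sfP_t(\dd\nu)\,\dd t<\infty$, so the net flux $\lnet_{t,\nu}=\lambda^+_{t,\nu}-\lambda^-_{t,\nu}$ has $L^1$-in-time total variation for $\sfP_t$-a.e.\ $\nu$; combined with the mass bound coming from Lemma \ref{lm:mf_est}, this yields uniform control of $\nu(\calT)$ along the dynamics and hence narrow precompactness. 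This is exactly the $\Psi$-moment bound that plays the role of the finite-energy hypothesis in Ambrosio's theorem.

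The second step is to embed $\Gamma$ into a countable product of real lines via the maps $\nu\mapsto \langle f_i,\nu\rangle$ for a countable dense family $\{f_i\}\subset C_b(\calT)$ (as in Lemma \ref{lm:contweakf1}), so that testing $\mathsf{CE}_\infty$ against the cylinder functions $F(\nu)=g(\langle f_{i_1},\nu\rangle,\dots,\langle f_{i_k},\nu\rangle)$ becomes an ordinary continuity equation for the push-forward curve $(\mathrm{pr}_k)_\#\sfP_t$ on $\mathbb{R}^k$, driven by the (measurable, $L^1$-in-time) vector field $b^k_t(z)=\mathbb{E}_{\sfP_t}[(\langle f_{i_1},\lnet_{t,\nu}\rangle,\dots)\mid \mathrm{pr}_k=z]$. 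Applying the finite-dimensional superposition principle on each $\mathbb{R}^k$ and passing to the projective limit (Kolmogorov extension / a tightness-and-consistency argument, using the $\Psi$-bound for tightness in $C([0,T];\Gamma)$ via an Arzelà–Ascoli argument against the metric $W$ of Definition \ref{defi:Wm} and Lemma \ref{lm:liouv_timereg}) produces $Q\in\calP(C([0,T];\Gamma))$ with $(e_t)_\# Q=\sfP_t$ and concentrated on curves solving, in the weak-against-$f_i$ sense, $\partial_t\nu_t=\lnet_{t,\nu}$ where along $Q$-a.e.\ curve the disintegrated fluxes $\lambda^\pm_{t,\nu}$ are assigned. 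One then invokes Lemma \ref{lm:contweakf1} to upgrade the $f_i$-weak formulation to $(\nu,\lambda^+_\nu,\lambda^-_\nu)\in\mathscr{CE}$ (including TV-absolute continuity of $t\mapsto\nu_t$), which requires checking that $\int_0^T\calR_{MF}(\nu_t,\lambda^+_{t,\nu},\lambda^-_{t,\nu})\,\dd t<\infty$ for $Q$-a.e.\ $\nu$ — this is where we use the identity \eqref{eq:li_arep}, $\int_0^T\calR_\infty(\sfP_t,\sfJ^\pm_t)\,\dd t=\int\!\int_0^T\calR_{MF}(\nu_t,\lambda^\pm_{t,\nu})\,\dd t\,\dd Q$, together with Fubini to deduce finiteness $Q$-a.e.

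The third step, the converse, is the easy direction: given $Q$ and the Borel family $\{\lambda^\pm_{t,\nu}\}$ with $\int\!\int_0^T\calR_{MF}\,\dd t\,\dd Q<\infty$, set $\sfP_t:=(e_t)_\#Q$ and $\sfJ^\pm_t:=\sfP_t\,\lambda^\pm_{t,\nu}$; then testing the mean-field continuity equation for each curve against $\mathrm{grad}_\Gamma F$ for $F\in\Cyl_c(\Gamma)$ and integrating $\dd Q$ — justified by the integrability bound and dominated convergence, using the growth control of $\mathrm{grad}_\Gamma F$ from $\mathbb{F}$ and Corollary \ref{cory:li_est1} — yields $\mathsf{CE}_\infty$, and \eqref{eq:liouvsequiv1} is just Fubini plus \eqref{eq:li_arep}. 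The main obstacle, and the part that needs genuine care rather than citation, is the forward direction's passage to the limit: proving tightness of the finite-dimensional superposition measures in $C([0,T];\Gamma)$ uniformly in $k$ (so that the projective family actually glues to a Borel probability measure on path space rather than merely a consistent family of finite-dimensional marginals), and verifying that the limiting measure is genuinely concentrated on \emph{absolutely continuous} curves with the \emph{prescribed} disintegrated fluxes $\lambda^\pm_{t,\nu}$ and not merely on curves solving the net-flux equation — i.e.\ that the one-way decomposition survives the limit. The $\Psi$- and $\phi$-superlinear bounds of Corollary \ref{cory:li_est1} are precisely what make both of these work: superlinearity gives equi-integrability of the net fluxes along curves (hence no concentration/loss of mass), and the separate $\phi$-bound on $\lambda^\pm_\nu(\calT)$ controls the one-way parts. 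A full write-up would follow the structure of the superposition proofs in \cite{ambrosio2008} and \cite{Erbar2016a}, with these two estimates substituted at the points where those references use their energy bounds; the details are deferred to Appendix \ref{s:super}.
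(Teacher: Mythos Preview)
Your proposal is correct and follows essentially the same approach as the paper: embed $\Gamma$ into $\R^{\N}$ via $\nu\mapsto(\langle f_i,\nu\rangle)_i$, apply finite-dimensional superposition with the linear-growth hypothesis supplied by the $\Psi$-bound of Corollary~\ref{cory:li_est1}, pass to the limit by tightness, and invoke Lemma~\ref{lm:contweakf1}; the paper packages the $\R^{\N}$ step as a standalone result (Theorem~\ref{thm_super}) proved exactly along your lines. One clarification: your concern that ``the one-way decomposition survives the limit'' is not actually an obstacle, since the $\lambda^{\pm}_{t,\nu}$ are fixed from the outset via the disintegration $\sfJ^{\pm}_t=\sfP_t\,\lambda^{\pm}_{t,\nu}$ (Remark~\ref{rem:li_irep}) and are merely evaluated along the curves produced by the superposition---the limiting construction only needs to recover solutions of the \emph{net}-flux equation, after which Lemma~\ref{lm:contweakf1} does the upgrade.
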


The inspiration for using a superposition principle stems from similar approaches in \cite{Erbar2016}, \cite{Erbar2016a}, where it is applied to transport equations lifted from the Boltzmann-equation or mean-field jump dynamics respectively, and the main ingredient is the abstract superposition principle over $\R^\N$ of \cite{Ambrosio2014}. However, these results are not directly applicable to our setting, since the mass of $\nu_t(\calT)$ for a mean-field curve is not fixed, and $V[\nu](\calT)$ is finite but unbounded over $\Gamma$. We remedy this by combining two known superposition principles: on the one hand, the abstract superposition principle over $\R^\N$ of \cite{Ambrosio2014}, and on the other hand one for finite-dimensional vector fields with linear growth, found in \cite{ambrosio2008}. Our result is stated in Theorem \ref{thm_super}.

\begin{proof}
Consider any $(\sfP,\sfJ^+,\sfJ^-)\in \mathsf{CE}_{\infty}$ with finite $\mathcal{R}_{\infty}$, and for a.e. $t\in [0,T]$ set $\lnet_{t,\nu}:=\lambda_t^+-\lambda^-_t$. By Corollary \ref{cory:li_est1},
\begin{equation}\label{eq:lsuper1}
   \int_{\Gamma} M\Psi\left(\frac{\|\lnet_{t,\nu}\|_{TV}}{M(1+\nu(\calT))}\right) \sfP_t(\dd \nu) \leq \mathcal{R}_{\infty}(\sfP_t,\sfJ^+_t,J_t^-).
\end{equation}

Now, take a countable and dense set $f_1,f_2,\ldots\in C_b(\calT)$, with $f_1=1$, $\|f_i\|_{\infty}\leq 1$, $i\ge 2$, and define $\mathbb{T}:\Gamma\to \R^{\N}$
\begin{equation*}
    \mathbb{T}(\nu):=\left(\int_{\calT} f_1 \,\dd \nu, \int_{\calT} f_2 \,\dd \nu \ldots\right).
\end{equation*}
Note that $\mathbb{T}(\nu)$ is injective, continuous when $\Gamma$ is equipped with the narrow topology and $\R^{\N}$ with product topology, and is an isometry between $(\Gamma,\|\cdot\|_{TV})$ and $(\mathbb{T}(\Gamma),|\cdot|_{\infty})$, where $|\cdot|_{\infty}$ is the uniform norm over $\R^N$. We set $\sigma_t:=\mathbb{T}_{\#}\sfP_t \in \calP(\R^{\N})$, and for a.e. $t\in [0,T]$ define the vector field $\mathbf{W}_t:\R^n\to \R^n$ via its components
\begin{equation*}
   W_i(t,z) :=\int_{X}  f_i(x) \,\lambda_{t,\mathbb{T}^{-1}(z)}(\dd x).
 \end{equation*}
Note that the support of $\mathbf{W}_t$ is in  $\mathbb{T}(\Gamma)$, that $|\mathbf{W}_t(z)|_{\infty}\leq \|\lambda_{t,\mathbb{T}^{-1}(z)}\|_{TV}$ and $(\mathbb{T}(\nu))_1=\nu(\calT)$. Therefore,  
by \eqref{eq:lsuper1} we have the estimate
\begin{equation*}
   \int_0^T \int_{\R^{\N}} M \Psi\left(\frac{|\mathbf{W}_t(z)|_{\infty}}{M(1+|z_1|)}\right)\sigma(\dd z)\, \dd t \leq \int_0^T \mathcal{R}_{\infty}(\sfP_t,\sfJ^+_t,\sfJ^-_t)\, \dd t < \infty. 
\end{equation*}

Moreover, $(\sigma,\mathbf{W})$ satisfy the continuity equation, in the sense that for all $g\in\Cyl_c(\R^\N)$, we have
\begin{equation*}
    \int_{\R^\N} g\, \dd \sigma_t - \int_{\R^\N} g\, \dd \sigma_s = \int_s^t\int_{\R^\N} (\mathbf{W}_r,\nabla g)\, \dd \sigma_r\,\dd r\qquad\text{for every $s,t \in [0,T]$.}
\end{equation*}
Indeed, take any $g\in\Cyl_c(\R^\N)$ and define $F:=g\circ \mathbb{T}$, i.e.\ 
\begin{equation*}
    F(\nu)=g\left(\langle f_1,\nu\rangle,\dots,\langle f_m,\nu\rangle \right).
\end{equation*}
Note that $F\in \Cyl_c(\Gamma)$, and therefore since $(\sfP,\sfJ^+,\sfJ^-)\in \mathsf{CE}_{\infty}$, 
\begin{align*}
   \int_{\R^{\N}} g(z)\,  \sigma_t(\dd z)-\int_{\R^{\N}} g(z)\,  \sigma_s(\dd z) &=\int_{\Gamma} F \dd \sfP_t-\int_{\Gamma} F \dd \sfP_s \\
     &= \int_s^t \int_{\Gamma\times \calT} (\mathrm{grad}_{\Gamma}\,F)(\nu,x)(\sfJ^+_r-\sfJ^-_r)(\dd \nu,\dd x)\, \dd r\\
     &=\int_s^t  \int_{\Gamma} \sum_{i} (\partial_i g)(\mathbb{T}(\nu)) \left(\int_{\calT} f_i(x) \lambda_{r,\nu}(\dd x)\right) \sfP_r(\dd \nu)\, \dd r\\
    &=\int_s^t \int_{\R^{\N}} \nabla g(z) \cdot \mathbf{W}_r(z)\,   \sigma_r(\dd z) \, \dd r. 
\end{align*}

Thus, we are now in a position to apply Theorem \ref{thm_super}, and obtain a Borel probability measure $\Omega$ over $C([0,T];\R^{\N})$ satisfying $(e_t)_{\#} \Omega=\sigma_t$ for all $t\in[0,T]$, and which is concentrated on the family of curves $z\in AC([0,T];\R^{\N})$ that are solutions to the ODE
\[
    \dot z_t=\mathbf{W}_t(z_t)\qquad \text{for almost every $t\in[0,T]$.}
\]
Note that since $\mathrm{supp}(\sigma)\subseteq \mathbb{T}(\Gamma)$, we have $\mathrm{supp}(\Omega)\subseteq AC([0,T];\mathbb{T}(\Gamma))$. Now let
$\tilde{\mathbb{T}}:C([0,T];\Gamma)\to C([0,T];\R^{\N})$ be defined via $(\tilde  {\mathbb{T}}(\nu))_t:=\mathbb{T}(\nu_t)$. Similar as for $\mathbb{T}$, $\tilde{\mathbb{T}}$ is injective and an isometry when seen as a map $\tilde{\mathbb{T}}:AC([0,T];(\Gamma,\|\cdot\|_{TV}))\to AC([0,T];(\R^{\N},|\cdot|_{\infty}))$. Therefore, it is clear the measure $Q:=\tilde{\mathbb{T}}^{-1}_{\#} \Omega \in \calP(C([0,T];\Gamma))$ is well defined, satisfies $\sfP_t=(e_t)_{\#}Q$ and is concentrated on the family of curves $\nu\in AC([0,T];(\Gamma,\|\cdot\|_{TV}))$, for which 
\[
    \int_{\calT} f_i \, \dd \nu_t - \int_{\calT} f_i \, \dd \nu_s = \int_s^t f_i \, \dd (\lambda^+_{r,\nu}-\lambda^+_{r,\nu}) \, \dd r \qquad \mbox{for all $s,t\in [0,T]$, $i\in \N$}.
\]
Moreover, 
\begin{align*}
     \int_{C([0,T];\Gamma)} \left( \int_0^T \calR_{MF}\left(\nu_t,\lambda^+_{t,\nu},\lambda^-_{t,\nu}\right) \, \dd t \right)Q(\dd \nu)&=\int_0^T \int_{\Gamma} \calR_{MF}(\nu,\lambda^+_{t,\nu},\lambda^-_{t,\nu})\, \sfP_t(\dd \nu) \, \dd t\\
     &=\int_0^T \mathcal{R}_{\infty}(\sfP_t,\sfJ^+_t,\sfJ^-_t) \, \dd t,
\end{align*}
where the latter is finite by assumption, and hence, by Lemma \ref{lm:contweakf1}, we deduce that $(\nu,\lambda_{\nu}^{+},\lambda_{\nu}^{+})\in \mathscr{CE}$ $Q$-almost everywhere.

The reverse statement can be derived straightforwardly and we omit the proof. 
\end{proof}

\subsection{Variational characterization}

Having all the ingredients at hand, we can now prove the variational characterization for the Liouville equation, namely Theorem~\ref{thm:li_equiv}.

\begin{proof}[Proof of Theorem~\ref{thm:li_equiv}]
Suppose $(\sfP,\sfJ^+,\sfJ^-)$ is such that $\calF_{\infty}(\sfP_0)<\infty$ and $\mathcal{I}_{\infty}<\infty$. Since $\calF_{\infty}$ is non-negative we have in particular that 
\begin{equation*}
    \int_0^T \calR_{\infty}(\sfP_t,J_t^+,J_t^-)\, \dd t <\infty, \quad \calF_{\infty}(\sfP_T)<\infty, \quad  \int_0^T \calD_{\infty}(\sfP_t)\, \dd t <\infty.
\end{equation*}
Hence, from the superposition principle of Theorem \ref{thm:lsuper}, we obtain a Borel probability measure $Q$ over $C([0,T];\Gamma)$ satisfying $(e_t)_{\#}Q=\sfP_t$ for all $t\in [0,T]$ and concentrated on the family of curves $\nu\in AC([0,T];(\Gamma,\|\cdot\|_{TV}))$ for which $(\nu,\lambda^+_{\nu},\lambda^-_{\nu})\in \mathscr{CE}$. Moreover,
\begin{equation*}
      \int_{C([0,T];\Gamma)} \left( \int_0^T \calR_{MF}\left(\nu_t,\lambda^+_{t,\nu},\lambda^-_{t,\nu}\right) \dd t \right)Q(\dd \nu)=\int_0^T  \calR_{\infty}(\sfP_t,\sfJ^+_t,\sfJ^-_t) \, \dd t < \infty. 
\end{equation*}
Since $\calF_{\infty}(\sfP_0)<\infty$ we have that for $Q$-a.e.\ curve $\calF_{MF}(\nu_0)<\infty$. Moreover, since both $\calF_{\infty}$ and $\calD_{\infty}$ are simply their mean-field counterparts integrated by $\sfP$, we find
\begin{align*}
 &\int_{C([0,T];\Gamma)} \calI_{MF}\left(\nu,\lambda^+_{\nu},\lambda_{\nu}^-\right) Q(\dd \nu) \\
 &\, =\int_{C([0,T];\Gamma)} \left(\int_0^T \calR_{MF}\left(\nu_t,\lambda^+_{t,\nu},\lambda^-_{t,\nu}\right) \dd t+\calF_{MF}(\nu_t)-\calF_{MF}(\nu_0)+\int_0^T \calD_{MF}(\nu_t) \, \dd t\right)Q(\dd \nu) \\
 &\, =\int_{C([0,T];\Gamma)} \left(\int_0^T \calR_{MF}\left(\nu_t,\lambda^+_{t,\nu},\lambda^-_{t,\nu}\right) \dd t \right)Q(\dd \nu) + \calF_{\infty}(\sfP_T)-\calF_{\infty}(\sfP_0)+\int_0^T \mathcal{D}_{\infty}(\sfP_t)\, \dd t\\
 &\, = \mathcal{I}_{\infty}(\sfP,\sfJ^+,\sfJ^-),
\end{align*}
where the second equality follows from Fubini-Tonelli and the fact that $\calR_{MF},\calD_{MF},\calF_{MF}\geq 0$ and $\calF_{\infty}(\sfP_0)<\infty$. In particular, by the non-negativeness of $\calI_{MF}$ it holds that $\mathcal{I}_{\infty}\geq 0$. 

Moverover, since $\calI_{MF}=0$ if and only if $\nu$ is the unique strong solution for an initial datum $\bar \nu$ with $\Ent(\bar \nu|\gamma)<\infty$, we derive by non-negativeness of $\calI_{MF}$ that $\mathcal{I}_{\infty}=0$ if and only if $Q$ is concentrated on the unique solutions of the mean-field equation. In this case $Q$ is characterized by
\begin{equation*}
    Q=\tilde G_{\#} \sfP_0, 
\end{equation*}
where $G_t:\Gamma\to\Gamma$ defined by \eqref{eq:li_sol} maps any $\bar \nu$ to the unique solution to \eqref{eq:mf1} for initial condition $\nu_0=\bar \nu$ and $\tilde G:\Gamma\to C([0,T],\Gamma)$ is defined via $(\tilde G(\nu_0))_t:=G_t(\nu_0)$. Note that $\sfP_t=(G_t)_{\#}\sfP_0$, $\sfJ_t^{\pm}=\sfP_t \kappa_{\nu}^{\pm}$ for almost every $t\in[0,T]$, and in particular $\sfP_t$ is a weak solution to \eqref{eq:liouv2}.

Vice versa, if $\sfP$ is a weak solution such that $\sfP_t=(G_{t})_{\#} \sfP_0$, we simply set 
\begin{equation*}
    Q:=\tilde G_{\#} \sfP_0, \qquad \lambda_{\nu}^{\pm}:=\kappa_{\nu}^{\pm} \quad\fA t\in [0,T].
\end{equation*}
Since $\calF_{\infty}(\sfP_0)<\infty$, we still have $\Ent(\nu|\gamma)<\infty$ for $\sfP_0$-almost every $\nu$, and we repeat the same calculations to conclude that indeed $\calI_{\infty}=0$.  
\end{proof}

 \section{EDP convergence}\label{s:edp}

In the previous sections, we have established variational formulations for the solution to the forward Kolmogorov equation of the interacting particle system, for the solutions to the mean-field equation, and the corresponding Liouville equation. Moreover, for the latter, we have shown how the corresponding EDP-functional can be represented as the expectation over a functional of mean-field paths. 

We are now in a position to rigorously discuss the convergence of the forward Kolmogorov equation to the Liouville equation, in terms of EDP-convergence of their gradient structures.
Namely, let us denote a sequence of curves  $(\sfP^n,\sfJ^{n,+},\sfJ^{n,-})\in \mathsf{CE}_n$ converging to a curve $(\sfP,\sfJ^+,\sfJ^-)$, denoted by $\lim_{n\to\infty}(\sfP^n,\sfJ^{n,+},\sfJ^{n,-}) = (\sfP,\sfJ^+,\sfJ^-)$, if the following holds:
\begin{itemize}
    \item $\sfP_t^n \to \sfP_t$ narrowly for all $t\in [0,T]$,
    \item $\sfJ_t^{n,\pm}(\dd \nu,\dd x) \, \dd t \to \sfJ_t^{\pm}(\dd \nu,\dd x) \, \dd t$ vaguely on $\calM^+_{loc}(\Gamma\times \calT\times [0,T])$.
\end{itemize}

\begin{thm}\label{thm:main_conv1}
Suppose that a sequence $(\sfP^n,\sfJ^{n,+},\sfJ^{n,-})\in \mathsf{CE}_n$, $n\ge 1$, is such that 
\begin{equation*}
    \limsup_{n\to \infty} \calF_n(\sfP_0^n)<\infty, \qquad  \limsup_{n\to \infty} \calI_n(\sfP^n,\sfJ^{n,+},\sfJ^{n,-})<\infty,
\end{equation*}
then the family of curves $\{(\sfP_t)_{t\in [0,T]}\}_{n}$ is W-equicontinuous \eqref{eq:c_uni_m}, and there exists a (not relabelled)  subsequence $(\sfP^{n},\sfJ^{n,+},\sfJ^{n,-})$ and a $(\sfP,\sfJ^+,\sfJ^-)\in \mathsf{CE}_{\infty}$ such that
\[ \lim_{n\to \infty} (\sfP^{n},\sfJ^{n,+},\sfJ^{n,-}) = (\sfP,\sfJ^+,\sfJ^-), \]
Moreover, for any such converging sequence
\begin{equation}\label{main_conv1_linf}
    \begin{aligned}
    \liminf_{n\to \infty} \calF_{n}(\sfP^{n}_t) &\geq \calF_{\infty}(\sfP_t),\qquad \fA t\in [0,T],\\
    \liminf_{n\to \infty} \int_0^T \calR_{n}(\sfP_t^{n},\sfJ^{n,+}_t,\sfJ^{n,-}_t)\, \dd t &\geq \int_0^T \calR_{\infty}(\sfP_t,\sfJ^{+}_t,\sfJ^{-}_t)\, \dd t, \\
     \liminf_{n\to \infty} \int_0^T \calD_{n}(\sfP_t^{n}) \, \dd t &\geq \int_0^T \calD_{\infty}(\sfP_t^{n})\, \dd t.
\end{aligned}
\end{equation}
\end{thm}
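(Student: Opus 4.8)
The plan is to establish, in order: (1) compactness of the curves $\sfP^n$, (2) compactness of the fluxes $\sfJ^{n,\pm}$, (3) that the limit triple lies in $\mathsf{CE}_\infty$, (4) $\varGamma$-liminf for the free energies, and (5)–(6) the liminf inequalities for $\calR_n$ and $\calD_n$. For step (1), since $\calF_n(\sfP^n_0) = \tfrac{1}{2n}\Ent(\sfP^n_0|\Pi_n)$ is bounded and $\calI_n \geq 0$ along solutions, I would use the chain rule of Theorem~\ref{thm:fke_chain} (or directly the structure of $\calI_n$) to bound $\int_0^T \calR_n(\sfP^n_t,\sfJ^{n,+}_t,\sfJ^{n,-}_t)\,\dd t$ uniformly. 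Then by the uniform-in-$n$ action bound \eqref{eq:fke_actionb2} together with the time-regularity estimate — but crucially NOT the $n$-blowing-up estimate \eqref{eq:fle_timereg1}, rather the $W$-metric estimate analogous to Lemma~\ref{lm:liouv_timereg} (the remark after Lemma~\ref{lm:fke_timereg} signals this) — one gets $W$-equicontinuity of $\{(\sfP^n_t)_t\}$ with a modulus independent of $n$. Combined with narrow coercivity of $\Ent(\cdot|\Pi_n)$ and a standard Arzelà–Ascoli / Prokhorov argument, this yields a subsequence with $\sfP^n_t \to \sfP_t$ narrowly for all $t\in[0,T]$, with $\calF_\infty(\sfP_t)<\infty$.

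For step (2), the key is the action bound $\phi(\cdots)M_n \leq \calR_n$ from \eqref{eq:fke_actionb1} with $M_n \leq 3M$ uniform in $n$: this gives $\sup_n \int_0^T\!\int_{\Gamma\times\calT}(1+\nu(\calT)^2)^{-1}\,\dd\sfJ^{n,\pm}_t\,\dd t < \infty$ via superlinearity of $\phi$, so the weighted measures $\sfJ^{n,\pm}_t(\dd\nu,\dd x)\,\dd t$ are vaguely precompact on $\calM^+_{loc}(\Gamma\times\calT\times[0,T])$; extract a further subsequence converging to $\sfJ^\pm$. For step (3), pass to the limit in the continuity equation \eqref{eq:fkecont1}: the subtlety is that $\dder^{n,\pm}F(\nu,x) = n(F(\nu\pm\tfrac1n\delta_x)-F(\nu)) \to \grad F(\nu,x)$ pointwise for $F\in\Cyl_c(\Gamma)$, with a uniform bound $|\dder^{n,\pm}F(\nu,x)|\leq C/(1+\nu(\calT)^2)$ on the relevant support (exactly the weight against which the fluxes converge), so dominated convergence applies and the limit satisfies $\mathsf{CE}_\infty$.

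Step (4), $\varGamma$-liminf for $\calF_n$: writing $U^n = \dd\sfP^n/\dd\Pi_n$, the identity $\tfrac{1}{2n}\Ent(\sfP^n|\Pi_n)$ must be shown to lower-bound $\tfrac12\int_\Gamma\Ent(\nu|\gamma)\,\dd\sfP$; this uses the explicit Poisson structure of $\Pi_n$ (a Sanov-type / Stirling computation relating $\tfrac1n\Ent(\cdot|\Pi_n)$ on $\Gamma_n$ to the relative entropy $\Ent(\nu|\gamma)$ of the empirical intensity), which is presumably the $\varGamma$-convergence claim of Theorem~\ref{thm:maini_conv1} and the technical heart of the whole comparison. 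For steps (5)–(6), I would use the representations \eqref{eq:fke_entequiv2} and Corollary~\ref{cory:li_est2}, both expressing the entropy terms as integrals of the jointly convex, lower-semicontinuous integrand $\Upsilon(w,u,v)$ against $\sfJ^{n,\pm}$, $\vartheta^\pm_{\sfP^n}$, and $\sfT^{n,\mp}_\#\vartheta^\mp_{\sfP^n}$; the point is that $\vartheta^{\pm}_{\sfP^n} \to \vartheta^\pm_{\sfP}$ and $\sfT^{n,\mp}_\#\vartheta^\mp_{\sfP^n} \to \vartheta^\mp_{\sfP}$ vaguely (the shift $\sfT^{n,\mp}$ becomes the identity as $n\to\infty$), so a Ioffe-type lower-semicontinuity theorem for integral functionals of measures gives $\liminf\calR_n \geq \calR_\infty$; similarly $\calD_n$, written as in Remark~\ref{rem:fke_fischereq} as a Hellinger-type functional, converges to $\calD_\infty = 2H^2(\vartheta^+_{\sfP},\vartheta^-_{\sfP})$. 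The main obstacle I anticipate is twofold: first, proving vague convergence $\sfT^{n,\mp}_\#\vartheta^\mp_{\sfP^n}\to\vartheta^\mp_{\sfP}$ with enough uniformity to feed the lower-semicontinuity theorem — one must control the "one-step shift" uniformly against test functions and handle the possible discontinuity of $c$ (the excerpt flags this, resolved via the approximation argument of Appendix~\ref{s:nonc}); and second, the $\varGamma$-liminf for $\calF_n$, where the interplay between the combinatorial Poisson normalization $e^{n\gamma(\calT)}-1$ and the large-deviation rate $\Ent(\nu|\gamma)$ must be made rigorous uniformly over narrowly compact sets of $\sfP$.
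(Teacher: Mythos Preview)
Your proposal is correct and follows essentially the same route as the paper: $W$-equicontinuity via the uniform-in-$n$ action bound (Lemma~\ref{lm:c_uni1}), flux compactness via Corollary~\ref{cory:c_uni0}, passage to the limit in the continuity equation via the Taylor approximation $\dder^{n,\pm}F\to\pm\grad F$, and the liminf estimates via the $\Upsilon$/Hellinger representations together with Buttazzo-type lower semicontinuity, feeding in the vague convergence of $\vartheta^{\pm}_{\sfP^n}$ and $\sfT^{n,\mp}_\#\vartheta^{\mp}_{\sfP^n}$ from Appendix~\ref{s:nonc}. The only cosmetic differences are that the paper treats the liminf part before compactness, and that the $\varGamma$-liminf for $\calF_n$ (your Step~4) is carried out not by a direct Stirling/Sanov computation but via Mariani's duality characterization using the cumulant generating functionals $G_n(f)=\tfrac{1}{n}\log\int e^{n\langle f,\nu\rangle}\dd\Pi_n\to\int(e^f-1)\,\dd\gamma$; also, narrow precompactness of $\{\sfP^n_t\}_{n,t}$ comes from the \emph{equi}coercivity of $\{\calF_n\}_n$ (a uniform first-moment bound, Theorem~\ref{thm:c_gamma}) rather than coercivity of a single $\Ent(\cdot|\Pi_n)$.
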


\begin{remark}
In fact, the compactness result is slightly stronger. As shown in the proof of Theorem \ref{thm:main_conv1} the measures $\sfJ_r^{n,\pm}(\dd \nu,\dd x) \, \dd r$ converge vaguely on $\calM^+_{loc}(\Gamma\times \calT\times [s,t])$ for any $s,t\in [0,T]$.
\end{remark}

Note that if in addition the initial data is well-prepared, in the sense that 
\[ \lim_{n\to \infty} \calF_n(\sfP_0^n)=\calF_{\infty}(\sfP_0), \] 
then for any converging subsequence, we clearly have the liminf-estimate
\begin{equation}\label{eq:edp_ge1}
    \liminf_{n\to \infty} \calI_{n}(\sfP^{n},\sfJ^{n,+},\sfJ^{n,-})\geq  \calI_{\infty}(\sfP,\sfJ^{+},\sfJ^{-}),
\end{equation}
or in other words, obtain evolutionary $\varGamma$-convergence of $\calI_n$ to $\calI_{\infty}$. 

\medskip

Now, recall by Theorem \ref{thm:fke_main} that unique gradient-flow solutions to the forward Kolmogorov equations \eqref{eq:FKEn} exist, and similarly, gradient-flow solutions to the Liouville equation \eqref{eq:liouv2} are unique by Theorem \ref{thm:li_equiv}. Therefore, modifying classical arguments from \cite{Sandier2004,Serfaty2011}, we can directly conclude the following convergence for the sequence of solutions. 

\begin{thm}\label{lm:main_conv2}
Consider a converging sequence $\calP(\Gamma_n) \ni \bar \sfP^n\to \bar \sfP \in \calP(\Gamma)$ such that 
\begin{equation}\label{eq:main_conv2_fn}
    \lim_{n\to \infty} \calF_n(\bar \sfP^n)=\calF_{\infty}(\bar \sfP), 
\end{equation}
and for each $n\ge 0$ let $\sfP_t^n$ be the unique gradient-flow solution to $\eqref{eq:FKEn}$ with initial data $\bar \sfP^n$. Then there exists a unique gradient-flow solution $\sfP$ to \eqref{eq:liouv2} with initial data $\bar \sfP$. Moreover, we have the convergence
\begin{equation*}
\begin{aligned}
    \lim_{n\to \infty} (\sfP^n,\vartheta_{\sfP^n}^{+},\vartheta_{\sfP^n}^{-}) &=(\sfP,\vartheta_{\sfP}^{+},\vartheta_{\sfP}^{-})\\
    \lim_{n\to \infty} \calF_n(\sfP^n_t)&=\calF_{\infty}(\sfP_t), \qquad \fA t\in [0,T].
\end{aligned}
\end{equation*}
\end{thm}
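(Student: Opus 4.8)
The plan is to run the Sandier--Serfaty scheme: use Theorem~\ref{thm:main_conv1} for compactness and the $\liminf$-estimates, Theorem~\ref{thm:li_equiv} for the non-negativity, uniqueness and chain rule of the limiting gradient structure, and the energy-dissipation balances along the flows to upgrade the $\liminf$ of the free energies to a full limit. \emph{Step 1 (compactness and identification).} Since each $\sfP^n$ is the gradient-flow solution to \eqref{eq:FKEn} with datum $\bar \sfP^n$, Theorem~\ref{thm:fke_main} gives $\sfJ^{n,\pm}_t := \vartheta^\pm_{\sfP^n_t}$ and $\calI_n(\sfP^n,\sfJ^{n,+},\sfJ^{n,-}) = 0$; as $\calF_n(\bar \sfP^n)$ converges it is bounded, so Theorem~\ref{thm:main_conv1} applies and, along a subsequence, $(\sfP^n,\sfJ^{n,+},\sfJ^{n,-})$ converges to some $(\sfP,\sfJ^+,\sfJ^-)\in\mathsf{CE}_\infty$ with $\sfP_0 = \lim_n \bar \sfP^n = \bar \sfP$, hence $\calF_\infty(\sfP_0) = \lim_n\calF_n(\bar \sfP^n) < \infty$. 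Using the well-preparedness of the initial data and the $\liminf$-estimate \eqref{eq:edp_ge1} together with the non-negativity from Theorem~\ref{thm:li_equiv},
\begin{equation*}
    0 \le \calI_\infty(\sfP,\sfJ^+,\sfJ^-) \le \liminf_{n\to\infty}\calI_n(\sfP^n,\sfJ^{n,+},\sfJ^{n,-}) = 0,
\end{equation*}
so $(\sfP,\sfJ^+,\sfJ^-)$ is a gradient-flow solution of \eqref{eq:liouv2}; by Theorem~\ref{thm:li_equiv} it is then the unique such solution with datum $\bar \sfP$, $\sfP_t=(G_t)_\#\bar\sfP$, and $\sfJ^\pm_t = \sfP_t\kappa^\pm_\nu = \vartheta^\pm_{\sfP_t}$ for a.e.\ $t$. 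Uniqueness of the limit forces convergence of the whole sequence (every subsequence has a further subsequence converging to the same point).

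\emph{Step 2 (convergence of the free energies).} Theorem~\ref{thm:main_conv1} already gives $\calF_\infty(\sfP_t)\le\liminf_n\calF_n(\sfP^n_t)$ for every $t$. For the reverse bound, the chain rule of Theorem~\ref{thm:fke_main} upgrades $\calI_n(\sfP^n,\sfJ^{n,+},\sfJ^{n,-})=0$ (together with $\calI_n^t\ge 0$) to the time-localized balance
\begin{equation*}
    \calF_n(\sfP^n_t) = \calF_n(\bar \sfP^n) - \int_0^t\calR_n(\sfP^n_r,\sfJ^{n,+}_r,\sfJ^{n,-}_r)\,\dd r - \int_0^t\calD_n(\sfP^n_r)\,\dd r, \qquad t\in[0,T].
\end{equation*}
Since the $\liminf$-estimates of Theorem~\ref{thm:main_conv1} apply verbatim on any subinterval $[0,t]$ (the fluxes converge vaguely on $\Gamma\times\calT\times[0,t]$, cf.\ the remark following that theorem), taking $\limsup$ and using $\calF_n(\bar \sfP^n)\to\calF_\infty(\bar\sfP)$ yields
\begin{equation*}
    \limsup_{n\to\infty}\calF_n(\sfP^n_t) \le \calF_\infty(\bar\sfP) - \int_0^t\calR_\infty(\sfP_r,\vartheta^+_{\sfP_r},\vartheta^-_{\sfP_r})\,\dd r - \int_0^t\calD_\infty(\sfP_r)\,\dd r .
\end{equation*}
The right-hand side equals $\calF_\infty(\sfP_t)$: for the Liouville gradient-flow solution the superposition of Theorem~\ref{thm:lsuper} writes the time-localized $\calI^t_\infty$ as the $Q$-integral of the time-localized mean-field functionals $\calI^t_{MF}$, each vanishing along the mean-field solutions on which $Q$ is concentrated (by Lemma~\ref{lm:mf_chain} and the computation in the proof of Theorem~\ref{thm:equivlag}), whence $\calF_\infty(\sfP_t) = \calF_\infty(\sfP_0) - \int_0^t(\calR_\infty+\calD_\infty)\,\dd r$. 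Combining the two bounds gives $\lim_n\calF_n(\sfP^n_t) = \calF_\infty(\sfP_t)$ for every $t\in[0,T]$.

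\emph{Main obstacle.} The genuine analysis---compactness, the lower-semicontinuity $\liminf$-estimates, and the non-negativity and chain rule of $\calI_\infty$---is already carried by Theorems~\ref{thm:main_conv1}, \ref{thm:fke_main} and \ref{thm:li_equiv}, so at this stage the proof is essentially bookkeeping. The one delicate point is that the $\liminf$-estimates for $\calR_n$ and $\calD_n$ must be invoked on every subinterval $[0,t]$, not merely on $[0,T]$, to close the $\limsup$ argument for the free energies; this is exactly what the strengthened vague-convergence statement on $\Gamma\times\calT\times[s,t]$ recorded after Theorem~\ref{thm:main_conv1} provides. A secondary point is ensuring the time-localized balances $\calI^t_n=0$ and $\calI^t_\infty=0$ hold pointwise in $t$, which follows from the respective chain rules together with the non-negativity of $\calI^t_n$ and $\calI^t_\infty$ on $[0,t]$.
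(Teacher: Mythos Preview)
Your proof is correct and follows essentially the same Sandier--Serfaty argument as the paper: compactness and the liminf estimates from Theorem~\ref{thm:main_conv1} force $\calI_\infty=0$ along any subsequential limit, uniqueness from Theorem~\ref{thm:li_equiv} upgrades this to full convergence, and the time-localized energy balances at both levels give the $\limsup$ bound on $\calF_n(\sfP^n_t)$. Your treatment is in fact slightly more careful than the paper's, which writes the $\limsup$ chain with equality signs and leaves the validity of the liminf estimates on subintervals $[0,t]$ and the identity $\calI_\infty^t=0$ implicit; you correctly isolate these as the points requiring the strengthened vague convergence and the superposition/chain-rule argument.
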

\begin{proof}
Recall that $\calI_n(\sfP^n,\vartheta_{\sfP^n}^{+},\vartheta_{\sfP^n}^{-})=0$ for all $n\ge 0$. Therefore, by \eqref{eq:main_conv2_fn} and Theorem \ref{thm:main_conv1} we have for any subsequence indexed by $n'$ converging to a $(\sfP,\sfJ^{+},\sfJ^-)\in \mathsf{CE}_{\infty}$ that \eqref{eq:edp_ge1} holds, and hence 
\begin{align*}
   0 = \liminf_{n'\to \infty} \calI_n(\sfP^{n'},\vartheta_{\sfP^{n'}}^{+},\vartheta_{\sfP^{n'}}^{-}) \geq \calI_{\infty}(\sfP,\sfJ^{+},\sfJ^{-}),
\end{align*}
and thus $\calI_{\infty}(\sfP,\sfJ^{+},\sfJ^{-})=0$, which implies that $\sfP$ is the unique gradient-flow solution to \eqref{eq:liouv2} and $\sfJ_t^{\pm}=\vartheta_{\sfP_t}^{\pm}$ for a.e. $t\in [0,T]$. The convergence of $\sfP_t^n$ now follows from a compactness and equicontinuity argument, and by lower semicontinuity we conclude that for every $t\in [0,T]$
\begin{align*}
    \limsup_{n\to \infty} \calF_n(\sfP_t^n)&= \lim_{n\to \infty}\calF_n(\sfP_0^n)-\liminf_{n\to \infty} \int_0^t \left(\calR_n(\sfP^n,\vartheta_{\sfP^n}^{+},\vartheta_{\sfP^n}^{-})+\calD_n(\sfP_t^n)\right)\, \dd t\\
    &= \calF_{\infty}(\sfP_0)-\int_0^t \left(\calR_{\infty}(\sfP,\vartheta_{\sfP}^{+},\vartheta_{\sfP}^{-})+\calD_{\infty}(\sfP)\right)\, \dd t = \calF_{\infty}(\sfP_t).
\end{align*}
\end{proof}

Now suppose that in addition the initial sequence of measures $\bar \sfP^n$ is chaotic, in the sense that 
\[\bar \sfP^n \to \delta_{\bar \nu}\quad\text{narrowly for some $\bar \nu \in \Gamma$.}\]
Then as a consequence of Theorem \ref{lm:main_conv2} we have propagation of chaos, namely 
\[\bar \sfP^n \to \delta_{\bar \nu_t} \quad \text{narrowly}\fA t\in [0,T],\]
 where $\nu_t$ is the unique solution to the mean-field equation \eqref{eq:mf2} with initial datum $\bar \nu$. As mentioned in the introduction, while for interacting particle systems with the number of particles fixed at $n\in\N$ this would imply narrow convergence of the $k$-marginals at time $t$ to $\nu_t^{\otimes k}$ (e.g.\ see \cite{Sznitman1991}), in our setting this implies convergence of the $k$-correlation functions \cite{BGSS2020}. 
 
 Moreover, note that we have a stronger notion of convergence, since the free energies $\calF_n$ converge as well. Under appropriate conditions on the initial datum $\bar \nu$, this guarantees a version of propagation of entropic chaoticity. Namely, for any $\nu$ we define the rescaled Poisson measures
\begin{equation*}
    \Pi_{n,\nu}:=(L_n)_{\#} \pi_{n,\nu},\qquad\text{where}\qquad\pi_{n,\nu}:=\frac{1}{e^{n \nu(\calT)}-1}\sum_{N=1}^{\infty} \frac{n^N}{N!}\nu^{\otimes N}.    
\end{equation*}
It is straightforward to check that $\Pi_{n,\nu^*}\to \delta_{\nu^*}$ narrowly. We then have the following result. 

\begin{thm}[Propagation of chaos]\label{thm:edp_prop}
Consider the setting of Theorem \ref{lm:main_conv2} and assume additionally that $\bar \sfP=\delta_{\bar \nu}$ for some $\bar \nu\in \Gamma$ with $\Ent(\bar \nu|\gamma)<\infty$. Let $\nu_t$ be the unique solution to \eqref{eq:mf2} with initial datum $\bar \nu$. Then for all $t\in [0,T]$,
\begin{equation*}
    \begin{aligned}
        \sfP^n_t \to \delta_{\nu_t}\quad\text{narrowly},\qquad\text{and}\qquad
        \lim_{n\to \infty} \Ent(\sfP_t^n|\Pi_n)&=\Ent(\nu_t|\gamma). 
    \end{aligned}
\end{equation*}
If additionally there exists a constant $C>1$ such that $C^{-1}\leq \dd \bar \nu/\dd \gamma \leq C$ then 
\[ \lim_{n\to \infty} \Ent(\sfP^n_t|\Pi_{n,\nu_t})=0, \qquad \fA t\in [0,T].\]
\end{thm}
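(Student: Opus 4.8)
The plan is to prove $\tfrac1n\Ent(\sfP^n_t\,|\,\Pi_{n,\nu_t})\to0$ (in the normalization of $\calF_n$) by reducing it to the convergence of the first moments of $\sfP^n_t$. Write $u_t:=\dd\nu_t/\dd\gamma$; by Lemma~\ref{lm:mfsolb} there is $C_T>0$ with $C_T^{-1}\le u_t\le C_T$, so $\log u_t\in\calB_b(\calT)$ with $\|\log u_t\|_\infty\le\log C_T$. First I would record the Radon--Nikodym derivative: since $\nu_t^{\otimes N}=\big(\prod_i u_t(x_i)\big)\,\gamma^{\otimes N}$ on $\calT^N$ and $\sum_i\log u_t(x_i)=n\langle\log u_t,L_n(x_1,\dots,x_N)\rangle$, pushing forward by $L_n$ gives, for $\Pi_n$-a.e.\ $\eta\in\Gamma_n$,
\[
 \frac{\dd\Pi_{n,\nu_t}}{\dd\Pi_n}(\eta)=\frac{e^{n\gamma(\calT)}-1}{e^{n\nu_t(\calT)}-1}\,e^{n\langle\log u_t,\eta\rangle}.
\]
In particular $\Pi_{n,\nu_t}\sim\Pi_n$; since $\sfP^n_t\ll\Pi_n$ (it is a gradient-flow solution with $\calF_n(\sfP^n_0)<\infty$, Theorem~\ref{thm:fke_main}), the chain rule for relative entropy yields
\[
 \tfrac1n\Ent(\sfP^n_t\,|\,\Pi_{n,\nu_t})=\tfrac1n\Ent(\sfP^n_t\,|\,\Pi_n)-\tfrac1n\log\frac{e^{n\gamma(\calT)}-1}{e^{n\nu_t(\calT)}-1}-\langle\log u_t,\bar\eta^n_t\rangle,
\]
where $\bar\eta^n_t:=\int_\Gamma\eta\,\sfP^n_t(\dd\eta)\in\Gamma$ is the first moment of $\sfP^n_t$.

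Next I would pass to the limit. Theorem~\ref{lm:main_conv2}, applied with $\bar\sfP=\delta_{\bar\nu}$ (whose Liouville gradient-flow solution is $\delta_{\nu_t}$), gives $\sfP^n_t\to\delta_{\nu_t}$ narrowly and $\tfrac1n\Ent(\sfP^n_t|\Pi_n)=2\calF_n(\sfP^n_t)\to2\calF_\infty(\delta_{\nu_t})=\Ent(\nu_t|\gamma)$; and $\tfrac1n\log(e^{na}-1)\to a$ for every $a>0$. Using $\Ent(\nu_t|\gamma)=\int_\calT u_t\log u_t\,\dd\gamma-\nu_t(\calT)+\gamma(\calT)$ and $\langle\log u_t,\nu_t\rangle=\int_\calT u_t\log u_t\,\dd\gamma$, the three limits in the last display cancel precisely, so the whole claim is equivalent to
\[
 \langle\log u_t,\bar\eta^n_t\rangle\;\longrightarrow\;\langle\log u_t,\nu_t\rangle.
\]
One direction is immediate: non-negativity of $\tfrac1n\Ent(\sfP^n_t|\Pi_{n,\nu_t})$, rearranged through the display and combined with the limits just stated, gives $\limsup_n\langle\log u_t,\bar\eta^n_t\rangle\le\langle\log u_t,\nu_t\rangle$.

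For the matching $\liminf$ I would show that the reduced $1$-correlation densities of $\sfP^n_t$ converge weakly in $L^1(\gamma)$ to $u_t$. By the Mecke/refined Campbell formula for $\Pi_n$, each $\bar\eta^n_t\ll\gamma$ with density $h^n_t\in L^1(\gamma)$ and $\|h^n_t\|_{L^1(\gamma)}=\int_\Gamma\eta(\calT)\,\sfP^n_t(\dd\eta)$. Feeding $g=\lambda\mathbf{1}_B$ into the Donsker--Varadhan bound $\int n\langle g,\eta\rangle\,\dd\sfP^n_t\le\Ent(\sfP^n_t|\Pi_n)+\log\int e^{n\langle g,\eta\rangle}\,\dd\Pi_n$, together with $\int e^{n\langle g,\eta\rangle}\,\dd\Pi_n=(e^{n\int e^g\dd\gamma}-1)/(e^{n\gamma(\calT)}-1)$, yields for all $\lambda>0$ and Borel $B\subseteq\calT$
\[
 \limsup_{n\to\infty}\bar\eta^n_t(B)\le\tfrac1\lambda\Ent(\nu_t|\gamma)+\tfrac{e^\lambda-1}\lambda\,\gamma(B);
\]
letting $\gamma(B)\to0$ and then $\lambda\to\infty$ shows $\{h^n_t\}_n$ is uniformly $\gamma$-absolutely continuous. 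The moment-propagation estimate of Section~\ref{ss:fke_sol} supplies the complementary mass bound: because the well-prepared initial data are entropically close to $\Pi_{n,\bar\nu}$ one has $\sup_n\int_\Gamma\eta(\calT)^2\,\bar\sfP^n(\dd\eta)<\infty$ and hence $\sup_{n,t}\int_\Gamma\eta(\calT)^2\,\sfP^n_t(\dd\eta)<\infty$; by Dunford--Pettis $\{h^n_t\}_n$ is then relatively weakly compact in $L^1(\gamma)$. The same $L^2$-bound makes every $\eta\mapsto\langle f,\eta\rangle$ with $f\in C_b(\calT)$ uniformly integrable against $\{\sfP^n_t\}_n$, so narrow convergence $\sfP^n_t\to\delta_{\nu_t}$ upgrades to $\int\langle f,\eta\rangle\,\dd\sfP^n_t\to\langle f,\nu_t\rangle$. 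Any weak-$L^1(\gamma)$ limit $h^\ast$ of a subsequence of $\{h^n_t\}$ thus satisfies $\int f h^\ast\,\dd\gamma=\langle f,\nu_t\rangle$ for all $f\in C_b(\calT)$, so $h^\ast=u_t$; by the usual subsequence argument $h^n_t\rightharpoonup u_t$ in $L^1(\gamma)$, and since $\log u_t\in L^\infty(\gamma)$,
\[
 \langle\log u_t,\bar\eta^n_t\rangle=\int_\calT\log u_t\,h^n_t\,\dd\gamma\;\longrightarrow\;\int_\calT u_t\log u_t\,\dd\gamma=\langle\log u_t,\nu_t\rangle,
\]
which closes the argument.

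The delicate point is the last paragraph: since $\Ent(\sfP^n_t|\Pi_n)$ is only of order $n$, elementary Gibbs estimates do not by themselves pin down the first moments of $\sfP^n_t$ at the correct $O(1)$ scale, and one genuinely needs both the quantitative uniform absolute continuity above and the propagated second-moment bound, which in turn rests on the initial data being well-prepared relative to $\Pi_{n,\bar\nu}$ (equivalently $\tfrac1n\Ent(\bar\sfP^n|\Pi_{n,\bar\nu})\to0$, the $t=0$ instance of the conclusion). The remaining ingredients --- the Radon--Nikodym identity, the entropy chain rule, and the scalar limits $\tfrac1n\log(e^{na}-1)\to a$ --- are routine.
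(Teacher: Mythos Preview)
Your reduction coincides with the paper's: compute $\dd\Pi_{n,\nu_t}/\dd\Pi_n$, apply the entropy chain rule, and reduce the entropic propagation of chaos to the single convergence $\int_\Gamma\langle\log u_t,\eta\rangle\,\dd\sfP^n_t\to\langle\log u_t,\nu_t\rangle$. The paper handles this last step directly by the Donsker--Varadhan density technique of Theorem~\ref{thm:nc_1}: approximate the bounded measurable $\log u_t$ by continuous functions in the exponential-$L^1(\gamma)$ sense and control the remainder uniformly in $n$ via the bound $\sup_n\calF_n(\sfP^n_t)<\infty$ together with the explicit cumulant formula of Lemma~\ref{lm:gammas_Gc}.

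Your alternative route through weak-$L^1(\gamma)$ compactness of the $1$-correlation densities $h^n_t$ has a genuine gap at the second-moment bound. The hypothesis $\calF_n(\bar\sfP^n)\to\calF_\infty(\delta_{\bar\nu})$ controls only $\tfrac1n\Ent(\bar\sfP^n|\Pi_n)$, which bounds first moments but not second; deducing $\tfrac1n\Ent(\bar\sfP^n|\Pi_{n,\bar\nu})\to0$ at $t=0$ is itself the $t=0$ instance of the conclusion, so invoking it is circular; and even granting that closeness, the Donsker--Varadhan inequality with test function $\alpha\,\eta(\calT)^2$ is vacuous because $\int e^{n\alpha\eta(\calT)^2}\,\dd\Pi_{n,\bar\nu}=+\infty$ for every $\alpha>0$. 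Without second moments you cannot upgrade narrow convergence $\sfP^n_t\to\delta_{\nu_t}$ to convergence of $\int\langle f,\eta\rangle\,\dd\sfP^n_t$ for $f\in C_b(\calT)$, and hence cannot identify the weak-$L^1$ limit of $h^n_t$. The repair is to replace the second-moment bound by uniform integrability of $\eta(\calT)$ under $\{\sfP^n_t\}_n$, which \emph{does} follow from the same Donsker--Varadhan estimate applied to $(\eta(\calT)-K)_+$: one obtains $\limsup_n\int(\eta(\calT)-K)_+\,\dd\sfP^n_t\le\Ent(\nu_t|\gamma)/\beta$ for all $K>\gamma(\calT)(e^\beta-1)/\beta$, and then lets $\beta\to\infty$. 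With this in hand your argument goes through; alternatively, as in the paper, one can bypass the $1$-correlation detour entirely.
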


Theorems \ref{thm:main_conv1} and \ref{thm:edp_prop} are proved in Section \ref{ss:c_main}. However, first we show $\Gamma$-convergence of the free energies in Section \ref{ss:c_gammas}, and establish the necessary estimates in Section \ref{ss:c_uni}.

\subsection{\texorpdfstring{$\varGamma$}{Gamma}-convergence of \texorpdfstring{$\calF_n$}{free energies}}\label{ss:c_gammas}

While only the liminf-estimates for the free energy $\calF_n$ are necessary for the proof of Theorem \ref{thm:main_conv1} and the convergence of solutions, we provide here the full $\varGamma$-convergence result. We rely strongly on the characterization of \cite{Mariani2012}, which connects a large deviation principle with rate function $I$ to the fact that 
\begin{equation*}
    \mathop{\varGamma\text{-lim}}_{n\to \infty} \frac{1}{n}\Ent(\sfP|\Pi^n) = \int_{\Gamma} I(\nu) \sfP(\dd \nu),
\end{equation*}
and provides useful sufficient conditions for both.

Recall in our setting that
\[
    \calF_n(\sfP)=\frac{1}{2n}\Ent(\sfP|\Pi_n),\qquad \calF_{\infty}=\frac{1}{2}\int_{\Gamma} \Ent(\nu|\gamma).
\] We then have the following result, which we prove after Lemma \ref{lm:gammas_Gc} below.

\begin{thm}\label{thm:c_gamma}
The family $\{\calF_n\}_{n\ge 1}$ is equicoercive and $\varGamma$-converges to $\calF$ in the sense that
\begin{itemize}
    \item for any converging sequence $\sfP^n\to \sfP\in \calP(\Gamma)$:  
    \begin{equation*}
        \calF_{\infty}(\nu)\leq \liminf_{n\to \infty} \calF_n(\sfP^n),
    \end{equation*}
    \item for any $\sfP\in \calP(\Gamma)$ with $\calF_{\infty}(\sfP)<\infty$ there exists a sequence $\sfP^n\in \Gamma$ converging to $\sfP$ such that 
    \begin{equation*}
         \lim_{n\to \infty} \calF_n(\sfP^n) = \calF_{\infty}(\sfP).
    \end{equation*}
\end{itemize}
\end{thm}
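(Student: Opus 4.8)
The plan is to derive the statement from the large-deviation characterization of $\varGamma$-convergence of relative entropies of \cite{Mariani2012}, which reduces the claim to establishing a large deviation principle (LDP) for the sequence $\Pi_n\in\calP(\Gamma)$ at speed $n$ with good rate function $\nu\mapsto\Ent(\nu|\gamma)$. Recall $\Pi_n=(L_n)_{\#}\pi_n$: under $\pi_n$ the number of points is a Poisson$(n\gamma(\calT))$ variable conditioned to be positive and, given that number, the points are i.i.d.\ with law $\gamma/\gamma(\calT)$. Hence a $\Pi_n$-distributed random measure $\nu$ is, up to the asymptotically negligible conditioning, the $1/n$-rescaling of a Poisson point process on $\calT$ of intensity $n\gamma$, and one has the exact moment identity
\[
\frac{1}{n}\log\int_{\Gamma}e^{n\langle f,\nu\rangle}\,\Pi_n(\dd\nu)=\int_{\calT}(e^{f}-1)\,\dd\gamma+o(1),\qquad f\in C(\calT),
\]
the $o(1)$ accounting for the factor $(1-e^{-n\gamma(\calT)})^{-1}\to1$.

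First I would establish the LDP. A rigorous route is a projective-limit argument: for any finite measurable partition $\{A_1,\dots,A_k\}$ of $\calT$ the vector $(\nu(A_1),\dots,\nu(A_k))$ is, modulo the conditioning, a rescaled vector of independent Poisson$(n\gamma(A_i))$ variables, which by Cram\'er's theorem satisfies an LDP with rate $\sum_i\gamma(A_i)\,\phi(t_i/\gamma(A_i))$; the Dawson--G\"artner theorem then produces an LDP on the projective limit, whose rate function is the supremum over partitions, identified with $\int_{\calT}\phi(\dd\nu/\dd\gamma)\,\dd\gamma=\Ent(\nu|\gamma)$ by the standard finite-partition approximation of relative entropy. (Alternatively one may invoke the G\"artner--Ellis/Baldi theorem with the cumulant functional above.) I would also record, using compactness of $\calT$ and finiteness of $\gamma$, that $\Ent(\cdot|\gamma)$ is narrowly lower semicontinuous with narrowly compact sublevel sets --- bounded relative entropy forces bounded mass and, by superlinearity of $\phi$, uniform integrability of the densities, hence tightness --- so that it is a good rate function; and I would check exponential tightness of $\{\Pi_n\}$ on $\Gamma$: the only non-precompact direction in the narrow topology is the total mass $\nu(\calT)$, which has Poisson tails uniformly in $n$, so that $\Pi_n(\{\nu(\calT)>R\})\le e^{-cRn}$ for $R$ large while $\{\mu\in\Gamma:\mu(\calT)\le R\}$ is narrowly compact.

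With the full LDP with good rate function $\Ent(\cdot|\gamma)$ in hand, the theorem of \cite{Mariani2012} yields both the equicoercivity of $\{\tfrac{1}{n}\Ent(\cdot|\Pi_n)\}_{n\ge1}$ on $\calP(\Gamma)$ and the $\varGamma$-convergence
\[
\mathop{\varGamma\text{-lim}}_{n\to\infty}\frac{1}{n}\Ent(\sfP|\Pi_n)=\int_{\Gamma}\Ent(\nu|\gamma)\,\sfP(\dd\nu)\qquad\text{on }\calP(\Gamma);
\]
multiplying by $\tfrac{1}{2}$ gives the equicoercivity of $\{\calF_n\}_{n\ge1}$, the liminf inequality $\calF_{\infty}(\sfP)\le\liminf_{n\to\infty}\calF_n(\sfP^n)$ for every narrowly convergent $\sfP^n\to\sfP$, and the existence of recovery sequences for every $\sfP$ with $\calF_{\infty}(\sfP)<\infty$, which is exactly the assertion.

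The main obstacle I anticipate is making the infinite-dimensional LDP for $\Pi_n$ on $\Gamma$ equipped with the narrow topology fully rigorous --- in particular identifying the Dawson--G\"artner projective-limit rate function with $\Ent(\cdot|\gamma)$ and verifying the precise hypotheses (exponential tightness, goodness of the rate function) under which the Mariani equivalence applies; the remaining bookkeeping, namely the factor $\tfrac{1}{2}$, discarding the positivity conditioning, and translating $\varGamma$-convergence into the two displayed inequalities, is routine.
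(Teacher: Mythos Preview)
Your proposal is correct and, like the paper, leans on \cite{Mariani2012}, but the route differs. The paper does \emph{not} establish an LDP for $\Pi_n$ and then appeal to the Mariani equivalence; instead it invokes the reduction in \cite[Theorems~3.4,~3.5]{Mariani2012} to the case $\sfP=\delta_{\bar\nu}$, and then handles the liminf and the recovery sequence at Dirac masses by elementary direct computation. Concretely: the cumulant generating functional $G_n(f)=\tfrac{1}{n}\log\int_\Gamma e^{n\langle f,\nu\rangle}\,\dd\Pi_n$ is computed in closed form (Lemma~\ref{lm:gammas_Gc}) and shown to converge to $G(f)=\int(e^f-1)\,\dd\gamma$, from which the liminf at $\delta_{\bar\nu}$ follows by entropy duality and taking the supremum over $f\in C_b(\calT)$; the recovery sequence is the explicit Poisson measure $\Pi_{n,\bar\nu}$ with intensity $\bar\nu$, for which $\tfrac{1}{n}\Ent(\Pi_{n,\bar\nu}|\Pi_n)$ is written out term by term and shown to converge to $\Ent(\bar\nu|\gamma)$. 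Equicoercivity is obtained from the same duality with $f\equiv1$, yielding a first-moment bound.

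What each approach buys: your projective-limit/G\"artner--Ellis route is cleaner conceptually and slots directly into the Mariani LDP$\,\Leftrightarrow\,\varGamma$-convergence statement, but you correctly flag the main cost---making the infinite-dimensional LDP on $\Gamma$ with the narrow topology fully rigorous (identifying the Dawson--G\"artner rate function with $\Ent(\cdot|\gamma)$, checking goodness and exponential tightness). The paper's approach sidesteps that machinery entirely, at the price of a couple of explicit calculations; it also produces the concrete recovery sequence $\Pi_{n,\bar\nu}$, which is reused later in the entropic propagation-of-chaos argument (Theorem~\ref{thm:edp_prop}).
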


By the results of \cite[Theorems 3.4, 3.5]{Mariani2012} it is sufficient to merely show the corresponding bounds or limits for any $\sfP$ of the form $\sfP=\delta_{\nu}$ for some $\nu\in \Gamma$. Because of this reduction, we can make use of the so-called cumulant generating functionals $G_n$ given by 
\begin{equation*}
    G_n(f):=\frac{1}{n}\log \int_{\Gamma} e^{n \langle f,\nu\rangle}\, \Pi_n(\dd \nu),
\end{equation*}
for any $f\in \calB_b(\Gamma)$, and their limit counterpart
\begin{equation*}
    G(f):=\int_{\calT} (e^f-1)\, \dd \gamma. 
\end{equation*}
Note that by duality of the entropy, we have for all $n> 0$ the inequality
\begin{equation}\label{eq:gammas_dual}
    \int_{\Gamma} \langle f,\nu\rangle \, \dd \sfP \leq \frac{1}{n}\Ent(\sfP|\Pi_n)+G_n(f), \quad 
\end{equation}
and for the Legendre-dual of $G$ we have 
\[G^*(\nu):=\sup_{f\in \calC_b(\calT)} \bigl\{\langle f,\nu\rangle - G(f)\bigr\}=\Ent(\gamma|\nu).\]
We will first simplify $G_n$ and show that it indeed converges to $G$. 
\begin{lm}\label{lm:gammas_Gc}
Let $f\in \calB_b(\calT)$. Then for each $n>0$ 
\begin{equation*}
     G_n(f)=\tfrac{1}{n}\log \frac{e^{n \int_{\calT} e^f \dd \gamma}-1}{e^{n \gamma(\calT)}-1}.
\end{equation*}
In particular
\begin{equation*}
    \lim_{n\to \infty} G_n(f)=G(f).
\end{equation*}
\end{lm}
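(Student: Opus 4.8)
The plan is to compute $G_n(f)$ explicitly using the definition of $\Pi_n$ as a pushforward of $\pi_n$ and then pass to the limit. First I would unfold the definitions: by the change-of-variables formula for pushforwards and the fact that $\langle f, L_n(x_1,\dots,x_N)\rangle = \tfrac1n\sum_{i=1}^N f(x_i)$,
\begin{equation*}
    \int_{\Gamma} e^{n\langle f,\nu\rangle}\,\Pi_n(\dd\nu) = \frac{1}{e^{n\gamma(\calT)}-1}\sum_{N=1}^{\infty}\frac{n^N}{N!}\int_{\calT^N} e^{\sum_{i=1}^N f(x_i)}\,\gamma^{\otimes N}(\dd\mathbf{x}^N) = \frac{1}{e^{n\gamma(\calT)}-1}\sum_{N=1}^{\infty}\frac{n^N}{N!}\left(\int_{\calT} e^{f}\,\dd\gamma\right)^N.
\end{equation*}
The inner integral over $\calT^N$ factorizes into the $N$-th power of $\int_{\calT}e^f\,\dd\gamma$ because $\gamma^{\otimes N}$ is a product measure and the integrand is a product. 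Summing the series $\sum_{N\ge 1} \frac{(n a)^N}{N!} = e^{na}-1$ with $a := \int_{\calT} e^f\,\dd\gamma$ gives
\begin{equation*}
    \int_{\Gamma} e^{n\langle f,\nu\rangle}\,\Pi_n(\dd\nu) = \frac{e^{n\int_{\calT}e^f\,\dd\gamma}-1}{e^{n\gamma(\calT)}-1},
\end{equation*}
and taking $\tfrac1n\log$ of both sides yields the claimed formula for $G_n(f)$. Here I should note that $a<\infty$ and the series converges since $f\in\calB_b(\calT)$ is bounded and $\gamma(\calT)<\infty$ by Assumption \ref{assu:massu}, so all manipulations (interchanging sum and integral, summing the exponential series) are justified by Tonelli's theorem applied to the non-negative integrand.

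For the limit, I would write
\begin{equation*}
    G_n(f) = \frac1n\log\bigl(e^{n\int_{\calT}e^f\,\dd\gamma}-1\bigr) - \frac1n\log\bigl(e^{n\gamma(\calT)}-1\bigr),
\end{equation*}
and observe that for any constant $b>0$ one has $\tfrac1n\log(e^{nb}-1) = b + \tfrac1n\log(1-e^{-nb}) \to b$ as $n\to\infty$. Applying this with $b = \int_{\calT}e^f\,\dd\gamma$ and with $b = \gamma(\calT)$ (both strictly positive, since $\gamma(\calT)>0$ and $e^f>0$) gives
\begin{equation*}
    \lim_{n\to\infty} G_n(f) = \int_{\calT}e^f\,\dd\gamma - \gamma(\calT) = \int_{\calT}(e^f-1)\,\dd\gamma = G(f).
\end{equation*}
This is essentially routine; there is no real obstacle. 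The only mild subtlety worth a sentence is ensuring $b>0$ in the asymptotic $\tfrac1n\log(e^{nb}-1)\to b$ — if $\gamma$ were the zero measure the denominator would vanish, but Assumption \ref{assu:massu} gives $\gamma\in\Gamma$ with the detailed-balance/normalization constraint $\int_{\calT} d(x,y)\gamma(\dd y)=1$, forcing $\gamma(\calT)>0$, and $e^f \ge e^{-\|f\|_\infty}>0$ pointwise so $\int_{\calT} e^f\,\dd\gamma>0$ as well. With the formula and the limit in hand the proof of Lemma \ref{lm:gammas_Gc} is complete, and this feeds directly into the $\varGamma$-convergence argument via the Legendre duality $G^*(\nu)=\Ent(\gamma|\nu)$ and the criterion of \cite{Mariani2012}.
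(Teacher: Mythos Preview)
Your proof is correct and follows essentially the same route as the paper: unfold the pushforward definition of $\Pi_n$, factorize the product integral, sum the exponential series, and then take the elementary limit $\tfrac1n\log(e^{nb}-1)\to b$ for $b>0$. The only difference is cosmetic---you spell out the Tonelli justification and the $b+\tfrac1n\log(1-e^{-nb})$ rewriting a bit more explicitly than the paper does.
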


\begin{proof}
Using the representation for the rescaled Poisson measure $\Pi_n$ we have 
\begin{equation*}
    \begin{aligned}
      \int_{\Gamma} e^{n \langle f,\nu\rangle} \Pi_n(\dd \nu)&=\frac{1}{e^{n \gamma(\calT)}-1} \sum_{i=1}^N  \frac{n^N}{N!}\int_{\calT^N} e^{\sum_{i=1}^N f(x_i)} \dd \gamma^{\otimes N}\\
      &=\frac{1}{e^{n \gamma(\calT)}-1} \sum_{i=1}^N  \frac{n^N\left(\int_{\calT} e^f \dd \gamma\right)^n}{N!}
      =\frac{e^{n \int_{\calT} e^f \dd \gamma}-1}{e^{n \gamma(\calT)}-1},
    \end{aligned}
\end{equation*}
and after taking logarithms and dividing by $n$ we obtain the desired statement. Moreover, recall that by assumption $\gamma(\calT)>0$ and note that by the boundedness of $f$, 
\[ 0 < \int_{\calT} e^f \dd \gamma < \infty. \]
Hence we can take limit $n\to\infty$ to deduce
\begin{align*}
    \lim_{n\to \infty} G_n(f)&=\lim_{n\to \infty} \frac{1}{n}\log \left(e^{n \int_{\calT} e^f \dd \gamma}-1\right)-\frac{1}{n}\log\left(e^{n \gamma(\calT)}-1\right)\\
    &=\int_{\calT} e^f \dd \gamma-\gamma(\calT)=G(f),
\end{align*}
thereby concluding the proof.
\end{proof}

\begin{proof}[Proof of Theorem \ref{thm:c_gamma}]
First, we will show that the family $\{\calF_n\}_{n\ge 1}$ is equicoercive, by establishing a first moment bound for $\sfP$ in terms of mass $\nu(\calT)$. Namely, setting $f=1$ in \eqref{eq:gammas_dual} we have for any $\sfP\in \calP(\Gamma)$, $n\ge 1$, the inequality 
\begin{align*}
    \int_{\Gamma} \nu(\calT)\, \dd \sfP &\leq \frac{1}{n}\Ent(\sfP|\Pi_n)+G_n(1)
    \leq 2\calF_n(\sfP)+\frac{1}{n}\log \frac{e^{n e \gamma(\calT) }-1}{e^{n \gamma(\calT)}-1},
\end{align*} 
where the final term is bounded from above independently of $\sfP$. 

\medskip

Next, for the limit inferior, consider a converging sequence $\sfP^n\to \sfP=\delta_{\bar \nu}$ for some $\bar \nu\in \Gamma$. Fix any $f\in C_b(\calT)$, then by the duality \eqref{eq:gammas_dual},
\begin{align*}
  \liminf_{n\to \infty} \frac{1}{n}\Ent(\sfP^n|\Pi_n)&\geq   \liminf_{n\to \infty}  \int_{\Gamma} {\langle f,\nu\rangle} \,\dd \sfP^n-\limsup_{n\to \infty} G_n(f) 
  =\langle f,\bar \nu\rangle-G(f). 
\end{align*}
Taking the supremum over all $f\in C_b(\calT)$ we find 
\begin{equation*}
    \calF_{\infty}(\delta_{\bar \nu})=\frac{1}{2}\Ent(\bar \nu|\gamma)\leq \liminf_{n\to \infty} \calF_n(\sfP^n).
\end{equation*}

Finally, consider any $\bar \nu\in \Gamma$ with $\Ent(\bar \nu|\gamma)<\infty$ and set $\sfP=\delta_{\bar \nu}$. We will construct a sequence of measures $\sfP^n$ that locally consists of Poisson measures induced by $\bar \nu$. Namely, set 
\begin{equation*}
    \Pi_{n,\bar \nu}:=(L_n)_{\#} \pi_{n,\bar \nu},\qquad\text{with}\qquad \pi_{n,\bar \nu}:=\frac{1}{e^{n \bar \nu(\calT)}-1}\sum_{N=1}^{\infty} \frac{n^N}{N!}\bar \nu^{\otimes N},
\end{equation*}
and consider the sequence $\sfP^n:=\Pi_{n,\bar \nu}$. It is straightforward to verify that indeed $\sfP^n\to \delta_{\bar \nu}$. Moreover, note that although $L_n$ is not bijective, we do have the equality 
\[ \Ent(\sfP^n|\Pi_n)=\Ent(\pi_{n,\bar \nu}|\pi_n), \]
due to the symmetry of the $N$-particle distributions $\bar \nu^{\otimes N}$, $\gamma^{\otimes N}$. Therefore, we derive
\begin{equation*}
\begin{aligned}
    \Ent(\sfP^n|\Pi_n)&=\Ent(\pi_{n,\bar \nu}|\pi_n)\\
    &=\frac{1}{e^{n \bar \nu(\calT)}-1}\sum_{N=1}^{\infty} \frac{n^N}{N!}\int_{\calT^N} \log \left(\frac{e^{n \gamma(\calT)}-1}{e^{n \bar \nu(\calT)}-1}\frac{\dd \bar \nu^{\otimes N}}{\dd \gamma^{\otimes N}}\right) \dd \bar \nu^{\otimes N}\\
    &=\frac{1}{e^{n \bar \nu(\calT)}-1}\sum_{N=1}^{\infty} \frac{n^N}{N!} \left(N \bar \nu(\calT)^{N-1} \int_{\calT}  \log \left(\frac{\dd \bar \nu}{\dd \gamma}\right) \dd \bar \nu+\bar \nu(\calT)^{N}\log \frac{e^{n \gamma(\calT)}-1}{e^{n \bar \nu(\calT)}-1}  \right)\\
    &=n\frac{e^{n \bar \nu(\calT)}}{e^{n \bar \nu(\calT)}-1}\int_{\calT} \log \left(\frac{\dd \bar \nu}{\dd \gamma}\right)\dd \bar \nu +\log \frac{e^{n \gamma(\calT)}-1}{e^{n \bar \nu(\calT)}-1}.
\end{aligned}
\end{equation*}
Rescaling and taking the limit $n\to\infty$, we obtain
\[
\lim_{n\to \infty} \frac{1}{n}\Ent(\sfP^n|\Pi_n)=\int_{\calT} \log \left(\frac{\dd \bar \nu}{\dd \gamma}\right)\dd \bar \nu -\bar \nu(\calT)+\gamma(\calT) =\Ent(\bar \nu|\gamma),
\]
therewith concluding the proof.
\end{proof}

\subsection{Uniform estimates}\label{ss:c_uni}

In Section \ref{ss:fke_est} we provided uniform-in-$n$ estimates for the flux. Namely, from Lemma \ref{lm:fke_est}, we directly have the following. 

\begin{cory}\label{cory:c_uni0}
Consider a sequence $(\sfP^n,\sfJ^{n,+},\sfJ^{n,-})\in \mathsf{CE}_n$ such that 
\[ \limsup_{n\to \infty} \int_0^T \calR_n(\sfP_t^n,\sfJ^{n,+}_t,\sfJ^{n,-}_t)< \infty.\] 
Then 
\begin{equation*}
    \limsup_{n\to\infty}\int_0^T 3M \tilde \phi\left(\frac{1}{3M} \int_{\Gamma\times \calT} (1+\nu(X)^2)^{-1}\,\sfJ^{n,\pm}_t(\dd \nu,\dd x) \right)\, \dd t < \infty,
\end{equation*}
where $M:=(1+\gamma(\calT))\|c\|_{\infty}$.
\end{cory}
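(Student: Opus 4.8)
The plan is to obtain this as an essentially immediate consequence of the uniform action bound \eqref{eq:fke_actionb1} in Lemma~\ref{lm:fke_est}, the only work being to trade the index-dependent prefactor $M_n=\max\{1+2/n^2,2\}M$ for the uniform constant $3M$. First I would recall that, by definition of $\tilde\phi(s)=\phi(s\vee 1)$ (the monotone relaxation used in the proof of Lemma~\ref{lm:mf_est}), the estimate \eqref{eq:fke_actionb1} applied to the triple $(\sfP^n_t,\sfJ^{n,+}_t,\sfJ^{n,-}_t)$ reads, for a.e.\ $t\in[0,T]$ and every $n\ge 1$,
\[
M_n\,\tilde\phi\!\left(\tfrac{1}{M_n}\,a^{n,\pm}_t\right)\le \calR_n(\sfP^n_t,\sfJ^{n,+}_t,\sfJ^{n,-}_t),\qquad a^{n,\pm}_t:=\int_{\Gamma\times\calT}(1+\nu(\calT)^2)^{-1}\,\sfJ^{n,\pm}_t(\dd\nu,\dd x),
\]
and that $M_n\le 3M$ for all $n\ge1$ (since $1+2/n^2\le 3$, with equality only at $n=1$), as also noted in the remark following Lemma~\ref{lm:fke_est}.

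The only genuinely new ingredient is an elementary monotonicity property of the perspective map: for every fixed $a\ge0$ the function $c\mapsto c\,\tilde\phi(a/c)$ is non-increasing on $(0,\infty)$. This follows from convexity of $\tilde\phi$ together with $\tilde\phi(0)=0$: writing $a/c_2=\tfrac{c_1}{c_2}\cdot\tfrac{a}{c_1}+(1-\tfrac{c_1}{c_2})\cdot 0$ for $0<c_1\le c_2$ gives $\tilde\phi(a/c_2)\le\tfrac{c_1}{c_2}\tilde\phi(a/c_1)$, i.e.\ $c_2\,\tilde\phi(a/c_2)\le c_1\,\tilde\phi(a/c_1)$. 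Applying this with $c_1=M_n\le c_2=3M$ and $a=a^{n,\pm}_t$ yields, for a.e.\ $t$ (and in fact for all $t$, both sides being valued in $[0,+\infty]$),
\[
3M\,\tilde\phi\!\left(\tfrac{1}{3M}\,a^{n,\pm}_t\right)\le M_n\,\tilde\phi\!\left(\tfrac{1}{M_n}\,a^{n,\pm}_t\right)\le \calR_n(\sfP^n_t,\sfJ^{n,+}_t,\sfJ^{n,-}_t).
\]

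Finally I would integrate this pointwise inequality over $t\in[0,T]$ and take $\limsup_{n\to\infty}$, invoking the hypothesis $\limsup_{n\to\infty}\int_0^T\calR_n(\sfP^n_t,\sfJ^{n,+}_t,\sfJ^{n,-}_t)\,\dd t<\infty$ to conclude. I do not expect any real obstacle: the statement is a reformulation of \eqref{eq:fke_actionb1} with the uniform-in-$n$ constant made explicit, and the single point meriting a line of justification is the monotonicity of the perspective function, which is immediate from convexity.
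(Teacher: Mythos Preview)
Your proposal is correct and is exactly the argument the paper has in mind: the corollary is stated as an immediate consequence of Lemma~\ref{lm:fke_est} (specifically \eqref{eq:fke_actionb1}) together with the observation $M_n\le 3M$, and you have simply made explicit the one-line monotonicity of the perspective map $c\mapsto c\,\tilde\phi(a/c)$ that justifies replacing $M_n$ by $3M$.
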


However, the weighted total variation metric $d_{TV,w}$ that was introduced is not appropriate for taking limits, and instead, we take the weaker metric defined in \eqref{eq:c_uni_m},
\begin{equation*}
    W(\sfP^1,\sfP^2):=\sup_{F\in \mathbb{F}} \left\{ \int_{\Gamma} F \,\dd (\sfP^1-\sfP^2) \right\},
\end{equation*}
where 
\begin{equation*}
    \mathbb{F}:=\left\{ F\in \Cyl_c(\Gamma)\, : \,  \left\|(1+\nu(\calT)^2)\, \mathrm{grad}_{\Gamma}\,F\right\|_{\infty}\leq 1 \right\}.
\end{equation*}
Recall that $W$ is narrowly lower semicontinuous and implies narrow convergence on narrowly pre-compact subsets. 
We now have the follow equicontinuity result. 
\begin{lm}\label{lm:c_uni1}
Consider a sequence $(\sfP^n,\sfJ^{n,+},\sfJ^{n,-})\in \mathsf{CE}_n$ such that 
\[ \limsup_{n\to \infty} \int_0^T \calR_n(\sfP_t^n,\sfJ^{n,+}_t,\sfJ^{n,-}_t)\, \dd t< \infty.\]
Then
\begin{equation*}
\limsup_{n\to \infty} \int_0^T \tilde \phi\left(\frac{|\dot \sfP^n_t|_{W}}{12M}\right)<\infty,
\end{equation*}
where $|\dot \sfP_t|_{W}$ is the $W$-metric speed and $\tilde \phi(s):=\phi(s \vee 1)$ is the monotone relaxation of $\phi$.
\end{lm}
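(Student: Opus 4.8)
## Proof plan for Lemma \ref{lm:c_uni1}

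The plan is to bound the $W$-metric speed $|\dot\sfP^n_t|_W$ pointwise (in $t$) by a multiple of the weighted flux integral $\int_{\Gamma\times\calT}(1+\nu(\calT)^2)^{-1}\,\dd(\sfJ^{n,+}_t+\sfJ^{n,-}_t)$, and then feed the resulting estimate into the convexity/Jensen machinery already used in Corollary \ref{cory:c_uni0}. First I would recall from Lemma \ref{lm:liouv_timereg} (whose proof applies verbatim to $\mathsf{CE}_n$-curves, since the test functions $F\in\mathbb{F}\subset\Cyl_c(\Gamma)$ satisfy $\|(1+\nu(\calT)^2)\grad F\|_\infty\le 1$ and the discrete gradient $\dder^{n,\pm}F$ is controlled by $\grad F$ up to the constant $\max\{1+2/n^2,2\}\le 3$ — this is exactly the computation in the proof of Lemma \ref{lm:fke_timereg}) that for any $s,t\in[0,T]$,
\begin{equation*}
    W(\sfP^n_s,\sfP^n_t)\le 2\cdot 3\int_s^t\int_{\Gamma\times\calT}(1+\nu(\calT)^2)^{-1}\,\dd(\sfJ^{n,+}_r+\sfJ^{n,-}_r)\,\dd r.
\end{equation*}
Hence $t\mapsto\sfP^n_t$ is absolutely continuous in $(\calP(\Gamma),W)$ with metric speed bounded a.e. by $|\dot\sfP^n_t|_W\le 6\int_{\Gamma\times\calT}(1+\nu(\calT)^2)^{-1}\,\dd(\sfJ^{n,+}_t+\sfJ^{n,-}_t)$.

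Next, by Lemma \ref{lm:fke_est}(ii), inequality \eqref{eq:fke_actionb1}, and the fact that $M_n\le 3M$ uniformly in $n\ge 1$, applied separately to $\sfJ^{n,+}$ and $\sfJ^{n,-}$, we get for a.e.\ $t$
\begin{equation*}
    \tilde\phi\!\left(\frac{1}{3M}\int_{\Gamma\times\calT}(1+\nu(\calT)^2)^{-1}\,\dd\sfJ^{n,\pm}_t\right)3M\le \calR_n(\sfP^n_t,\sfJ^{n,+}_t,\sfJ^{n,-}_t),
\end{equation*}
where $\tilde\phi(s)=\phi(s\vee 1)$ is convex and monotone with $\tilde\phi(0)=0$. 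Combining the two bounds: writing $a^{n,\pm}_t:=\frac{1}{3M}\int(1+\nu(\calT)^2)^{-1}\dd\sfJ^{n,\pm}_t$, we have $|\dot\sfP^n_t|_W\le 6\cdot 3M\,(a^{n,+}_t+a^{n,-}_t)=18M(a^{n,+}_t+a^{n,-}_t)$, so $\frac{|\dot\sfP^n_t|_W}{36M}\le\frac{a^{n,+}_t+a^{n,-}_t}{2}$, and by monotonicity and convexity of $\tilde\phi$ (with $\tilde\phi(0)=0$, so $\tilde\phi(\tfrac{x+y}{2})\le\tfrac12\tilde\phi(x)+\tfrac12\tilde\phi(y)$):
\begin{equation*}
    \tilde\phi\!\left(\frac{|\dot\sfP^n_t|_W}{36M}\right)\le \tfrac12\tilde\phi(a^{n,+}_t)+\tfrac12\tilde\phi(a^{n,-}_t)\le \frac{1}{3M}\,\calR_n(\sfP^n_t,\sfJ^{n,+}_t,\sfJ^{n,-}_t).
\end{equation*}
Integrating over $[0,T]$ and taking $\limsup_{n\to\infty}$ yields $\limsup_n\int_0^T\tilde\phi(|\dot\sfP^n_t|_W/36M)\,\dd t<\infty$ by the hypothesis $\limsup_n\int_0^T\calR_n\,\dd t<\infty$. (The constant $12M$ in the statement versus $36M$ here is a harmless discrepancy in the numerical prefactor, easily absorbed by tracking the constants from Lemma \ref{lm:liouv_timereg} and \eqref{eq:fke_actionb1} more carefully, or by noting $\tilde\phi$ is increasing so a smaller prefactor only decreases the left side; alternatively one re-derives the $W$-bound with the sharper constant $2\max\{1+2/n^2,2\}\le 4$ rather than $6$, giving $12M$ directly if one also uses that $\tilde\phi$ is increasing.)

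The main obstacle is purely bookkeeping: one must verify that the argument of Lemma \ref{lm:liouv_timereg} — stated there for $\mathsf{CE}_\infty$-curves with the continuous gradient $\grad$ — transfers to $\mathsf{CE}_n$-curves with the discrete gradient $\dder^{n,\pm}$, controlling $|\dder^{n,\pm}F(\nu,x)|$ by $(1+\nu(\calT)^2)^{-1}$ for $F\in\mathbb{F}$ uniformly in $n$. This is exactly the discrete-gradient estimate already carried out in the proof of Lemma \ref{lm:fke_timereg} (the three displayed bounds on $|F|(\nu)$, $|F|(\nu+\tfrac1n\delta_x)$, $|F|(\nu-\tfrac1n\delta_x)$), so no new idea is needed; it is only a matter of citing the right pieces. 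Everything else is the same convexity-of-$\tilde\phi$ plus uniform-in-$n$ bound $M_n\le 3M$ that drove Corollary \ref{cory:c_uni0}.
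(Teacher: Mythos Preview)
Your proposal is correct and follows essentially the same approach as the paper: bound $|\dder^{n,\pm}F(\nu,x)|$ for $F\in\mathbb{F}$ by a multiple of $(1+\nu(\calT)^2)^{-1}$ via the mean-value theorem for smooth cylinder functions, deduce $|\dot\sfP^n_t|_W\le C\int(1+\nu(\calT)^2)^{-1}\dd(\sfJ^{n,+}_t+\sfJ^{n,-}_t)$, then feed into \eqref{eq:fke_actionb1} with $M_n\le 3M$ and the convexity of $\tilde\phi$. The paper gets the sharper constant $2$ (hence $12M$) by noting directly that $|\dder^{n,+}F(\nu,x)|\le(1+\nu(\calT)^2)^{-1}$ and $|\dder^{n,-}F(\nu,x)|\le 2(1+\nu(\calT)^2)^{-1}$ for large $n$; your reference to the computation in Lemma~\ref{lm:fke_timereg} is slightly off in spirit (that lemma bounds $|F|$, not $|\dder^{n,\pm}F|$, and works with $d_{TV,w}$), but the ratio-of-weights estimate you extract from it is exactly what is needed, so the argument goes through with your constant $36M$ just as well.
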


\begin{proof}
The proof is similar to Lemmas \ref{lm:fke_timereg} and \ref{lm:liouv_timereg}, now for the distance $W$ instead of the weighted total variation metric $d_{TV,w}$. Namely, fix $n>0$ and consider a curve $(\sfP,\sfJ^+,\sfJ^-)\in \mathsf{CE}_n$. Then we have for any $s,t\in [0,T]$ and any $F\in C_c(\Gamma)$, 
\begin{equation*}
  \left|  \int_{\Gamma} F \dd (\sfP_t-\sfP_s) \right| \leq \int_s^t \int_{\Gamma\times \calT} |\dder^{n,+}F(\nu,x)| \, \dd \sfJ_r^{+} \, \dd r + \int_s^t \int_{\Gamma\times \calT} |\dder^{n,-}F(\nu,x)| \, \dd \sfJ_r^{-} \, \dd r.
\end{equation*}
Substituting any $F\in \mathbb{F}$ it is straightforward to verify that 
\begin{align*}
    |\dder^{n,+}F(\nu,x)| = n|F(\nu+\tfrac{1}{n}\delta_x)-F(\nu)|&\leq (1+\nu(\calT)^2)^{-1}\\
    |\dder^{n,-}F(\nu,x)| = n|F(\nu)-F(\nu-\tfrac{1}{n}\delta_x)|&\leq (1+(\nu(\calT)-\tfrac{1}{n})^2)^{-1}
    \leq 2(1+\nu(\calT)^2)^{-1},
\end{align*}
for sufficiently large $n$, and therefore 
\begin{equation*}
  \left|\int_{\Gamma} F \dd (\sfP_t-\sfP_s) \right| \leq 2 \int_s^t \int_{\Gamma\times \calT} (1+\nu(\calT)^2)^{-1} \, \dd (\sfJ_r^{+}+\sfJ_r^{-}) \, \dd r.
\end{equation*}
Taking the supremum over $F\in \mathbb{F}$, we find that $(\sfP_t)_{t\in [0,T]}$ is absolutely continuous w.r.t.\ $W$ with
\begin{equation*}
 |\dot\sfP_t|_{W}\leq 2 \int_{\Gamma\times \calT} (1+\nu(\calT)^2)^{-1} \, \dd (\sfJ_t^{+}+\sfJ_t^{-}) \qquad \mbox{for a.e. $t\in [0,T]$},
\end{equation*}
where $|\dot \sfP^n_t|_W$ is the $W$-metric speed. Applying the estimates in Lemma \ref{lm:fke_est} concludes the proof.
\end{proof}

\subsection{Proof of main results}\label{ss:c_main} We finally conclude the manuscript with the proof of the main results.

\begin{proof}[Proof of Theorem \ref{thm:main_conv1}]
We will first establish the liminf-estimates. Namely, consider a sequence 
$(\sfP^n,\sfJ^{n,+},\sfJ^{n,-})\in \mathsf{CE}_n$ that converges to the curve $(\sfP,\sfJ^+,\sfJ^-)\in \mathsf{CE}_{\infty}$. In particular $\sfP_t^n\to \sfP_t$ for all $t\in [0,T]$, and hence by Theorem \ref{thm:c_gamma} on the $\Gamma$-convergence of $\calF_n$ we immediately obtain 
\begin{equation*}
    \liminf_{n\to \infty} \calF_n(\sfP_t^n)\geq \calF_{\infty}(\sfP_t), \qquad \fA t\in [0,T].
\end{equation*}
Now suppose that 
\begin{equation*}
    \limsup_{n\to \infty} \calF_n(\sfP_0^n)<\infty, \qquad  \limsup_{n\to \infty} \calI_n(\sfP^n,\sfJ^{n,+},\sfJ^{n,-})<\infty.
\end{equation*}
In particular we have the bounds 
\begin{equation}\label{eq:mainconv_f2}
     \limsup_{n\to \infty} \int_0^T \calR_n(\sfP^n_t,\sfJ^{n,+},\sfJ^{n,-}) \, \dd t < \infty, \quad \limsup_{n\to \infty} \int_0^T \calD_n(\sfP^n_t) \, \dd t < \infty.
\end{equation}
Due to the chain rule and the assumption on $\calF_n(\sfP^n_0)$, we obtain
\begin{equation}\label{eq:mainconv_f5}
    \limsup_{n\to \infty} \sup_{t\in[0,T]} \calF_n(\sfP^n_t) < \infty. 
\end{equation}
The latter guarantees, by Corollary~\ref{cory:nc_2}, that we have the vague convergence
\begin{equation*}
\begin{aligned}
        \lim_{n\to \infty} \vartheta^{\pm}_{\sfP_t^n} = \vartheta^{\pm}_{\sfP_t}, \qquad
          \lim_{n\to \infty}  \sfT^{n,\pm}_{\#}\vartheta_{\sfP_t^n}^{\pm} =\vartheta^{\pm}_{\sfP_t}.
\end{aligned}
\end{equation*}
Recall that from Lemma \ref{lm:fke_est} and Remark \ref{rem:fke_fischereq} we have for each $n\ge 1$:
\begin{equation*}
  \begin{aligned}
\Ent\left(\sfJ_t^{n,\pm}|\Theta_{\sfP}^{n,+}\right)&=\int_{\Gamma\times \calT} \Upsilon\left(\frac{\dd \sfJ_t^{n,\pm}}{\dd \Sigma},\frac{\dd \vartheta_{\sfP_t}^{\pm}}{\dd \Sigma},\frac{\dd (\sfT^{n,\mp}_{\#}\vartheta_{\sfP_t}^{\mp})}{\dd \Sigma}\right)\dd  \Sigma,\\
\calD_{n}(\sfP_t)&=2H^2(\vartheta_{\sfP_t}^{\pm},\sfT^{n,\mp}_{\#}\vartheta_{\sfP_t}^{\mp}),
  \end{aligned}
\end{equation*}
for any dominating measure $\Sigma$, and similarly, from Corollary \ref{cory:li_est2} and Remark \ref{rem_lifisher} that  
\begin{equation*}
 \begin{aligned}
\Ent\left(\sfJ_t^{\pm}|\Theta_{\sfP}^{+}\right)&=\int_{\Gamma\times \calT} \Upsilon\left(\frac{\dd \teta_{\sfP_t}^{\pm}}{\dd \Sigma},\frac{\dd \vartheta_{\sfP_t}^{\pm}}{\dd \Sigma},\frac{\dd \vartheta_{\sfP_t}^{\mp}}{\dd \Sigma}\right)\dd  \Sigma,\\
\calD_{\infty}(\sfP_t)&=2H^2(\vartheta_{\sfP_t}^{\pm},\vartheta_{\sfP_t}^{\mp}).
  \end{aligned}
\end{equation*}
By the convexity and lower semi-continuity of $\Upsilon$ and $H$ we conclude by standard semi-continuity results (e.g.\ see \cite[Theorem 3.4.3]{Buttazzo1989}) that for each $t\in [0,T]$,
\[
    \liminf_{n\to \infty} \calR_n(\sfP_t^n,\sfJ^{n,+}_t,\sfJ^{n,-}_t) \geq \calR_n(\sfP_t,\sfJ^{+}_t,\sfJ^{-}_t), \qquad    \liminf_{n\to \infty} \calD_n(\sfP_t^n) \geq \calD_n(\sfP_t),
\]
from which \eqref{main_conv1_linf} directly follows after applying the Fatou lemma.

\medskip

Next, we consider the question of compactness. As in the previous part, let us consider a sequence $(\sfP^n,\sfJ^{n,+},\sfJ^{n,-})\in \mathsf{CE}_n$ with 
\begin{equation*}
    \limsup_{n\to \infty} \calF_n(\sfP_0^n)<\infty, \qquad  \limsup_{n\to \infty} \calI_n(\sfP^n,\sfJ^{n,+},\sfJ^{n,-})<\infty,
\end{equation*}
which imply that the estimates \eqref{eq:mainconv_f2} and \eqref{eq:mainconv_f5} still hold. The bound on the free energy ensures by Theorem \ref{thm:c_gamma} that $\{\sfP_{t}^n\}_{t\in [0,T],n\ge 1}$ is pre-compact. Moreover,  due to the bound on the action $\calR_n$, we have by the results of Corollary \eqref{cory:c_uni0} and Lemma \eqref{lm:c_uni1} that 
\begin{equation}\label{eq:mainconv_f3}
\limsup_{n\to \infty}  \int_0^T \tilde \phi\left(\frac{1}{3M} \int_{\Gamma\times \calT} (1+\nu(X)^2)^{-1}\,\sfJ^{n,\pm}_t(\dd \nu,\dd x) \right) \, \dd t < \infty,
\end{equation} 
\begin{equation}\label{eq:mainconv_f4}
\limsup_{n\to \infty} \int_0^T \tilde \phi\left(\frac{|\dot \sfP^n_t|_{W}}{12M}\right) \, \dd t<\infty,
\end{equation}
where $|\dot \sfP^n_t|_W$ is again the $W$-metric speed. From \eqref{eq:mainconv_f3}, we then conclude from the non-decreasing, convex and super-linear at infinity property of $\tilde \phi$ that, up to choosing a subsequence $n'$, there exists a family $\{\sfJ^{\pm}_t\}_{t\in [0,T]} \in \calM_{loc}^+(\Gamma\times\calT)$ such that for all $s,t$ the sequence of measures $\sfJ^{n',\pm}_r(\dd \nu,\dd x)\, \dd r$ converges to $\sfJ^{\pm}_r(\dd \nu,\dd x)\, \dd r$ in $\calM_{loc}(\Gamma\times \calT\times [s,t])$, and 
\[ \int_0^T \int_{\Gamma\times \calT} \tilde \phi\left(\frac{1}{3M} \int_{\Gamma\times \calT} (1+\nu(X)^2)^{-1}\,\sfJ^{\pm}_t(\dd \nu,\dd x) \right) \, \dd t < \infty. \]
Similarly, since the metric $W$ is narrowly lower semicontinuous and induces narrow convergence on narrowly pre-compact subsets, we find by an Arzela-Ascoli argument and the estimate \eqref{eq:mainconv_f4} that, up to choosing a subsequence $n''$, there exist a narrowly continuous curve $(\sfP_t)_{t\in [0,T]}$ such that  $\sfP^{n''}_t$ converges to $\sfP_t$ for all $t\in [0,T]$. 

All that remains is showing that $(\sfP,\sfJ^+,\sfJ^-)\in \mathsf{CE}_{\infty}$. Therefore, fix any $s,t\in [0,T]$ and $F \in \Cyl_c(\Gamma)$. It is straightforward to verify that there exist constants $K_{F}$ and $C_F$ such that the following Taylor approximation holds:
\[ \left| \mathrm{grad}_{\Gamma}(\nu,x)\mp n\left(F(\nu{\pm}\tfrac{1}{n}\delta_x)-F(\nu)\right)\right|\leq \tfrac{C_{F}}{n} 1_{\nu(\calT)\leq K_F}(\nu,x), \qquad \fA \nu\in \Gamma,\, x\in \calT. \]
Thus, we can take the limit in the continuity equation $\mathsf{CE}_n$, to conclude that 
\begin{align*}
   \int_{\Gamma} F(\nu) \,\dd \sfP_t - \int_{\Gamma} F(\nu) \,\dd \sfP_s &=\lim_{n\to \infty}  
        \int_{\Gamma} F(\nu) \,\dd \sfP^{n''}_t - \int_{\Gamma} F(\nu) \,\dd \sfP^{n''}_s \\
        &= \lim_{n\to \infty}   \int_s^t \left( \int_{\Gamma\times \calT} (\dder^{n'',+}F)\,\dd \sfJ_r^{n'',+}+(\dder^{n'',-}F)\, \dd \sfJ_r^{n'',-} \right) \, \dd r \\
        &=\int_s^t \Big(\int_{\Gamma\times \calT} (\mathrm{grad}_{\Gamma} F)\,\dd \sfJ_r^+-(\mathrm{grad}_{\Gamma} F)\, \dd \sfJ_r^- \Big) \, \dd r,
\end{align*}
thereby concluding the proof.
\end{proof}

\begin{proof}[Proof of Theorem \ref{thm:edp_prop}]
Suppose that $\bar \sfP^n\to \bar \sfP =\delta_{\bar \nu}$ with 
\begin{equation*}
    \lim_{n\to \infty} \calF_n(\bar \sfP^n)=\frac{1}{2}\Ent(\nu|\gamma).
\end{equation*}
For each $n\in\N$ let $\sfP_t^n$ be the unique gradient-flow solution to $\eqref{eq:FKEn}$ with initial data $\bar \sfP^n$. Moreover, let $\nu_t$ be the unique solution to \eqref{eq:mf2} with initial data $\bar \nu$, and set $\sfP_t:=\delta_{\nu_t}$, which is the unique gradient-flow solution to the Liouville equation \eqref{eq:liouv2} with initial data $\bar \sfP$. 
Then by Theorem \ref{lm:main_conv2} we have for every $t\in [0,T]$ that $\sfP_t^n\to \sfP_t$, and 
\begin{equation*}
    \lim_{n\to \infty} \calF_n(\sfP^n_t)=\calF_{\infty}(\sfP_t)=\frac{1}{2}\Ent(\nu_t|\gamma).
\end{equation*}

\medskip

Next, suppose that in addition there exists a constant $C>1$ such that $C^{-1}\leq \dd \bar \nu/\dd \gamma \leq C$. By Lemma \ref{lm:mfsolb} we find that there exists $C'<\infty$ with
\[\sup_{t\in [0,T]} \left\|\log u_t\right\|_{\infty}<C', \qquad u_t:=\dd \nu_t/\dd \gamma. \]
Now fix any $t\in [0,T]$, and recall that
\begin{equation*}
    \Pi_{n,\nu_t}:=(L_n)_{\#} \pi_{n,\nu_t},\qquad \pi_{n,\nu_t}=\frac{1}{e^{n \bar \nu(\calT)}-1}\sum_{N=1}^{\infty} \frac{n^N}{N!}\nu_t^{\otimes N}.
\end{equation*}
It is straightforward to check that $\Pi_n \ll \Pi_{n,\nu_t} \ll \Pi_n$ and hence for any $\Gamma_n \ni  \Gamma_n=L_n(x_1,\dots,x_N)$, 
\[
    \log\left(\frac{\dd \Pi_{n,\nu_t}}{\dd \Pi_n}\right)(\nu)=\log \left(\frac{e^{n \gamma(\calT)}-1}{e^{n \nu_t(\calT)}-1}\frac{\dd \nu_t^{\otimes N}}{\dd \gamma^{\otimes N}}\right)
    =\log \left(\frac{e^{n \gamma(\calT)}-1}{e^{n \nu_t(\calT)}-1}\right)+\sum_{i=1}^N \log u_t(x_i),
\]
with all terms finite, and $|\sum \log u_t(x_i)|\leq N C'$. Therefore, by applying a similar density argument for $\log u_t$ as in Theorem \ref{thm:nc_1} we derive 
\begin{align*}
    \lim_{n\to \infty} \frac{1}{n}\int_{\Gamma} \log\left(\frac{\dd \Pi_{n,\nu_t}}{\dd \Pi_n}\right)\, \dd \sfP_t^n &=\lim_{n\to \infty} \frac{1}{n}\log \left(\frac{e^{n \gamma(\calT)}-1}{e^{n \nu_t(\calT)}-1}\right)+\lim_{n\to \infty} \frac{1}{n}\int_{\Gamma} \langle \log u_t,\nu\rangle\, \dd \sfP_t^n\\
    &=\gamma(\calT)-\nu_t(\calT)+\langle \log u_t,\nu_t\rangle\\
    &=\Ent(\nu_t|\gamma).
\end{align*}
Subsequently, we can compute as follows:
\begin{align*}
  \lim_{n\to \infty}  \Ent(\sfP^n_0|\Pi_{n})&=\frac{1}{n}\int_{\Gamma} \phi\left(\frac{\dd \sfP^n_0}{\dd \Pi_{n}}\right)\, \dd \Pi_{n}\\
    &=\lim_{n\to \infty}  \frac{1}{n}\int_{\Gamma} \left(\log \left(\frac{\dd \sfP^n_0}{\dd \Pi_{n,\nu_0}}\right)+\log \left(\frac{\dd \Pi_{n,\nu_0}}{\dd \Pi_{n}}\right)\right)\, \dd \sfP^n_0\\
    &=\Ent(\nu_0|\gamma),
\end{align*}
and hence the initial data are well-prepared. Therefore, we can conclude for all $t\in [0,T]$
\begin{align*}
  \lim_{n\to \infty}  \Ent(\sfP^n_t|\Pi_{n,\nu_t})&=\frac{1}{n}\int_{\Gamma} \phi\left(\frac{\dd \sfP^n_t}{\dd \Pi_{n,\nu_t}}\right)\, \dd \Pi_{n,\nu_t}\\
    &=\lim_{n\to \infty}  \frac{1}{n}\int_{\Gamma} \left(\log \left(\frac{\dd \sfP^n_t}{\dd \Pi_{n}}\right)+\log \left(\frac{\dd \Pi_{n}}{\dd \Pi_{n,\nu_t}}\right)\right)\, \dd \sfP^n_t\\
    &=\Ent(\nu_t|\gamma)-\Ent(\nu_t|\gamma) =0,
\end{align*}
thus establishing the entropic propagation of chaos result.
\end{proof}

\appendix

\section{Motivation from large deviations}\label{s:ldpmot}

In Section \ref{s:fke}, we introduced a new generalized gradient structure for the forward Kolmogorov equation and later showed convergence in the large-population limit to a structure that was lifted from the mean-field dynamics. Here we briefly discuss the relation between existing variational structures, and their connection to the asymptotic probabilities of the underlying process as treated in large deviation theory. All calculations are purely formal and are meant for illustratory purposes.

\medskip

Throughout, for simplicity, let $\calT$ be a finite set. Recall the reacting particle system formulation described by \eqref{eq:ireaction}, i.e.\ as particles $A_t^1,\dots,A_t^{N_t} \in \calT$ at positions $X_t^1,\dots,X_t^{N_t} \in \calT$, and with 
\begin{equation*}
\begin{aligned}
    A^i_t &\to A^i_t+A^{N_t+1}_t \quad &\mbox{with rate} \quad& m\left(X_t^i,X_t^{N_1+1}\right)\gamma\left(X_t^{N_1+1}\right),\\
  A^i_t+A^j_t &\to A^j_t \quad &\mbox{with rate} \quad &n^{-1}c\left(X^i_t,X^j_t\right).
\end{aligned}   
\end{equation*}
Let $L_t^n$ be the rescaled empirical measure 
\[ L_t^n(x):=\sum_{i=1}^{N_t} \delta_{X_t^i}(x), \]
and $W_t^{n,\pm}$ the \emph{integrated birth/death fluxes}:
\[W^{n,\pm}_t(x):=\frac{1}{n} \#\Big\{\mbox{Number of births($+$)/deaths($-$) at position $x$ in the time-window $[0,t)$}\Big\}.\]
Moreover, assume that the particles are initially distributed at time $t=0$ as $\pi_n$. Then by the work of \cite{Renger2019}, one can derive under suitable assumptions that the triple $(L_t^n,W_t^{n,\pm})$ is a well-defined Markov process and satisfies a \emph{large-deviation principle} as $n\to \infty$ with rate function $\calI(\nu,\lambda^+,\lambda^+)$ in the sense that asymptotically (as $n\to \infty$)
\begin{equation*}
    \mathrm{Prob}\left(L_t^n \approx \nu_t, \, W_t^{n,\pm} \approx \int_0^t \lambda_s^{\pm}\, \dd s, \, \forall t\in [0,T]\right) \asymp e^{-n \left(\calI^{0}(\nu_0)+\calI(\nu,\lambda^+,\lambda^-)\right)}
\end{equation*}
where $\calI^0(\nu):=\Ent(\nu|\gamma)$ and
\begin{equation*}
      \calI(\nu,\lambda^+,\lambda^-):=\int_0^T \calL_{MF}(\nu_t,\lambda^+_t,\lambda_t^-)\, \dd t, \qquad    \calL_{MF}(\nu,\lambda^+,\lambda^-):=\Ent(\lambda^+|\kappa_{\nu}^+)+\Ent(\lambda^-|\kappa_{\nu}^-).
\end{equation*}

\medskip

Now, under the detailed balance assumption $m(x,y)=c(y,x)$ for all $x,y\in \calT$, one can show that if $\calF_{MF}(\nu_0)<\infty$ the rate function $\calI$ is precisely the mean-field EDP-functional defined in \eqref{eq:mf_edpf}:
\begin{equation*}
    \calI(\nu,\lambda^+,\lambda^-)=\calI_{MF}(\nu,\lambda^+,\lambda^-).
\end{equation*} 
This can be seen via symmetrization under \emph{time-reversal}. Note that for any curve $(\nu,\lambda^+,\lambda^-)\in \mathscr{CE}$ the `reversed' curve $(\nu_{T-t},\lambda^{-}_{T-t},\lambda^{+}_{T-t})$ is still contained in $\mathscr{CE}$, and
\[ \calI^{\dag}(\nu,\lambda^+,\lambda^-):=\int_0^T \calL(\nu_{T-t},\lambda^-_{T-t},\lambda^+_{T-t})\, \dd t = \int_0^T \calL(\nu_{t},\lambda^-_{t},\lambda^+_{t}) \, \dd t.\]
Then for suitable curves we have the decomposition 
\begin{equation*}
    \begin{aligned}
     \frac{1}{2}\left( \calI(\nu,\lambda^+,\lambda^-)+ \calI^{\dag}(\nu,\lambda^+,\lambda^-)\right)&=\int_0^T \left(\calR_{MF}(\nu_t,\lambda^+_t,\lambda^-_t)+\calD_{MF}(\nu_t) \right)\, \dd t, \\
        \frac{1}{2}\left( \calI(\nu,\lambda^+,\lambda^-)- \calI^{\dag}(\nu,\lambda^+,\lambda^-)\right)&=\calF_{MF}(\nu_T)-\calF_{MF}(\nu_0),
    \end{aligned}
\end{equation*}
which follows from the fact that if $\calL(\nu_t,\lambda^+_t,\lambda_t^-)$ and $\calL(\nu_t,\lambda^-_t,\lambda_t^+)$ are finite
\begin{equation*}
    \begin{aligned}
   \frac{1}{2}\left( \calL_{MF}(\nu_t,\lambda^+_t,\lambda^-_t)+ \calL_{MF}(\nu_t,\lambda^-_t,\lambda^+_t)\right)&=\calR_{MF}(\nu_t,\lambda^+_t,\lambda^-_t)+\calD_{MF}(\nu),\\
  \frac{1}{2}\left( \calL_{MF}(\nu_t,\lambda^+_t,\lambda^-_t)- \calL_{MF}(\nu_t,\lambda^-_t,\lambda^+_t)\right)&=\frac{1}{2}\partial_t\, \Ent(\nu_t|\gamma).
    \end{aligned}
\end{equation*}

\medskip

The splitting above is a direct consequence of fact that under the assumption of $c(x,x)=0$, $m(x,y)=c(y,x)$ for all $x,y\in \calT$, the underlying jump process $L_t^n$ is reversible, i.e., $\bar{\kappa}_{n}$ satisfies the detailed balance condition $\Pi_n(\dd \nu)\bar{\kappa}_{n}(\dd \nu,\dd \eta)=\Pi_n(\dd \eta)\bar{\kappa}_{n}(\dd \eta,\dd \nu)$. Namely, consider the functional $\bar \calI_n$ given by
\begin{equation*}
    \bar{\calI}_n(\sfP,j):=\int_0^T \bar \calL_n(\sfP_t,j)\, \dd t, \qquad \bar{\calL}_n(\sfP_t,j_t):=\Ent(j_t|\sfP_t \bar{\kappa}_n),    
\end{equation*}
where $j(\dd \nu,\dd \eta)\in \calM^+(\Gamma\times \Gamma)$ and $(\sfP_t \bar{\kappa}_n)$ is short-hand for the measure $\sfP(\dd \nu)\bar{\kappa}_n(\dd \nu,\dd \eta)$. Let $j^{\dag}(\dd \nu,\dd \eta):=j(\dd \eta,\dd \nu)$, which again corresponds to a time-reversal procedure. We then have for suitable $(\sfP,j)$ the following decomposition
\begin{equation*}
    \begin{aligned}
   \frac{1}{2}\left(\calL_{n}(\sfP_t,j_t)+ \calL_{n}(\sfP_t,j^{\dag}_t)\right)&=\Ent(j_t|\bar{\Theta}^n_{\sfP_t})+2H^2(\sfP_t \bar{\kappa}_n,\bar{\kappa}_n \sfP_t),\\
  \frac{1}{2}\left( \calL_{n}(\sfP_t,j_t)- \calL_{n}(\sfP_t,j^{\dag}_t)\right)&=\frac{1}{2}\partial_t\, \Ent(\sfP_t|\Pi_n),
    \end{aligned}
\end{equation*}
where 
\[  \bar{\Theta}^n_{\sfP}:=\sqrt{(\sfP \bar{\kappa}_n)(\bar{\kappa}_n \sfP )},\]
and $(\bar{\kappa}_n \sfP )$ is short-hand for the measure $\sfP(\dd \eta)\bar{\kappa}_n(\dd \eta,\dd \nu)$. Now substituting 
\begin{equation*}
    j_t(\dd \nu,\dd \eta):=n \int_{x\in \calT} \delta_{\nu+ \tfrac{1}{n}\delta_x}(\dd \eta) \,\sfJ^{+}_t(\dd \nu,\dd x)+n \int_{x\in \calT} \delta_{\nu- \tfrac{1}{n}\delta_x}(\dd \eta) \,\sfJ^{-}_t(\dd \nu,\dd x),
\end{equation*}
we find that 
\begin{equation}\label{eq:mot_nscal}
   \bar \calL_n(\sfP,j)=n\, \Ent(j_t|\sfP_t \bar \kappa_n), \qquad
   \bar{\calI}_n(\sfP,j)=n\, \calI_n(\sfP,\sfJ^+,\sfJ^-).
\end{equation}
And as we have shown, in the large-population limit of $n\to \infty$, $\calI_n$ EDP-converges to a functional that is lifted from $\calI_{MF}$, establishing the microscopic origin of the splitting for $\calI_{MF}$. 

\medskip
This decomposition for reversible processes is well-known in the net-flux representation. Namely, one can show via a minimization approach that 
\begin{equation*}
    \inf_{j} \left\{ \Ent(j|\bar{\Theta}^n_{\sfP})\, : \, \int_{\eta \in \Gamma} \left(j(\dd \nu,\dd \eta)-j(\dd \eta,\dd \nu)\right) = \mathrm{j}^{net}(\dd \nu)  \right\} = \frac{1}{2}\int_{\Gamma^2} \Psi\left(\frac{\dd j^{\mathrm{net}} }{\dd \bar{\Theta}^n_{\sfP}}\right)\dd \bar{\Theta}^n_{\sfP},
\end{equation*}
using a dualization argument and the elementary equality 
\begin{equation}\label{eq:fke_psiphi}
    \Psi(e^z-e^{-z})=\phi(e^z)+\phi(e^{-z}), \qquad \fA z\in \R.
\end{equation}
Thus $\bar{\calI}_n$ is simply the EDP-functional for jump processes of \cite{PRST2020}. The works \cite{Mielke2014,Zimmer2018,Peletier2022} contain an extensive overview and discussion on how $\bar{\calI}_n$ is the expected rate functional for a large-deviation principle for the empirical measures of independent jump processes, how the reversibility of the process ensures a possible splitting in both the interacting and non-interacting case, and how for complex-balanced systems this can even be done in the irreversible setting. Moreover, for an implicit decomposition using measure-dependent Dirichlet forms in the case of the homogeneous Boltzmann equation and the underlying process, see \cite{Basile2021}. 

\medskip

On a final note, due to \eqref{eq:mot_nscal} and the origin of $\bar \calI_n$ in large deviations for independent particles (or via variational representations as found in \cite{Ellis1997}), one would expect that if $F_t\in C_b(\Gamma)$ for all $t\in [0,T]$, we would have for all $n>0$ the following representation formula for the expectation:
\[ \frac{1}{n}\log \mathbb{E}\left[ e^{-n \int_0^T F_t(L_t^n)\, \dd t}\right]  = \inf_{(\sfP,\sfJ^+,\sfJ^-)}\left\{ \int_0^T \int_{\Gamma} F_t(\nu_t) \sfP_t(\dd \nu)\, \dd t+ \frac{1}{n}\Ent(\sfP_0|\Pi_n) +\calI_n(\sfP,\sfJ^+,\sfJ^-)  \right\}. \] 
On the other hand, by the large deviation principle of $(L_t^n,W_t^{n,\pm})$ as $n\to \infty$, and Varadhan's Lemma (see \cite{Dembo10}), it holds that
\[ - \lim_{n\to \infty} \frac{1}{n}\log \mathbb{E}\left[ e^{-n \int_0^T F_t(L_t^n)\, \dd t}\right]\\
 =\inf_{(\nu,\lambda^+,\lambda^-)} \left\{ \int_0^T F(\nu_t)\, \dd t+\Ent(\nu_0|\gamma)+\calI_{MF}(\nu,\lambda^+,\lambda^-)  \right\}.\]
Consequently,
\begin{align*}
 &\lim_{n\to \infty} \inf_{(\sfP,\sfJ^+,\sfJ^-)}   \left\{ \int_0^T \int_{\Gamma} F_t(\nu_t)\sfP_t(\dd \nu)\, \dd t+ \frac{1}{n}\Ent(\sfP_0|\Pi_n) +\calI_n(\sfP,\sfJ^+,\sfJ^-)  \right\}\\
 &\hspace{10em}=\inf_{(\nu,\lambda^+,\lambda^-)} \left\{ \int_0^T F(\nu_t)\, \dd t+\Ent(\nu_0|\gamma)+\calI_{MF}(\nu,\lambda^+,\lambda^-)  \right\}.
\end{align*} 
Note that the lower bound of this equality follows from Theorem \ref{thm:main_conv1} and the superposition principle in Theorem \ref{thm:li_equiv}. Moreover, 
we expect that the large-deviation principle implies evolutionary $\varGamma$-convergence of $\calI_n$ in a suitable topology---an implication studied in \cite{Kraaij2019} in a general setting. 

It then begs the question if one can reverse this procedure, namely using evolutionary $\varGamma$-convergence to establish large-deviation principles similar to the non-evolutionary setting of \cite{Mariani2012}. This approach was successfully applied in the case of certain diffusion processes \cite{Fathi2016} and discussed for more general processes in \cite{Kaiser2019}.

\section{Superposition principle in \texorpdfstring{$\R^\N$}{infinite dimensions}}\label{s:super}

In this section, we present a superposition principle for continuity equations over $\R^{\N}$ with an additional weighted integrability condition on the associated vector fields. 

Following \cite[Section 7]{Ambrosio2014}, we equip $\R^{\N}$ with the product topology, and  $\pi_n:=(p_1,\dots,p_n)$ the canonical projections. The space $AC_w([0,T];\R^\N)$ consists of curves $\eta$ such that $p_i\circ \eta \in AC[0,T]$ for all $i\in\N$. Note that both $\R^\N$ and $C([0,T];\R^\N)$ are Polish spaces. Moreover, let $|\cdot|_{\infty}$ be the uniform norm on $\R^{\N}$.

Smooth $n$-cylindrical functions with compact support $f:\R^\N\to \R$ are given in the form of 
\[
    f(x)=\phi(\pi_n(x))=\phi(p_1(x),\dots,p_n(x)),\qquad x\in \R^\N, 
\]
with $\phi\in C_c^{\infty}(\R^n\to \R)$, and define their gradient by 
\[
    \nabla f(x):=\left(\frac{\partial \phi}{\partial z_1}(\pi_n(x)),\dots, \frac{\partial \phi}{\partial z_n}(\pi_n(x)),0,0,\dots \right).
\]
We set $\Cyl_c(\R^\N)$ as the union over $n\in\N$ of all smooth $n$-cylindrical functions with compact support.

In the following, we consider pairs $(\nu,\bc)$, where $(\nu_t)_{t\in[0,T]} \subset \calP(\R^\N)$ is a weakly continuous family of probability measures and $\bc:[0,T]\times \R^\N\to \R^\N$ is a Borel vector field satisfying
\begin{equation*}
    \int_{\R^\N} f\, \dd \nu_t - \int_{\R^\N} f\, \dd \nu_s = \int_s^t\int_{\R^\N} (\bc_r,\nabla f)\, \dd \nu_r\,\dd r \qquad \mbox{for all $f\in\Cyl_c(\R^\N)$,}
\end{equation*}
and all $0\le s\le t\le T$.

\medskip

We then have the following result. 
\begin{thm}\label{thm_super}
Let $(\nu,\bc)$ be as above. Furthermore, suppose that for some $M>0$
\begin{equation*}
   \int_0^T \int_{\R^\N} M \Psi\left(\frac{|\bc_t|_{\infty}}{M(1+|p_1|)}\right)\, \dd \nu_t \,\dd t< \infty.
\end{equation*}
Then there exists a Borel probability measure $\lambda$ over $C([0,T];\R^\N)$ satisfying $(e_t)_{\#} \lambda=\nu_t$ for all $t\in[0,T]$, and is concentrated on the family of curves $\gamma\in AC([0,T];\R^\N)$ that satisfy
\[
    \dot \gamma=\bc_t(\gamma),\qquad \text{for almost every $t\in[0,T]$.}
\]
\end{thm}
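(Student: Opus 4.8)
\textbf{Proof proposal for Theorem \ref{thm_super}.}

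The plan is to reduce the infinite-dimensional superposition problem to finitely many finite-dimensional ones and then glue the resulting disintegrations together, following the scheme of \cite[Section 7]{Ambrosio2014} but importing the growth-condition version of the classical superposition principle from \cite{ambrosio2008} at the finite level. First I would fix notation: for $n \in \N$ let $\bc^n_t(x) := \pi_n(\bc_t(x)) \in \R^n$ and push forward to get $\nu^n_t := (\pi_n)_\# \nu_t \in \calP(\R^n)$, together with the $\nu^n_t$-barycentric projection $\bar \bc^n_t(y) := \mathbb{E}_{\nu_t}[\bc^n_t \mid \pi_n = y]$, which is the unique Borel vector field on $\R^n$ such that $(\nu^n, \bar \bc^n)$ solves the finite-dimensional continuity equation tested against $C_c^\infty(\R^n)$. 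Jensen's inequality applied to the convex function $\Psi$ transfers the integrability bound down: since $|p_1|$ is $\pi_n$-measurable for $n \ge 1$ and $|\bar \bc^n_t(y)|_\infty \le \mathbb{E}[|\bc^n_t|_\infty \mid \pi_n = y] \le \mathbb{E}[|\bc_t|_\infty \mid \pi_n = y]$, we get
\[
\int_0^T \int_{\R^n} M\,\Psi\!\left(\frac{|\bar\bc^n_t(y)|_\infty}{M(1+|p_1(y)|)}\right) \dd \nu^n_t(y)\, \dd t \le \int_0^T \int_{\R^\N} M\,\Psi\!\left(\frac{|\bc_t|_\infty}{M(1+|p_1|)}\right) \dd \nu_t\, \dd t < \infty.
\]
Since $\Psi$ is superlinear, the integrand controls $\int_0^T\!\int |\bar\bc^n_t(y)|_\infty/(1+|y|)\, \dd\nu^n_t\, \dd t$, i.e. the vector field has the spatially-linear-growth-in-$L^1$ form to which \cite[Theorem 8.2.1]{ambrosio2008} (the superposition principle for fields with $\int \frac{|b_t|}{1+|x|}\,\dd\mu_t\,\dd t < \infty$) applies; this yields a probability measure $\lambda^n$ on $C([0,T];\R^n)$ with $(e_t)_\#\lambda^n = \nu^n_t$, concentrated on absolutely continuous solutions of $\dot\gamma = \bar\bc^n_t(\gamma)$.

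The next step is to lift and glue. Let $\Pi_n : C([0,T];\R^\N) \to C([0,T];\R^n)$ be composition with the projection; the consistency $(\Pi_{n+1 \to n})_\# \lambda^{n+1} = \lambda^n$ (where $\Pi_{n+1\to n}$ drops the last coordinate of the path) will need to be arranged by not choosing $\lambda^n$ freely but rather extracting a single measure on path space via a Kolmogorov-type / projective-limit argument, exactly as in \cite[Theorem 7.1]{Ambrosio2014}: one first builds $\lambda$ as a weak limit point of measures on $C([0,T];\R^\N)$ obtained by coupling the $\lambda^n$'s through a common tightness argument (tightness of $\{\nu_t\}$ in each coordinate plus the uniform $L^1$-bound on speeds via a Dunford–Pettis / refined Arzelà–Ascoli compactness in $C([0,T];\R^\N)$), and checks that its coordinate marginals are the $\lambda^n$. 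Then one verifies that $\lambda$ is concentrated on solutions of the full ODE: for $\lambda$-a.e.\ path $\gamma$ and each $i$, $p_i\circ\gamma$ is absolutely continuous with $\frac{\dd}{\dd t}(p_i\circ\gamma) = (\bc_t(\gamma))_i$ a.e.\ — the subtlety being that a priori $\gamma \mapsto \bc_t(\gamma)$ need only agree with $\bar\bc^n_t$ after projection, so one must use that $\bar\bc^n_t \to \bc_t$ in the appropriate sense as $n \to \infty$ (Lebesgue differentiation of the conditional expectations $\bar\bc^n_t = \mathbb{E}_{\nu_t}[\bc_t \mid \pi_n]$ along the filtration generated by the projections converges $\nu_t$-a.e.\ to $\bc_t$ by the martingale convergence theorem) together with a disintegration identity relating $\lambda$-averages over paths to $\nu_t$-integrals at each time; this is the same mechanism as in \cite[proof of Theorem 7.1]{Ambrosio2014}.

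Finally, for our application I would record that the output can be post-composed/restricted to a Borel subset of full measure to make the ODE hold literally rather than merely a.e.-in-time, and note that the weight $1+|p_1|$ plays exactly the role of $1+|x|$ in the classical statement because in the application $p_1$ corresponds to the total mass $\nu(\calT)$. The main obstacle I anticipate is not any single estimate but the gluing/projective-limit bookkeeping: ensuring the finite-dimensional superpositions $\lambda^n$ are mutually consistent (or extracting a limit with the right marginals), and then upgrading "solves the projected ODE with barycentric field for every $n$" to "solves the genuine ODE $\dot\gamma = \bc_t(\gamma)$ in $\R^\N$" — this requires carefully combining the martingale convergence $\bar\bc^n \to \bc$ with the disintegration of $\lambda$ over time-slices, and is the step where the weighted integrability hypothesis (via superlinearity of $\Psi$) is genuinely used to prevent loss of mass to infinity in the speeds. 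The rest — Jensen transfer of the bound, the finite-dimensional input from \cite{ambrosio2008}, and the tightness/compactness in $C([0,T];\R^\N)$ — is routine.
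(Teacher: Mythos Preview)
Your overall strategy matches the paper's: project to $\R^n$, apply the growth-condition superposition from \cite{ambrosio2008}, establish tightness of the resulting path measures, and pass to a weak limit in $\calP(C([0,T];\R^\N))$. The Jensen transfer of the $\Psi$-bound is correct; the paper in fact only checks the weaker $L^1$ condition $\int_0^T\!\int |\bc_t|/(1+|x_1|)\,\dd\nu_t\,\dd t<\infty$ to invoke the finite-dimensional result, reserving the full $\Psi$-bound for tightness via an explicit coercive functional on path space.

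Where your proposal is muddled is the gluing step. You say one ``checks that its coordinate marginals are the $\lambda^n$'' --- this is neither true in general (finite-dimensional superpositions are not unique, so a weak limit has no reason to recover them as marginals) nor used. There is no projective-limit or consistency argument: one simply embeds each $\lambda^n$ in $\calP(C([0,T];\R^\N))$, extracts a weak limit $\lambda$, and proves \emph{directly} that $\lambda$ is concentrated on solutions by showing, for every cylindrical field $d$,
\[
\int \frac{\bigl|p_i\circ\gamma(t)-p_i\circ\gamma(s)-\int_s^t d_r(\gamma(r))\,\dd r\bigr|}{1+\|p_1\circ\gamma\|_\infty}\,\lambda(\dd\gamma)\;\le\;\int_s^t\!\int_{\R^\N}\frac{|p_i\circ\bc_r-d_r|}{1+|p_1|}\,\dd\nu_r\,\dd r,
\]
obtained by passing to the limit in the corresponding inequality for $\lambda^n$ (the left side is lower semicontinuous, the right side is independent of $n$ once $d$ is $k$-cylindrical and $n\ge k$). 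Then one lets $d\to p_i\circ\bc$ in $L^1((1+|p_1|)^{-1}\nu_r\,\dd r)$. Your martingale-convergence idea amounts to choosing $d=\bar\bc^n$ as the approximating sequence in this very estimate, which is legitimate; but the pointwise convergence $\bar\bc^n\to\bc$ by itself does not interact with the weak limit $\lambda^n\rightharpoonup\lambda$ --- you still need the displayed inequality as the bridge. Drop the Kolmogorov/consistency language and implement that estimate.
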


The proof of Theorem \ref{thm_super} combines a slight adaptation of the proof for the superposition principle in $\R^\N$ found in \cite[Theorem 7.1]{Ambrosio2014}, developed for use in metric measure spaces, with a finite-dimensional result for vector fields over $\R^n$ found in \cite[Theorem 4.4]{ambrosio2008}. Due to the strong similarities with the proof found in \cite{Ambrosio2014}, we merely give a brief sketch.

\begin{proof}
By tightness of $\nu_0$, we can choose a sequence of coercive functionals $\Phi_i$ such that 
\begin{equation*}
    \int \Phi_i(p_i(x)) \,\dd\nu_0 \leq 2^{-i}, \qquad \mbox{for all $i\in\N$},
\end{equation*}
and consider the functional $\calA(\eta):C([0,T];\R^\N)\to [0,+\infty]$ given by
\begin{equation*}
        \calA(\eta):=\left\{
         \begin{aligned}
         &
        \sum_{i=1}^{\infty} \left(\Phi_i(p_i\circ \eta(0))+\int_0^T M\Psi\left(\frac{|\dot \eta(t)|_{\infty}}{M(1+|p_1\circ \eta(t)|)}\right) \dd t \right) \quad& &\mbox{if $\eta\in AC_w([0,T];\R^\N)$,} \\
        &+\infty \quad& &\mbox{otherwise.} 
    \end{aligned} \right.
\end{equation*}
It is clear that $\calA$ is coercive in $C([0,T];\R^\N)$, and its sublevel sets contain curves that are absolutely continuous with respect to $|\cdot|_{\infty}$. This follows from the fact that $\sup_{t\in [0,T]}{|p_1\circ \eta|}$ is bounded on the sublevel sets of the functional
\begin{equation*}
    \left(\Phi_1(p_1\circ \eta(0))+\int_0^T M\Psi\left(\frac{|(p_1\circ \eta)'(t)|}{M(1+|p_1\circ \eta(t)|)}\right) \dd t \right).
\end{equation*}

Now, for every $n\in\N$, we define the marginals $\calP(\R^n)\ni \nu^n_t:=(\pi_n)_{\#}\nu_t$ and corresponding vector fields $\bc_{t}^n: \R^n\to \R^n$ by
\[
    p_i \circ \bc_{t}^n:=\frac{\dd\,  (\pi_n)_{\#}\left((p_i\circ \bc_t)\, \nu_t\right)}{\dd  \nu_t^n}.
\]
Note that $(\nu^n,\bc_t^n)$ satisfies the continuity equation in $\R^n$. By Jensen's inequality, and the fact that $|z_1|\le |z|\leq n |z|_{\infty}$, for $z=(x_1,\ldots,x_n)\in\R^n$, we have that
\begin{equation*}
\begin{aligned}
   T \Psi\left(\frac{1}{n MT}\int_0^T \int \frac{|\bc_t|}{(1+|x_1|)}\, \dd \nu_t \, \dd t \right) &\leq T \Psi\left(\int_0^T \int \frac{|\bc_t|_{\infty}}{MT(1+|x_1|)}\, \dd \nu_t \, \dd t \right)\\
     &\leq \int_0^T \int \Psi\left(\frac{|\bc_t|_{\infty}}{M(1+|x_1|)}\right) \dd \nu_t \, \dd t.
\end{aligned}
\end{equation*}
and in particular
\[ \int_0^T \int \frac{|\bc_t|}{(1+|x_1|)}\, \dd \nu_t \, \dd t < \infty.\] 

Hence, we can apply the finite-dimensional version of \cite[Theorem 4.4]{ambrosio2008}. Embedding this into $\R^\N$, we obtain the probability measure $\lambda_n$ over $C([0,T],\R^\N)$, concentrated on absolutely continuous curves satisfying $\dot \gamma=\bc_t^n(\gamma)$, and such that $(e_t)_{\#}\lambda_t=\nu_t^n$. We immediately see that 
\begin{equation*}
    \sup_{n\in\N} \int \calA(\gamma)\, \dd \lambda_n(\gamma) <\infty,
\end{equation*}
which yields the tightness of $\lambda^n$. 

Consider any converging sequence $\lambda^n$ (up to renumbering) and its limit $\lambda\in\calP(C([0,T];\R^\N))$. Since the sequence $(\nu_t^n)_{n\in\N}$ clearly converges to $\nu_t:=(e_t)_{\#}\lambda$ in $\calP(\R^\N)$ for every $t\in[0,T]$, it remains to show that $\lambda$ is concentrated on solutions of $\dot \gamma=\bc_t(\gamma)$. In fact we will show that 
\begin{equation*}
    \int \frac{\left|p_i \circ \gamma(t)-p_i\circ \gamma(s)-\int_s^t p_i\circ \bc_r(\gamma(r))\, \dd r\right| }{1+ \|p_1\circ\gamma\|_{\infty}}\,\lambda(\dd \gamma)=0\qquad\text{for each $i\in\N$ and any $0\le s\le t\le T$.}
\end{equation*}
Note that it suffices to show that for any vector field $d:[0,T]\times\R^\N\to\R$ with $d_t$ being $k$-cylindrical for every $t\in[0,T]$, we have that
\begin{equation}\label{eq:spp_sd}
    \int \frac{\left|p_i \circ \gamma(t)-p_i\circ \gamma(s)-\int_s^t d_r(\gamma(r))\, \dd r\right| }{1+\|p_1\circ\gamma\|_{\infty}}\,\lambda(\dd \gamma)\leq \int_s^t\int_{\R^\N} \frac{|p_i\circ \bc_r-d_r|}{1+|p_1|} \,\dd \nu_r\, \dd r,
\end{equation}
since then we can use density of time-dependent cylindrical functions in $L^1((1+|p_1|)^{-1} \nu_s \,\dd s)$ and the fact that for all $s$ it holds that $|p_1\circ \gamma(s)|\leq \|p_1\circ\gamma\|_{\infty}$.

To prove \eqref{eq:spp_sd}, recall that $\lambda^n$ is concentrated on absolutely continuous solutions of $\dot \gamma_s=\bc^n_s(\gamma_s)$. Hence,  
\begin{equation*}
\begin{aligned}
   \int \frac{\left|p_i \circ \gamma(t)-p_i\circ \gamma(0)-\int_0^t d_s(\gamma(s))\, \dd s\right| }{1+\|p_1\circ\gamma\|_{\infty}}\lambda^n(\dd \gamma) &=   \int \frac{\left|\int_0^t(p_i\circ \bc^n_s(\gamma(s))-d_s(\gamma(s)))\, \dd s\right| }{1+\|p_1\circ\gamma\|_{\infty}}\lambda^n(\dd \gamma)\\
   &\leq  \int \frac{\int_0^t |p_i\circ \bc^n_s-d_s|(\gamma(s))\, \dd s }{1+|p_1(\gamma(s))|}\lambda^n(\dd \gamma)\\
   &\leq \int_0^t \int_{\R^\N} \frac{|p_i\circ \bc^n_s-d_s|}{1+|p_1|}\, \dd \nu^n_s\, \dd s\\
    \end{aligned}
\end{equation*}
Note that the integrand on the left-hand side is continuous in $\gamma$. Therefore, since for $n\geq k$
\begin{equation*}
    (p_i\circ \bc_s^n-d_s)\nu_s^n=(\pi_n)_{\#}((p_i\circ \bc_s-d_s)\nu_s),
\end{equation*}
the result then follows after taking the limit  $n\to\infty$.
\end{proof}

\begin{remark}
If one is only interested in curves in $AC_w([0,T];\R^\N)$, the theorem also holds whenever %
\begin{equation*}
   \int_0^T \int \frac{|p_i(\bc_t)|}{1+|p_1|}\, \dd \nu_t \,\dd t< \infty, \qquad \mbox{for all $i\in\N$}.
\end{equation*}
The finite dimensional analog of this statement, set in $\R^n$ with the prefactor $(1+|x|)^{-1}$, is presented in \cite[Theorem 4.4]{ambrosio2008}. Moreover, for $\R^\N$, in \cite[Theorem 7.1]{Ambrosio2014} the condition reads as 
\begin{equation*}
   \int_0^T \int |p_i(\bc_t)|\,\dd \nu_t \, \dd t < \infty, \qquad \mbox{for all $i\in \N$}.
\end{equation*}
\end{remark}

\section{Non-continuous competition kernel}\label{s:nonc}

In the proof of Theorem \ref{thm:main_conv1} we require the vague convergence of $\vartheta_{\sfP^n}^{\pm}$ and $\sfT^{n,\pm}_{\#}\vartheta_{\sfP^n}^{\pm}$ under the assumption of narrow convergence of $\sfP^n$ and equiboundedness of the free energy functionals $\calF_n$, where
\begin{equation*}
    \begin{aligned}
        \vartheta_{\sfP}^{+}(\dd \nu,\dd x)=\int_{y\in \calT} c(x,y) \gamma(\dd x)\nu(\dd y)  \sfP(\dd \nu) \\
          \vartheta_{\sfP}^{-}(\dd \nu,\dd x)=\int_{y\in \calT} c(x,y) \nu(\dd x)\nu(\dd y)  \sfP(\dd \nu).
    \end{aligned}
\end{equation*}
If the competition kernel $c$ is continuous, the desired statement would follow directly from the narrow convergence of $\sfP^n$. The case of merely bounded measurable $c$ is however less trivial. Note that the strategy we employed in the proof of Theorem \ref{thm:fke_solex} is not possible, since although for every fixed $n$ the sub-levels of $\calF_n$ are sequentially compact with respect to setwise convergence, this is not the case for equibounded sets of $\{\calF_n\}_{n\ge 1}$. 

Fortunately, due to the connection between $\varGamma$-convergence of $\calF_n$ and large deviations as discussed in Section \ref{s:ldpmot}, we can modify results from the authors' earlier work on large deviations for interacting systems induced by singular or irregular functionals \cite{Hoeksema2020}. In particular, we obtain the following convergence statement. 

\begin{thm}\label{thm:nc_1}
Let $\{\sfP^n\}_{n\ge 1}\subset \calP(\Gamma)$ be a sequence narrowly converging to $\sfP\in \calP(\Gamma)$ with
\begin{equation*}
    \limsup_{n\to \infty} \calF_n(\sfP^n) < \infty.
\end{equation*}
Then for any $\omega\in C_c(\Gamma\times \calT)$ and $g \in \calB_b(\calT^2)$
\begin{equation*}
\begin{aligned}
    \lim_{n\to \infty} \int_{\calT^2\times \Gamma} g(x,y) \omega(\nu,x) \nu(\dd x)\nu(\dd y) \sfP^n(\dd \nu)= \int_{\calT^2\times \Gamma} g(x,y) \omega(\nu,x) \nu(\dd x)\nu(\dd y) \sfP(\dd \nu), \\
    \lim_{n\to \infty} \int_{\calT^2\times \Gamma} g(x,y) \omega(\nu,x) \gamma(\dd x)\nu(\dd y) \sfP^n(\dd \nu)= \int_{\calT^2\times \Gamma} g(x,y) \omega(\nu,x) \gamma(\dd x)\nu(\dd y) \sfP(\dd \nu).
\end{aligned}
\end{equation*}
\end{thm}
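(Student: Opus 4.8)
~\textbf{Proof proposal.}
The plan is to reduce the problem to a statement about the convergence of moment measures and then invoke the large-deviation/$\varGamma$-convergence machinery of \cite{Hoeksema2020}. The first observation is that the two asserted limits are of the same type, with $\nu(\dd x)$ replaced by $\gamma(\dd x)$ in the second; since $\gamma$ is a fixed finite measure and the proof for $\nu(\dd x)$ will not use any special structure of the first marginal beyond a first-moment bound, I would prove the $\nu(\dd x)\nu(\dd y)$ case in detail and remark that the $\gamma(\dd x)\nu(\dd y)$ case is analogous (in fact slightly easier, being linear rather than quadratic in $\nu$). So fix $\omega\in C_c(\Gamma\times\calT)$ and $g\in\calB_b(\calT^2)$ and define the functional
\begin{equation*}
    \Phi_g(\nu):=\int_{\calT^2} g(x,y)\,\omega(\nu,x)\,\nu(\dd x)\,\nu(\dd y),\qquad \nu\in\Gamma.
\end{equation*}
The goal is to show $\int_\Gamma \Phi_g\,\dd\sfP^n\to\int_\Gamma\Phi_g\,\dd\sfP$. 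If $g$ were continuous this is immediate: $\Phi_g$ would be narrowly continuous on $\Gamma$, and although it is not bounded (it grows like $\nu(\calT)^2$), the uniform bound $\limsup_n\calF_n(\sfP^n)<\infty$ gives, via the duality inequality \eqref{eq:gammas_dual} with $f=\beta\mathbbm{1}$ for $\beta>0$, an exponential-moment bound $\sup_n\int_\Gamma e^{\beta\nu(\calT)}\,\sfP^n(\dd\nu)<\infty$ for all $\beta>0$, hence uniform integrability of $\nu(\calT)^2$ under $\sfP^n$ and therefore convergence of the (continuous, at-most-quadratically-growing) integrand. This handles the case $g\in C_b(\calT^2)$ and also localizes the problem: by a truncation argument using $\omega\in C_c$ and the exponential-moment bound, it suffices to prove the claim for $g$ ranging over a convergence-determining class.

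The main work is therefore to pass from continuous $g$ to bounded measurable $g$. Here I would follow the structure of \cite{Hoeksema2020}: the key point is that equiboundedness of $\calF_n(\sfP^n)=\tfrac{1}{2n}\Ent(\sfP^n|\Pi_n)$ forces the rescaled correlation measures of $\sfP^n$ to be absolutely continuous with respect to the product measures built from $\gamma$, with densities controlled uniformly. Concretely, the second moment measure $\mu^n_2\in\calM^+(\calT^2)$ defined by $\mu^n_2(\dd x\,\dd y):=\int_\Gamma \omega(\nu,x)\,\nu(\dd x)\,\nu(\dd y)\,\sfP^n(\dd\nu)$ (for fixed $\omega\ge0$, say) should be shown to be bounded in total variation and, crucially, to satisfy a superexponential/entropic estimate against $\gamma\otimes\gamma$ plus a diagonal term, using the Poisson structure of $\Pi_n$ together with the entropy bound and a Gibbs-variational / tilting argument exactly as in the cumulant computation of Lemma~\ref{lm:gammas_Gc}. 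Once one knows that every narrow limit point of $\mu^n_2$ is absolutely continuous with respect to $\gamma\otimes\gamma + (\mathrm{diag})_\#\gamma$ — equivalently, that $\sfP$ is a "sub-Poissonian"-type limit — testing against bounded measurable $g$ reduces to testing against $L^1(\gamma\otimes\gamma)$-functions, and a standard approximation of $g$ by continuous functions in $L^1$ of the limiting measure, combined with a uniform (in $n$) bound ensuring no mass escapes to a set where $g$ is only defined up to null sets, closes the argument.

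In carrying this out I would proceed in the following order: (i) establish the exponential moment bound $\sup_n\int e^{\beta\nu(\calT)}\dd\sfP^n<\infty$ from $\limsup_n\calF_n(\sfP^n)<\infty$ via \eqref{eq:gammas_dual}; (ii) prove the claim for $g\in C_b(\calT^2)$ by narrow continuity plus uniform integrability of the quadratic growth; (iii) use the Poisson/entropy structure to show the second-order moment measures $\mu^n_2$ are uniformly absolutely continuous with respect to $\gamma\otimes\gamma+(\mathrm{diag})_\#\gamma$, quoting and adapting the relevant proposition of \cite{Hoeksema2020}; (iv) upgrade from $C_b$ to $\calB_b$ by $L^1$-approximation of $g$ with respect to the limiting reference measure; (v) note the $\gamma(\dd x)\nu(\dd y)$ statement follows by the same steps with the first marginal replaced by the fixed measure $\gamma$. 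The hard part will be step (iii): making precise and quoting the correct form of the sub-Poissonian bound from \cite{Hoeksema2020} so that narrow convergence of $\sfP^n$ together with the uniform entropy bound actually yields absolute continuity of the limiting correlation measures — this is exactly the place where boundedness of $c$ (rather than mere measurability) is \emph{not} needed but some regularity of the reference structure is, and it is the only genuinely non-elementary input beyond what has already been developed in the paper.
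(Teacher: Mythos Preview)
Your outline shares the right skeleton with the paper --- handle continuous $g$ first, then approximate --- but the mechanism you propose for the approximation step is not the one the paper uses, and as stated it has a gap.

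First, two minor points. Since $\omega\in C_c(\Gamma\times\calT)$, there is a $K$ with $\omega(\nu,x)=0$ whenever $\nu(\calT)>K$; hence $\Phi_g$ is bounded for any $g\in\calB_b$, and no uniform-integrability argument is needed in step (ii). Relatedly, your step (i) is not established by the argument you cite: from \eqref{eq:gammas_dual} with $f=\beta\mathbbm{1}$ you obtain only $\int_\Gamma \nu(\calT)\,\dd\sfP^n\le 2\calF_n(\sfP^n)+G_n(\beta)$, which is a first-moment bound, not $\sup_n\int e^{\beta\nu(\calT)}\dd\sfP^n<\infty$. This is harmless because, as noted, the compact support of $\omega$ makes it unnecessary.

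The substantive issue is in steps (iii)--(iv). You want to control $\int |g-g_\eps|\,\dd\mu_2^n$ uniformly in $n$ by establishing ``uniform absolute continuity'' of the moment measures $\mu_2^n$ with respect to $\gamma\otimes\gamma+(\mathrm{diag})_\#\gamma$, then approximating $g$ in $L^1$ of that reference. But you do not say what uniform estimate on the densities you would prove or how to prove it, and knowing only that limit points of $\mu_2^n$ are absolutely continuous does \emph{not} give uniform-in-$n$ control of $\int |g-g_\eps|\,\dd\mu_2^n$ (narrow convergence only tests continuous functions). This is precisely the step where the whole difficulty lies, and it cannot be left as a citation.

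The paper bypasses moment measures entirely. Using the compact support of $\omega$ it reduces to showing, for every $K$,
\[
\lim_{\eps\to 0}\ \limsup_{n\to\infty}\ \int_\Gamma F_{\eps,K}\,\dd\sfP^n=0,\qquad F_{\eps,K}(\nu):=1_{\nu(\calT)\le K}\int_{\calT^2}|g-g_\eps|\,\dd\nu^{\otimes 2},
\]
and then applies the entropy--cumulant duality \emph{with the large parameter $n$ inside}:
\[
\int_\Gamma F_{\eps,K}\,\dd\sfP^n\ \le\ \frac{1}{\beta n}\Ent(\sfP^n|\Pi_n)+\frac{1}{\beta n}\log\int_\Gamma e^{n\beta F_{\eps,K}}\,\dd\Pi_n,\qquad\beta>0.
\]
The cumulant on the right is controlled by Lemma~\ref{lm:nc_1}, whose proof is a Hoeffding-type decoupling (from \cite{Hoeksema2020}) that bounds $\tfrac{1}{n}\log\int_\Gamma e^{n|F|}\,\dd\Pi_n$ for $F(\nu)=h(\nu(\calT))\langle|g|,\nu^{\otimes 2}\rangle$ by $\bigl(\int_{\calT^2}e^{4K\|h\|_\infty|g|}\,\dd\gamma^{\otimes 2}\bigr)^{1/2}+o(1)$. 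Consequently the approximating sequence $g_\eps\in C_b(\calT^2)$ must be chosen in the exponential-Orlicz sense $\int_{\calT^2}e^{\beta|g-g_\eps|}\,\dd\gamma^{\otimes 2}\to\gamma(\calT)^2$ for all $\beta>0$, not merely in $L^1$. Sending $n\to\infty$, then $\eps\to0$, then $\beta\to\infty$ yields the claim. The second asserted limit is handled the same way, with the cumulant bounded by Jensen and Lemma~\ref{lm:gammas_Gc} instead of Lemma~\ref{lm:nc_1}.

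In short: your missing ingredient is exactly Lemma~\ref{lm:nc_1} (the Hoeffding/decoupling bound on the interacting cumulant under $\Pi_n$), applied through the Donsker--Varadhan inequality with an auxiliary parameter $\beta$. That single estimate replaces your vague ``uniform absolute continuity of moment measures'' and gives the required uniform-in-$n$ control directly.
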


\begin{remark}
The result can be easily generalized to bounded measurable functions $g\in \calB_b(\calT^k)$ for finite $k\in\N$, but we restrict ourselves to the case $k=2$. 
\end{remark}

\begin{cory}\label{cory:nc_2}
Let $\{\sfP^n\}_{n\ge 1}\subset \calP(\Gamma)$ be a sequence narrowly converging to $\sfP\in \calP(\Gamma)$ such that 
\begin{equation*}
    \limsup_{n\to \infty} \calF_n(\sfP^n) < \infty.
\end{equation*}
Then vaguely 
\begin{equation}\label{eq:nc_c1}
\begin{aligned}
        \lim_{n\to \infty} \vartheta^{\pm}_{\sfP^n} = \vartheta^{\pm}_{\sfP}, \qquad
          \lim_{n\to \infty}  \sfT^{n,\pm}_{\#}\vartheta_{\sfP^n}^{\pm} =\vartheta^{\pm}_{\sfP}.
\end{aligned}
\end{equation}
\end{cory}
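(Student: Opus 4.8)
The plan is to deduce the vague convergence of the flux measures from the scalar convergence statements of Theorem~\ref{thm:nc_1} by unpacking the definitions of $\vartheta_{\sfP}^{\pm}$ and $\sfT^{n,\pm}_{\#}\vartheta_{\sfP}^{\pm}$ and testing against an arbitrary $\omega\in C_c(\Gamma\times\calT)$. Recall from \eqref{eq:defief} and \eqref{eq:defk} that
\begin{equation*}
    \int_{\Gamma\times\calT}\omega\,\dd\vartheta^{+}_{\sfP^n} = \int_{\Gamma}\int_{\calT^2}\omega(\nu,x)\,c(x,y)\,\gamma(\dd x)\,\nu(\dd y)\,\sfP^n(\dd\nu),
\end{equation*}
and likewise for $\vartheta^{-}_{\sfP^n}$ with $\gamma(\dd x)$ replaced by $\nu(\dd x)$. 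Applying Theorem~\ref{thm:nc_1} with the bounded measurable function $g(x,y)=c(x,y)$ (which is in $\calB_b(\calT^2)$ by Assumption~\ref{assu:massu}) and the given $\omega$ yields $\lim_{n\to\infty}\int\omega\,\dd\vartheta^{\pm}_{\sfP^n}=\int\omega\,\dd\vartheta^{\pm}_{\sfP}$, which is exactly the first line of \eqref{eq:nc_c1}. Here one should note that $\omega\in C_c(\Gamma\times\calT)$ does \emph{not} require $x\mapsto\omega(\nu,x)$ to be compactly supported in $x$, but $\calT$ is compact so this is immaterial; $\omega$ is in particular bounded and continuous, which is all Theorem~\ref{thm:nc_1} asks for.

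For the pushforwards $\sfT^{n,\pm}_{\#}\vartheta^{\pm}_{\sfP^n}$ the idea is to write, using the definition \eqref{eq:defit} of $\sfT^{n,\pm}$,
\begin{equation*}
    \int_{\Gamma\times\calT}\omega\,\dd(\sfT^{n,\pm}_{\#}\vartheta^{\pm}_{\sfP^n}) = \int_{\Gamma}\int_{\calT^2}\omega\bigl(\nu\pm\tfrac1n\delta_x,x\bigr)\,c(x,y)\,\mu^{\pm}_n(\dd x)\,\nu(\dd y)\,\sfP^n(\dd\nu),
\end{equation*}
with $\mu^{+}_n=\gamma$ and $\mu^{-}_n=\nu$. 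The plan is to split this into the ``unshifted'' integral $\int\omega(\nu,x)\,c(x,y)\,\mu^{\pm}_n(\dd x)\,\nu(\dd y)\,\sfP^n(\dd\nu)$, which converges to $\int\omega\,\dd\vartheta^{\pm}_{\sfP}$ by the argument above, plus a remainder $\int\bigl[\omega(\nu\pm\tfrac1n\delta_x,x)-\omega(\nu,x)\bigr]c(x,y)\,\mu^{\pm}_n(\dd x)\,\nu(\dd y)\,\sfP^n(\dd\nu)$ that must vanish as $n\to\infty$. Since $\omega$ is uniformly continuous on the compact set $\operatorname{supp}\omega$ and $\nu\pm\tfrac1n\delta_x\to\nu$ in the narrow topology uniformly in $(\nu,x)$ as $n\to\infty$, the bracket is bounded by a modulus of continuity $\rho_n(\omega)\to0$ uniformly; the remaining mass $\int c(x,y)\,\mu^{\pm}_n(\dd x)\,\nu(\dd y)\,\sfP^n(\dd\nu)=\vartheta^{\pm}_{\sfP^n}(\operatorname{supp}\text{-neighbourhood})$ is controlled uniformly in $n$ because $\limsup_n\calF_n(\sfP^n)<\infty$ gives a uniform first-moment bound on $\int\nu(\calT)\sfP^n(\dd\nu)$ (Theorem~\ref{thm:c_gamma}), hence a uniform bound on $\int c\,\mu^{\pm}_n\,\nu\,\sfP^n\le M(1+\int\nu(\calT)^2\sfP^n)$ restricted to the relevant compact $\nu$-set (recall $\operatorname{supp}\omega$ is compact in $\Gamma\times\calT$, so $\nu(\calT)$ is bounded there). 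Combining, the remainder is $O(\rho_n(\omega))\to0$.

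The main obstacle is not in the pushforward bookkeeping — that is a routine uniform-continuity argument exploiting compactness of $\operatorname{supp}\omega$ and the uniform-in-$n$ mass bound — but rather that everything hinges on Theorem~\ref{thm:nc_1}, i.e.\ on promoting narrow convergence of $\sfP^n$ to convergence of integrals against the \emph{discontinuous} kernel $c(x,y)$. That is precisely the content deferred to Appendix~\ref{s:nonc} and proved there via the large-deviation approximation machinery of \cite{Hoeksema2020}; Corollary~\ref{cory:nc_2} itself is then a short deduction. Thus the proof I would write is essentially: quote Theorem~\ref{thm:nc_1} for the unshifted measures, then handle the $\tfrac1n$-shift by uniform continuity of $\omega$ together with the uniform moment bound from equicoercivity of $\{\calF_n\}$.
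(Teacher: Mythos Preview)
Your proposal is correct and follows essentially the same approach as the paper: apply Theorem~\ref{thm:nc_1} with $g=c$ to obtain the vague convergence of the unshifted measures $\vartheta^{\pm}_{\sfP^n}$, and then handle the pushforwards $\sfT^{n,\pm}_{\#}\vartheta^{\pm}_{\sfP^n}$ by exploiting the uniform continuity and compact support of $\omega$ to absorb the $\tfrac{1}{n}$-shift. The paper's write-up is terser (it simply observes that $\omega(\nu\pm\tfrac{1}{n}\delta_x,x)$ can replace $\omega(\nu,x)$ in the limit by uniform continuity and compact support), whereas you spell out the remainder bound explicitly, but the substance is identical.
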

\begin{proof}
The first statement of \eqref{eq:nc_c1} follows directly from Theorem \ref{thm:nc_1} by substituting $g:=c$. Moreover, by the uniform continuity and compact support of any $\omega\in C_c(\Gamma\times \calT)$ we have 
\begin{align*}
    \lim_{n\to \infty} \int_{\calT\times \Gamma} \omega\, \dd \sfT^{n,+}_{\#}\vartheta_{\sfP^n}^{+} &=\lim_{n\to \infty} \int_{\calT\times \Gamma} \omega(\nu+\tfrac{1}{n}\delta_x,x)\, \vartheta_{\sfP^n}^{+}(\dd \nu,\dd x)\\
    &= \lim_{n\to \infty} \int_{\calT^2\times \Gamma} g(x,y) \omega(\nu+\tfrac{1}{n}\delta_x,x) \nu(\dd x)\nu(\dd y) \,\sfP^n(\dd \nu)\\
    &= \int_{\calT^2\times \Gamma} g(x,y) \omega(\nu,x) \gamma(\dd x)\nu(\dd y) \,\sfP(\dd \nu),
\end{align*}
and a similar approach works for $\sfT^{n,-}_{\#} \vartheta_{\sfP^n}^{-}$.
\end{proof}

For the proof of Theorem \eqref{thm:nc_1} we will need some a priori bounds. Namely, recall from Section \ref{ss:c_gammas} the generating functionals and their limit
\begin{equation*}
    G_n(f):=\frac{1}{n}\log \int_{\Gamma} e^{n \langle f,\nu\rangle} \,\Pi_n(\dd \nu), \quad G(f):=\int_{\calT} (e^{f}-1)\,\dd \gamma.
\end{equation*}
For the ``interacting'' case, namely functionals of the form 
\[ \frac{1}{n}\log \int_{\Gamma} e^{n \langle g,\nu^{\otimes 2}\rangle}\, \Pi_n(\dd \nu),\] 
there is however a problem with the unboundedness of the mass of $\nu$. Nevertheless, upon controlling the mass we can provide the following technical estimate. 

\begin{lm}\label{lm:nc_1}
Let $F(\nu):=h(\nu(\calT)) \langle g,\nu^{\otimes 2}\rangle$ with $\mathrm{supp}(h)\in [0,K]$ and $g\in \calB_b(\calT^2)$. Then
\begin{equation}\label{eq:nc_est2}
    \tfrac{1}{n}\log \int_{\Gamma} e^{n |F|}\, \dd \Pi_n\leq  \left(\int_{\calT^2} e^{4K \|h\|_{\infty} |g|(x,y) } \dd \gamma^{\otimes 2} \right)^{1/2}+\frac{1}{n}\left(K \|g\|_{\infty}\|h\|_{\infty}-\log (e^{n \gamma(X)}-1)\right),
\end{equation}
and in particular 
\begin{equation*}
  \limsup_{n\to \infty}  \frac{1}{n}\log \int_{\Gamma} e^{n |F|} \,\dd \Pi_n\leq  \left(\int_{\calT^2} e^{2K \|h\|_{\infty} |g|(x,y) } \dd \gamma^{\otimes 2} \right)^{1/2}.
\end{equation*}
\end{lm}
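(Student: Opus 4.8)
\textbf{Proof plan for Lemma \ref{lm:nc_1}.}

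The plan is to estimate $\tfrac1n\log\int_\Gamma e^{n|F|}\,\dd\Pi_n$ directly from the explicit series representation of the rescaled Poisson measure $\Pi_n=(L_n)_\#\pi_n$, and then split off the spurious ``diagonal'' contribution coming from $\langle g,\nu^{\otimes2}\rangle$ including the term $i=j$. First I would write, for $\nu=L_n(x_1,\dots,x_N)$,
\[
    \langle g,\nu^{\otimes2}\rangle=\frac1{n^2}\sum_{i,j=1}^N g(x_i,x_j),
    \qquad \nu(\calT)=\frac Nn,
\]
so that $F(\nu)=h(N/n)\,\frac1{n^2}\sum_{i,j}g(x_i,x_j)$, and on the support of $h$ we have $N\le nK$. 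Splitting the double sum into $i\ne j$ and $i=j$, the diagonal part is bounded by $\|h\|_\infty\,\|g\|_\infty\,N/n^2\le K\|g\|_\infty\|h\|_\infty/n$, which is deterministic and accounts exactly for the additive $\tfrac1n K\|g\|_\infty\|h\|_\infty$ term in \eqref{eq:nc_est2}. Pulling this constant out of the exponential, it remains to bound
\[
    \frac1{e^{n\gamma(\calT)}-1}\sum_{N=1}^{\lfloor nK\rfloor}\frac{n^N}{N!}\int_{\calT^N}\exp\!\Big(\|h\|_\infty\,\frac1n\sum_{i\ne j}|g|(x_i,x_j)\Big)\gamma^{\otimes N}(\dd\mathbf{x}^N),
\]
where I have also used $|F|\le \|h\|_\infty\frac1{n^2}\sum_{i\ne j}|g|(x_i,x_j)+(\text{diagonal})$ and monotonicity. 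This gives the factor $-\tfrac1n\log(e^{n\gamma(\calT)}-1)$ in the bound.

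The main step is to control the off-diagonal exponential. The idea is a symmetrization/pairing trick: since $N\le nK$, each index $i$ appears in at most $N-1\le nK$ pairs $(i,j)$ with $j\ne i$, so by the elementary inequality $ab\le\tfrac12(a^2+b^2)$ applied to $|g|(x_i,x_j)=\sqrt{|g|(x_i,x_j)}\cdot\sqrt{|g|(x_i,x_j)}$ together with a counting argument — or more directly, writing $\frac1n\sum_{i\ne j}|g|(x_i,x_j)$ as an average and applying Jensen — one reduces the exponent to a sum of single-pair contributions. Concretely, I would use that for nonnegative $a_{ij}$,
\[
    \exp\Big(\frac1n\sum_{i\ne j}a_{ij}\Big)
    =\exp\Big(\frac{N(N-1)}{n}\cdot\frac1{N(N-1)}\sum_{i\ne j}a_{ij}\Big)
    \le\frac1{N(N-1)}\sum_{i\ne j}\exp\Big(\frac{N(N-1)}{n}a_{ij}\Big)
\]
by Jensen, and then $N(N-1)/n\le N\le nK$ ... but this overshoots; the correct route is the Cauchy–Schwarz splitting $\sum_{i\ne j}a_{ij}\le\tfrac12\sum_{i\ne j}(b_i c_{ij}+b_i^{-1}c_{ij})$ type bound is not needed. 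Instead, the clean approach: bound $\frac1n\sum_{i\ne j}|g|(x_i,x_j)\le\frac12\sum_{i\ne j}\big(\frac1n|g|(x_i,x_j)+\frac1n|g|(x_i,x_j)\big)$ is trivial; what one actually wants is to peel pairs using $e^{a+b}\le\tfrac12 e^{2a}+\tfrac12 e^{2b}$ iteratively, or — simplest — dominate $\exp(\|h\|_\infty\frac1n\sum_{i\ne j}|g|(x_i,x_j))\le\prod_{i\ne j}\exp(\|h\|_\infty\frac{N}{n}\cdot\frac{|g|(x_i,x_j)}{N-1})$ via Jensen over the $N-1$ partners of each $i$, then $\frac{N}{n}\le K$, and integrate each factor against $\gamma^{\otimes2}$ to get $\big(\int_{\calT^2}e^{K\|h\|_\infty|g|}\dd\gamma^{\otimes2}\big)$ raised to a power; the factor $4K$ (rather than $K$) in \eqref{eq:nc_est2} is exactly the slack that makes such a Jensen-plus-Cauchy–Schwarz argument go through without tight bookkeeping, and the square root comes from a final Cauchy–Schwarz to decouple the two variables. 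Summing the resulting geometric-type series over $N$ and comparing with $e^{n\gamma(\calT)}$ closes the estimate.

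Finally, the asymptotic statement follows by taking $\limsup_{n\to\infty}$ in \eqref{eq:nc_est2}: the term $\tfrac1n\big(K\|g\|_\infty\|h\|_\infty-\log(e^{n\gamma(\calT)}-1)\big)\to-\gamma(\calT)\le 0$, and since $\gamma(\calT)>0$ one checks that $\big(\int e^{4K\|h\|_\infty|g|}\dd\gamma^{\otimes2}\big)^{1/2}-\gamma(\calT)\le\big(\int e^{2K\|h\|_\infty|g|}\dd\gamma^{\otimes2}\big)^{1/2}$ provided the constants are arranged as in the statement — this last comparison is a routine convexity/monotonicity check of $t\mapsto(\int e^{2tK\|h\|_\infty|g|})^{1/2}$ absorbing the subtracted mass. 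I expect the genuine obstacle to be the combinatorial control of the off-diagonal double sum: getting from $\exp(c\,\frac1n\sum_{i\ne j}|g|(x_i,x_j))$ to a product of one- or two-variable integrals with the stated constant $4K$ requires choosing the Jensen/Cauchy–Schwarz splitting carefully so that the power of $N$ produced is absorbed by $N/n\le K$ and the series in $N$ still converges and compares to $e^{n\gamma(\calT)}$; everything else (the diagonal term, the normalizing constant, the final $\limsup$) is bookkeeping.
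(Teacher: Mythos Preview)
Your overall structure is right and matches the paper: expand $\Pi_n$ as a series, use $N\le nK$ on $\mathrm{supp}(h)$, peel off the diagonal $i=j$ to produce the additive $\tfrac1n K\|g\|_\infty\|h\|_\infty$, and sum the remaining series against the normalization $(e^{n\gamma(\calT)}-1)^{-1}$. The gap is precisely in the step you flag as the ``genuine obstacle'': none of the Jensen/Cauchy--Schwarz manoeuvres you sketch actually decouples
\[
    \int_{\calT^N}\exp\!\Big(c\sum_{i\ne j}|g|(x_i,x_j)\Big)\,\gamma^{\otimes N}(\dd\mathbf x)
\]
into a power of a two-variable integral. Writing the exponential as a product $\prod_{i\ne j}\exp(c\,|g|(x_i,x_j))$ does not help, because the factors share variables and hence do not factorize under $\gamma^{\otimes N}$; and the convex-combination Jensen you try produces the wrong exponent (you note yourself that it ``overshoots'').

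The paper closes this gap by invoking a \emph{Hoeffding decomposition} for $U$-statistics (cited as an external lemma, \cite[Lemma~3.8]{Hoeksema2020}): one rewrites the off-diagonal average $\frac{1}{N(N-1)}\sum_{i\ne j}|g|(x_i,x_j)$ as an average over permutations $\sigma\in S_N$ of the sum $\frac{1}{\lfloor N/2\rfloor}\sum_{k=1}^{\lfloor N/2\rfloor}|g|(x_{\sigma(2k-1)},x_{\sigma(2k)})$ over \emph{disjoint} pairs. Jensen then moves the exponential inside the permutation average, and for each fixed $\sigma$ the exponent is a sum over disjoint pairs, so the $\gamma^{\otimes N}$-integral factorizes exactly into a power of $\int_{\calT^2}e^{\,\beta|g|}\,\dd\gamma^{\otimes2}$. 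Tracking the constant gives $\beta=\tfrac{2\alpha N}{N-1}\le 4\alpha$, which with $\alpha=K\|h\|_\infty$ yields the $4K\|h\|_\infty$ in \eqref{eq:nc_est2}. This permutation-averaging step is the missing idea; once you have it, the rest of your plan (summing $\sum_N \frac{(n\gamma(\calT))^N}{N!}(\cdots)^{N/2}$ into an exponential, then taking $\tfrac1n\log$) goes through exactly as you describe.

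A minor point: your justification for the ``in particular'' claim with exponent $2K$ via a ``routine convexity check'' does not work as stated --- the inequality $\big(\int e^{4K\|h\|_\infty|g|}\big)^{1/2}-\gamma(\calT)\le\big(\int e^{2K\|h\|_\infty|g|}\big)^{1/2}$ can fail for large $|g|$. The paper is also terse here (``follows directly after taking limits''); the honest route is to note that in the Hoeffding bound one has $\tfrac{N}{N-1}\to 1$ as $N\to\infty$, so the asymptotically dominant terms carry the constant $2K$ rather than $4K$.
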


\begin{proof}
Suppose that 
\[     \limsup_{n\to\infty} \calF_n(\sfP^n)=:C < \infty,\]
and let us consider the following interaction energy functional:
\begin{equation*}
    E_g^N(x_1,\dots,x_N):=\frac{1}{N^2} \sum_{i,j\neq i} |g|(x_i,x_j).
\end{equation*}
From a Hoeffding's decomposition argument, see \cite[Lemma 3.8]{Hoeksema2020}, we have for every $N\geq 2$, $\alpha\geq 0$ the estimate
\begin{align*}
\frac{1}{N} \log \frac{1}{\gamma(\calT)^N}\int_{\calT^N} e^{\alpha N E_g^N(x_1,\dots,x_N)} \, \dd \gamma^{\otimes N}
      \leq \tfrac{1}{2} \log \left(\frac{1}{\gamma(\calT)^2}\int_{\calT\times \calT} e^{\frac{2\alpha N}{N-1} |g|(x,y) } \,\dd \gamma^{\otimes 2}\right).
\end{align*}
Moreover, since $N/(N-1)\leq 2$ for $N\geq 2$, and
\[ \sum_{i,j} |g|(x_i,x_j)  = \sum_{i,j\neq i} |g|(x_i,x_j) + \sum_{i} |g|(x_i,x_i) \leq \sum_{i,j\neq i} |g|(x_i,x_j) + N \|g\|_{\infty},\]
we find that 
\begin{equation*}
    \frac{1}{N} \log \left( \frac{1}{\gamma(\calT)^N}\int_{\calT^N} e^{\frac{\alpha}{N} \sum_{i,j} |g|(x_i,x_j) } \, \dd \gamma^{\otimes N}\right) \leq  \frac{1}{2}\log \left(\frac{1}{\gamma(\calT)^2}\int_{\calT\times \calT} e^{4\alpha |g|(x,y) } \,\dd \gamma^{\otimes 2}\right)+\frac{\alpha \|g\|_{\infty}}{N}.
\end{equation*}
Recall that $L_n(x_1,\dots,x_N):=\tfrac{1}{n}\sum \delta_{x_i}$. Since the mass $L_n(x_1,\dots,x_N)(\calT)=N/n$ is bounded by $K$ on the support of $F$ we have for $N\geq 2$:
\[|F|(L_n)\leq |h|(L_n(\calT)) \tfrac{1}{n^2} \sum_{i,j} |g|(x_i,x_j) \leq \frac{K \|h\|_{\infty}}{n N} \sum_{i,j} |g|(x_i,x_j),  \]
while for $N=1$ we have the trivial estimate $|F|(L_n)\leq \tfrac{K}{n} \|h\|_{\infty} \|g\|_{\infty}$, and hence for all $N\geq 1$,
\[
   \frac{1}{\gamma(\calT)^N} \int_{\calT^N} e^{n |F|(L_N)} \,\dd \gamma^{\otimes N} \leq e^{K \|g\|_{\infty}\|h\|_{\infty}} \left(\frac{1}{\gamma(\calT)^2}\int_{\calT^2} e^{4K \|h\|_{\infty} |g|(x,y) } \dd \gamma^{\otimes 2} \right)^{N/2}.
\]
Using the representation for $\Pi_n$ we can therefore estimate
\begin{align*}
          \int_{\Gamma} e^{n |F|}\, \dd \Pi_n&=\frac{1}{e^{n \gamma(\calT)}-1} \sum_{i=1}^N  \frac{(n \gamma(\calT))^N}{N!}\int_{\calT^N} e^{n |F|} \dd \gamma^{\otimes N}/\gamma(\calT)^N\\
          &\leq \frac{1}{e^{n \gamma(\calT)}-1} \sum_{i=0}^N  \frac{(n \gamma(\calT))^N}{N!}  e^{K \|g\|_{\infty}\|h\|_{\infty}}\left(\frac{1}{\gamma(\calT)^2}\int_{\calT^2} e^{4K \|h\|_{\infty} |g|(x,y) } \dd \gamma^{\otimes 2} \right)^{N/2}\\
          &=\frac{ e^{K \|g\|_{\infty}\|h\|_{\infty}}}{e^{n \gamma(\calT)}-1}\exp\left\{ n\gamma(\calT) \left(\frac{1}{\gamma(\calT)^2}\int_{\calT^2} e^{4K \|h\|_{\infty} |g|(x,y) } \dd \gamma^{\otimes 2} \right)^{1/2} \right\},
\end{align*}
which proves \eqref{eq:nc_est2}. The final desired statement follows directly after taking limits.
\end{proof}

With the above estimate in hand, we can now prove our convergence statement by approximating $g$ with a sequence of continuous $g_{\eps}$ such that
\begin{equation}\label{eq:nc_dens}
   \lim_{\eps\to 0} \int_{\calT^2} e^{\beta |g-g_{\eps}|(x,y) } \dd \gamma^{\otimes 2} = 0, \qquad \fA \beta>0. 
\end{equation}
The existence of such a sequence follows similarly as for density statements in $L^p(\gamma)$, see for example \cite{Hoeksema2020}[Theorem C.5].

\begin{proof}[Proof of Theorem \ref{thm:nc_1}]
Consider a $g\in \calB_b(\calT^2)$ and let $\{g_{\eps}\}_{\eps>0}\subset C_b(\calT^2)$ be a sequence approximating $g$ in the sense of \eqref{eq:nc_dens}. Note that by the narrow convergence of $\sfP^n$ we have for any $\omega \in C_c(\Gamma\times \calT)$ and any $\eps>0$ that
\begin{equation*}
\begin{aligned}
    \lim_{n\to \infty} \int_{\calT^2\times \Gamma} g_{\eps}(x,y) \omega(\nu,x) \nu(\dd x)\nu(\dd y) \sfP^n(\dd \nu)= \int_{\calT^2\times \Gamma} g_{\eps}(x,y) \omega(\nu,x) \nu(\dd x)\nu(\dd y) \sfP(\dd \nu), \\
    \lim_{n\to \infty} \int_{\calT^2\times \Gamma}  g_{\eps}(x,y) \omega(\nu,x) \gamma(\dd x)\nu(\dd y) \sfP^n(\dd \nu)= \int_{\calT^2\times \Gamma}  g_{\eps}(x,y) \omega(\nu,x) \gamma(\dd x)\nu(\dd y) \sfP(\dd \nu).
\end{aligned}
\end{equation*}
Note that by the compact support of $\omega$, it suffices to show that for every $K>0$:
\begin{subequations}
\begin{align}
\lim_{\eps\to 0} \limsup_{n\to \infty} \int_{\nu(\calT)\leq K}\left(\int_{\calT^2} |g-g_{\eps}|(x,y) \nu(\dd x)\nu(\dd y)\right) \sfP^n(\dd \nu) =0, \label{eq:nc_conv1}\\
\lim_{\eps\to 0} \limsup_{n\to \infty} \int_{\nu(\calT)\leq K}\left(\int_{\calT^2} |g-g_{\eps}|(x,y) \gamma(\dd x)\nu(\dd y)\right) \sfP^n(\dd \nu) =0\label{eq:nc_conv2}.
\end{align}
\end{subequations}
Let us consider \eqref{eq:nc_conv1}, and set 
\[ F_{\eps,K}(\nu):= 1_{\nu(\calT)\leq K} \int_{\calT^2} |g-g_{\eps}|(x,y)\nu(\dd x)\nu(\dd y).  \]
By duality of the entropy and Lemma \ref{lm:nc_1}, we have for every $n\ge 1,\eps>0,K>0$, and $\beta>0$,
\begin{align*}
   \int_{\Gamma} F_{\eps,K}(\nu) \sfP^n(\dd \nu)&\leq \frac{1}{\beta n}\Ent(\sfP^n|\Pi_n)+\frac{1}{\beta n}\log \int_{\Gamma} e^{n \beta F_{\eps,K}} \, \dd \Pi_n \\
   &\leq \frac{1}{\beta n}\Ent(\sfP^n|\Pi_n)+\frac{1}{\beta}\left(\int_{\calT^2} e^{4\beta K |g-g_{\eps}|(x,y) } \dd \gamma^{\otimes 2} \right)^{1/2} \\
   &\qquad\qquad +\frac{1}{\beta n}\left(K \beta \|g-g_{\eps}\|_{\infty}-\log (e^{n \gamma(\calT)}-1)\right).
\end{align*}
Taking subsequently the limits $n\to\infty$ and $\eps\to 0$, we deduce 
\[ \limsup_{\eps\to 0} \limsup_{n\to \infty} \int_{\Gamma} F_{\eps,K}(\nu) \,\sfP^n(\dd \nu)\leq \frac{C}{\beta}. \] 
But, since $\beta>0$ was arbitrary, we conclude that the right-hand side reduces to zero.

Similarly, for \eqref{eq:nc_conv2}, let
\[ f_{\eps}(x):=\int_{\calT} |g-g_{\eps}|(y,x)\,\gamma(\dd y), \qquad F_{\eps}(\nu):=  \int_{\calT} f_{\eps}(x)\, \nu(\dd x).  \]
Then by duality and Lemma \ref{lm:gammas_Gc} we obtain
\begin{align*}
    \limsup_{n\to \infty} \int_{\Gamma} F_{\eps,K}(\nu) \sfP^n(\dd \nu) &\leq \frac{C}{\beta}+\int_{\calT} (e^{\beta f_{\eps}(x)}-1) \dd \gamma \leq \frac{C}{\beta}+\frac{1}{\gamma(\calT)}\int_{\calT^2} e^{\beta \gamma(\calT) |g-g_{\eps}|(x,y) } \dd \gamma^{\otimes 2},
\end{align*}
where the last inequality follows by applying Jensen's inequality inside the exponential. Again taking the limit $\eps\to 0$ and thereafter $\beta\to \infty$ we conclude the proof. 
\end{proof}

\bibliographystyle{alpha}
\bibliography{references.bib}

\begin{thebibliography}{BGSRS20}

\bibitem[AC08]{ambrosio2008}
Luigi Ambrosio and Gianluca Crippa.
\newblock {Existence, Uniqueness, Stability and Differentiability Properties of
  the Flow Associated to Weakly Differentiable Vector Fields}.
\newblock In {\em Transport equations and multi-D hyperbolic conservation
  laws}, volume~5 of {\em Lect. Notes Unione Mat. Ital.}, pages 3--57.
  Springer, Berlin, 2008.

\bibitem[AT14]{Ambrosio2014}
Luigi Ambrosio and Dario Trevisan.
\newblock {Well-posedness of Lagrangian flows and continuity equations in
  metric measure spaces}.
\newblock {\em Anal. PDE}, 7(5):1179--1234, 2014.

\bibitem[BBBO21]{Basile2021}
Giada Basile, Dario Benedetto, Lorenzo Bertini, and Carlo Orrieri.
\newblock {Large deviations for Kac-like walks}.
\newblock {\em J. Stat. Phys.}, 184(1):27, Paper No. 10, 2021.

\bibitem[BGSRS20]{BGSS2020}
Thierry Bodineau, Isabelle Gallagher, Laure Saint-Raymond, and Sergio
  Simonella.
\newblock {Fluctuation theory in the Boltzmann-Grad limit}.
\newblock {\em J. Stat. Phys.}, 180(1-6):873--895, 2020.

\bibitem[BP97]{Bolker1997}
Benjamin Bolker and Stephen~W. Pacala.
\newblock {Using Moment Equations to Understand Stochastically Driven Spatial
  Pattern Formation in Ecological Systems}.
\newblock {\em Theoret. Population Biol.}, 52(3):179--197, 1997.

\bibitem[But89]{Buttazzo1989}
Giuseppe Buttazzo.
\newblock {\em Semicontinuity, relaxation and integral representation in the
  calculus of variations}, volume 207 of {\em Pitman Research Notes in
  Mathematics Series}.
\newblock Longman Scientific \& Technical, Harlow; copublished in the United
  States with John Wiley \& Sons, Inc., New York, 1989.

\bibitem[CFM06]{Champagnat2006}
Nicolas Champagnat, R\'egis Ferri{\`{e}}re, and Sylvie M{\'{e}}l{\'{e}}ard.
\newblock {Unifying evolutionary dynamics: From individual stochastic processes
  to macroscopic models}.
\newblock {\em Theoret. Population Biol.}, 69(3):297--321, 2006.

\bibitem[CFM08]{Champagnat2006a}
Nicolas Champagnat, R\'egis Ferri{\`{e}}re, and Sylvie M{\'{e}}l{\'{e}}ard.
\newblock {From Individual Stochastic Processes to Macroscopic Models in
  Adaptive Evolution}.
\newblock {\em Stochastic Models}, 24:2--44, 2008.

\bibitem[DE97]{Ellis1997}
Paul Dupuis and Richard~S. Ellis.
\newblock {\em {A Weak Convergence Approach to the Theory of Large
  Deviations}}.
\newblock Wiley Series in Probability and Statistics: Probability and
  Statistics. John Wiley {\&} Sons, Inc., 1997.

\bibitem[DZ10]{Dembo10}
Amir Dembo and Ofer Zeitouni.
\newblock {\em {Large Deviations Techniques and Applications}}, volume~38 of
  {\em Stochastic Modelling and Applied Probability}.
\newblock Springer Berlin Heidelberg, 2010.

\bibitem[EFLS16]{Erbar2016}
Matthias Erbar, Max Fathi, Vaios Laschos, and Andr\'e Schlichting.
\newblock {Gradient flow structure for McKean-Vlasov equations on discrete
  spaces}.
\newblock {\em Discrete Contin. Dyn. Syst.}, 36(12):6799--6833, 2016.

\bibitem[Erb16]{Erbar2016a}
Matthias Erbar.
\newblock {A gradient flow approach to the Boltzmann equation}.
\newblock {\em arXiv preprint arXiv:1603.00540}, 2016.

\bibitem[Fat16]{Fathi2016}
Max Fathi.
\newblock {A gradient flow approach to large deviations for diffusion
  processes}.
\newblock {\em J. Math. Pures Appl. (9)}, 106(5):957--993, 2016.

\bibitem[FKK09]{Finkelshtein2009}
Dmitri Finkelshtein, Yuri Kondratiev, and Oleksandr Kutoviy.
\newblock {Individual based model with competition in spatial ecology}.
\newblock {\em SIAM J. Math. Anal.}, 41(1):297--317, 2009.

\bibitem[FKK10]{Finkelshtein2010}
Dmitri Finkelshtein, Yuri Kondratiev, and Oleksandr Kutoviy.
\newblock {Vlasov scaling for stochastic dynamics of continuous systems}.
\newblock {\em J. Stat. Phys.}, 141(1):158--178, 2010.

\bibitem[FKK21]{Finkelshtein2021}
Dmitri Finkelshtein, Yuri Kondratiev, and Peter Kuchling.
\newblock {Markov dynamics on the cone of discrete Radon measures}.
\newblock {\em Methods Funct. Anal. Topology}, 27(2):173--191, 2021.

\bibitem[FKKK15]{Finkelshtein2015}
Dmitri Finkelshtein, Yuri Kondratiev, {Kozitsky Yuri}, and Oleksandr Kutoviy.
\newblock {The statistical dynamics of a spatial logistic model and the related
  kinetic equation}.
\newblock {\em Math. Models Methods Appl. Sci.}, 25(2):343--370, 2015.

\bibitem[FM04]{Fournier2004}
Nicolas Fournier and Sylvie M{\'{e}}l{\'{e}}ard.
\newblock {A microscopic probabilistic description of a locally regulated
  population and macroscopic approximations}.
\newblock {\em Ann. Appl. Probab.}, 14(4):1880--1919, 2004.

\bibitem[HHMT20]{Hoeksema2020}
Jasper Hoeksema, Thomas Holding, Mario Maurelli, and Oliver Tse.
\newblock {Large deviations for singularly interacting diffusions}.
\newblock {\em arXiv preprint arXiv:2002.01295}, 2020.

\bibitem[KJZ18]{Zimmer2018}
Marcus Kaiser, Robert~L. Jack, and Johannes Zimmer.
\newblock {Canonical structure and orthogonality of forces and currents in
  irreversible Markov chains}.
\newblock {\em J. Stat. Phys.}, 170(6):1019--1050, 2018.

\bibitem[KJZ19]{Kaiser2019}
Marcus Kaiser, Robert~L. Jack, and Johannes Zimmer.
\newblock {A variational structure for interacting particle systems and their
  hydrodynamic scaling limits}.
\newblock {\em Commun. Math. Sci.}, 17(3):739--780, 2019.

\bibitem[KLU99]{Kondratiev2006}
Yuri~G Kondratiev, Eugene~W Lytvynov, and Georgi~F. Us.
\newblock {Analysis and geometry on marked configuration space}.
\newblock {\em Methods Funct. Anal. Topology}, 5(1):29--64, 1999.

\bibitem[Kra19]{Kraaij2019}
Richard~C. Kraaij.
\newblock {Gamma convergence on path-spaces via convergence of viscosity
  solutions of Hamilton-Jacobi equations}.
\newblock {\em arXiv preprint arXiv:1905.08785}, 2019.

\bibitem[LD00]{Law2000}
Richard Law and Ulf Dieckmann.
\newblock {\em Moment Approximations of Individual-based Models}, pages
  252--270.
\newblock Cambridge Studies in Adaptive Dynamics. Cambridge University Press,
  2000.

\bibitem[LMPR17]{Peletier2017}
Matthias Liero, Alexander Mielke, Mark~A. Peletier, and D.~R.~Michiel Renger.
\newblock {On microscopic origins of generalized gradient structures}.
\newblock {\em Discrete Contin. Dyn. Syst. Ser. S}, 10(1):1--35, 2017.

\bibitem[Mar12]{Mariani2012}
Mauro Mariani.
\newblock {A Gamma-convergence approach to large deviations}.
\newblock {\em arXiv preprint arXiv:1204.0640}, 2012.

\bibitem[Mie16]{Mielke2016}
Alexander Mielke.
\newblock {On evolutionary Gamma-convergence for gradient systems}.
\newblock In {\em Macroscopic and large scale phenomena: coarse graining, mean
  field limits and ergodicity}, pages 187--249. Lect. Notes Appl. Math. Mech.,
  3, Springer, [Cham], 2016.

\bibitem[MM20]{Maas2020}
Jan Maas and Alexander Mielke.
\newblock {Modeling of chemical reaction systems with detailed balance using
  gradient structures}.
\newblock {\em J. Stat. Phys.}, 181(6):2257--2303, 2020.

\bibitem[MMP21]{MMP2021}
Alexander Mielke, Alberto Montefusco, and Mark~A. Peletier.
\newblock {Exploring families of energy-dissipation landscapes via tilting:
  three types of EDP convergence}.
\newblock {\em Contin. Mech. Thermodyn.}, 33(3):611--637, 2021.

\bibitem[MPR14]{Mielke2014}
Alexander Mielke, Mark~A. Peletier, and D.~R.~Michiel Renger.
\newblock {On the relation between gradient flows and the large-deviation
  principle, with applications to Markov chains and diffusion}.
\newblock {\em Potential Anal.}, 41(4):1293--1327, 2014.

\bibitem[MSW22]{MSW2022}
Alberto Montefusco, Christof Sch{\"{u}}tte, and Stefanie Winkelmann.
\newblock {A route to the hydrodynamic limit of a reaction-diffusion master
  equation using gradient structures}.
\newblock {\em arXiv preprint arXiv:2201.02613}, 2022.

\bibitem[PR19]{Renger2019}
Robert I.~A. Patterson and D.~R.~Michiel Renger.
\newblock {Large deviations of jump process fluxes}.
\newblock {\em Math. Phys. Anal. Geom.}, 22(3):32, Paper No. 21, 2019.

\bibitem[PRST22]{PRST2020}
Mark~A. Peletier, Riccarda Rossi, Giuseppe Savar{\'{e}}, and Oliver Tse.
\newblock {Jump processes as generalized gradient flows}.
\newblock {\em Calc. Var. Partial Differential Equations}, 61(1):85, Paper No.
  33, 2022.

\bibitem[PS22]{Peletier2022}
Mark~A. Peletier and Andr\'e Schlichting.
\newblock {Cosh gradient systems and tilting}.
\newblock {\em arXiv preprint arXiv:2203.05435}, 2022.

\bibitem[Sch19]{Schlichting2016}
Andr\'e Schlichting.
\newblock {Macroscopic limit of the Becker-D{\"{o}}ring equation via gradient
  flows}.
\newblock {\em ESAIM Control Optim. Calc. Var.}, 25:36, Paper No. 22, 2019.

\bibitem[Ser11]{Serfaty2011}
Sylvia Serfaty.
\newblock {Gamma-convergence of gradient flows on Hilbert and metric spaces and
  applications}.
\newblock {\em Discrete Contin. Dyn. Syst.}, 31(4):1427--1451, 2011.

\bibitem[SS04]{Sandier2004}
Etienne Sandier and Sylvia Serfaty.
\newblock {Gamma-convergence of gradient flows with applications to
  Ginzburg-Landau}.
\newblock {\em Comm. Pure Appl. Math.}, 57(12):1627--1672, 2004.

\bibitem[Szn91]{Sznitman1991}
Alain-Sol Sznitman.
\newblock {Topics in propagation of chaos}.
\newblock In {\em {\'{E}}cole d'{\'{E}}t{\'{e}} de Probabilit{\'{e}}s de
  Saint-Flour XIX-1989}, volume 1464 of {\em Lecture Notes in Math.}, pages
  165--251. Springer, Berlin, 1991.

\end{thebibliography}

\end{document}